\newtheorem{thm}{Theorem}[section]
\newtheorem{cor}[thm]{Corollary}
\newtheorem{lem}[thm]{Lemma}
\newtheorem{prop}[thm]{Proposition}
\newtheorem{defn}[thm]{Definition}
\newtheorem{rem}[thm]{Remark}
\newcommand{\Rmnum}[1]
{expandafter\@slowromancap\romannumeral#1@}
\numberwithin{equation}{section}
\numberwithin{figure}{section}
\begin{document}
	\title{Long-term behavior of
		nonlocal reaction-diffusion equation under small random perturbations}
	\author{Xiuling Gui,\,\,Jin Yang,\,\,Chunfeng Wang,\,\,Jing Hou,\,\,Ji Shu\thanks{Corresponding author. \textit{E-mail addresses}: 2189157394@qq.com(X.Gui), 2252372813@qq.com(J.Yang), 18781613970@qq.com(C.Wang), 2542866897@qq.com(J.Hou), shuji@sicnu.edu.cn(J.Shu).}\small\\
		\textit{School of Mathematical Sciences, Sichuan Normal University,Chengdu 610066, PR China}}
	\date{}
	
	\maketitle
	
	{\begin{center}
			\begin{minipage}{6.0in}
				
				{\textbf{Abstract:} In this paper, we investigate the nonlocal reaction-diffusion equation driven by stationary noise, which is a regular approximation to white noise and satisfies certain properties. We show the existence of random attractor for the equation. When stochastic nonlocal reaction-diffusion equation driven by additive and multiplicative noise, we demonstrate that the solution converges to the corresponding deterministic equation and establish the upper semicontinuity of the attractors as the perturbation parameter $\delta$ and $\epsilon$ both approaches zero. }\\
				
				{\textbf{Key words:} Nonlocal reaction-diffusion equation; Small random perturbation; Stationary noise; Random attractor; Upper semicontinuity}\\
			\end{minipage}
	\end{center}}
	\section{Introduction}
	\indent
	\par
	In this paper, we study the limiting behavior of random attractors of random dynamical systems as the perturbation parameters approach zero. In particular, let $\mathcal{O} \subset \mathbb{R}^N$ be a bounded open set of class $C^k$ with $k \geq 2$, and consider the following nonautonomous nonlocal reaction-diffusion equation defined on $\mathcal{O}$:
		\begin{equation}
			\begin{cases}
				\label{eq:1.1}
				\frac{\partial u_{\delta,\epsilon}}{\partial t}-a(l(u_{\delta,\epsilon}))\Delta u_{\delta,\epsilon}=f(u_{\delta,\epsilon})+h(t)+\epsilon g(t,u_{\delta,\epsilon})\zeta_{_\delta}(\theta_{_t}\omega), & \text{in } \mathcal{O} \times (\tau, \infty) \\
				u = 0, & \text{on } \partial\mathcal{O} \times (\tau, \infty) \\
				\ u_{\delta, \epsilon}(x,\tau)=u_{\delta,\epsilon}(\tau)=u_\tau, & \text{in } \mathcal{O}
			\end{cases}
		\end{equation}
		where $\tau\in R$, $\epsilon$ is a small positive parameter, $h\in L_{loc}^2(R;H^{-1}(\mathcal{O}))$, $\zeta_{\delta}(\theta_{t}\omega)$ is a stationary noise defined in a probability space for each $\delta>0$, $f,a$ are continuous function satisfying some conditions and $g$ is a continuous mapping  which satisfies some assumptions (see Section 2.2). 
	\par 
	For the deterministic case (i.e., $\epsilon = 0$) of equation (1.1):
	\begin{equation}
		\label{eq:1.2}
		\frac{\partial u}{\partial t}-a(l(u))\Delta u=f(u)+h(t),
	\end{equation}
	it has already been studied on its solutions and the existence of pullback $\mathcal{D}$-attractors in \cite{Caraballo THerrera-Cobos M Marín-Rubio P}. There have been numerous publications concerning the existence and upper semicontinuity of global attractors and random attractors (see, for example \cite{A.V. Babin and M.I.,A. Andami Ovono. Asymptotic behaviou,B. WangUpper semicontinuity of random attractors,G. R. Sell and Y. You,J.C.RobinsonS,J.C. Robinson,J.K. Hale,J.K. Hale X. Lin and G,J.K. Hale and G. Raugel Upper semicontinuity,J.BallGlobalattractorsfor,R.TemamI,T.CaraballoandJ.A.LOn the upper,Y. LvandW.WangD}) and the references therein. More works on random attractors can be found in \cite{F. Flandoli and B. Schmalfuss Random attractors,H. Crauel A. Debussche and F. Flandoli,H. Crauel and F. Flandoli,M. J. Garrido-Atienza and B. SchmalfussErgodicity of,P. W. Bates K. N. Lu and B. Wang,T. Caraballo M. J. Garrido-Atienza B. Schmalfuss and J. Valero,T. Caraballo M. J. Garrido-Atienza B. Schmalfuss and J. Valero Asymptotic,T. Caraballo M. J. Garrido-Atienza and T. Taniguchi,W.-J. Beyn B. Gess P. Lescot and M. R¨ockner} for the autonomous stochastic equations, and in \cite{B. Gess,B. GessRandom attractors,B. Wang Sufficient and,J. Q. Duan and B. Schmalfuss,T. Caraballo J. A. Langa V. S. Melnik and J. Valero} for the non-autonomous stochastic systems. Additionally, Kloeden and Stoiner have obtained some results on the relationship between the attractors for an autonomous ordinary differential equation and small non-autonomous perturbations in \cite{P.E.Kloeden and D.J. Stoiner}. In this paper, we will examine the limiting behavior of random attractors for the stochastic nonautonomous nonlocal reaction-diffusion problem (1.1) defined on $\mathcal{O}$ when $\delta\rightarrow0^+$ and $\epsilon\rightarrow0^+$. As far as we know, there are no results on the dynamical behavior of problem (1.1).\par 
	\par 
	We consider a class of stationary processes that can be regarded as regular approximations of the white noise (which is known as formal derivative of the Brownian Motion). To describe such noise, we introduce a probability space $ \left(\Omega, \mathcal{F}, \Bbb P\right)$, defined as the classical Wiener probability space on the Brownian motion W(t, $\omega$), where
	\begin{equation}
		\Omega=C_0(\Bbb R,\Bbb R ) :=\left\{\omega\in C(\Bbb R,\Bbb R) : \omega(0)=0\right\}
		\nonumber
	\end{equation}
	with the open compact topology, $\mathcal{F}$ is its Borel $\sigma $-algebra, and $ \Bbb P$ is the Wiener measure on $(\Omega, \mathcal{F}) $. The Brownian motion has the form $W(t, \omega) =\omega(t)$. In what follows, we will consider the Wiener shift $\left\lbrace \mathcal{\theta}_{t}\right\rbrace _{t\in\Bbb R} $ defined on the probability space $ \left(\Omega,\mathcal{F},\Bbb P\right) $ by
	
	\begin{equation}
			\label{eq:1.3}
		\theta_{t}\omega(\cdot)=\omega(t+\cdot)-\omega(t) \quad \forall\omega\in\Omega, t\in\Bbb R.
	\end{equation}
	As is well-known, $\Bbb P $ is an ergodic invariant measure for $ \left\lbrace \mathcal{\theta}_{t}\right\rbrace _{t\in\Bbb R} $, and $ \left(\Omega,\mathcal{F},\left\lbrace \mathcal{F}_{t}\right\rbrace _{t\ge0},\Bbb P\right) $ forms a metric dynamical system (see \cite{Arnold}). There exists a $ \theta_t$ -invariant subset of full measure $\Omega_1 $ (see, e.g., \cite{Duan}) such that
	\begin{equation}
			\label{eq:1.4}
		\lim_{t\rightarrow\pm\infty}\frac{\omega(t)}{t}=0,\quad \forall \omega\in\Omega_1.
	\end{equation}
	Let \par 
	\begin{equation}
			\label{eq:1.5}
		C_{\omega}=\sup_{s\in \Bbb Q}\frac{ |\omega(s)|}{ |s|+1},
	\end{equation}
	where $ \Bbb Q $ is the set of rational numbers. By the pathwise continuity of the Wiener process,
	we find that $ C_{\omega} : \Omega_1 \rightarrow \Bbb R^+ $ is a measurable function, and
	\begin{equation}
			\label{eq:1.6}
		|\omega(s)|\le C_{\omega}(|s|+1)
	\end{equation}
	for all $ s\in \Bbb R $. Recall that $ \theta_t\omega(s)=\omega(s+t)-\omega(t) $, it then follows that 
	\begin{equation}
			\label{eq:1.7}
		C_{\theta_t\omega}\leq2C_{\omega}(|t|+1).
	\end{equation}
	From now on, we consider the probability space $(\Omega_1, \mathcal{F}_1, \Bbb P)$, where $\mathcal{F}_1$ is the trace algebra of $\Omega_1$. For simplicity, this space is still denoted as $\left(\Omega, \mathcal{F}, \Bbb P\right)$.\par 
	To describe the noise, we make the following assumptions on the noise in this paper:
	\par
	\noindent
	{\textbf{Hypothesis 1.1. }} $\zeta_{\delta} :\Omega\rightarrow\Bbb R$ is a measurable mapping such that\par 
	(1) $ \zeta_{\delta}(\theta_{t}\omega) $ is a stationary process and is continuous in $t$;\par 
	(2) we have the estimate
	\begin{equation}
			\label{eq:1.8}
		|\zeta_{\delta}(\theta_{t}\omega)|\leq K_{\delta}C_{\omega}(|t|+1), 
	\end{equation}
	where $K_{\delta} $ is a positive function of $\frac{1}{\delta}$, and $\lim\limits_{\delta\rightarrow0^+}K_{\delta}=+\infty$;\par
	(3) for every $T>0$ and $\omega\in\Omega$, we have
	\begin{equation}
			\label{eq:1.9}
		\lim_{\delta\rightarrow0^+}\sup_{|t|\leq T}|\int_{0}^{t}\zeta_{\delta}(\theta_{r}w)dr-\omega(t)|=0.
	\end{equation}\\
	\noindent
	\textbf{Hypothesis 1.2.} There exists a positive constant $\tilde{\delta}$ such that
	\begin{equation}
			\label{eq:1.10}
		\lim_{t\rightarrow\pm\infty}\frac{1}{t}\left[\int_{0}^{t}\zeta_{\delta}(\theta_{s}w)ds-\omega(t)\right]=0\quad \ uniformly\ for\ \delta\in(0,\tilde{\delta}].
	\end{equation}
	\noindent
	{\textbf{Hypothesis 1.3. }} If we consider the random variable
	\begin{equation}
		x_{\delta}(\omega):=\int_{-\infty}^{0}e^r\zeta_{\delta}(\theta_{r}\omega)dr,\quad\forall\omega\in\Omega,
		\nonumber
	\end{equation}
	then 
	\begin{equation}
		\Omega\times\Bbb R\ni(\omega,t)\longmapsto x_{\delta}(\theta_{t}\omega)=\int_{-\infty}^{0}e^r\zeta_{\delta}(\theta_{r+t}\omega)dr
		\nonumber
	\end{equation}
	is a stationary solution of the linear random differential equation:
	\begin{equation}
		\dot{x}=-x+\zeta_{\delta}(\theta_{t}\omega),
		\nonumber
	\end{equation}
	and for each $\omega\in\Omega$, the following properties hold.\par 
	(1) $x_{\delta}(\theta_{t}\omega)$ is continuous in $t$.\par 
	(2) There exists a positive constant $\bar{\delta}$ such that
	\begin{equation}
			\label{eq:1.11}
		\lim_{t\rightarrow\pm\infty}\frac{|x_{\delta}(\theta_{t}\omega)|}{|t|}=0,\quad\lim_{t\rightarrow\pm\infty}\frac{1}{t}\int_{0}^{t}x_{\delta}(\theta_{_r}\omega)dr=0
	\end{equation}
	uniformly with respect to $\delta\in\left(0,\bar{\delta}\right ]$.\par 
	For each $T > 0$,
	\begin{equation}
			\label{eq:1.12}
		\lim_{\delta\rightarrow0^+}\sup_{|t|\leq T}|x_{\delta}(\theta_{t}\omega)-x_{0}(\theta_{t}\omega)|=0,
	\end{equation}
	where $x_{0}(\theta_{t}\omega)=-\int_{-\infty}^{0}e^r\theta_{t}\omega(r)dr$ and is a stationary solution of the stochastic differential
	equation
	\begin{equation}
		dx=-xdt+dW,
		\nonumber
	\end{equation}
	with the properties that $x_{0}(\theta_{_t}\omega)$ is continuous in $t$ and
	\begin{equation}
			\label{eq:1.15}
		\lim_{t\rightarrow\pm\infty}\frac{|x_{0}(\theta_{t}\omega)|}{|t|}=0,\quad\lim_{t\rightarrow\pm\infty}\frac{1}{t}\int_{0}^{t}x_{0}(\theta_{r}\omega)dr=0.
	\end{equation}\par 
	(for existence and properties of $x_{0}(\theta_{t}\omega)$ see \cite{Constantin}).
	\par 
	According to reference \cite{J. Zhao  Shen. J  K. Lu}, Hypotheses 1.1-1.2 are not artificial conditions. For any $\delta>0$, there are at least three candidates of noise that we can choose to meet these conditions, such as the stationary Ornstein-Uhlenbeck process (also called the colored noise)
	\begin{equation}
		\xi_{\delta}(\theta_{t}\omega)=-\int_{-\infty}^{0}\frac{1}{\delta^2}e^{\frac{s}{\delta}}\theta_{t}\omega(s)ds,\quad\forall\omega\in\Omega,
		\nonumber
	\end{equation}
	which is the solution to the Langevin equation
	\begin{equation}
		d\xi_{\delta}=-\frac{1}{\delta}\xi_{\delta}dt+\frac{1}{\delta}dW,\quad\xi_{\delta}(0)=\frac{1}{\delta}\int_{-\infty}^{0}e^{\frac{s}{\delta}}dW(s),
		\nonumber
	\end{equation}
	the derivative of the mollifier of the Brownian motion
	\begin{equation}
		\eta_{\delta}(\theta_{t}\omega)=-\frac{1}{\delta^2}\int_{0}^{\delta}\dot{\phi}(\frac{s}{\delta})\theta_{t}\omega(s)ds,
		\nonumber
	\end{equation}
	where $\phi$ is a nonnegative $C^{\infty}$-function with the properties
	\begin{equation}
		supp\ \phi(t)\subset[0,1],\quad\int_{0}^{1}\phi(t)dt=1
		\nonumber
	\end{equation}
	and the difference quotient of Brownian motion
	\begin{equation}
		\zeta_{_\delta}^*(\theta_{t}\omega)=\frac{1}{\delta}(\omega(t+\delta)-\omega(t)),\quad\forall\omega\in\Omega.
		\nonumber
	\end{equation}
	\par
	These three types of noise have been studied by many authors (see, e.g., \cite{Gu A.A,A. GuB. Wang Asymptotic behavior,Shen J LK} and references therein). These three types of noise have different forms, and there may be other types of noise that satisfy Hypotheses 1.1-1.3. However, in this paper, we will investigate the results derived from all noise types that satisfy Hypotheses 1.1-1.3. Shen and his collaborators studied the persistence of dynamical behavior for a non-autonomous coupled system under a small random perturbation driven by stationary multiplicative noise in \cite{J. Zhao J.Shen P}, and the persistence of $C^1$ inertial manifolds under small random perturbations in \cite{J. Zhao  Shen. J  K. Lu}. To the best of the author's knowledge, this paper is the first one dealing with the limiting behavior of random attractors of nonlocal reaction-diffusion equations driven by stationary noise as small random perturbations approach zero.
	\par 
	In this paper, we will consider a class of stationary processes that satisfy Hypotheses 1.1-1.3, rather than the specific three types of noise mentioned earlier. We consider the limiting behavior of solutions to equation (1.1) as $\delta\rightarrow0^+$ and $\epsilon\rightarrow0^+$. This is different from the corresponding random equation driven by white noise
	\begin{equation}
		\label{eq:1.14}\indent
		\frac{\partial u_{0,\epsilon}}{\partial t}-a(l(u_{0,\epsilon}))\Delta u_{0,\epsilon}=f(u_{0,\epsilon})+h(t)+\epsilon g(t,u_{0,\epsilon})\circ\frac{dW}{dt},
	\end{equation}
	the symbol $\circ$ indicates that the equation is understood in the sense of Stratonovich's integration. Our findings reveal the limiting cases of the solution and random attractor for equation \eqref{eq:1.1} as $\delta\rightarrow0^+ $ and $\epsilon\rightarrow 0^+$. Additionally, when the noise term is linear additive or multiplicative noise, we prove the convergence relationship between the solutions and the random attractor of equation \eqref{eq:1.1} and equation \eqref{eq:1.14} as $\delta\rightarrow0^+$. When equation \eqref{eq:1.1} is driven by linear additive noise (i.e., $g(t, u) = \phi$) or linear multiplicative noise (i.e., $g(t, u) = u$),  we show that the solutions of equation \eqref{eq:1.1} converge to the solutions of equation \eqref{eq:1.2} as $\delta\rightarrow0^+$ and $\epsilon\rightarrow0^+$ (see Theorem \ref{thm:4.3} and Theorem \ref{thm:5.3}. By using appropriate changes of variables given by Ornstein-Uhlenbeck processes, we prove problem \eqref{eq:1.1} generates a random dynamical system that possesses a random attractor, and the random attractor of equation \eqref{eq:1.1} approaches that of equation \eqref{eq:1.2} in terms of the Hausdorff semidistance when $\delta\rightarrow0^+$ and  $\epsilon\rightarrow0^+$ under additive or multiplicative noise (see Theorem \ref{thm:4.14} and Theorem \ref{thm:5.14}. In this paper, we establish the convergence relationship between the random attractors of equation \eqref{eq:1.1} and equation \eqref{eq:1.14} as the parameter $\delta$ approaches zero. For the convergence analysis of the attractors between equation \eqref{eq:1.14} and equation \eqref{eq:1.2} as $\epsilon\rightarrow0^+$, we refer to the proof in \cite{Caraballo THerrera-Cobos M Marín-Rubio P}.
	\par 
	The paper is organized as follows: In Section 2, we provide some basic settings and state the main results. In Section 3, we show the existence of solutions and random nonautonomous attractors for (1.1)\eqref{eq:1.1}. In Section 4, under the additive noise case (i.e., $g(t,u)=\phi$), we prove the convergence of the solutions of problem \eqref{eq:1.1}, and establish the upper semi-continuity of the random attractors of \eqref{eq:1.1} as $ \delta\rightarrow 0^+$ and $\epsilon\rightarrow0^+$. In Section 5, we discuss the same convergence results for the case of multiplicative noise.
	\par

	\section{Preliminaries}
	\indent
	\par 
	\textbf{2.1. Notation.} In this paper, for simplicity, we denote by $H=L^2(\mathcal{O})$, $V=H_{0}^{1}(\mathcal{O})$ and $ V^*=H^{-1}(\mathcal{O})$. Identifying $H$ with its dual, we have the usual chain of dense and compact embeddings $V\subset H\subset V^{\ast} $. We denote by $| \cdot |_{p}$ the norm in $L^{p}(\mathcal{O})$, $| \cdot |$, $\|\cdot\|$ and $\| \cdot \|_{*} $ the norms in $H$, $V$, and $ V^*$, by $(\cdot, \cdot)$ and $((\cdot, \cdot)) $ the scalar products in $H$ and $V$,
	respectively, and by $<\cdot, \cdot>$ the duality product between $V$ and $V^*$. Last, let $C_{c}^{\infty}(\mathcal{O})$ be the space of all functions of class $C^{\infty}$  with compact supports contained in $\mathcal{O}$ .
	\par 
	Denote by $ A = - \Delta $ with Dirichlet boundary condition in our problem, and let
	$D(A)$ be the domain of $A$. In this way, the linear operator $ A : D(A) := V \cap H^2(\mathcal{O})\subset V \rightarrow H$ is positive, self-adjoint with compact resolvent. We denote by $0<\lambda_1\leq\lambda_2 \leq 
	\cdot\cdot\cdot$ the eigenvalues of $A$, and by $e_1, e_2, \cdot\cdot\cdot$, a corresponding complete orthonormal
	system in $L^2(\mathcal{O})$ of eigenvectors of $A$. Recall that for every $v\in V$, the Poincar\'e inequality
	\begin{equation}
		\lambda_1(\mathcal{O})|v|^2\leq\|v\|^2
		\nonumber
	\end{equation}
	holds. In what follows, unless otherwise specified, we write $\lambda_1$ instead of $\lambda_1(\mathcal{O})$.\\
	
	\textbf{2.2. Some assumptions about the problem (1.1).} For nonlocal term $a$, function $a\in C(\Bbb R; \Bbb R^+)$, let $l\in\mathcal{L}(L^2(\mathcal O); \Bbb R)$, and there exist two positive constants $m$ and $\widetilde m$, such that
	\begin{equation}
		\label{eq:2.1}
		m\leq a(s)\leq\widetilde m\quad\forall s\in\Bbb R .
	\end{equation} 
	For nonlinear terms $f$ and $g$, let $f\in C(\Bbb R)$, and there exist positive constants $\alpha_1$, $\alpha_2$,
	$\eta$, $\kappa$, and $p>2$, such that
	\begin{equation}
		\label{eq:2.2}
		|f(s)-f(r)|\leq\eta|s-r|\quad\forall s, r\in\Bbb{R},
	\end{equation}
	\begin{equation}
		\label{eq:2.3}
		-\kappa-\alpha_1|s|^p \leq f(s)s\leq\kappa-\alpha_2|s|^p \quad \forall s \in \Bbb R .
	\end{equation} 
	From \eqref{eq:2.3}, we can deduce that there exists $\beta>0$, such that
	\begin{equation}
		\label{eq:2.4}
		| f(s)|\le\beta(|s|^{p-1}+1)\quad\forall s \in\Bbb R. 
	\end{equation}
	Moreover, letting $ g : \Bbb R \times \Bbb R \rightarrow \Bbb R $ is a continuous function such that for all $t, s\in\Bbb R$,
	\begin{equation}
		\label{eq:2.5}
		|g(t, s)| \leq d_1|s| 
		^{q-1} + \psi_1(t),
	\end{equation}
	\begin{equation}
		\label{eq:2.6}
		|\frac{\partial g}{\partial s}(t, s)| \leq d_2| s| 
		^{q - 2} + \psi_2(t),
	\end{equation}
	\par 
	where $2\leq q<p$, $d_1$ and $d_2$ are nonnegative constants, $\psi_1\in L_{loc}^{p_1}(\Bbb R; L^{p_1}(\mathcal O))$, and $ \psi_2\in L_{loc}^{\infty}(\Bbb R; L^{\infty}(\mathcal O))$( $p_1$ is the conjugated number with $p$ ).\\

	\textbf{2.3. Definitions of weak solutions.} We introduce the concepts of a solution of problem (1.1).\par
	\begin{defn}
		A weak solution to problem (1.1) is a mapping $u_{\delta,\epsilon}(\cdot, \tau, \omega, u_{\tau}):
		[\tau, T)\rightarrow H$ for all $T > \tau$ with $ u_{\delta,\epsilon}(\tau) = u_{\tau} $ , satisfying for any $\tau\in R$, $ \omega \in \Omega $,
		\begin{equation}
			u_{\delta,\epsilon}(\cdot , \tau , \omega , u_{\tau} )\in C(\tau,T;H)\cap L^2(\tau,T;V)\cap L^p(\tau,T;L^p(\mathcal{O})).
			\nonumber
		\end{equation}
		
		Moreover, for every $ t > \tau$ and $ v \in V + L^p(\mathcal{O})$,
		\begin{equation}
			\begin{aligned}
				(u_{\delta,\epsilon},v)=& (u_{\tau},v)+\int_{\tau}^{t}a(l(u_{\delta,\epsilon}))((u_{\delta,\epsilon},v))ds+\int_{0}^{t}(f(u_{\delta,\epsilon}),u_{\delta,\epsilon})ds\\& +\int_{\tau}^{t}<h,v>ds+\int_{\tau}^{t}(g(s,u_{\delta,\epsilon}(s)))\zeta_{\delta}(\theta_{s}\omega).
			\end{aligned}
			\nonumber
		\end{equation}
	\end{defn} 
	\par 
	\textbf{2.4. Cocycles for nonlocal PDEs.} In this section, we will introduce some basic concepts related to non-autonomou random dynamical systems (see e.g.  \cite{A. Carvalho}, \cite{B. Wang}, \cite{P. W.B}, \cite{P. Kloeden}). 
	\par 
	Let $\left(\Omega,\mathcal{F} ,\Bbb P\right),\left\lbrace{\theta _t}\right\rbrace_{t\in\Bbb R} ) $ be a metric dynamical space (see \cite{A.V. Babin and M.I.}). In what follows, we use $(X, d)$ to denote a complete separable metric space. If $A$ and $B$ are two nonempty subsets of $X$, then we use $dist_{X}(A, B) := \sup_{a\in A} \inf_{b\in B}d(a, b)$ to denote their Hausdorff semi-distance. For other concepts such as upper semi-continuity of random attractors with respect to parameters, we can refer to \cite{A. Carvalho,J. Xu and T. Caraballo,P. W.B,P. Kloeden} and the references therein.
	\par
	\begin{defn}
		Let $D : \Bbb R \times \Omega \rightarrow 2^X$ be a set-valued mapping with closed nonempty images. We say $D$ is measurable with respect to $ \mathcal{F}$ in $\Omega$ if the mapping $\omega \in \Omega\rightarrow d(x, D(\tau, \omega))$ is $(\mathcal{F}, \mathcal{B}(\Bbb R))$-measurable for every fixed $x\in X $ and $\tau\in R$.
	\end{defn}
	\par
	\begin{defn}
		Let $ \mathcal{D} $ be a collection of some families of nonempty subsets of X and $ B=\{B(\tau, \omega) : \tau\in R, \omega\in\Omega\}\in\mathcal{D}$. Then $B$ is called a $\mathcal{D}$-pullback absorbing set for $\Phi$ if for all $\tau\in R$, $\omega \in \Omega$, and for every $B \in \mathcal{D}$, there
		exists $T = T(B,\tau, \omega)> 0$ such that
		\begin{equation}
			\Phi (t, \tau- t, \theta_{-t}\omega , B(\tau- t, \theta_{-t}\omega ))\subset B(\tau, \omega ),\quad \forall t \geq T.
			\nonumber
		\end{equation}
	\end{defn} 
	\par 
	\begin{defn}
		Let $\mathcal{D}$ be a collection of some families of
		nonempty subsets of $X$. Then $\Phi$ is said to be $\mathcal{D}$ -pullback asymptotically compact in
		$X$ if for all $\tau\in R$ and $\omega \in \Omega$ , the sequence
		\begin{equation}
			\{ \Phi (t_n, \tau- t_n, \theta _{-t_n} \omega , x_n)\} 
			_{\infty}^{n=1}\ has\ a\ convergent\ subsequence\ in\ X,
			\nonumber
		\end{equation}
		as $t_n \rightarrow \infty$ and $x_n \in D(\tau- t_n, \theta_{-t_n} \omega )$ with $\{ D(\tau, \omega ) : \tau\in R,\omega \in \Omega \} \in \mathcal{D}$.
	\end{defn}
	\par 
	\begin{defn}
		Let  $\mathcal{D}$ be a collection of some families of
		nonempty subsets of X and $\mathcal{A} = \{ \mathcal{A}(\tau, \omega ) : \tau\in R, \omega \in \Omega \} \in \mathcal{D}$. Then $\mathcal{A}$ is called a $\mathcal{D}$ -pullback attractor for $\Phi$ if the following conditions (i)--(iii) are fulfilled:\par 
		(i) $\mathcal{A}$ is measurable in the sense of Definition 2.4.1, and $\mathcal{A} (\tau, \omega )$ is compact for all $\tau\in R$, $\omega \in \Omega$ .\par 
		(ii) $\mathcal{A}$ is invariant, that is, for every $\tau\in R$ and $\omega\in \Omega$, 
		\begin{equation}
			\Phi(t, \tau , \omega,\mathcal{A}(\tau, \omega))= A(\tau+t, \theta_t\omega),\quad\forall t \geq 0.
			\nonumber
		\end{equation}
		\par 
		(iii) $\mathcal{A}$ attracts every member of $\mathcal{D}$ , that is, for every $D = \{ D(\tau, \omega ) : \tau\in R,\omega \in 
		\Omega \} \in \mathcal{D}$ and for every $\tau\in R$, $ \omega \in \Omega $,
		\begin{equation}
			\lim_{t\rightarrow\infty}d(\Phi (t, \tau- t, \theta_{-t}\omega , D(\tau- t, \theta_{-t}\omega )),  \mathcal{A}  (\tau, \omega )) = 0.
			\nonumber
		\end{equation}
	\end{defn}
	We have provided all necessary definitions for random dynamical systems. Next, we will define a cocycle $\Phi : \Bbb R^+ \times \Bbb R \times \Omega \times H \rightarrow H $ for (\ref{eq:1.1}), such that
	for all $t \in \Bbb R^+, \tau\in R, \omega \in \Omega$, and $u_{\tau}\in H$,
	\begin{equation}
		\Phi (t, \tau, \omega , u_{\tau}) = u(t+\tau; \tau, \theta_{-\tau} \omega , u_{\tau} ).
		\nonumber
	\end{equation}
	where $u(\cdot ; \tau , \omega , u_\tau )$ denotes the solution to (1.2), which will be proved to exist in Section 3. Thus, $\Phi$ will be a continuous cocycle on $H$ over $(\Omega , \mathcal{F} , \Bbb P, \{ \theta_t\}_{t\in \Bbb R})$. Moreover, let $D = \{ D(\tau , \omega ) : \tau \in \Bbb R , \omega \in \Omega \}$ be a tempered family of bounded nonempty subsets of $H$, that is, for every $\gamma > 0, \tau \in \Bbb R$, and $\omega \in \Omega$,
	\begin{equation}
			\label{eq:2.7}
		\lim_{t\rightarrow-\infty}e^{\gamma t}|D(\tau+t,\theta_{t}\omega)|=0,
	\end{equation}
	where $| D| = sup_{u\in D}|u|$. Throughout this section, we will use $\mathcal{D}$ to denote the collection of all tempered families of bounded nonempty subsets of $H$, i.e.,
	\begin{equation}
		\mathcal{D}=\{D=\{D(\tau,\omega) : \tau\in R,\omega\in\Omega\} : D\quad satifies\ \eqref{eq:2.7} \}
	\end{equation}
	\par 
	\begin{rem}
		Since the cocycle generated by problem (1.1) depends on the parameter $\delta$ and $\epsilon$, we will use $\Phi_{\delta,\epsilon}$ instead of using the notation $\Phi$.
	\end{rem}
	
	\section{Attractors of nonlocal stochastic PDEs driven by stationary noise.}
	\indent
	\par 
	Our aim is to study the existence of the attractors for solution of problem \eqref{eq:1.1}. Recently, Xu and his collaborators studied the long time behavior of nonautonomous nonlocal partial differential equations driven by colored noise in \cite{J. Xu and T. Caraballo}. In this section, we will further analyze the problem \eqref{eq:1.1} driven by stationary noise. We first consider the existence and uniqueness of weak solution to problem \eqref{eq:1.1}.
	\indent
\begin{thm}\label{thm:3.1}
	 Assume that function $a$ is globally Lipschitz and satisfies \ref{eq:2.1},
	$ f \in C(\Bbb R ) $ fulfills \eqref{eq:2.2}--\eqref{eq:2.4}, $ h \in L_{loc}^2(\Bbb R^+; V^*) $ and $ l \in L^2(\mathcal{O})$. Additionally, function
	$g$ satisfies \eqref{eq:2.5}--\eqref{eq:2.6}, and exist $\epsilon_0\in(0,1)$ such that for all $\epsilon\in(0,\epsilon_0]$, $\delta\in(0, 1]$, $\omega\in\Omega$, and for each initial datum $ u_{0} \in H $. Then there exists a unique weak solution to problem (1.1) in the sense of Definition 2.3.1. Moreover, this solution
	behaves continuously in $H$ with respect to the initial values.
\end{thm}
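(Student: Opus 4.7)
The plan is to treat \eqref{eq:1.1} pathwise: by Hypothesis 1.1(1) the coefficient $\zeta_\delta(\theta_t\omega)$ is continuous in $t$ and, by \eqref{eq:1.8}, bounded by $K_\delta C_\omega(|t|+1)$ on every interval $[\tau,T]$ for each fixed $\omega\in\Omega$. Thus, for fixed $\omega$, \eqref{eq:1.1} becomes a deterministic nonlocal reaction-diffusion equation with an additional Carath\'eodory forcing term $\epsilon g(t,u)\zeta_\delta(\theta_t\omega)$, and I would adapt the Faedo--Galerkin scheme developed for the deterministic case in \cite{Caraballo THerrera-Cobos M Marín-Rubio P}.

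First I would project \eqref{eq:1.1} onto $\mathrm{span}\{e_1,\dots,e_n\}$, seeking $u^n(t)=\sum_{j=1}^n c_j^n(t)e_j$ solving the resulting ODE system; since $a$ is Lipschitz, $l\in L^2(\mathcal{O})$, $f$ is locally Lipschitz by \eqref{eq:2.2}, and $g(t,\cdot)$ is $C^1$ by \eqref{eq:2.6}, Carath\'eodory theory gives local solvability. Testing with $u^n$ and using $a(\cdot)\geq m$ from \eqref{eq:2.1}, the dissipativity $f(s)s\leq\kappa-\alpha_2|s|^p$ from \eqref{eq:2.3}, and the growth \eqref{eq:2.5} (with $q<p$, so the term $|u^n|^q|\zeta_\delta(\theta_s\omega)|$ is absorbed into $\alpha_2|u^n|^p$ by Young's inequality for $\epsilon$ small enough relative to the $\omega$-dependent bound on $\zeta_\delta$), I obtain uniform-in-$n$ bounds
\begin{equation*}
u^n\in L^\infty(\tau,T;H)\cap L^2(\tau,T;V)\cap L^p(\tau,T;L^p(\mathcal{O})).
\end{equation*}
A dual estimate then controls $\partial_t u^n$ in $L^{p_1}(\tau,T;V^*+L^{p_1}(\mathcal{O}))$, and an Aubin--Lions type compactness argument yields a subsequence $u^n\to u$ strongly in $L^2(\tau,T;H)$ and a.e.\ on $\mathcal{O}\times(\tau,T)$.

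To pass to the limit, the delicate step is the nonlocal coefficient: continuity of $l$ on $H$ together with continuity of $a$ gives $a(l(u^n(t)))\to a(l(u(t)))$ for a.e.\ $t$, while \eqref{eq:2.1} provides a uniform $L^\infty$ bound; combined with the weak convergence $\nabla u^n\rightharpoonup\nabla u$ in $L^2(\tau,T;H)$ and dominated convergence, this identifies the limit as $a(l(u))\Delta u$. The limits of $f(u^n)$ in $L^{p_1}(\tau,T;L^{p_1}(\mathcal{O}))$ and of $g(\cdot,u^n)\zeta_\delta(\theta_\cdot\omega)$ in $L^2(\tau,T;H)$ follow from the a.e.\ convergence together with the growth bounds \eqref{eq:2.4} and \eqref{eq:2.5} by a standard Lions-type argument.

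The hardest part is uniqueness and continuous dependence in $H$, where the nonlocal term couples the energy estimate globally. Setting $w=u_1-u_2$ for two solutions with data $u_\tau^1,u_\tau^2$, the dangerous term
\begin{equation*}
\langle (a(l(u_1))-a(l(u_2)))\Delta u_2,\,w\rangle
\end{equation*}
must be controlled via the Lipschitz continuity of $a$, the Cauchy--Schwarz bound $|l(u_1)-l(u_2)|\leq|l|\,|w|$, and a careful absorption of $\|u_2\|$ into the good dissipative term $m\|w\|^2$ on the left-hand side, exactly as in \cite{Caraballo THerrera-Cobos M Marín-Rubio P}. The $f$-contribution is absorbed by \eqref{eq:2.2}, and the $g$-contribution by \eqref{eq:2.6} combined with $\psi_2\in L^\infty_{\mathrm{loc}}$ and the pathwise bound on $\zeta_\delta$. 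Gr\"onwall's lemma then yields
\begin{equation*}
|w(t)|^2\leq |u_\tau^1-u_\tau^2|^2\exp\bigl(C(\omega,T,\epsilon)(t-\tau)\bigr),
\end{equation*}
giving both uniqueness (when $u_\tau^1=u_\tau^2$) and the claimed continuous dependence on initial data in $H$.
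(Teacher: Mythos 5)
Your proposal is correct and follows essentially the same route as the paper, whose proof consists only of invoking the Galerkin method and energy estimates from \cite{M. Herrera-Cobos}; you have filled in the standard steps (a priori bounds, the dual estimate on $\partial_t u^n$, Aubin--Lions compactness, identification of the nonlocal coefficient $a(l(u))$ in the limit, and the Gr\"onwall argument for uniqueness) in the expected way. The only detail to watch is the $d_2|s|^{q-2}$ part of \eqref{eq:2.6} in the uniqueness estimate when $q>2$, which requires an additional H\"older/interpolation step beyond the $\psi_2\in L^{\infty}_{loc}$ and pathwise $\zeta_\delta$ bounds you mention.
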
\par 
	\begin{proof}
		By applying the Galerkin method and energy estimations \cite[Chapter 3, Theorem
		3.3]{M. Herrera-Cobos}, we can prove problem (1.1) have a unique solution for every $ T >\tau$ and $ \omega \in \Omega $,
		\begin{equation}
			u_{\delta, \epsilon}(\cdot ,\tau, \omega , u_{\tau} )\in C(\tau,T;H)\cap L^2(\tau,T;V)\cap L^p(\tau,T;L^p(\mathcal{O})).
			\nonumber
		\end{equation}
	\end{proof}
	Next, we derive uniform estimations on the solution of problem (1.1) and then prove $\mathcal{D}$-pullback asymptotic compactness by using the idea introduced by Ball in \cite{J.BallGlobalattractorsfor}. To this end, we need the following assumptions:\par 
	(h1) Suppose that
	\begin{equation}
		\int_{-\infty}^{\tau}e^{m\lambda_1s}\|h(s)\|_{*}^2ds<\infty.
		\nonumber
	\end{equation}\\
	For the existence of tempered random attractors, we need the assumption below:\par 
	(h2) For every $ \gamma > 0$, it holds that
	\begin{equation}
		\lim_{t\rightarrow-\infty}e^{\gamma t}\int_{-\infty}^{0}e^{m\lambda_1s}\|h(s+t)\|_{*}^2ds=0.
		\nonumber
	\end{equation}
	It is worth stressing that (h1) and (h2) do not require that $h(t)$ is bounded in $ V^{\ast}$ as $ t \rightarrow \pm \infty$ .\\
	
	\begin{lem}\label{lem:3.2} Assume the conditions of Theorem 3.1 and (h1) hold. Letting $\epsilon_0\in(0,1)$ such that for all $\epsilon\in(0,\epsilon_0]$. Then, for every $\delta\in(0, 1]$, $\tau\in R$, $\omega\in\Omega$, and $ D = \{ D(\tau, \omega ): \tau\in R, \omega\in\Omega\}\in\mathcal{D}$, there exists $T = T(\tau,\omega,\delta,\epsilon, D) > 0$ such that for all $t\geq T$ and $\sigma\geq\tau-t$, the solution of problem (1.1) satisfies
	\begin{equation}
		\begin{aligned}
			&|u_{\delta,\epsilon}(\sigma;\tau-t,\theta_{-\tau}\omega,u_{\tau-t})|^2\leq e^{-m\lambda_1(\sigma-\tau)} \\
			&+\int_{-\infty}^{\sigma-\tau}e^{m\lambda_1(s-\sigma+t)}\left(\frac{2}{m}\|h(s+\tau)\|_{*}^2+\left(2\kappa+\epsilon c|\zeta_{\delta}(\theta_{s}\omega)|^{\frac{p}{p-q}}\right)|\mathcal{O}|+\epsilon c|\zeta_{\delta}(\theta_{s}\omega)|^{p_1}|\psi_1|_{L^{p_1}}^{p_1}\right)ds,
			\nonumber
		\end{aligned}
	\end{equation}
	\begin{equation}
		\begin{aligned}
			&\int_{\tau-t}^{\tau}e^{m\lambda_1(s-\tau)}\|u_{\delta,\epsilon}(s;\tau-t,\theta_{-\tau}\omega,u_{\tau-t})\|^2ds\\ &\leq\frac{2}{m}+\frac{2}{m}\int_{-\infty}^{0}e^{m\lambda_1s}\left(\frac{2}{m}\|h(s+\tau)\|_{*}^2+\left(2\kappa+\epsilon c|\zeta_{\delta}(\theta_{s}\omega)|^{\frac{p}{p-q}}\right)|\mathcal{O}|+\epsilon c|\zeta_{\delta}(\theta_{s}\omega)|^{p_1}|\psi_1|_{L^{p_1}}^{p_1}\right)ds,\\
			and\\
			&\int_{\tau-t}^{\tau}e^{m\lambda_1(s-\tau)}|u_{\delta,\epsilon}(s;\tau-t,\theta_{-\tau}\omega,u_{\tau-t})|_{p}^pds\\
			&\leq\frac{1}{\alpha_2}+\frac{1}{\alpha_2}\int_{-\infty}^{0}e^{m\lambda_1s}\left(\frac{2}{m}\|h(s+\tau)\|_{*}^2+\left(2\kappa+\epsilon c|\zeta_{\delta}(\theta_{s}\omega)|^{\frac{p}{p-q}}\right)|\mathcal{O}|+\epsilon c|\zeta_{\delta}(\theta_{s}\omega)|^{p_1}|\psi_1|_{L^{p_1}}^{p_1}\right)ds,
			\nonumber
		\end{aligned}
	\end{equation}
	where $ u_{\tau-t} :=u_{\delta,\epsilon}(\tau-t)\in D(\tau- t, \theta_{-t}\omega )$, and $c$ is a constant which depends on $ \alpha_2$, $p$, $p_1$, $q$, and $d_1$ but
	not on $ \delta $ and $\epsilon$.
\end{lem}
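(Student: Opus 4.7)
The plan is to run a standard energy estimate on \eqref{eq:1.1}, apply Gronwall, and then use the temperedness of $D$ together with the growth bound on $\zeta_{\delta}(\theta_{t}\omega)$ from Hypothesis~1.1(2) to dispose of the initial-data term. First I would test the equation against $u_{\delta,\epsilon}$ in $H$, arriving at the identity
\begin{equation*}
\frac{1}{2}\frac{d}{dt}|u_{\delta,\epsilon}|^{2}+a(l(u_{\delta,\epsilon}))\|u_{\delta,\epsilon}\|^{2}=(f(u_{\delta,\epsilon}),u_{\delta,\epsilon})+\langle h,u_{\delta,\epsilon}\rangle+\epsilon(g(t,u_{\delta,\epsilon}),u_{\delta,\epsilon})\zeta_{\delta}(\theta_{t}\omega).
\end{equation*}
Using \eqref{eq:2.1} to bound $a(l(\cdot))\geq m$, \eqref{eq:2.3} for $f(s)s\leq \kappa-\alpha_{2}|s|^{p}$, and Young's inequality $2\langle h,u\rangle\leq \frac{2}{m}\|h\|_{*}^{2}+\frac{m}{2}\|u\|^{2}$, the linear part yields (after combining with Poincar\'e $\|u\|^{2}\geq \lambda_{1}|u|^{2}$ on a piece of the $\|u\|^{2}$ term) a clean lower bound $m\lambda_{1}|u|^{2}+\frac{m}{2}\|u\|^{2}+2\alpha_{2}|u_{\delta,\epsilon}|_{p}^{p}$ on the left side.

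The key algebraic step is controlling the stochastic term. Using \eqref{eq:2.5} I would split
\begin{equation*}
2\epsilon|\zeta_{\delta}(\theta_{t}\omega)|\int_{\mathcal{O}}|g(t,u)u|\,dx\leq 2\epsilon d_{1}|\zeta_{\delta}|\int_{\mathcal{O}}|u|^{q}\,dx+2\epsilon|\zeta_{\delta}|\int_{\mathcal{O}}\psi_{1}(t)|u|\,dx,
\end{equation*}
and then apply Young's inequality with conjugate exponents $(p/q,p/(p-q))$ to the first integral and with $(p,p_{1})$ to the second. Choosing the small parameter so that the resulting $|u|_{p}^{p}$ contributions are absorbed into $2\alpha_{2}|u|_{p}^{p}$, and bounding $\epsilon^{p_{1}}\leq \epsilon$ since $\epsilon\in(0,1)$, one arrives at the differential inequality
\begin{equation*}
\frac{d}{dt}|u_{\delta,\epsilon}|^{2}+m\lambda_{1}|u_{\delta,\epsilon}|^{2}+\frac{m}{2}\|u_{\delta,\epsilon}\|^{2}+\alpha_{2}|u_{\delta,\epsilon}|_{p}^{p}\leq F(t,\omega),
\end{equation*}
where $F(t,\omega)=\frac{2}{m}\|h(t)\|_{*}^{2}+(2\kappa+\epsilon c|\zeta_{\delta}(\theta_{t}\omega)|^{p/(p-q)})|\mathcal{O}|+\epsilon c|\zeta_{\delta}(\theta_{t}\omega)|^{p_{1}}|\psi_{1}(t)|_{L^{p_{1}}}^{p_{1}}$.

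Next I would apply Gronwall with weight $e^{m\lambda_{1}s}$ on the interval $[\tau-t,\sigma]$ with $\omega$ replaced by $\theta_{-\tau}\omega$, and then perform the time shift $s\mapsto s-\tau$ to rewrite $\zeta_{\delta}(\theta_{s-\tau}\theta_{-\tau}\omega)=\zeta_{\delta}(\theta_{s}\omega)$; the initial-data contribution $e^{-m\lambda_{1}(\sigma-\tau+t)}|u_{\tau-t}|^{2}$ is then driven below $e^{-m\lambda_{1}(\sigma-\tau)}$ for $t\geq T(\tau,\omega,\delta,\epsilon,D)$ by using temperedness \eqref{eq:2.7} of $D$ together with the polynomial growth of $|\zeta_{\delta}(\theta_{-t}\omega)|$ afforded by Hypothesis~1.1(2) and \eqref{eq:1.7}. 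This yields the first (pointwise) estimate. The remaining two estimates follow by integrating the differential inequality on $[\tau-t,\tau]$ against the same weight $e^{m\lambda_{1}(s-\tau)}$, dropping the nonnegative $|u|^{2}$ term when isolating $\int\|u\|^{2}$ and vice versa for $\int|u|_{p}^{p}$, and then dividing by the coefficients $m/2$ and $\alpha_{2}$ respectively.

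The main obstacle is the careful bookkeeping of constants in the two Young steps, so that after absorption the surviving $|u|_{p}^{p}$ coefficient is exactly $\alpha_{2}$ and the surviving $\|u\|^{2}$ coefficient is exactly $m/2$; these choices are what produce the prefactors $1/\alpha_{2}$ and $2/m$ in the integral bounds. A secondary point is verifying that the resulting integrands in $F(s,\omega)$ are integrable against $e^{m\lambda_{1}s}$ on $(-\infty,0]$, which uses hypothesis (h1) for the $h$-term and Hypothesis~1.1(2) combined with \eqref{eq:1.6}--\eqref{eq:1.7} to control the exponential moments of $|\zeta_{\delta}(\theta_{s}\omega)|^{p/(p-q)}$ and $|\zeta_{\delta}(\theta_{s}\omega)|^{p_{1}}$; this integrability is what allows all terms on the right-hand side of the stated bounds to be finite.
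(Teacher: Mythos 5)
Your plan coincides with the paper's own proof: the same energy estimate tested against $u_{\delta,\epsilon}$, the same Young splittings with exponents $(p/q,p/(p-q))$ and $(p,p_1)$ leading to the differential inequality $\frac{d}{dt}|u_{\delta,\epsilon}|^{2}+m\lambda_{1}|u_{\delta,\epsilon}|^{2}+\frac{m}{2}\|u_{\delta,\epsilon}\|^{2}+\alpha_{2}|u_{\delta,\epsilon}|_{p}^{p}\leq F(t,\omega)$, followed by the weighted Gronwall integration, the integrability check via (h1) and Hypothesis 1.1, and the temperedness argument to absorb the initial data. The proposal is correct and essentially identical in structure to the paper's argument.
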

\par
	\begin{proof} 
		Multiplying by $ u_{\delta\,\epsilon}(\cdot)$ on both sides of \eqref{eq:1.1} in $H$, we derive
		\begin{equation}
			\label{eq:3.1}
			\frac{d}{dt}|u_{\delta,\epsilon}|^2+2a(l(u_{\delta,\epsilon}))\|u_{\delta,\epsilon}\|^2=2(f(u_{\delta,\epsilon}),u_{\delta,\epsilon})+2\langle h(t),u_{\delta,\epsilon}\rangle+2\epsilon\zeta_{\delta}(\theta_{t}w)(g(t,u_{\delta,\epsilon}),u_{\delta,\epsilon}).
		\end{equation}
		By \eqref{eq:2.3} and the Young inequality, we have
		\begin{equation}
			\label{eq:3.2}
			2(f(u_{\delta,\epsilon}),u_{\delta,\epsilon})\leq2\int_{\mathcal{O}}(\kappa-2\alpha_2|u_{\delta,\epsilon}|^p)dx\leq2\kappa|\mathcal{O}|-2\alpha_2|u_{\delta,\epsilon}|^p,
		\end{equation}
		\begin{equation}
			\label{eq:3.3}
			2\langle h(t),u_{\delta,\epsilon}\rangle\leq\frac{2}{m}\|h(t)\|_{*}^2+\frac{m}{2}\|u_{\delta,\epsilon}\|^2.
		\end{equation}
		By \eqref{eq:2.5}-\eqref{eq:2.6} and the Young inequality that
		\begin{equation}
				\label{eq:3.4}
			|2\epsilon\zeta_{\delta}(\theta_{t}w)(g(t,u_{\delta,\epsilon}),u_{\delta,\epsilon})|\leq \epsilon c|\mathcal{O}||\zeta_{\delta}(\theta_{t}w)|^{\frac{p}{p-q}}+\epsilon c|\zeta_{\delta}(\theta_{t}w)|^{p_1}|\psi_1|_{p_1}^{p_1}+\alpha_2|u_{\delta,\epsilon}|_{p}^p,
		\end{equation}
		where $c$ is a constant depending on $ \alpha_2, p,  p_1, r_1 $ and $ d_1 $.
		\par 
		From \eqref{eq:3.2}-\eqref{eq:3.4}, \eqref{eq:2.1} and Poincar\'e inequalities, we get
		\begin{equation}
				\label{eq:3.5}
			\begin{aligned}
				\frac{d}{dt}|u_{\delta,\epsilon}|^2&+m\lambda_1|u_{\delta,\epsilon}|^2+\frac{m}{2}\|u_{\delta,\epsilon}\|^2+\alpha_2|u_{\delta,\epsilon}|_{p}^p\\
				&\leq\frac{2}{m}\|h(t)\|_{*}^2+\left(2\kappa+\epsilon c|\zeta_{\delta}(\theta_{t}w)|^{\frac{p}{p-q}}\right)|\mathcal{O}|+\epsilon c|\zeta_{\delta}(\theta_{t}w)|^{p_1}|\psi_1|_{p_1}^{p_1}.
			\end{aligned}
		\end{equation}\\
		
		\noindent
		By direct calculations involving $ u_{\delta,\epsilon}(\sigma;\tau-t,\theta_{-(\tau-t)}\omega,u_{\tau-t}) $ and replacing $ \omega $ by $ \theta_{-t}\omega $, we derive 
		\begin{equation}
				\label{eq:3.6}
			\begin{aligned}
				|u_{\delta,\epsilon}&(\sigma;\tau-t,\theta_{-\tau}\omega,u_{\tau-t})|^2+\frac{m}{2}\int_{\tau-t}^{\sigma}e^{m\lambda_1 (s-\sigma)}\|u_{\delta,\epsilon}(s;\tau-t,\theta_{-\tau}\omega,u_{\tau-t})\|^2ds\\
				&+\alpha_2\int_{\tau-t}^{\sigma}e^{m\lambda_1(s-\sigma)}|u_{\delta,\epsilon}(s;\tau-t,\theta_{-\tau}\omega,u_{\tau-t})|_{p}^pds\\
				&\leq e^{-m\lambda_1(\sigma-\tau+t)}|u_{\tau-t}|^2\\ 
				&+\int_{-t}^{\sigma-\tau}e^{m\lambda_1(s-\sigma+\tau)}\left(\frac{2}{m}\|h(s+\tau)\|_{*}^2+\left(2\kappa+\epsilon c|\zeta_{\delta}(\theta_{s+\tau}\omega)|^{\frac{p}{p-q}}\right)|\mathcal{O}|+\epsilon c|\zeta_{\delta}(\theta_{s+\tau}\omega)|^{p_1}|\psi_1|_{p_1}^{p_1}\right)ds.
			\end{aligned}
		\end{equation}
		It follows from (h1) that
		\begin{equation}
				\label{eq:3.7}
			\int_{-\infty}^{\sigma-\tau}e^{m\lambda_1(s-\sigma+\tau)}\left(\frac{2}{m}\|h(s+\tau)\|_{*}^2+\left(2\kappa+\epsilon c|\zeta_{\delta}(\theta_{s+\tau}\omega)|^{\frac{p}{p-q}}\right)|\mathcal{O}|\right)ds<\infty,
		\end{equation}
		and from Hypothesis 1.1 and $ \psi_1 \in L_{loc}^2(\Bbb R ; L^{p_1}(\mathcal O))$, we obtain
		\begin{equation}
				\label{eq:3.8}
			\int_{-\infty}^{\sigma-\tau}e^{m\lambda_1(s-\sigma+\tau)}|\zeta_{\delta}(\theta_{s+\tau}\omega)|^{p_1}|\psi_1|_{p_1}^{p_1}ds<\infty.
		\end{equation}
		Since $ u_{\tau-t}\in D(\tau-t,\theta_{-t}\omega)\in\mathcal{D}$, we can get
		\begin{equation}
				\label{eq:3.9}
			e^{-m\lambda_1t}|u_{\tau-t}|^2\leq e^{-m\lambda_1t}|D(\tau-t,\theta_{-t}\omega)|^2\rightarrow0,\quad as\ t\rightarrow\infty.
		\end{equation}
		By \eqref{eq:3.9},  there exists $ T = T(\tau, \omega , D) > 0$, such that for all $ t \ge T$, we have,
		\begin{equation}
				\label{eq:3.10}
			e^{-m\lambda_1(\sigma-\tau+t)}|u_{\tau-t}|^2\leq1.
		\end{equation}
		Finally, by \eqref{eq:3.7}-\eqref{eq:3.10}, we completes the proof.
	\end{proof}
	\par 
	Next, we present the existence of $\mathcal{D}$-pullback absorbing set for the continuous cocycle $\Phi_{\delta,\epsilon}$ in $H$.\\
	
	\par 
		\begin{cor}\label{cor:3.3} Assume the conditions of Theorem 3.1 and (h2) hold. Then the continuous cocycle $\Phi_{\delta,\epsilon}$ associated with problem \eqref{eq:1.1} possesses a closed measurable $\mathcal{D}$-pullback absorbing set $K_{\delta,\epsilon}= \{K_{\delta,\epsilon}(\tau, \omega): \tau\in R, \omega\in\Omega\} \in\mathcal{D}$ in $H$. Namely, letting $\epsilon_0\in (0,1]$, for any
	given $\delta\in (0, 1]$ and $\epsilon\in(0,\epsilon_0]$, every $\tau\in R$, $\omega\in\Omega$, we denote
	\begin{equation}
		K_{\delta,\epsilon}(\tau,\omega)=\{u_{\delta, \epsilon}\in H: |u_{\delta, \epsilon}|^2\leq R_{\delta,\epsilon}(0,\omega)\},
		\nonumber
	\end{equation}
	where
	\begin{equation}
		R_{\delta,\epsilon}(\tau,\omega)=1+\int_{-\infty}^{0}e^{m\lambda_1s}\left(\frac{2}{m}\|h(s+\tau)\|_{*}^2+\left(2\kappa+\epsilon c|\zeta_{\delta}(\theta_{s+\tau}\omega)|^{\frac{p}{p-q}}\right)|\mathcal{O}|+\epsilon c|\zeta_{\delta}(\theta_{s+\tau}\omega)|^{p_1}|\psi_1|_{L^{p_1}}^{p_1}\right)ds.
		\nonumber
	\end{equation}
\end{cor}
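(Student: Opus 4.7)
The plan is to take $K_{\delta,\epsilon}(\tau,\omega) := \{u \in H : |u|^2 \leq R_{\delta,\epsilon}(\tau,\omega)\}$ as proposed and verify in turn the four required properties: (i) finiteness of $R_{\delta,\epsilon}$, (ii) closedness and measurability of the set-valued map, (iii) the $\mathcal{D}$-pullback absorbing property, and (iv) membership in $\mathcal{D}$. Closedness is immediate since each fiber is a closed ball in $H$, and finiteness of $R_{\delta,\epsilon}(\tau,\omega)$ follows from (h1) (for the $\|h(s+\tau)\|_*^2$ term) combined with Hypothesis 1.1(2), which yields $|\zeta_\delta(\theta_{s+\tau}\omega)| \leq K_\delta C_\omega(|s+\tau|+1)$; the resulting polynomial factors are absorbed by the exponential weight $e^{m\lambda_1 s}$ on $(-\infty,0]$. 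Measurability of $\omega \mapsto R_{\delta,\epsilon}(\tau,\omega)$ is inherited from the joint measurability of $(s,\omega) \mapsto \zeta_\delta(\theta_{s+\tau}\omega)$ via Fubini, and this in turn gives measurability of the ball-valued map in the sense of Definition 2.4.1 (the function $\omega \mapsto d(x, K_{\delta,\epsilon}(\tau,\omega))$ depends continuously on the radius).

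Next, I would establish the absorbing property by specializing Lemma 3.2 at $\sigma = \tau$. Using the cocycle definition and the group law $\theta_{-(\tau-t)}\theta_{-t} = \theta_{-\tau}$ we have
\begin{equation*}
\Phi_{\delta,\epsilon}(t,\tau-t,\theta_{-t}\omega,u_{\tau-t}) = u_{\delta,\epsilon}(\tau;\tau-t,\theta_{-\tau}\omega,u_{\tau-t}),
\end{equation*}
so plugging $\sigma = \tau$ into the first estimate of Lemma 3.2 immediately gives $|\Phi_{\delta,\epsilon}(t,\tau-t,\theta_{-t}\omega,u_{\tau-t})|^2 \leq R_{\delta,\epsilon}(\tau,\omega)$ for all $t \geq T(\tau,\omega,\delta,\epsilon,D)$, which is exactly the absorbing condition for arbitrary $D \in \mathcal{D}$.

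The main obstacle I expect is (iv), namely verifying $\lim_{t\to-\infty} e^{\gamma t} R_{\delta,\epsilon}(\tau+t, \theta_t \omega) = 0$ for every $\gamma > 0$. For the deterministic forcing this is exactly (h2) after the trivial shift $h(s+(\tau+t))$. For the noise contributions, Hypothesis 1.1(2) applied to $\theta_t\omega$ in place of $\omega$, combined with (1.7), gives
\begin{equation*}
|\zeta_\delta(\theta_{s+\tau+t}\omega)| = |\zeta_\delta(\theta_{s+\tau}(\theta_t\omega))| \leq K_\delta\, C_{\theta_t\omega}(|s+\tau|+1) \leq 2 K_\delta C_\omega(|t|+1)(|s+\tau|+1).
\end{equation*}
Raising this to the powers $\alpha \in \{p/(p-q), p_1\}$ bounds the noise integrand by an $s$-integrable quantity $e^{m\lambda_1 s}(|s|+1)^\alpha$ multiplied by a polynomial-in-$t$ factor of the form $(|t|+1)^\alpha$, and this polynomial factor is defeated by $e^{\gamma t}$ as $t \to -\infty$. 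Combining the $h$-piece and the noise pieces yields temperedness of $K_{\delta,\epsilon}$ pointwise in $\omega$, completing the proof.
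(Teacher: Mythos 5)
Your proposal is correct and follows essentially the same route as the paper: the absorbing property is read off from Lemma \ref{lem:3.2} at $\sigma=\tau$, and membership in $\mathcal{D}$ is checked by showing $e^{\gamma t}R_{\delta,\epsilon}(\tau+t,\theta_t\omega)\to 0$ using (h2) for the forcing term and the linear growth bound \eqref{eq:1.8} together with \eqref{eq:1.7} for the noise terms. You in fact supply more detail than the paper does on closedness, measurability, and the temperedness of the noise contributions, which the paper's proof leaves implicit.
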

	\begin{proof}
		For every $\tau\in R$, $ \omega\in\Omega$ and $D\in\mathcal{D}$, it follows from Lemma
		3.2 that there exists $ T = T(\tau, \omega , D) > 0 $ , such that for all $ t \geq T $,
		\begin{equation}
			\label{eq:3.11}
			\Phi_{\delta,\epsilon}(t, \tau- t, \theta_{-t}\omega , D(\tau- t, \theta_{-t}\omega )) = u_{\delta,\epsilon}(\tau;\tau-t, \theta_{-\tau}\omega, D(\tau-t, \theta_{-t}\omega))\subset K_{\delta,\epsilon}(\tau, \omega ).
		\end{equation}
		Next, to finish this proof, we need show $K_{\delta,\epsilon}$ belongs to $\mathcal{D}$. Letting $\gamma$ be an arbitrary positive number, for every $\tau\in R$, $\omega\in\Omega$, we have,
		\begin{equation}
			\begin{aligned}
				\label{eq:3.12}
				&\lim_{t\rightarrow-\infty}e^{\gamma t}|K_{\delta,\epsilon}(\tau+t,\theta_{t}\omega)|=\lim_{t\rightarrow-\infty}e^{\gamma t}R_{\delta,\epsilon}(t,\theta_{t}\omega)\\
				&=\lim_{t\rightarrow-\infty}e^{\gamma t}\left(1+\int_{-\infty}^{0}e^{m\lambda_1s}
				\left(\frac{2}{m}\|h(s+\tau+t)\|_{*}^2+\left(2\kappa+\epsilon c|\zeta_{\delta}(\theta_{s+\tau+t})|^{\frac{p}{p-q}}\right)|\mathcal{O}|\right)ds\right)\\
				&+\lim_{t\rightarrow-\infty}e^{\gamma t}\epsilon c\left(\int_{-\infty}^{0}e^{m\lambda_1s}|\zeta_{\delta}(\theta_{s+\tau+t}w)|^{p_1}|\psi_1|_{L^{p_1}}^{p_1}ds\right),
			\end{aligned}
		\end{equation}
		since (h2), for any $ \gamma >0$, we get
		\begin{equation}
			\label{eq:3.13}
			\lim_{t\rightarrow-\infty}e^{\gamma t}|K_{\delta,\epsilon}(\tau+t,\theta_{t}\omega)|=0,
		\end{equation}
		Along with \eqref{eq:3.11} and \eqref{eq:3.13}, we complete the proof.
	\end{proof}
	\par 
	Next, we discuss the asymptotic compactness of the solutions to problem \eqref{eq:1.1}. Namely, the sequence of solutions to problem \eqref{eq:1.1} is compact in $H$.\\
	
	\begin{lem}\label{lem:3.4} Under assumptions of Lemma \ref{lem:3.2}, the continuous cocycle $\Phi_{\delta,\epsilon}$ associated with problem \eqref{eq:1.1} is $\mathcal{D}$-pullback asymptotic compactness in $H$. That is, for every $ \tau\in R$, $\omega\in\Omega$, $D = \{D(\tau, \omega) : \tau\in R, \omega\in\Omega\}\in\mathcal{D}$, as $t_n\rightarrow\infty$, the initial data $u_{\tau,n} :=u_{\delta,\epsilon,n}(\tau)\in D(\tau- t_n, \theta{-t_n}\omega)$, and the sequence $\{\Phi_{\delta,\epsilon}(t_n, \tau-t_n, \theta_{-t_n}\omega, u_{\tau, n})=u_{\delta,\epsilon}(\tau; \tau-t_n, \theta_{-\tau}\omega, u_{\tau,n})\}$ (solutions to problem \eqref{eq:1.1}) has a convergence subsequence in $H$.
\end{lem}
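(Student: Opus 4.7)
The plan is to follow the classical Ball energy-equation strategy, adapted to the nonlocal coefficient $a(l(u))$ and to the random forcing $\zeta_\delta(\theta_t\omega)$ through the pathwise framework in which $\omega$ is fixed. I will split the argument into three phases: uniform estimates and extraction of a weak limit, identification of the limit as a weak solution of the same problem, and an energy-equality argument that upgrades weak convergence to strong convergence in $H$ at time $\tau$.

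First, fix arbitrary $\sigma<\tau$ (say $\sigma=\tau-1$). For all $n$ sufficiently large we have $\tau-t_n<\sigma$, so the solutions $u_n(\cdot):=u_{\delta,\epsilon}(\cdot;\tau-t_n,\theta_{-\tau}\omega,u_{\tau,n})$ are defined on $[\sigma,\tau]$. Applying Lemma \ref{lem:3.2} on this interval yields a uniform bound for $\{u_n\}$ in $L^\infty(\sigma,\tau;H)\cap L^2(\sigma,\tau;V)\cap L^p(\sigma,\tau;L^p(\mathcal{O}))$. Using (1.1) together with \eqref{eq:2.1}, \eqref{eq:2.4}, \eqref{eq:2.5}, and $h\in L^2_{loc}(\mathbb R;V^*)$, one bounds $\partial_t u_n$ in $L^2(\sigma,\tau;V^*)+L^{p_1}(\sigma,\tau;L^{p_1}(\mathcal{O}))$, and an Aubin--Lions--Simon compactness argument extracts a subsequence (not relabeled) and a function $\tilde u$ with $u_n\rightharpoonup\tilde u$ weakly in $L^2(\sigma,\tau;V)\cap L^p(\sigma,\tau;L^p(\mathcal{O}))$, weakly-$*$ in $L^\infty(\sigma,\tau;H)$, strongly in $L^2(\sigma,\tau;H)$ and a.e.\ in $\mathcal{O}\times(\sigma,\tau)$, and in particular $u_n(\tau)\rightharpoonup\tilde u(\tau)$ in $H$.

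Second, pass to the limit in the weak formulation of \eqref{eq:1.1} on $(\sigma,\tau)$. For the nonlocal coefficient, strong convergence in $L^2(\sigma,\tau;H)$ together with the continuity of $l$ in $L^2(\mathcal{O})$ yields $l(u_n)\to l(\tilde u)$ strongly in $L^2(\sigma,\tau)$; by continuity of $a$ and \eqref{eq:2.1} one has $a(l(u_n))\to a(l(\tilde u))$ strongly and uniformly bounded, so $a(l(u_n))\nabla u_n\rightharpoonup a(l(\tilde u))\nabla\tilde u$ in $L^2(\sigma,\tau;H)$. The Lipschitz condition \eqref{eq:2.2} handles $f(u_n)\to f(\tilde u)$, and \eqref{eq:2.5}--\eqref{eq:2.6} plus the $\omega$-wise continuity of $\zeta_\delta(\theta_\cdot\omega)$ handle the noise term. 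Thus $\tilde u$ is a weak solution of \eqref{eq:1.1} on $[\sigma,\tau]$ with pathwise parameter $\theta_{-\tau}\omega$.

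Third, use Ball's energy argument. Testing \eqref{eq:1.1} against $u$ gives the pointwise identity \eqref{eq:3.1}, which, combined with Poincaré's inequality and \eqref{eq:2.1}, can be turned into the absolutely continuous equality
\begin{equation}
e^{m\lambda_1 t}|u(t)|^2 = e^{m\lambda_1 \sigma}|u(\sigma)|^2+\int_{\sigma}^{t} e^{m\lambda_1 s}\bigl[m\lambda_1|u|^2-2a(l(u))\|u\|^2+2(f(u),u)+2\langle h,u\rangle+2\epsilon\zeta_\delta(\theta_s\omega)(g(s,u),u)\bigr]\,ds,
\nonumber
\end{equation}
valid for both $u_n$ and $\tilde u$. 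Evaluate at $t=\tau$ and take $\limsup_n$. The initial term $e^{m\lambda_1\sigma}|u_n(\sigma)|^2$ is handled by choosing $\sigma$ so that $u_n(\sigma)\rightharpoonup\tilde u(\sigma)$ in $H$ (a Cantor diagonal in $\sigma\to-\infty$ may be needed) and invoking weak lower semicontinuity in the right direction after using the Lemma \ref{lem:3.2} bound; the $h$ and $g$ terms converge by the Lipschitz/polynomial estimates combined with strong $L^2(\sigma,\tau;H)$ and a.e.\ convergence; the crucial diffusion term $-2\int e^{m\lambda_1 s}a(l(u_n))\|u_n\|^2\,ds$ satisfies $\limsup\le -2\int e^{m\lambda_1 s}a(l(\tilde u))\|\tilde u\|^2\,ds$, since $a(l(u_n))\to a(l(\tilde u))$ strongly while $\|\cdot\|^2$ is weakly lower semicontinuous. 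Combining these estimates gives $\limsup_n|u_n(\tau)|^2\le|\tilde u(\tau)|^2$, and weak lower semicontinuity gives the reverse inequality, so $|u_n(\tau)|\to|\tilde u(\tau)|$; weak convergence plus norm convergence in the Hilbert space $H$ yields the desired strong convergence.

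The main obstacle is the third step, specifically the coupling between the nonlocal coefficient $a(l(u))$ and the diffusion term $\|u\|^2$ inside the energy equality: one cannot treat $a(l(u_n))$ as a fixed weight, so the weak lower semicontinuity must be applied to the product, which requires that the strong convergence of $a(l(u_n))$ be sharp enough (a.e.\ plus uniform bound via \eqref{eq:2.1}) to combine with the weak convergence of $\|u_n\|^2$ in $L^1(\sigma,\tau)$. A secondary subtlety is arranging that $u_n(\sigma)$ converges weakly to $\tilde u(\sigma)$ at the chosen base time $\sigma$; this is done by first extracting a subsequence on a dense countable set of $\sigma$'s and then using the energy inequality to get a single diagonal sequence that works simultaneously for the limit as $\sigma\to-\infty$, if needed.
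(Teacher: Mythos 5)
Your proposal is correct in outline, but it follows a genuinely different route from the paper. The paper's proof stops much earlier: after the same uniform estimates from Lemma \ref{lem:3.2}, the bounds \eqref{eq:3.15}--\eqref{eq:3.18} on the nonlinearities and on $\partial_t u_n$, and the Aubin--Lions extraction of a subsequence converging strongly in $L^2(\tau-T,\tau;H)$ and hence strongly in $H$ at almost every intermediate time $\tau-s$, it simply invokes the cocycle property and the continuous dependence on initial data in $H$ (already established in Theorem \ref{thm:3.1}) to write $u_{\delta,\epsilon}(\tau;\tau-t_n,\cdot)=u_{\delta,\epsilon}(\tau;\tau-s,\cdot,u_{\delta,\epsilon}(\tau-s;\tau-t_n,\cdot))$ and propagate the strong convergence from time $\tau-s$ to time $\tau$. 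You instead carry out the full Ball energy-equation argument: identify the weak limit as a solution, write the weighted energy equality, and upgrade weak convergence of $u_n(\tau)$ to strong convergence via $\limsup_n|u_n(\tau)|^2\le|\tilde u(\tau)|^2$. Both work; the paper's shortcut is lighter because it exploits continuity of the solution map, while your route is more robust (it would survive if only weak continuity of the cocycle were available) at the price of the two delicate points you correctly flag, namely the one-sided passage to the limit in $\int e^{m\lambda_1 s}a(l(u_n))\|u_n\|^2\,ds$ and the treatment of the initial term. On the latter, note that your own step one already gives you strong $H$-convergence of $u_n(\sigma')$ at almost every $\sigma'\in(\sigma,\tau)$, so you can take the base time to be such a $\sigma'$ and avoid the Cantor diagonal and the limit $\sigma\to-\infty$ entirely; and on the former, the argument closes once you observe that $l(u_n(\cdot))\to l(\tilde u(\cdot))$ uniformly on $[\sigma',\tau]$ (from the $L^\infty(H)$ bound, the equicontinuity in $V^*$, and $l\in H$), so that $a(l(u_n))$ converges uniformly and the product can be split into a term handled by weak lower semicontinuity of the $a(l(\tilde u))$-weighted $V$-norm and a remainder controlled by the $L^1$ bound on $\|u_n\|^2$.
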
\par
	\begin{proof} 
		Letting $\{u_{\tau,n}\}_{n=1}^{\infty}$ be a sequence in $D(\tau, \omega)$, by Lemma \ref{lem:3.2} that there exists $T: =T(\tau, \omega, D)>0$, such that for all $t_n>T$, we find that
		\begin{equation}
			\label{eq:3.14}
			\{u_{\delta,\epsilon}(\cdot; \tau-t_n, \theta_{-\tau}\omega, u_{\tau,n})\}\ is\ bounded\ in\ L^{\infty}(\tau- T,\tau; H)\cap L^2(\tau-T, \tau ; V )\cap L^p(\tau- T,\tau;L^p(\mathcal{O} )).
		\end{equation}
		By \eqref{eq:2.1}, \eqref{eq:2.2}, \eqref{eq:2.5} and \eqref{eq:2.6}, we get from \eqref{eq:3.14} that 
		\begin{equation}
				\label{eq:3.15}
			\{ f(u_{\delta,\epsilon}(\cdot;\tau -t_n, \theta_{-\tau}\omega, u_{\tau ,n}))\}\ is\ bounded\ in\ L^q(\tau- T,\tau ;L^q(\mathcal{O})),
		\end{equation}
		\begin{equation}
				\label{eq:3.16}
			\{ g(\cdot , u_{\delta,\epsilon}(\cdot; \tau-t_n, \theta_{-\tau}\omega, u_{\tau ,n}))\}\ is\ bounded\ in\ L^{p_1}(\tau- T,\tau; L^{p_1}(\mathcal{O})).
		\end{equation}
		\begin{equation}
				\label{eq:3.17}
			\{a(l(u_{\delta,\epsilon}(\cdot;\tau-t_n, \theta_{-\tau}\omega, u_{\tau,n}))\Delta u_{\delta,\epsilon}(\cdot; \tau-t_n, \theta_{-\tau}\omega, u_{\tau,n}) \}\ is\ bounded\ in\ L^2(\tau- T, \tau ; V^{*}).
		\end{equation}
		Combing with \eqref{eq:3.14}-\eqref{eq:3.17}, we have
		\begin{equation}
				\label{eq:3.18}
			\left\{\frac{d}{dt}u_{\delta,\epsilon}(\cdot; \tau-t_n, \theta_{-\tau}\omega, u_{\tau,n})\right\}\in L^2(\tau- T, \tau ; V^{*})+L^q(\tau- T, \tau ;L^q(\mathcal{O} ))+L^{p_1}(\tau-T,\tau; L^{p_1}(\mathcal{O})).
		\end{equation}
		Since the embedding $ V \hookrightarrow H $ is compact, by \eqref{eq:3.14}-\eqref{eq:3.18} and the Aubin--Lions compactness lemma, we can deduce that there exists $ u_{\delta,\epsilon}\in L^2(\tau- T,\tau ; H)$  such that, up to a subsequence,
		\begin{equation}
				\label{eq:3.19}
			u_{\delta,\epsilon}(\cdot; \tau-t_n, \theta_{-\tau}\omega, u_{\tau,n})\rightarrow u_{\delta,\epsilon}\quad strongly\quad in\quad  L^2(\tau-T,\tau;H).
		\end{equation}
		By choosing a further subsequence ( we still denoted the same), by (3.19), for almost all $s\in(0, T)$, we have
		\begin{equation}
				\label{eq:3.20}
			u_{\delta,\epsilon}(\tau- s; \tau-t_n, \theta_{-\tau} \omega , u_{\tau,n}) \rightarrow u_{\delta,\epsilon}(\tau- s)\quad  strongly\quad in\ H .
		\end{equation}
		Since $ 0 < s < T $, there exists a constant $ 0 < T' < T$, such that for $s\in(\tau- T, \tau- T')$, the convergence \eqref{eq:3.19} is true, the solution with initial data in $H$, by \eqref{eq:3.20}, we obtain that
		\begin{equation}
			\begin{aligned}
				u_{\delta,\epsilon}(\tau; \tau- t_n, \theta_{-\tau} \omega , u_{\tau,n}) = &u_{\delta,\epsilon}(\tau; \tau- s, \theta_{-\tau} \omega , u_{\delta,\epsilon}(\tau- s; \tau- t_n, \theta _{-\tau} \omega , u_{\tau,n}))\\
				&\rightarrow u_{\delta,\epsilon}(\tau, \tau- s, \theta_{-\tau} \omega , u_{\delta,\epsilon}(\tau- s)),
				\nonumber
		\end{aligned}
		\end{equation}
		The proof is finished.
	\end{proof}\par 
	According to Lemma \ref{lem:3.4}, we can deduce that the continuous cocycle $ \Phi_{\delta, \epsilon}$ associated with problem \eqref{eq:1.1} is $\mathcal{D}$ -pullback asymptotic compactness in $H$.\\
	
	\begin{thm}\label{thm:3.5} Assume function $a$ is globally Lipschitz and satisfies \eqref{eq:2.1}, $ f\in C(\Bbb R ) $ fulfills \eqref{eq:2.2}-\eqref{eq:2.4}, $ h \in L_{loc}^2(\Bbb R^+;V^*)$ satisfies (h1)--(h2), and $ l \in L^2(\mathcal{O})$. In addition, function $g$ satisfies \eqref{eq:2.5}-\eqref{eq:2.6}. Then, letting $\epsilon_0\in (0,1]$, for any given $\delta \in (0, 1]$ and $\epsilon\in (0,\epsilon_0]$, the continuous cocycle $ \Phi_{\delta,\epsilon}$ associated to problem \eqref{eq:1.1} has a unique $ \mathcal{D}$-pullback attractor $\mathcal{A}_{\delta,\epsilon} = \{\mathcal{A}_{\delta,\epsilon}(\tau, \omega) : \tau\in R,\omega \in \Omega\}\in \mathcal{D}$ in $H$.
\end{thm}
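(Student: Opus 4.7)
The plan is to reduce this to an application of the standard abstract existence theorem for $\mathcal{D}$-pullback random attractors for continuous cocycles on Polish spaces (as in the framework of \cite{B. Wang}): if a continuous cocycle $\Phi_{\delta,\epsilon}$ on $H$ over $(\Omega,\mathcal{F},\mathbb{P},\{\theta_t\}_{t\in\mathbb{R}})$ admits a closed measurable $\mathcal{D}$-pullback absorbing set in $\mathcal{D}$ and is $\mathcal{D}$-pullback asymptotically compact in $H$, then it possesses a unique $\mathcal{D}$-pullback attractor $\mathcal{A}_{\delta,\epsilon}\in\mathcal{D}$ given by the omega-limit formula
\begin{equation}
\mathcal{A}_{\delta,\epsilon}(\tau,\omega)=\bigcap_{s\geq 0}\overline{\bigcup_{t\geq s}\Phi_{\delta,\epsilon}\bigl(t,\tau-t,\theta_{-t}\omega,K_{\delta,\epsilon}(\tau-t,\theta_{-t}\omega)\bigr)}.
\nonumber
\end{equation}
Thus the proof amounts to checking the three hypotheses of this abstract theorem for $\Phi_{\delta,\epsilon}$.

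First I would record that $\Phi_{\delta,\epsilon}$ is a continuous cocycle on $H$. Existence, uniqueness and continuous dependence on initial data in $H$ are furnished by Theorem \ref{thm:3.1}; combined with the definition $\Phi_{\delta,\epsilon}(t,\tau,\omega,u_\tau)=u_{\delta,\epsilon}(t+\tau;\tau,\theta_{-\tau}\omega,u_\tau)$ and the uniqueness of weak solutions, the cocycle identity is immediate from restarting the equation at intermediate times. Next, I would invoke Corollary \ref{cor:3.3} to obtain the closed $\mathcal{D}$-pullback absorbing set $K_{\delta,\epsilon}$; the fact that $K_{\delta,\epsilon}\in\mathcal{D}$ was already verified there via (h2). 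Finally, $\mathcal{D}$-pullback asymptotic compactness of $\Phi_{\delta,\epsilon}$ in $H$ is exactly the content of Lemma \ref{lem:3.4}. Applying the abstract theorem then produces the desired attractor $\mathcal{A}_{\delta,\epsilon}$, and its uniqueness is a standard consequence of the attraction and invariance properties listed in Definition 2.7.

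The one non-routine point, which I expect to be the main technical obstacle, is the measurability of the random set $K_{\delta,\epsilon}(\tau,\omega)$ in the sense of Definition 2.4.1, since Definition 2.4.1 requires $\omega\mapsto d(x,K_{\delta,\epsilon}(\tau,\omega))$ to be $(\mathcal{F},\mathcal{B}(\mathbb{R}))$-measurable. Because $K_{\delta,\epsilon}(\tau,\omega)$ is the closed ball in $H$ of radius $\sqrt{R_{\delta,\epsilon}(\tau,\omega)}$, this reduces to measurability of the scalar function $\omega\mapsto R_{\delta,\epsilon}(\tau,\omega)$. For the latter I would argue: by Hypothesis 1.1 the map $(s,\omega)\mapsto\zeta_{\delta}(\theta_s\omega)$ is jointly measurable and pathwise continuous in $s$, and the exponentially weighted integrands of $|\zeta_{\delta}(\theta_{s+\tau}\omega)|^{p/(p-q)}$ and $|\zeta_{\delta}(\theta_{s+\tau}\omega)|^{p_1}|\psi_1|_{L^{p_1}}^{p_1}$ are dominated via the bound \eqref{eq:1.8} and \eqref{eq:1.7} by integrable majorants on $(-\infty,0)$; a Fubini/Tonelli argument then yields measurability of the integral expression defining $R_{\delta,\epsilon}$.

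Having checked continuity of the cocycle, existence of the closed measurable $\mathcal{D}$-pullback absorbing set, and $\mathcal{D}$-pullback asymptotic compactness, the abstract theorem delivers the unique $\mathcal{D}$-pullback attractor $\mathcal{A}_{\delta,\epsilon}=\{\mathcal{A}_{\delta,\epsilon}(\tau,\omega):\tau\in\mathbb{R},\omega\in\Omega\}\in\mathcal{D}$ in $H$, completing the proof. No new estimates beyond those already established in Lemma \ref{lem:3.2}, Corollary \ref{cor:3.3}, and Lemma \ref{lem:3.4} are required.
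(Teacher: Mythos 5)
Your proposal is correct and follows essentially the same route as the paper: the paper's proof likewise combines the closed measurable $\mathcal{D}$-pullback absorbing set from Corollary \ref{cor:3.3} with the $\mathcal{D}$-pullback asymptotic compactness from Lemma \ref{lem:3.4} and then invokes the abstract existence theorem from \cite{B. Wang}. Your additional discussion of the measurability of $\omega\mapsto R_{\delta,\epsilon}(\tau,\omega)$ is a point the paper leaves implicit, but it does not change the argument.
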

	\begin{proof}
		The results follows from definition of weak solution in Section 2. By Corollary \ref{cor:3.3}, we given that $\Phi_{\delta,\epsilon}$ possesses a closed and measurable $\mathcal{D}$-pullback absorbing set $K_{\delta,\epsilon}(\tau,\omega)$ within $\mathcal{D}$, and by Lemma 3.4, $\Phi_{\delta,\epsilon}$ is $\mathcal{D}$-pullback asymptotically compactness in $H$. It follows that the existence and uniqueness of the $\mathcal{D}$-pullback attractor $\mathcal{A}_{\delta,\epsilon}(\Omega)$ for $\Phi_{\delta,\epsilon}$ can be deduced, for more details, see \cite[Proposition 2.10]{B. Wang}.
	\end{proof}
	\section{Convergence of random attractors for stochastic nonlocal PDEs with additive noise}
	\indent 
	\par 
	In the next two sections, we will consider two particular cases of problem (1.1), when the stochastic forcing term $g(t, u)$ in problem \eqref{eq:1.1} is linear (such as $g(t, u) = u$, multiplicative noise) or independent on u (such as $g(t, u) = \phi$ , additive noise). We can prove the existence of random attractors to problem \eqref{eq:1.1} via performing a conjugation which transforms the stochastic equation into a random one. Therefore, a reasonable question is, if we study the long time behavior of problem \eqref{eq:1.1} with additive noise or multiplicative noise, what is the relationship between problem \eqref{eq:1.1} and problem \eqref{eq:1.14} with additive or multiplicative noise when the perturbation parameter $\delta$ and $\epsilon$ goes to zero? We will answer this question in the next two sections.
	\par 
	To simplify the presentation, in the following sections we assume $h(t) = 0$, which means we will study the dynamics of the stochastic autonomous PDEs. Actually, the ideas to work on the stochastic non-autonomous PDEs are the same (see Section 3 ). In this section, we study the case that $g(t,u)$ in \eqref{eq:1.1} is a constant $\phi\in V\cap H^2(\mathcal{O})$, i.e. the case of additive noise. 
	\par 
	\textbf{4.1. Convergence of solutions}. As a bridge, we consider the convergence of solutions of stochastic differential equation
	\begin{equation}
		\label{eq:4.1}\indent
		\frac{\partial u_{0,\epsilon}}{\partial t}-a(l(u_{0,\epsilon}))\Delta u_{0,\epsilon}=f(u_{0,\epsilon})+\epsilon\phi \frac{dW}{dt}\quad u_{0,\epsilon}(\tau)=u_0\in H,
	\end{equation}\\
	and differential equation
	\begin{equation}
		\label{eq:4.2}\indent
		\frac{\partial u}{\partial t}-a(l(u))\Delta u=f(u),\quad u(\tau)=u_0\in H,
	\end{equation}
	as $\epsilon\rightarrow0^+$. We also study the convergence of solutions of the random differential equation
	\begin{equation}
		\label{eq:4.3}
		\frac{\partial u_{\delta,\epsilon}}{\partial t}-a(l(u_{\delta,\epsilon}))\Delta u_{\delta,\epsilon}=f(u_{\delta,\epsilon})+\epsilon\phi \zeta_{\delta}(\theta_{t}w),\quad u_{\delta,\epsilon}(\tau)=u_0\in H
	\end{equation}
	as $ \delta\rightarrow0^+ $ and $\epsilon\rightarrow0^+ $, where function $ a $ and $ f $ satisfty conditions \eqref{eq:2.2}-\eqref{eq:2.4} with $ p=2 $ and $\beta=C_{f}$, respectively.\par 
	For any $T>\tau$ with $T>0$, $\tau\in R$, $\omega\in\Omega$, we can show that the solution of equation \eqref{eq:4.1} is uniformly convergent to the solution of equation \eqref{eq:4.2} on $ [\tau, T ] $ as $ \epsilon\rightarrow0^+ $, and using this convergence, the solution of equation \eqref{eq:4.2} is uniformly convergent to the solution of equation \eqref{eq:4.2} on $ [\tau, T ] $ as $ \delta\rightarrow0^+ $ and $\epsilon\rightarrow0^+ $.
	\par 
	Let $u_{0,\epsilon}(t,\omega,u_0)$, $u(t,u_0)$ and $u_{\delta,\epsilon}(t,\omega,u_0)$ be solutions of equations \eqref{eq:4.1}, \eqref{eq:4.2} and \eqref{eq:4.3}, respectively. Based on the previous assumptions and \eqref{eq:1.10}, one can get their existence for all $t\ge\tau$.
	\par 
	Let $v_{0,\epsilon}=u_{0,\epsilon}-\epsilon\phi W_t$. From \eqref{eq:4.1} we get that $ v_{0,\epsilon}(t,\omega, v_{0,\epsilon}(\tau))$ satisfies equation
	\begin{equation}
		\begin{aligned}
			\label{eq:4.4}
			\frac{\partial{v_{0,\epsilon}}}{\partial{t}}=&a(l(v_{0,\epsilon})+\epsilon W_tl(\phi))\Delta v_{0,\epsilon}+f(v_{0,\epsilon}+\epsilon\phi W_t)+\epsilon\phi W_t\\
			&+a(l(v_{0,\epsilon})+\epsilon W_tl(\phi))\epsilon W_t\Delta\phi,
		\end{aligned}
	\end{equation}
	and
	\begin{equation}
		\label{eq:4.5}
		v_{0,\epsilon}(\tau)=u_0-\epsilon\phi W_\tau\in H.
	\end{equation}
	\par 
	Set $v_{\delta,\epsilon}=u_{\delta,\epsilon}-\epsilon\phi\int_{0}^{t}\zeta_{\delta}(\theta_{s}\omega)ds$. From \eqref{eq:4.3} we find that $ v_{\delta,\epsilon}(t,\omega, v_{\delta,\epsilon}(\tau))$ satisfies equation
	\begin{equation}
		\begin{aligned}
			\label{eq:4.6}
			\frac{\partial{v_{\delta,\epsilon}}}{\partial{t}}=&a(l(v_{\delta,\epsilon})+\epsilon \int_{0}^{t}\zeta_{\delta}(\theta_{s}\omega)dsl(\phi))\Delta v_{\delta,\epsilon}+f(v_{\delta,\epsilon}+\epsilon\phi\int_{0}^{t}\zeta_{\delta}(\theta_{s}\omega)ds)+\epsilon\phi\int_{0}^{t}\zeta_{\delta}(\theta_{s}\omega)ds\\
			&+a(l(v_{\delta,\epsilon})+\epsilon \int_{0}^{t}\zeta_{\delta}(\theta_{s}\omega)dsl(\phi))\epsilon\int_{0}^{t}\zeta_{\delta}(\theta_{s}\omega)ds\Delta\phi
		\end{aligned}
	\end{equation}
	and
	\begin{equation}
		\label{eq:4.7}
		v_{\delta,\epsilon}(\tau)=u_0-\epsilon\phi\int_{0}^{\tau}\zeta_{\delta}(\theta_{s}\omega)ds\in H.
	\end{equation}
	According to Hypothesis 1.1, \eqref{eq:4.5} and \eqref{eq:4.7}, for $T>\tau$ with $T>0$, $\omega\in\Omega$, we have
	\begin{equation}
		\begin{aligned}
			\sup_{\delta\rightarrow0^+}|v_{\delta,\epsilon}(\tau)-v_{0,\epsilon}(\tau)|=0.
			\nonumber
		\end{aligned}
	\end{equation}
	\par
	Before proving the convergence relationship of the solutions to equations \eqref{eq:4.1} and \eqref{eq:4.2}, we first give the following estimates.\\
	
	\par 
	\begin{lem}\label{lem:4.1} Let $v_{0,\epsilon}(t,\omega,v_{0,\epsilon}(\tau))$ and $u(t,u_0)$ be solutions of equation \eqref{eq:4.4} and \eqref{eq:4.2}, respectively. For each $\tau\in R$, $\omega\in\Omega$, $u_0\in H$ and $v_{0,\epsilon}(\tau)\in H$, there exists a positive constant $\alpha =\alpha(T,u_0,v_{0,\epsilon}(\tau))>0$ such that
	\begin{equation}
		\label{eq:4.8}
		\sup_{\tau\leq t\leq T}\|v_{0,\epsilon}(t,\omega,v_{0,\epsilon}(\tau))\|^2\leq\alpha,\quad\forall\epsilon>0,
	\end{equation}
	\begin{equation}
		\label{eq:4.9}
		\sup_{\tau\leq t\leq T}\|u(t,u_0)\|^2\leq\alpha.
	\end{equation}
\end{lem}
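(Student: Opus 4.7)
The plan is to prove both bounds by standard parabolic energy estimates, testing the equations with $-\Delta$ applied to the solution to pick up the $V$-norm, and then to take $\alpha$ to be the maximum of the two resulting constants. For both (4.2) and (4.4) I will first establish an $H$-bound on $[\tau,T]$ by the usual test with the solution itself (this is the same calculation as in Lemma \ref{lem:3.2} specialized to $h\equiv 0$, $\epsilon=0$ for (4.2) and to the transformed additive-noise equation for (4.4)); this $H$-bound is uniform on $[\tau,T]$ and, in the stochastic case, uniform in $\epsilon\in(0,1]$ because for fixed $\omega$ the quantity $M_{T,\omega}:=\sup_{\tau\leq t\leq T}|W_t(\omega)|$ is finite. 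This $H$-bound will then feed the $V$-bound through the growth condition (2.4) with $p=2$, which gives $|f(s)|^2\leq C(|s|^2+1)$.

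For (4.9), I test (4.2) with $-\Delta u$ in $H$ to obtain
\begin{equation*}
\tfrac{1}{2}\tfrac{d}{dt}\|u\|^2 + a(l(u))|\Delta u|^2 = -(f(u),\Delta u).
\end{equation*}
Using $a(l(u))\geq m$ from (2.1) and Young's inequality on the right-hand side,
\begin{equation*}
\tfrac{d}{dt}\|u\|^2 + m|\Delta u|^2 \leq \tfrac{1}{m}|f(u)|^2 \leq \tfrac{C}{m}(|u|^2+1).
\end{equation*}
Combined with the already established bound on $|u(t)|^2$, integrating on $[\tau,t]$ for $t\leq T$ and invoking Gronwall's inequality delivers (4.9), with a constant depending only on $T$ and on $\|u_0\|$.

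For (4.8) I test (4.4) with $-\Delta v_{0,\epsilon}$ in $H$. The diffusion term again produces $a(\cdot)|\Delta v_{0,\epsilon}|^2\geq m|\Delta v_{0,\epsilon}|^2$. The three additional contributions are: the reaction term $(f(v_{0,\epsilon}+\epsilon\phi W_t),-\Delta v_{0,\epsilon})$, which I handle with the Lipschitz estimate (2.2) together with (2.4) and Young's inequality, producing a term controlled by $|v_{0,\epsilon}|^2+\epsilon^2 M_{T,\omega}^2|\phi|^2+1$; the forcing term $(\epsilon\phi W_t,-\Delta v_{0,\epsilon})$, bounded by $\frac{m}{4}|\Delta v_{0,\epsilon}|^2+C\epsilon^2 M_{T,\omega}^2|\phi|^2$ via Cauchy-Schwarz; and the cross term $(a(\cdot)\epsilon W_t\Delta\phi,-\Delta v_{0,\epsilon})$, bounded by $\frac{m}{4}|\Delta v_{0,\epsilon}|^2+C\widetilde{m}^2\epsilon^2 M_{T,\omega}^2|\Delta\phi|^2$, which is finite because $\phi\in V\cap H^2(\mathcal{O})$. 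Absorbing the $|\Delta v_{0,\epsilon}|^2$ contributions on the left and applying Gronwall's lemma yields (4.8) with a constant depending on $T$, $\omega$, $\phi$ and $\|v_{0,\epsilon}(\tau)\|$, uniformly for $\epsilon\in(0,1]$.

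The only genuinely delicate point is the pathwise uniformity in $\epsilon$: since $M_{T,\omega}$ is a fixed finite number for each $\omega\in\Omega$, every $\epsilon$-dependent factor of the form $\epsilon^k M_{T,\omega}^k$ remains bounded as $\epsilon$ ranges over a bounded interval, so taking $\alpha$ to be the maximum of the two constants produced above completes the proof.
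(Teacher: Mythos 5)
Your proposal is correct and follows essentially the same route as the paper: test (4.4) (resp. (4.2)) with $-\Delta$ of the solution, bound the diffusion term below by $m$ via (2.1), control $f$ through the linear growth coming from (2.4) with $p=2$ and $\beta=C_f$, absorb the remaining terms with Young/Poincar\'e using the pathwise bound $\sup_{\tau\le t\le T}|W_t(\omega)|<\infty$, and close with a Gronwall argument; the paper's proof is just a compressed version of this, recording only the resulting integral inequality (4.10). Your explicit remark that the uniformity is really over a bounded range of $\epsilon$ (rather than literally all $\epsilon>0$) is a fair and accurate reading of what the estimate actually delivers.
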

	\begin{proof}
		By \eqref{eq:2.1}, \eqref{eq:2.4} with $\beta=C_f$ and $p=2$, \eqref{eq:4.4}, the Young and poincar\'e inequalities, we have
		\begin{equation}
			\begin{aligned}
				\label{eq:4.10}
				\|v_{0,\epsilon}(t)\|^2\leq& e^{(m\lambda_1-4C_f)\tau}(\|v_{0,\epsilon}(\tau)\|^2+\int_{\tau}^{t}(2C_f|\mathcal{O}|+\lambda_1C_f|v_{0,\epsilon}(t)|^2\\
				&+\epsilon^2(C_f\lambda_1+\lambda_1C_f^{-1})|W_t|^2|\phi|^2+\frac{2\tilde{m}^2\epsilon^2}{m}|\Delta\phi|^2)e^{(m\lambda_1-4C_f)s}ds).
			\end{aligned}
		\end{equation}
		Thus \eqref{eq:4.8} holds. The proof of \eqref{eq:4.9} is similar and here for brevity we omit it. Then the proof is complete.
	\end{proof}
	Next, we give the approximation between $ u_{0, \epsilon}(t, \omega, u_0) $ and $u(t, u_0)$.\\
	
	\begin{lem}\label{lem:4.2} Assuming Hypothesis 1.1 and Lemma \ref{lem:4.1} hold with
	\begin{equation}
		m>\alpha L_a|l|\lambda_1^{-1}+2L_a|l|+2\eta\lambda_1^{-1},
		\nonumber
	\end{equation}
	where $a(\cdot)$ is supposed to be globally Lipschitz, and the Lipschitz constant is denoted by $L_a$.Then, for each $\omega\in\Omega$ and $u_0\in H $, we have
	\begin{equation}
		\lim_{\epsilon\rightarrow0^+}\sup_{t\in[\tau,T]}|u_{0,\epsilon}(t,\omega,u_0)-u(t,u_0)|^2=0,
	\end{equation}
	where $u_{0,\epsilon}(t,\omega, u_0)$ is the solution of equation \ref{eq:4.1}, and $u(t,u_0)$ is the solution of equation \ref{eq:4.2}.
\end{lem}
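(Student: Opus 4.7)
The plan is to exploit the Ornstein--Uhlenbeck-type change of variable $u_{0,\epsilon} = v_{0,\epsilon} + \epsilon\phi W_t$ already set up in the paper: since $t \mapsto W_t(\omega)$ is continuous on $[\tau,T]$ for each fixed $\omega$, the pointwise bound
\[
|u_{0,\epsilon}(t,\omega,u_0) - u(t,u_0)|^2 \le 2|v_{0,\epsilon}(t) - u(t)|^2 + 2\epsilon^2 |\phi|^2 |W_t|^2
\]
reduces the claim to showing $\sup_{t\in[\tau,T]} |v_{0,\epsilon}(t) - u(t)|^2 \to 0$ as $\epsilon \to 0^+$, where $v_{0,\epsilon}$ solves the random PDE \eqref{eq:4.4} and $u$ solves the deterministic PDE \eqref{eq:4.2}.

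Setting $w_\epsilon = v_{0,\epsilon} - u$, the next step is to subtract \eqref{eq:4.2} from \eqref{eq:4.4} and take the $H$-inner product with $w_\epsilon$. The diffusion contribution is split as
\[
a\bigl(l(v_{0,\epsilon}) + \epsilon W_t l(\phi)\bigr)\Delta v_{0,\epsilon} - a(l(u))\Delta u = a(\cdot)\Delta w_\epsilon + \bigl[a(\cdot) - a(l(u))\bigr]\Delta u.
\]
Integration by parts gives $\bigl(a(\cdot)\Delta w_\epsilon, w_\epsilon\bigr) \le -m \|w_\epsilon\|^2$ by \eqref{eq:2.1}, while for the cross term I would use $(\Delta u, w_\epsilon) = -((u,w_\epsilon))$ together with the global Lipschitz constant $L_a$ of $a$, the bound $|l(z)| \le |l|\cdot |z|$, and the a priori estimate $\|u\|^2 \le \alpha$ from Lemma \ref{lem:4.1} to obtain the bound $L_a|l|\sqrt{\alpha}\bigl(|w_\epsilon| + \epsilon|W_t||\phi|\bigr)\|w_\epsilon\|$. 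The reaction difference is handled by \eqref{eq:2.2}, giving $|(f(v_{0,\epsilon} + \epsilon\phi W_t) - f(u), w_\epsilon)| \le \eta|w_\epsilon|^2 + \eta\epsilon|W_t||\phi||w_\epsilon|$, and the remaining correction term in \eqref{eq:4.4} contributes $\tilde m\,\epsilon |W_t||\Delta\phi||w_\epsilon|$.

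After applying Young's inequality and the Poincar\'e bound $|w_\epsilon|^2 \le \lambda_1^{-1}\|w_\epsilon\|^2$, the quadratic-in-$\|w_\epsilon\|$ contributions can be absorbed into $-m\|w_\epsilon\|^2$ thanks precisely to the hypothesis $m > \alpha L_a|l|\lambda_1^{-1} + 2L_a|l| + 2\eta\lambda_1^{-1}$. What survives is a differential inequality of the form
\[
\frac{d}{dt}|w_\epsilon|^2 \le C_1 |w_\epsilon|^2 + C_2(\omega, T, \phi)\,\epsilon^2\bigl(1 + \sup_{s\in[\tau,T]}|W_s|^2\bigr),
\]
with initial datum $|w_\epsilon(\tau)|^2 = \epsilon^2|\phi|^2|W_\tau|^2$. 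Gronwall's lemma on the finite interval $[\tau,T]$ then yields $\sup_{t\in[\tau,T]}|w_\epsilon(t)|^2 \le C_3(\omega, T, \phi)\,\epsilon^2 \to 0$, which together with the reduction above completes the argument.

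The main obstacle is the nonlocal character of $a(l(\cdot))$: unlike a standard reaction--diffusion setting, the diffusion coefficient depends on the solution itself through the functional $l$, and the difference $a(l(v_{0,\epsilon}) + \epsilon W_t l(\phi)) - a(l(u))$ cannot be treated by monotonicity alone. It must be controlled using the Lipschitz property of $a$ combined with the \emph{a priori} bound $\|u\| \le \sqrt{\alpha}$ from Lemma \ref{lem:4.1}, producing the mixed term $L_a|l|\sqrt{\alpha}|w_\epsilon|\|w_\epsilon\|$ whose clean absorption into $m\|w_\epsilon\|^2$ forces exactly the structure of the spectral-gap-type condition on $m$ appearing in the hypotheses.
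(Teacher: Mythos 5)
Your proposal follows essentially the same route as the paper's own proof: the same change of variable $v_{0,\epsilon}=u_{0,\epsilon}-\epsilon\phi W_t$, the same energy estimate for $v_{0,\epsilon}-u$ with the diffusion difference split into $a(\cdot)\Delta w_\epsilon$ plus a cross term controlled by $L_a$, $|l|$ and the bound $\|u\|^2\le\alpha$ from Lemma \ref{lem:4.1}, absorption via the spectral condition on $m$, and Gronwall on $[\tau,T]$. The only cosmetic difference is bookkeeping of the small parameter (you keep an $O(1)$ Gronwall coefficient and an $O(\epsilon^2)$ forcing, the paper keeps both $O(\epsilon)$), which does not affect the conclusion.
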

	\begin{proof}
		Let $v_{0, \epsilon}(t, \omega, v_{0,\epsilon}(\tau))=v_{0, \epsilon}(t)$ is the solution of equation \ref{eq:4.4}. Since $\omega$ is continuous in $t$, there exists a constant $C_r=C_r(\omega, T)>0$, we have 
		\begin{equation}
			|\omega(t)|\leq C_r,\quad\forall t\in[\tau,T].
			\nonumber
		\end{equation}
		By \eqref{eq:4.4} and \eqref{eq:4.2}, for $t\in[\tau,T]$, $T>\tau$ with $T>0$, $\tau\in R$, we have
		\begin{equation}
			\begin{aligned}
				\label{eq:4.12}
				\frac{d}{dt}|v_{0,\epsilon}(t)-u(t)|^2+2m\|v_{0,\epsilon}(t)-u(t)\|^2=&2\langle a(l(v_{0,\epsilon})+\epsilon W_tl(\phi))\Delta u-a(l(u(t)))\Delta u(t),v_{0, \epsilon}(t)-u(t)\rangle\\
				&+2\left(f(v_{0,\epsilon}+\epsilon\phi W_t)-f(u(t)),v_{0,\epsilon}(t)-u(t)\right)\\
				&+2(\epsilon\phi W_t,v_{0, \epsilon}(t)-u(t))\\
				&+2\langle a(l(v_{0,\epsilon})+\epsilon W_tl(\phi))\epsilon W_t\Delta\phi,v_{0, \epsilon}(t)-u(t)\rangle\\
				&=I_1'+I_2'+I_3'+I_4'.
			\end{aligned}
		\end{equation}
		Since $a$ is globally Lipschitz, denote this Lipschitz constant by $L_a$. By the Young inequality and Poincar\'e inequality, we have 
		\begin{equation}
			\begin{aligned}
				\label{eq:4.13}
				I_1'&\leq(2L_a|l|\lambda_1^{-1}\|u\|^2+2L_a|l|)\|v_{0,\epsilon}(t)-u(t)\|^2+2\epsilon\L_a|l|C_r|v_{0,\epsilon}(t)-u(t)|^2\\
				&+2\epsilon(\tilde{m}C_r|\Delta\phi|^2+L_a|l|C_r\|\phi\|^2\|u\|^2),
			\end{aligned}
		\end{equation}
		and 
		\begin{equation}
			\begin{aligned}
				\label{eq:4.14}
				I_2'&=2\left(f(v_{0,\epsilon}+\epsilon\phi W_t)-f(u(t)),v_{0,\epsilon}(t)-u(t)\right)\\
				&\leq4\eta\lambda_1^{-1}\|v_{0,\epsilon}(t)-u(t)\|^2+2\eta\epsilon C_r(\|\phi\|^2+|v_{0,\epsilon}(t)-u(t)|^2).
			\end{aligned}
		\end{equation}
		For $I'_3$, by the Young inequality, we have,
		\begin{equation}
			\begin{aligned}
				\label{eq:4.15}
				I_3'&=2(\epsilon\phi W_t,v_{0, \epsilon}(t)-u(t))\leq2\epsilon C_r(\|\phi\|^2+|v_{0,\epsilon}(t)-u(t)|^2)
			\end{aligned}
		\end{equation}
		For $I_4'$, by \eqref{eq:2.1} and the Young inequality, we have
		\begin{equation}
			\label{eq:4.16}
			I_4'\leq2\tilde{m}\epsilon C_r(|\Delta\phi|^2+|v_{0,\epsilon}(t)-u(t)|^2).
		\end{equation}
		Combining with $I_1'-I_4'$ and Lemma \ref{lem:4.1}, we get
		\begin{equation}
			\begin{aligned}
				\frac{d}{dt}|v_{0,\epsilon}(t)-u(t)|^2&\leq(-2m+2\alpha L_a|l|\lambda_1^{-1}+2L_a|l|+4\eta\lambda_1^{-1})\|v_{0,\epsilon}(t)-u(t)\|^2\\
				&+2\epsilon\gamma(t)|v_{0,\epsilon}(t)-u(t)|^2+2\epsilon\beta(t),
				\nonumber
			\end{aligned}
		\end{equation}
		with
		\begin{equation}
			\begin{aligned}
				\gamma(t)=&\eta C_r+L_a|l|C_r+\tilde{m}C_r+C_r,
				\nonumber
			\end{aligned}
		\end{equation}
		\begin{equation}
			\begin{aligned}
				\beta(t)=&\alpha L_aC_r|l|\|\phi\|^2+\eta C_r\|\phi\|^2+C_r\|\phi\|^2+\tilde{m}C_r|\Delta\phi|^2.
				\nonumber
			\end{aligned}
		\end{equation}
		\noindent
		Thanks to assumption $m>\alpha L_a|l|\lambda_1^{-1}+2L_a|l|+2\eta\lambda_1^{-1}$, we can obtain 
		\begin{equation}
			\begin{aligned}
				\label{eq:4.17}
				\frac{d}{dt}|v_{0,\epsilon}(t)-u(t)|^2\leq\epsilon\gamma(t)|v_{0,\epsilon}(t)-u(t)|^2+\epsilon\beta(t),
			\end{aligned}
		\end{equation}
		By applying Gronwall's inequality, we have
		\begin{equation}
			\label{eq:4.18}
			|v_{0,\epsilon}(t)-u(t)|^2\leq\epsilon\int_{0}^{t}\beta(s)e^{\epsilon\gamma(s)(t-s)}ds+|v_{0,\epsilon}(\tau)-u(\tau)|^2\rightarrow0,\quad as\ \epsilon\rightarrow0^+.
		\end{equation}
		Thus, we can obtain that
		\begin{equation}
			\label{eq:4.19}
			\lim_{\epsilon\rightarrow0^+}|v_{0,\epsilon}(t,\omega,u_0)-u(t,u_0)|^2=0.
		\end{equation}
		Finally, we observe that
		\begin{equation}
			\label{eq:4.20}
			|u_{0,\epsilon}(t,\omega, u_0)-u(t,u_0)|^2\leq2|v_{0,\epsilon}(t,\omega,u_0)-u(t,u_0)|^2+2\epsilon C_r\|\phi\|^2.
		\end{equation}
		By using \eqref{eq:4.17} and \eqref{eq:4.18}, we complete the proof.
	\end{proof}
	\par 
	The following theorem shows the approximation of $u_{\delta,\epsilon}(t,\omega,u_0)$ and $u(t,u_0)$. \\
	\par 
	\begin{thm}\label{thm:4.3}
Assuming that Hypothesis 1.1, Lemma \ref{lem:4.1} and Lemma \ref{lem:4.2} hold. For each $ \omega\in\Omega$ and $u_0\in H$,
	\begin{equation}
		\lim\limits_{\substack{\delta\rightarrow0^+\\\epsilon\rightarrow0^+}}\sup_{t\in[\tau,T]}|u_{\delta,\epsilon}(t,\omega,u_0)-u(t,u_0)|^2=0,
		\nonumber
	\end{equation}
	where $u_{\delta,\epsilon}(t,\omega, u_0)$ is the solution of equation \eqref{eq:4.3}, and $u(t,u_0)$ is the solution of equation \eqref{eq:4.2}.
		\end{thm}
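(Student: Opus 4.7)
The natural strategy is to insert the solution $u_{0,\epsilon}$ of the Stratonovich equation \eqref{eq:4.1} as a bridge, so that
\[
|u_{\delta,\epsilon}(t,\omega,u_0)-u(t,u_0)|^2 \le 2|u_{\delta,\epsilon}(t,\omega,u_0)-u_{0,\epsilon}(t,\omega,u_0)|^2 + 2|u_{0,\epsilon}(t,\omega,u_0)-u(t,u_0)|^2.
\]
The second term is handled for free by Lemma \ref{lem:4.2}: its supremum over $[\tau,T]$ tends to $0$ as $\epsilon\to 0^+$. The entire task therefore reduces to showing
\[
\sup_{t\in[\tau,T]}|u_{\delta,\epsilon}(t,\omega,u_0)-u_{0,\epsilon}(t,\omega,u_0)|^2 \longrightarrow 0 \quad\text{as } \delta\to 0^+,
\]
for every fixed $\epsilon\in(0,\epsilon_0]$. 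Once this is in hand, the joint limit follows by first choosing $\epsilon$ small to control the Lemma \ref{lem:4.2} piece and then $\delta$ small (possibly depending on that $\epsilon$) to control the first piece.

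For the new bound, I would work with the auxiliary variables already introduced, namely $v_{0,\epsilon}=u_{0,\epsilon}-\epsilon\phi W_t$ and $v_{\delta,\epsilon}=u_{\delta,\epsilon}-\epsilon\phi\int_{0}^{t}\zeta_{\delta}(\theta_{s}\omega)\,ds$, which satisfy \eqref{eq:4.4} and \eqref{eq:4.6} respectively. Subtracting the two equations and testing with $v_{\delta,\epsilon}-v_{0,\epsilon}$ in $H$, I expect an energy identity of the schematic form
\[
\frac{d}{dt}|v_{\delta,\epsilon}-v_{0,\epsilon}|^2+2m\|v_{\delta,\epsilon}-v_{0,\epsilon}\|^2 = J_1+J_2+J_3+J_4,
\]
where $J_1$ comes from the nonlocal diffusion difference, $J_2$ from $f(v_{\delta,\epsilon}+\epsilon\phi\!\int\zeta_\delta)-f(v_{0,\epsilon}+\epsilon\phi W_t)$, $J_3$ from the explicit forcing $\epsilon\phi(\int\zeta_\delta\,ds-W_t)$, and $J_4$ from the $\Delta\phi$ terms. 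Using the Lipschitz properties of $a$ and $f$ (as in the proof of Lemma \ref{lem:4.2}), the uniform a priori bound of Lemma \ref{lem:4.1} on $\|v_{0,\epsilon}\|$ and $\|u\|$, and the key estimate supplied by Hypothesis 1.1(3),
\[
\Delta_{\delta}(T):=\sup_{|t|\le T}\Bigl|\int_{0}^{t}\zeta_{\delta}(\theta_{r}\omega)\,dr-\omega(t)\Bigr|\longrightarrow 0 \quad (\delta\to 0^+),
\]
the cross-terms in $J_1,J_2,J_3,J_4$ can be reorganized to produce an inequality of the Gronwall type
\[
\frac{d}{dt}|v_{\delta,\epsilon}-v_{0,\epsilon}|^2 \le \gamma(\epsilon,\omega,T)|v_{\delta,\epsilon}-v_{0,\epsilon}|^2 + \beta(\epsilon,\omega,T)\,\Delta_{\delta}(T),
\]
with an initial error $|v_{\delta,\epsilon}(\tau)-v_{0,\epsilon}(\tau)|^2\le \epsilon^2|\phi|^2\Delta_{\delta}(T)^2$. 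Gronwall's lemma on $[\tau,T]$ then forces $\sup_{[\tau,T]}|v_{\delta,\epsilon}-v_{0,\epsilon}|^2\to 0$ as $\delta\to 0^+$, and the identity $u_{\delta,\epsilon}-u_{0,\epsilon}=(v_{\delta,\epsilon}-v_{0,\epsilon})+\epsilon\phi(\int_0^t\zeta_\delta\,ds-W_t)$ transfers the conclusion back to the $u$-variables, again with Hypothesis 1.1(3) absorbing the last piece.

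The main obstacle is the nonlocal term $J_1$: one must expand
\[
a\bigl(l(v_{\delta,\epsilon})+\epsilon l(\phi)\!\textstyle\int_0^t\!\zeta_\delta\,ds\bigr)\Delta v_{\delta,\epsilon}-a\bigl(l(v_{0,\epsilon})+\epsilon l(\phi)W_t\bigr)\Delta v_{0,\epsilon}
\]
by adding and subtracting $a(l(v_{0,\epsilon})+\epsilon l(\phi)W_t)\Delta v_{\delta,\epsilon}$, so that one piece produces the gradient-difference term $\|v_{\delta,\epsilon}-v_{0,\epsilon}\|^2$ (to be absorbed into the coercivity $2m\|\cdot\|^2$ via a smallness condition on the coefficients analogous to the one used in Lemma \ref{lem:4.2}) while the other is controlled by $L_a|l|\bigl(\|v_{\delta,\epsilon}-v_{0,\epsilon}\|+\epsilon|l(\phi)|\Delta_\delta(T)\bigr)\|v_{0,\epsilon}\|$, where the $\|v_{0,\epsilon}\|$ factor is uniformly bounded thanks to Lemma \ref{lem:4.1}. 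Young's inequality then redistributes the norms so that the coercivity term absorbs the $\|v_{\delta,\epsilon}-v_{0,\epsilon}\|^2$ contributions, leaving only lower-order terms of size $O(\Delta_\delta(T))$ plus a $\gamma|v_{\delta,\epsilon}-v_{0,\epsilon}|^2$ remainder, which is exactly the shape needed for Gronwall. The rest of the argument is then a straightforward combination of these estimates with Lemma \ref{lem:4.2}.
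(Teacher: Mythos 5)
Your proposal follows essentially the same route as the paper's own proof: the same decomposition into the nonlocal, nonlinear, forcing, and $\Delta\phi$ terms (the paper's $I_1$--$I_4$), the same use of Lemma \ref{lem:4.1}, Hypothesis 1.1(3) and Gronwall to get $\sup_{[\tau,T]}|v_{\delta,\epsilon}-v_{0,\epsilon}|^2\to 0$, the same transfer back to the $u$-variables, and the same final triangle inequality through $u_{0,\epsilon}$ combined with Lemma \ref{lem:4.2}. The plan is correct as stated.
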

	\par 
	\begin{proof}
		Let $v_{\delta, \epsilon}(t, \omega, v_{\delta,\epsilon}(\tau))=v_{\delta, \epsilon}(t)$ and $v_{0, \epsilon}(t, \omega, v_{0,\epsilon}(\tau))=v_{0, \epsilon}(t)$ are the solutions to \eqref{eq:4.6} and \eqref{eq:4.4}, respectively. From \eqref{eq:4.6} and \eqref{eq:4.4}, we have
		\begin{equation}
			\begin{aligned}
				\label{eq:4.21}
				&			\frac{d}{dt}|v_{\delta, \epsilon}(t)-v_{0, \epsilon}(t)|^2+2m\|v_{\delta, \epsilon}(t)-v_{0, \epsilon}(t)\|^2
				\\
				&\leq2\langle a(l(v_{\delta,\epsilon}(t)+\epsilon\int_{0}^{t}\zeta_{\delta}(\theta_{s}\omega)dsl(\phi))\Delta v_{0, \epsilon}(t)-a(l(v_{0, \epsilon}(t)+\epsilon W_rl(\phi))\Delta v_{0, \epsilon}(t),v_{\delta,\epsilon}(t)-v_{0,\epsilon}(t)\rangle\\
				&+2(f(v_{\delta, \epsilon}(t)+\epsilon\phi\int_{0}^{t}\zeta_{\delta}(\theta_{s}\omega)ds)-f(v_{0, \epsilon}(t)+\epsilon\phi W_t),v_{\delta,\epsilon}(t)-v_{0,\epsilon}(t))\\
				&+2(\epsilon\phi\int_{0}^{t}\zeta_{\delta}(\theta_t\omega)-\epsilon\phi W_t, v_{\delta,\epsilon}(t)-v_{0,\epsilon}(t))\\
				&+2\langle a(l(v_{\delta,\epsilon}(t)+\epsilon\int_{0}^{t}\zeta_{\delta}(\theta_{s}\omega)dsl(\phi))\epsilon\int_{0}^{t}\zeta_{\delta}(\theta_{s}\omega)ds\Delta\phi-a(l(v_{0, \epsilon}(t)+\epsilon W_t)l(\phi))\epsilon W_t\Delta\phi, v_{\delta,\epsilon}(t)-v_{0,\epsilon}(t)\rangle\\
				&=I_1+I_2+I_3+I_4.
			\end{aligned}
		\end{equation}
		Next, we estimate each term on the right side of the inequality. By (2.2), (1.11) and Young inequality, for any $T>\tau$ with $T>0$, $\tau\in R$, $\omega\in\Omega$ and $u_0\in H$, we have that
		\begin{equation}
			\begin{aligned}
				\label{eq:4.22}
				I_1\leq&\left(2L_a|l|\lambda_1^{-1}\|v_{0,\epsilon}(t)\|^2+2L_a|l|+2\epsilon L_a|l|\sup_{\tau\leq t\leq T}|\int_{0}^{t}\zeta_{\delta}(\theta_{s}\omega)ds-W_t|\right)\|v_{\delta,\epsilon}(t)-v_{0,\epsilon}(t)\|^2\\
				&+2\epsilon L_a|l|\sup_{\tau\leq t\leq T}|\int_{0}^{t}\zeta_{\delta}(\theta_{s}\omega)ds-W_t|\|\phi\|^2\|v_{0,\epsilon}(t)\|^2.
			\end{aligned}
		\end{equation}
		For $I_2$ and $I_3$, by \eqref{eq:1.11}, \eqref{eq:2.2}, the Young and poincar\'e inequalities, we have
		\begin{equation}
				\label{eq:4.23}
			I_2\leq4\eta\lambda_1^{-1}\|v_{\delta, \epsilon}(t)-v_{0, \epsilon}(t)\|^2+2\epsilon\eta\sup_{t\in[\tau,T]}|\int_{0}^{t}\zeta_{\delta}(\theta_{s}\omega)ds-W_t|\left(\|\phi\|^2+|v_{\delta, \epsilon}(t)-v_{0, \epsilon}(t)|^2\right),
		\end{equation}
		and 
		\begin{equation}
				\label{eq:4.24}
			I_3\leq2\epsilon\sup_{t\in[\tau,T]}|\int_{0}^{t}\zeta_{\delta}(\theta_t\omega)-W_t|(\|\phi\|^2+|v_{\delta,\epsilon}(t)-v_{0,\epsilon}(t)|^2).
		\end{equation}
		Next, we estimate $I_4$. Since the function $a$ satisfies the global Lipschitz condition, $l\in\mathcal{L}(L^(\mathcal{O}); R)$, by \eqref{eq:1.11} and the Young inequality, we can deduce that
		\begin{equation}
			\begin{aligned}
					\label{eq:4.25}
				&I_4\leq2\tilde{m}\epsilon\sup_{\tau\leq t\leq T}|\int_{0}^{t}\zeta_{\delta}(\theta_{s}\omega)ds-W_t|(\Delta\phi|^2+ |v_{\delta,\epsilon}(t)-v_{0,\epsilon}(t)|^2)\\
				&+2\epsilon L_a|l|C_r(|v_{\delta, \epsilon}(t)-v_{0, \epsilon}(t)|^2|\Delta\phi|^2+|v_{\delta, \epsilon}(t)-v_{0, \epsilon}(t)|^2)\\
				&+2\epsilon^2 L_a|l|C_r\sup_{t\in[\tau,T]}|\int_{0}^{t}\zeta_{\delta}(\theta_{s}\omega)ds-W_t|(\|\phi\|^2+|\Delta\phi|^2|v_{\delta,\epsilon}(t)-v_{0,\epsilon}(t)|^2).
			\end{aligned}
		\end{equation}
		Based on the analysis of $I_1-I_4$, we have
		\begin{equation}
			\begin{aligned}
					\label{eq:4.26}
				\frac{d}{dt}|v_{\delta, \epsilon}(t)-v_{0, \epsilon}(t)|^2&\leq(-2m+2L_a|l|\|v_{0,\epsilon}(t)\|^2\lambda_1^{-1}+2L_a|l|+4\eta\lambda_1^{-1})\|v_{\delta, \epsilon}(t)-v_{0, \epsilon}(t)\|^2\\
				&+2L_a|l|\epsilon\sup_{t\in[\tau,T]}|\int_{0}^{t}\zeta_{\delta}(\theta_{s}\omega)ds-W_t|\|v_{\delta, \epsilon}(t)-v_{0, \epsilon}(t)\|^2\\
				&+2\epsilon z(t)|v_{\delta, \epsilon}(t)-v_{0, \epsilon}(t)|^2+2\epsilon k(t),
			\end{aligned}
		\end{equation}
		where
		\begin{equation}
			\begin{aligned}
				z(t)=&\sup_{\tau\leq t\leq T}|\int_{0}^{t}\zeta_{\delta}(\theta_{s}\omega)ds-W_t|(\eta+1+\tilde{m}+\epsilon C_rL_a|l||\Delta\phi|^2)+L_a|l|C_r|\Delta\phi|^2+C_rL_a|l|,
				\nonumber
			\end{aligned}
		\end{equation}
		\begin{equation}
			\begin{aligned}
					\label{eq:4.27}
				k(t)=&\sup_{\tau\leq t\leq T}|\int_{0}^{t}\zeta_{\delta}(\theta_{s}\omega)ds-W_t|(L_a|l|\|\phi\|^2\|v_{0,\epsilon}(t)\|^2+\eta\|\phi\|^2+\|\phi\|^2+\tilde{m}|\Delta\phi|^2+\epsilon L_a|l|C_r\|\phi\|^2).
			\end{aligned}
		\end{equation}
		For the first and second terms on the right side of the inequality, we can use the fast that $\sup_{\tau\leq t\leq T}|\int_{0}^{t}\zeta_{\delta}(\theta_{s}\omega)ds-W_t|<1$ for sufficiently small $\epsilon$, $\delta>0$. By Lemma \ref{lem:4.1} and $m>\alpha L_a|l|\lambda_1^{-1}+2L_a|l|+2\eta\lambda_1^{-1}$, for all $t\in[\tau, T ]$ we get that
		\begin{equation}
			\begin{aligned}
					\label{eq:4.28}
				\frac{d}{dt}|v_{\delta, \epsilon}(t)-v_{0, \epsilon}(t)|^2&\leq\epsilon z(t)|v_{\delta, \epsilon}(t)-v_{0, \epsilon}(t)|^2+\epsilon\sup_{\tau\leq t\leq T}|\int_{0}^{t}\zeta_{\delta}(\theta_{s}\omega)ds-W_t|k(t).
			\end{aligned}
		\end{equation}
		Then, by Gronwall's inequality, we have
		\begin{equation}
			\begin{aligned}
					\label{eq:4.29}
				&|v_{\delta, \epsilon}(t)-v_{0, \epsilon}(t)|^2\leq\epsilon\sup_{\tau\leq t\leq T}|\int_{0}^{t}\zeta_{\delta}(\theta_{s}\omega)ds-W_t|\int_{\tau}^{t}e^{\epsilon z(s)(t-s)}k(s)ds+|v_{\delta, \epsilon}(\tau)-v_{0, \epsilon}(\tau)|^2,
			\end{aligned}
		\end{equation}
		and $|v_{\delta, \epsilon}(\tau)-v_{0, \epsilon}(\tau)|^2\rightarrow0$ as $\delta\rightarrow0^+$, $\epsilon\rightarrow0^+$, for all $t\in [\tau, T]$. Then,
		along with \eqref{eq:1.17} implies that
		\begin{equation}
				\label{eq:4.30}
			\lim\limits_{\substack{\delta\rightarrow0^+\\\epsilon\rightarrow0^+}}|v_{\delta,\epsilon}(t,\omega,v_{\delta,\epsilon}(\tau))-v_{0, \epsilon}(t,\omega,v_{0,\epsilon}(\tau))|^2=0.
		\end{equation}
		Since that
		\begin{equation}
			\begin{aligned}
					\label{eq:4.31}
				|u_{\delta,\epsilon}(t,\omega,u_0)-u_{0, \epsilon}(t,\omega,u_0)|^2&\leq2|v_{\delta,\epsilon}(t,\omega,v_{\delta,\epsilon}(\tau))-v_{0, \epsilon}(t,\omega,v_{0,\epsilon}(\tau))|^2\\
				&+2\|\phi\|^2\times\sup_{t\in[\tau,T]}|\int_{0}^{t}\zeta_{\delta}(\theta_{s}\omega)ds-W_t|^2,
			\end{aligned}
		\end{equation}
		we observe that
		\begin{equation}
				\label{eq:4.32}
			|u_{\delta,\epsilon}(t,\omega,u_0)-u(t,u_0)|^2\leq2|u_{\delta,\epsilon}(t,\omega,u_0)-u_{0,\epsilon}(t,\omega,u_0)|^2+2|u_{0,\epsilon}(t,\omega,u_0)-u(t,u_0)|^2.
		\end{equation}
		By using Lemma \ref{lem:4.2} and \eqref{eq:4.30}, we can get that 
		\begin{equation}
				\label{eq:4.33}
			\lim\limits_{\substack{\delta\rightarrow0^+\\\epsilon\rightarrow0^+}}\sup_{t\in[\tau,T]}|u_{\delta,\epsilon}(t,\omega,u_0)-u(t,u_0)|^2=0.
		\end{equation}
	\end{proof}
	\begin{rem}\label{rem:4.1} By \eqref{eq:1.11}, \eqref{eq:4.29}, \eqref{eq:4.31}, we clearly that for any $T>\tau$ with $T>0$, $\tau\in R$, $\epsilon>0$, $\omega\in\Omega$ and $u_0\in H$, the solution of equation \eqref{eq:4.3} are uniformly convergent to the solution of equation \eqref{eq:4.1} on $[\tau,T]$ as $\delta\rightarrow0^+$, i.e.,
	\begin{equation}
		\lim\limits_{\substack{\delta\rightarrow0^+}}|u_{\delta,\epsilon}(t,\omega,u_0)-u_{0, \epsilon}(t,\omega,u_0)|^2=0.
		\nonumber
	\end{equation}
	\end{rem}
	\par 
	\textbf{4.2. Random conjugate equation}. Next, we define a random variable
	\begin{equation}
		x_{0,\epsilon}^*(\theta_{t}\omega) :=\epsilon\int_{-\infty}^{t}e^{-\eta(t-s)}\phi dW_s,
		\nonumber
	\end{equation}
	is a stationary solution of the linear stochastic differential equation:
	\begin{equation}
		\label{eq:4.34}
		dx_{0,\epsilon}+\eta x_{0,\epsilon}dt=\epsilon\phi dW.
	\end{equation}
	On account of the change og variable $p_{0,\epsilon}(t)=u_{0,\epsilon}(t)-x_{0,\epsilon}^*(\theta_{t}\omega)$, equation \eqref{eq:4.1} can be written as
	\begin{equation}
		\begin{aligned}
			\label{eq:4.35}
			\frac{\partial{p_{0,\epsilon}}}{\partial{t}}=&a(l(v_{0,\epsilon}+x_{0,\epsilon}^*(\theta_{t}\omega))\Delta (p_{0,\epsilon}+x_{0,\epsilon}^*(\theta_{t}\omega))+f(p_{0,\epsilon}+x_{0,\epsilon}^*(\theta_{t}\omega))+x_{0,\epsilon}^*(\theta_{t}\omega),\quad p_{0,\epsilon}(\tau)=u_0-x_{0,\epsilon}^*(\theta_{\tau}\omega).
		\end{aligned}
	\end{equation}
	Similarly, we define a random variable
	\begin{equation}
		\label{eq:4.36}
		x_{\delta,\epsilon}^*(\theta_{t}\omega) := \epsilon\int_{-\infty}^{t}e^{-\eta(t-s)}\phi\zeta_{\delta}(\theta_s\omega)ds,
	\end{equation}
	is a stationary solution of the random differential equation:
	\begin{equation}
		\frac{dx_{\delta,\epsilon}}{dt}+\eta x_{\delta,\epsilon}=\epsilon\phi\zeta_{\delta}(\theta_{t}\omega),
		\nonumber
	\end{equation}
	and for each $\omega\in\Omega$, the following properties hold.\\
	There exists a positive constant $\bar{\delta}$ such that
	\begin{equation}
		\label{eq:4.37}
		\lim_{t\rightarrow\pm\infty}\frac{|x^*_{\delta,\epsilon}(\theta_{t}\omega)|}{|t|}=0,\quad\lim_{t\rightarrow\pm\infty}\frac{1}{t}\int_{0}^{t}x^*_{\delta,\epsilon}(\theta_r\omega)dr=0.
	\end{equation}
	For arbitrarily $\epsilon>0$ and $\delta\ge0$, we give the random transformation
	\begin{equation}
		\label{eq:4.38}
		p_{\delta,\epsilon}(t)=u_{\delta,\epsilon}(t)-x_{\delta,\epsilon}^*(\theta_{t}\omega),
	\end{equation}
	taking into \eqref{eq:4.3}, we have
	\begin{equation}
		\begin{aligned}
			\label{eq:4.39}
			\frac{\partial{p_{\delta,\epsilon}}}{\partial{t}}=&a(l(p_{\delta,\epsilon}+x_{\delta,\epsilon}^*(\theta_{t}\omega))\Delta (p_{\delta,\epsilon}+x_{\delta,\epsilon}^*(\theta_{t}\omega))+f(p_{\delta,\epsilon}+x_{\delta,\epsilon}^*(\theta_{t}\omega))\\
			&+x_{\delta,\epsilon}^*(\theta_{t}\omega),\quad p_{\delta,\epsilon}(\tau)=u_0-x_{\delta,\epsilon}^*(\theta_{\tau}\omega).
		\end{aligned}
	\end{equation}
	By the same way in \cite[Theorem 7]{J. Xu}, we are can prove that equation \eqref{eq:4.3} with initial value $p_{\delta,\epsilon}(\tau)\in H$ and Dirichlet boundary condition possesses a unique weak solution, for every $T>\tau$ with $T>0$.
	\begin{equation}
		p_{\delta, \epsilon}(\cdot; \tau, \omega, p_{\delta,\epsilon}(\tau))\in C(\tau, T; H) \cap L^2(\tau, T; V).
		\nonumber
	\end{equation}
	Furthermore, this solution is continuous in $H$.
	\begin{equation}
		p_{\delta, \epsilon}(\cdot; \tau, \omega, p_{\delta,\epsilon}(\tau))\in C(\tau, T; V) \cap L^2(\tau, T; V\cap H^2(\mathcal{O})).
		\nonumber
	\end{equation}
	For any $p_{\delta,\epsilon}(\tau)\in H$, $\tau\in R$, $\omega\in\Omega$, $\epsilon>0$, $\delta\ge0$, we define the mapping $\Xi_{\delta,\epsilon}:\Bbb R^+\times\Omega\times H\rightarrow H$ such that
	\begin{equation}
		\label{eq:4.40}
		\Xi_{\delta,\epsilon}=p_{\delta, \epsilon}(t; \tau, \omega, p_{\delta,\epsilon}(\tau)).
	\end{equation}
	Clearly, there is a mapping $\Psi_{\delta,\epsilon}:\Bbb R^+\times\Omega\times H\rightarrow H$ satisfying
	\begin{equation}
		\begin{aligned}
			\label{eq:4.41}
			\Psi_{\delta,\epsilon}&=u_{\delta,\epsilon}(t;0,\omega,u_{\delta,\epsilon}(\tau))\\
			&=p_{\delta, \epsilon}(t;0, \omega,u_0-x_{\delta,\epsilon}^*(\omega))+x_{\delta,\epsilon}^*(\theta_t\omega)\quad\forall u_0\in H,\quad\forall\omega\in\Omega.
		\end{aligned}
	\end{equation}
	\par 
	\textbf{4.3. Convergence of attractors.} In what follows, we prove the convergence of the random attractors of equation \eqref{eq:4.3} to those of equation \eqref{eq:4.2} as $\delta\rightarrow0^+$ and $\epsilon\rightarrow0^+$. To that end, we first show existence of random attractors of equation \eqref{eq:4.1} and equation \eqref{eq:4.3},  and show these random attractors of equation \eqref{eq:4.3} converge to the ones of equation \eqref{eq:4.2} as $\delta\rightarrow0^+$ and $\epsilon\rightarrow0^+$.\par
	Before proving the convergence of the attractors, it is necessary to study the approximation of stationary noises.
	\par 
	\begin{lem}\label{lem:4.5} Under the further assuming that Hypothesis 1.2 hold, for almost all $\omega\in\Omega$, $t\leq0$,
	\begin{equation}
		\lim\limits_{\substack{\delta\rightarrow0^+\\\epsilon\rightarrow0^+}}|x_{\delta,\epsilon}^*(\theta_{t}\omega)-x_{0,\epsilon}^*(\theta_{t}\omega)|=0,
		\nonumber
	\end{equation}
	and
	\begin{equation}
		\lim_{\epsilon\rightarrow0^+}|x_{0,\epsilon}^*(\theta_{t}\omega)|=0.
		\nonumber
	\end{equation}
		\end{lem}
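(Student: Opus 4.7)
My plan is to establish both limits pathwise by combining the explicit prefactor $\epsilon$ with integration-by-parts rewritings of the two Ornstein--Uhlenbeck-type integrals, so that everything reduces to controlling a deterministic bracket that is bounded uniformly in small $\delta$.

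For the second limit, I would apply the It\^o product rule to $s \mapsto e^{-\eta(t-s)}\omega(s)$ on $[a,t]$ and send $a \to -\infty$; the boundary term $e^{-\eta(t-a)}\omega(a)$ vanishes because $|\omega(a)| \le C_\omega(|a|+1)$ by \eqref{eq:1.6}, while $e^{-\eta(t-a)}$ decays exponentially. This yields the pathwise identity
\begin{equation*}
\int_{-\infty}^{t} e^{-\eta(t-s)}\,dW_s = \omega(t) - \eta\int_{-\infty}^{t} e^{-\eta(t-s)}\omega(s)\,ds,
\end{equation*}
from which $|x_{0,\epsilon}^{*}(\theta_t\omega)| \le \epsilon\,|\phi|\,C(\omega,t)$ with $C(\omega,t)<\infty$ almost surely, so that $\lim_{\epsilon\to 0^{+}}|x_{0,\epsilon}^{*}(\theta_t\omega)|=0$.

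For the first limit, I set $F_\delta(s):=\int_0^s \zeta_\delta(\theta_r\omega)\,dr$. Classical integration by parts (the boundary contribution at $-\infty$ being killed by the exponential weight against the at-most polynomial growth of $F_\delta$ coming from \eqref{eq:1.8}) gives
\begin{equation*}
\int_{-\infty}^{t} e^{-\eta(t-s)}\zeta_\delta(\theta_s\omega)\,ds = F_\delta(t)-\eta\int_{-\infty}^{t} e^{-\eta(t-s)}F_\delta(s)\,ds.
\end{equation*}
Subtracting the two pathwise representations,
\begin{equation*}
x_{\delta,\epsilon}^{*}(\theta_t\omega)-x_{0,\epsilon}^{*}(\theta_t\omega) = \epsilon\phi\Bigl[(F_\delta(t)-\omega(t))-\eta\int_{-\infty}^{t} e^{-\eta(t-s)}(F_\delta(s)-\omega(s))\,ds\Bigr].
\end{equation*}
The first term in the bracket tends to zero as $\delta \to 0^{+}$ by \eqref{eq:1.9} applied at the fixed point $s=t$, hence is bounded for small $\delta$. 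For the tail integral, I would split $(-\infty,t] = (-\infty,-T]\cup[-T,t]$ with $T$ large: \eqref{eq:1.9} supplies $\sup_{\delta\in(0,\delta_0]}\sup_{|s|\le T}|F_\delta(s)-\omega(s)|<\infty$, while Hypothesis 1.2 yields, for any prescribed $\mu>0$, an estimate $|F_\delta(s)-\omega(s)|\le \mu|s|$ for $|s|\ge T$ uniformly in $\delta\in(0,\tilde\delta\,]$, which is integrable against $e^{-\eta(t-s)}$. These two facts bound the bracket by a constant $C(t,\omega)$ independent of $\delta\in(0,\delta_0]$ for some $\delta_0\le\tilde\delta$; multiplying by $\epsilon$ and letting $\delta,\epsilon\to 0^{+}$ concludes.

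The main obstacle will be the uniform control, in small $\delta$, of the tail integral $\int_{-\infty}^{-T} e^{-\eta(t-s)}(F_\delta(s)-\omega(s))\,ds$: the direct bound from \eqref{eq:1.8} is not uniform in $\delta$ since $K_\delta\to\infty$, and it is precisely Hypothesis 1.2 that converts this non-uniformity into a uniform sub-linear growth rate strong enough to close the argument.
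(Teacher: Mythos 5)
Your argument is correct and follows essentially the same route as the paper's proof: both rest on the integration-by-parts identity $x_{\delta,\epsilon}^*(\theta_t\omega)-x_{0,\epsilon}^*(\theta_t\omega)=\epsilon\phi\bigl[F_\delta(t)-\omega(t)\bigr]+\epsilon\eta\int_{-\infty}^{t}e^{-\eta(t-s)}\phi\bigl[F_\delta(s)-\omega(s)\bigr]\,ds$ (with $F_\delta(s):=\int_0^s\zeta_\delta(\theta_r\omega)\,dr$), the far tail being controlled by the uniform sublinear growth supplied by Hypothesis 1.2 and the compact part by \eqref{eq:1.9}, while the second limit comes from the same integration by parts combined with \eqref{eq:1.6}. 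The only divergence is at the final step: you bound the bracket uniformly in small $\delta$ and let the prefactor $\epsilon$ close the argument, whereas the paper makes the sublinear rate $\varepsilon_0$ arbitrarily small so that the bracket itself vanishes as $\delta\to0^+$ for fixed $\epsilon$ --- a sharper conclusion that is what is actually used later (e.g. in \eqref{eq:4.55}), although your weaker version does suffice for the joint limit asserted in the lemma.
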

	\begin{proof}
		By Hypothesis 1.2, for any $\varepsilon_0>0$, there exists a $T_1 = T_1(\omega)>0$ such that for $t\leq-T_1$ and $\delta\in(0, \tilde{\delta}_1]$, we get that
		\begin{equation}
			\label{eq:4.42}
			\left|\frac{1}{t}\left[\int_{0}^{t}\zeta_{\delta}(\theta_{s}\omega)ds-\omega(t)\right]\right|<\varepsilon_0.
		\end{equation}
		Thus there exists a $\delta_0\in(0, \tilde{\delta}_1)$ such that for any $t\leq0$, we have
		\begin{equation}
			\left|\int_{0}^{t}\zeta_{\delta}(\theta_{s}\omega)ds-\omega(t)\right|<\varepsilon_0.
			\nonumber
		\end{equation}
		For all $\delta\in(0, 1]$, using integration by part, we obtain 
		\begin{equation}
			\begin{aligned}
				x_{\delta,\epsilon}^*(\theta_{t}\omega)-x_{0,\epsilon}^*(\theta_{t}\omega)&=\epsilon\int_{-\infty}^{t}e^{-\eta(t-s)}\phi d\left[\int_{0}^{s}\zeta_{\delta}(\theta_{r}\omega)dr-\omega(s)\right]\\
				&=\epsilon\phi\left[\int_{0}^{t}\zeta_{\delta}(\theta_{r}\omega)dr-\omega(t)\right]+\epsilon\int_{-\infty}^{t}e^{-\eta(t-s)}\eta\phi \left[\int_{0}^{s}\zeta_{\delta}(\theta_{r}\omega)dr-\omega(s)\right]ds.
				\nonumber
			\end{aligned}
		\end{equation}
		By \eqref{eq:4.42}, for $t\in[-T_1,0]$, we have
		\begin{equation}
			\begin{aligned}
				\label{eq:4.43}
				&\left|\int_{-\infty}^{t}e^{-\eta(t-s)}\eta\phi \left[\int_{0}^{s}\zeta_{\delta}(\theta_{r}\omega)dr-\omega(s)\right]ds\right|\\
				&\leq\varepsilon_0\eta\|\phi\|\int_{-\infty}^{-T_2}e^{-\eta(t-s)}(-s)ds+\varepsilon_0\sup_{s\in[-T_2,0]}\left|\int_{0}^{s}\zeta_{\delta}(\theta_{r}\omega)dr-\omega(s)\right|\int_{-T_2}^{t}e^{-\eta(t-s)}\eta\|\phi\|ds\\
				&\leq\varepsilon_0\|\phi\|\left[\eta^{-1}\Gamma(2)+|t|\times\Gamma(1)\right]+\|\phi\|\sup_{s\in[-T_2,0]}\left|\int_{0}^{s}\zeta_{\delta}(\theta_{r}\omega)dr-\omega(s)\right|\times\Gamma(1).
			\end{aligned}
		\end{equation}
		For $t<-T_1$, by \eqref{eq:4.42} we can obtain that
		\begin{equation}
			\begin{aligned}
				\label{eq:4.44}
				&\left|\int_{-\infty}^{t}e^{-\eta(t-s)}\eta\phi \left[\int_{0}^{s}\zeta_{\delta}(\theta_{r}\omega)dr-\omega(s)\right]ds\right|\leq\varepsilon_0\|\phi\|\left[\eta^{-1}\Gamma(2)+|t|\times\Gamma(1)\right].
			\end{aligned}
		\end{equation}
		Then we obtain
		\begin{equation}
			\begin{aligned}
				\label{eq:4.45}
				|x_{\delta,\epsilon}^*(\theta_{t}\omega)-x_{0,\epsilon}^*(\theta_{t}\omega)|&\leq\epsilon\|\phi\|\left|\int_{0}^{t}\zeta_{\delta}(\theta_{r}\omega)dr-\omega(t)\right|+2\epsilon\varepsilon_0\|\phi\|\left[\eta^{-1}\Gamma(2)+|t|\times\Gamma(1)\right]\\
				&+\epsilon\|\phi\|\times\sup_{r\in[-T_2,0]}\left|\int_{0}^{t}\zeta_{\delta}(\theta_{r}\omega)ds-\omega(t)\right|\times\Gamma(1).
			\end{aligned}
		\end{equation}
		Taking first $\varepsilon_0\rightarrow0^+$, then letting $\delta\rightarrow0^+$ and $\epsilon\rightarrow0^+$, we can find the first result is true. 
		By integration by parts and \eqref{eq:1.6}, we obtain that
		\begin{equation}
			\begin{aligned}
				\label{eq:4.46}
				|x_{0,\epsilon}^*(\theta_{t}\omega)|&=\epsilon\left|\phi W_t-\int_{-\infty}^{t}e^{-\eta(t-\tau)}\eta\phi W_{\tau}d\tau\right|\\
				&\leq\epsilon\|\phi\| C_{\omega}\left[|t|+1+\Gamma(1)+\eta^{-1}\Gamma(2)+|t|\times\Gamma(1)\right]\rightarrow 0,\quad as\ \epsilon\rightarrow0^+.
			\end{aligned}
		\end{equation}
		The proof is completed.
	\end{proof}

	\par 
	Next, we show the existence of $\mathcal{D}_F$-pullback absorbing stes of equation \eqref{eq:4.4} and equation \eqref{eq:4.3} in $H$.
	\par 
	\begin{thm}\label{thm:4.5} Suppose that $a$ is globally Lipschitz and fulfills \eqref{eq:2.1}, $f\in C(\Bbb R)$ satisfies \eqref{eq:2.2} and \eqref{eq:2.4} with $p=2$ and $\beta=C_f$, $\phi\in V \cap H^2(\mathcal{O})$, and $l\in L^2(\mathcal{O})$. Also, let m$\lambda_1> 4C_f$. Then, there exists a $\epsilon_0\in(0,1)$ such that for all $\epsilon\in(0,\epsilon_0]$, (4.1) has a random $\mathcal{D}_F$-attractor $\mathcal{A}_{0,\epsilon}(\omega )$ (where $\mathcal{D}_F$ is the universe of fixed bounded sets) for the dynamical system $\Psi_{0,\epsilon}(t, \omega, u_0)$. In addition, the $\mathcal{D}_F$-pullback absorbing set $B_{0,\epsilon}=\{B_{0,\epsilon}(\omega):\omega\in\Omega \}\in\mathcal{D}$ in $H$ is given by
	\begin{equation}
		B_{0,\epsilon}(\omega)=\{u\in H :|u|^2\leq\lambda_1^{-1}R_{0,\epsilon}(\omega)\}
		\nonumber
	\end{equation}
	with
	\begin{equation}
		\begin{aligned}
			R_{0,\epsilon}(\omega)=&2|x_{0,\epsilon}^*(\omega)|^2+\frac{8C_f|\mathcal{O}|}{m(m\lambda_1-4C_f)}+\frac{4\lambda_1C_f^2|
				\mathcal{O}|}{(m\lambda_1-4C_f)^2}\\
			&+\frac{4+2\lambda_1C_fm+m\lambda_1-4C_f+2C_f|\mathcal{O}|}{m(m\lambda_1-4C_f)}\\
			&+(4m^{-1}+2\lambda_1C_f)\int_{-\infty}^{0}e^{(m\lambda_1-4C_f)s}\left(\frac{|x_{0,\epsilon}^*(\theta_{s}\omega)|^2}{\lambda_1C_f}+\frac{2C_f|x_{0,\epsilon}^*(\theta_{s}\omega)|^2}{\lambda_1}+\frac{2\tilde{m}^2}{m}\right)ds\\
			&+2\int_{-1}^{0}e^{(m\lambda_1-4C_f)s}\left(\lambda_1C_f|\mathcal{O}|+(C_f\lambda_1+\lambda_1C_f^{-1})|x_{0,\epsilon}^*(\theta_{s}\omega)|^2+\frac{\tilde{m}^2}{m}\right)ds.
			\nonumber
		\end{aligned}
	\end{equation}
		\end{thm}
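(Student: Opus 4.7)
The plan is to work with the conjugate equation \eqref{eq:4.35}, derive a dissipative energy estimate for $p_{0,\epsilon}$ in $H$, convert it back to a bound on $u_{0,\epsilon}$, and then invoke the standard abstract theorem (cf.\ the argument behind Theorem \ref{thm:3.5}) to extract the random attractor. First I would take the inner product of \eqref{eq:4.35} with $p_{0,\epsilon}(t)$ in $H$. The diffusion term splits as $\langle a(l(p_{0,\epsilon}+x^*_{0,\epsilon}))\Delta p_{0,\epsilon},p_{0,\epsilon}\rangle + \langle a(l(\cdot))\Delta x^*_{0,\epsilon},p_{0,\epsilon}\rangle$; the first piece gives $-a(l(\cdot))\|p_{0,\epsilon}\|^2 \le -m\|p_{0,\epsilon}\|^2$ by \eqref{eq:2.1}, while the second piece is controlled by Young's inequality using the upper bound $\widetilde m$ and $|\Delta\phi|$ picked up from $x^*_{0,\epsilon}$. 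For the reaction term I would write $f(p_{0,\epsilon}+x^*_{0,\epsilon})=f(p_{0,\epsilon}+x^*_{0,\epsilon})-f(0)+f(0)$ and use \eqref{eq:2.4} with $p=2,\beta=C_f$ together with \eqref{eq:2.2} and Young's inequality to bound $(f(p_{0,\epsilon}+x^*_{0,\epsilon}),p_{0,\epsilon})$ by $2C_f|p_{0,\epsilon}|^2+\lambda_1C_f|x^*_{0,\epsilon}|^2+C_f|\mathcal{O}|$ (up to harmless constants). Finally the forcing $x^*_{0,\epsilon}(\theta_t\omega)$ contributes a term absorbed in part into $|p_{0,\epsilon}|^2$ and in part into a pathwise remainder.

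Combining these estimates and applying Poincaré's inequality $|p_{0,\epsilon}|^2\le\lambda_1^{-1}\|p_{0,\epsilon}\|^2$, I would arrive at the basic differential inequality
\begin{equation}
\frac{d}{dt}|p_{0,\epsilon}|^2+(m\lambda_1-4C_f)|p_{0,\epsilon}|^2\le G(\theta_t\omega),
\nonumber
\end{equation}
where $G(\theta_t\omega)$ is an explicit function of $|x^*_{0,\epsilon}(\theta_t\omega)|^2$, $\|\phi\|^2$, $|\Delta\phi|^2$ and $|\mathcal{O}|$. The standing assumption $m\lambda_1>4C_f$ is exactly what makes the coefficient in front of $|p_{0,\epsilon}|^2$ positive, so Gronwall gives a pullback estimate: for any $u_0$ in a fixed bounded set $D\in\mathcal D_F$ and $\tau\to-\infty$,
\begin{equation}
|p_{0,\epsilon}(0;\tau,\omega,u_0-x^*_{0,\epsilon}(\theta_\tau\omega))|^2\le e^{(m\lambda_1-4C_f)\tau}|u_0-x^*_{0,\epsilon}(\theta_\tau\omega)|^2+\int_{-\infty}^{0}e^{(m\lambda_1-4C_f)s}G(\theta_s\omega)\,ds.
\nonumber
\end{equation}
The first term vanishes as $\tau\to-\infty$ by temperedness of $x^*_{0,\epsilon}$ (a consequence of \eqref{eq:1.15}), and the integral is finite since $x^*_{0,\epsilon}(\theta_s\omega)$ is sub-exponential in $s$. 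Reverting to $u_{0,\epsilon}=p_{0,\epsilon}+x^*_{0,\epsilon}(\theta_t\omega)$ and using $|u|^2\le 2|p|^2+2|x^*_{0,\epsilon}|^2$ produces a bound of precisely the form declared for $\lambda_1^{-1}R_{0,\epsilon}(\omega)$, confirming that $B_{0,\epsilon}(\omega)$ is a pullback absorbing set in $H$.

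To finish, I would check that $B_{0,\epsilon}\in\mathcal D$ (temperedness of $R_{0,\epsilon}(\theta_t\omega)$ in $t$), which follows from \eqref{eq:1.15} applied to $x^*_{0,\epsilon}$ and a change of variables inside the integral that defines $R_{0,\epsilon}$. Asymptotic compactness of $\Psi_{0,\epsilon}$ is then obtained exactly as in Lemma \ref{lem:3.4}: the bound on $p_{0,\epsilon}$ in $L^\infty(\tau,0;H)\cap L^2(\tau,0;V)$ together with the equation \eqref{eq:4.35} yields boundedness of $\partial_t p_{0,\epsilon}$ in a dual norm, and the compact embedding $V\hookrightarrow H$ plus the Aubin–Lions lemma give compactness of the trajectory end-points in $H$. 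Existence and uniqueness of the $\mathcal D_F$-attractor $\mathcal A_{0,\epsilon}(\omega)$ then follow from the abstract criterion used for Theorem \ref{thm:3.5} (i.e.\ Proposition 2.10 in \cite{B. Wang}).

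The main obstacle I anticipate is handling the nonlocal term $a(l(\cdot))\Delta x^*_{0,\epsilon}$ cleanly: unlike in the standard reaction–diffusion case, the diffusion coefficient now depends on the full state $p_{0,\epsilon}+x^*_{0,\epsilon}$ and multiplies a noise-driven Laplacian $\Delta\phi$, so one has to use both bounds in \eqref{eq:2.1} (the lower bound $m$ to keep $-m\|p_{0,\epsilon}\|^2$ on the left-hand side and the upper bound $\widetilde m$ to control the cross term on the right). Threading these two bounds through while keeping the dissipative coefficient $m\lambda_1-4C_f$ intact, and then checking that the pathwise constants in $R_{0,\epsilon}(\omega)$ really do take the explicit form stated in the theorem, is the delicate bookkeeping step.
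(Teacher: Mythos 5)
Your overall strategy --- conjugating to \eqref{eq:4.35}, testing with $p_{0,\epsilon}$, using the lower bound $m$ in \eqref{eq:2.1} for dissipation and the upper bound $\widetilde m$ for the $\Delta\phi$ cross term, and closing with Gronwall and temperedness of $x^*_{0,\epsilon}$ --- is exactly the template the paper uses (it defers the proof of this theorem to \cite[Theorem 9]{J. Xu}, but carries out the identical computation in full for the $\delta>0$ analogue in Theorem \ref{thm:4.6}, Steps 1--2, and your $H$-estimate reproduces \eqref{eq:4.48}--\eqref{eq:4.52}). Where you genuinely diverge is the compactness step: you propose the Ball/Aubin--Lions energy method on trajectories as in Lemma \ref{lem:3.4}, whereas the paper's route is to test the conjugate equation a second time with $-\Delta p_{0,\epsilon}$, obtain a bounded pullback absorbing set in $V$, and get a compact absorbing set in $H$ from the compact embedding $V\hookrightarrow H$. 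For $p=2$ (globally Lipschitz $f$) both mechanisms are legitimate, and yours avoids the extra regularity computation; the paper's buys a compact absorbing ball outright and, more importantly, the explicit radius.

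That last point is where your plan has a concrete gap. The stated $R_{0,\epsilon}(\omega)$ is not the radius of the $H$-absorbing ball coming from your single $H$-estimate: the coefficient $(4m^{-1}+2\lambda_1C_f)$, the term $2\int_{-1}^{0}e^{(m\lambda_1-4C_f)s}\bigl(\lambda_1C_f|\mathcal{O}|+(C_f\lambda_1+\lambda_1C_f^{-1})|x_{0,\epsilon}^*(\theta_{s}\omega)|^2+\tilde{m}^2/m\bigr)ds$, and the overall prefactor $\lambda_1^{-1}$ in $|u|^2\leq\lambda_1^{-1}R_{0,\epsilon}(\omega)$ all arise because $R_{0,\epsilon}$ is a bound on $\|u_{0,\epsilon}(0)\|^2$ in $V$ (obtained by the $-\Delta p_{0,\epsilon}$ test and a double integration over $[-1,0]$, cf.\ \eqref{eq:4.54} and the computation of $\tilde r^2_{\delta,\epsilon}$), transferred to $H$ via Poincar\'e. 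Your claim that reverting to $u_{0,\epsilon}=p_{0,\epsilon}+x^*_{0,\epsilon}$ ``produces a bound of precisely the form declared'' therefore does not go through as written: you would get a different (purely $H$-level) radius. Since the specific formula for $R_{0,\epsilon}$ is used quantitatively downstream (the convergence $R_{\delta,\epsilon}\to R_{0,\epsilon}$ in Theorem \ref{thm:4.6} and the bounds in Lemma \ref{lem:4.11}), the $V$-estimate is not an optional refinement here; you need to add it to match the statement.
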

	\begin{proof}
		The proof is similar to \cite[Theorem 9]{J. Xu} and we omit the details here.
	\end{proof}
	\par 
	\begin{thm}\label{thm:4.6} Assume the conditions in Theorem \ref{thm:4.5} hold. Then, there exists $\tilde{\delta} >0$ and $\epsilon_0\in(0,1]$ such that for all $0<\delta<\tilde{\delta}$, $\epsilon\in(0,\epsilon_0]$, \eqref{eq:4.3} has a random $\mathcal{D}_F$ -attractor $\mathcal{A}_{\delta,\epsilon}(\omega )$
	associated to the dynamical system $\Psi_{\delta,\epsilon}(t, \omega, u_0)$. In addition, the $\mathcal{D}_F$ -pullback absorbing set $B_{\delta,\epsilon}:=\{ B_{\delta,\epsilon}(\omega ): \omega\in\Omega\}\in\mathcal{D}$ in $H$ is given by
	\begin{equation}
		B_{\delta,\epsilon}(\omega)=\{u\in H :|u|^2\leq\lambda_1^{-1}R_{\delta,\epsilon}(\omega)\},
		\nonumber
	\end{equation}
	with
	\begin{equation}
		\begin{aligned}
			R_{\delta,\epsilon}(\omega)=&2|x_{\delta,\epsilon}^*(\omega)|^2+\frac{8C_f|\mathcal{O}|}{m(m\lambda_1-4C_f)}+\frac{4\lambda_1C_f^2|
				\mathcal{O}|}{(m\lambda_1-4C_f)^2}\\
			&+\frac{4+2\lambda_1C_fm+m\lambda_1-4C_f+2C_f|\mathcal{O}|}{m(m\lambda_1-4C_f)}\\
			&+(4m^{-1}+2\lambda_1C_f)\int_{-\infty}^{0}e^{(m\lambda_1-4C_f)s}\left(\frac{|x_{\delta,\epsilon}^*(\theta_{t}\omega)|^2}{\lambda_1C_f}+\frac{2C_f|x_{\delta,\epsilon}^*(\theta_t\omega)|^2}{\lambda_1}+\frac{2\tilde{m}^2}{m}\right)ds\\
			&+2\int_{-1}^{0}e^{(m\lambda_1-4C_f)s}\left(\lambda_1C_f|\mathcal{O}|+(C_f\lambda_1+\lambda_1C_f^{-1})|x_{\delta,\epsilon}^*(\theta_{s}\omega)|^2+\frac{\tilde{m}^2}{m}\right)ds.
			\nonumber
		\end{aligned}
	\end{equation}
	Moreover, for $\omega\in\Omega$,
	\begin{equation}
		\label{eq:4.47}
		\lim\limits_{\substack{\delta\rightarrow0^+}}R_{\delta,\epsilon}(\omega)=R_{0,\epsilon}(\omega),
	\end{equation}
	where $R_{0,\epsilon}(\omega)$ are given in Theorem \ref{thm:4.5}.
		\end{thm}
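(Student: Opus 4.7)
The plan is to mirror the strategy of Theorem 4.5, applying it to the random conjugate equation \eqref{eq:4.39} in place of \eqref{eq:4.35}, and then to transfer the results back to $u_{\delta,\epsilon}$ via the conjugation \eqref{eq:4.38}, \eqref{eq:4.41}. Concretely, I would first test \eqref{eq:4.39} against $p_{\delta,\epsilon}$ in $H$, use the bounds \eqref{eq:2.1}--\eqref{eq:2.4} (with $p=2$, $\beta=C_f$), the Poincar\'e and Young inequalities, and the spectral-gap assumption $m\lambda_1>4C_f$, to obtain a differential inequality of the form
\begin{equation*}
\frac{d}{dt}|p_{\delta,\epsilon}|^2+(m\lambda_1-4C_f)|p_{\delta,\epsilon}|^2+\tfrac{m}{2}\|p_{\delta,\epsilon}\|^2
\le H_{\delta,\epsilon}(\theta_t\omega),
\end{equation*}
where $H_{\delta,\epsilon}(\theta_t\omega)$ collects the inhomogeneous terms involving $x_{\delta,\epsilon}^*(\theta_t\omega)$, $f$-forcing and the coefficient $\tilde m$. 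A Gronwall step with initial time $\tau-t$ and $\omega$ replaced by $\theta_{-t}\omega$, followed by the change of variable $u_{\delta,\epsilon}=p_{\delta,\epsilon}+x_{\delta,\epsilon}^*(\theta_t\omega)$ and the elementary bound $|u_{\delta,\epsilon}|^2\le 2|p_{\delta,\epsilon}|^2+2|x_{\delta,\epsilon}^*(\theta_t\omega)|^2$, produces exactly the constant $R_{\delta,\epsilon}(\omega)$ written in the statement.

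Next I would verify that $B_{\delta,\epsilon}=\{B_{\delta,\epsilon}(\omega):\omega\in\Omega\}$ belongs to $\mathcal{D}_F$ and is $\mathcal{D}_F$-pullback absorbing. Temperedness reduces to checking that $e^{\gamma t}R_{\delta,\epsilon}(\theta_t\omega)\to 0$ as $t\to-\infty$ for every $\gamma>0$; this follows from property \eqref{eq:4.37} (in particular $|x_{\delta,\epsilon}^*(\theta_s\omega)|$ grows sublinearly in $s$) together with the exponential weight $e^{(m\lambda_1-4C_f)s}$ in the integrals defining $R_{\delta,\epsilon}$. Asymptotic compactness in $H$ is inherited from the $L^2(\tau-t,\tau;V)$ bound supplied by the same energy estimate via the compact embedding $V\hookrightarrow H$, exactly as in Lemma \ref{lem:3.4}. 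With absorbing set and asymptotic compactness in hand, the existence and uniqueness of $\mathcal{A}_{\delta,\epsilon}(\omega)$ follows from the standard abstract result (the same reference used to prove Theorem \ref{thm:3.5}).

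For the convergence \eqref{eq:4.47}, the structure of $R_{\delta,\epsilon}(\omega)$ shows that it depends on $\delta$ only through the values $|x_{\delta,\epsilon}^*(\theta_s\omega)|^2$ appearing in the boundary term and under the two integrals $\int_{-\infty}^0$ and $\int_{-1}^0$. By Lemma \ref{lem:4.5}, $x_{\delta,\epsilon}^*(\theta_s\omega)\to x_{0,\epsilon}^*(\theta_s\omega)$ pointwise, which handles the boundary term and, with ordinary continuity, the integral on $[-1,0]$. For the integral over $(-\infty,0)$ I would invoke the dominated convergence theorem: the pointwise convergence from Lemma \ref{lem:4.5} combined with the uniform temperedness \eqref{eq:4.37} yields, for any fixed $\omega$, a bound $|x_{\delta,\epsilon}^*(\theta_s\omega)|^2\le C(\omega)(1+|s|)^2$ valid uniformly for $\delta\in(0,\bar\delta]$, and $(1+|s|)^2e^{(m\lambda_1-4C_f)s}$ is integrable on $(-\infty,0]$ because $m\lambda_1>4C_f$; this supplies the required dominating function.

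The main obstacle I anticipate is the last step: producing a genuinely $\delta$-uniform, integrable majorant for $|x_{\delta,\epsilon}^*(\theta_s\omega)|^2$. The direct bound coming from Hypothesis 1.1 is useless because it involves $K_\delta\to\infty$, so one must argue from \eqref{eq:4.37} instead, translating the uniform-in-$\delta$ sublinear growth at $\pm\infty$ together with continuity on compact intervals (Hypothesis 1.3(1)) into a single $\delta$-independent envelope. Once this is in place, the rest of the argument is a straightforward adaptation of Theorem \ref{thm:4.5} and Lemma \ref{lem:4.5}.
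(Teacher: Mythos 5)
Your overall strategy matches the paper's for the two main ingredients: the $H$-energy estimate on the conjugated equation \eqref{eq:4.39} leading to a pullback absorbing radius, and the proof of \eqref{eq:4.47} by combining the pointwise convergence of Lemma \ref{lem:4.5} with the uniform-in-$\delta$ sublinear growth \eqref{eq:4.37} (the paper makes this concrete by choosing $r<0$ and $\bar\delta>0$ with $|x_{\delta,\epsilon}^*(\theta_t\omega)|\le|t|$ for $t\le r$, splitting $\int_{-\infty}^0$ at $r$, and applying dominated convergence with the majorant $e^{(m\lambda_1-4C_f)t}|t|^2$ — exactly the envelope you describe). Where you diverge is in how compactness is obtained, and this matters for the form of $R_{\delta,\epsilon}(\omega)$. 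The paper does \emph{not} get the stated radius from the $H$-estimate alone: after the Gronwall step on $|p_{\delta,\epsilon}|^2$ (its \eqref{eq:4.52}), it performs a second energy estimate by testing \eqref{eq:4.39} against $-\Delta p_{\delta,\epsilon}$ (its \eqref{eq:4.54}) and integrates twice over $[-1,0]$ to produce a bounded absorbing set in $V$; the quantity $R_{\delta,\epsilon}(\omega)$ in the statement is precisely this $V$-bound (hence the extra term $2\int_{-1}^{0}e^{(m\lambda_1-4C_f)s}\bigl(\lambda_1C_f|\mathcal{O}|+(C_f\lambda_1+\lambda_1C_f^{-1})|x_{\delta,\epsilon}^*(\theta_{s}\omega)|^2+\tilde m^2/m\bigr)ds$, which cannot come from the $L^2$-estimate), and the factor $\lambda_1^{-1}$ in the definition of $B_{\delta,\epsilon}(\omega)$ is Poincar\'e applied to that $V$-bound. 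Compactness of the absorbing set in $H$ then follows directly from the compact embedding $V\hookrightarrow H$, with no need for the Aubin--Lions argument of Lemma \ref{lem:3.4} that you invoke. Your Aubin--Lions route would also yield asymptotic compactness and hence an attractor, so the existence claim survives, but as written your first step does not ``produce exactly the constant $R_{\delta,\epsilon}(\omega)$ written in the statement''; to recover the stated absorbing set you need the $-\Delta$-testing step. The convergence argument in your last two paragraphs is essentially the paper's Step 3 and is sound.
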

	\begin{proof} We prove the theorem in three steps.
		\par 
		\textbf{Step 1.} Multiplying \eqref{eq:4.39} by $p_{\delta, \epsilon}(t) := p_{\delta,\epsilon}(t; \tau, \omega, p_{\delta,\epsilon}(\tau))$ in $H$, we can get 
		\begin{equation}
			\begin{aligned}
				\label{eq:4.48}
				\frac{d}{dt}|p_{\delta,\epsilon}(t)|^2+2m\|p_{\delta,\epsilon}(t)\|^2&\leq2(f(p_{\delta,\epsilon}(t)+x_{\delta,\epsilon}^*(\theta_{t}\omega)),p_{\delta,\epsilon}(t))+2(x_{\delta,\epsilon}^*(\theta_{t}\omega),p_{\delta,\epsilon}(t))\\
				&+2\tilde{m}\|p_{\delta,\epsilon}(t)\|.
			\end{aligned}
		\end{equation}
		Next, we estimate each term on the right-hand side of the inequality. For the first term of the right-hand side of \eqref{eq:4.48}, by \eqref{eq:2.2}-\eqref{eq:2.4} and Young inequality, we get 
		\begin{equation}
			\begin{aligned}
				\label{eq:4.49}
				2(f(p_{\delta,\epsilon}(t)+x_{\delta,\epsilon}^*(\theta_{t}\omega)),p_{\delta,\epsilon}(t))\leq\frac{C_f}{\alpha_1}|\mathcal{O}|+\frac{C_f}{\alpha_1\lambda_1}|x_{\delta,\epsilon}^*(\theta_{t}\omega)|^2+2C_f(\alpha_1+1)|p_{\delta,\epsilon}(t)|^2.
			\end{aligned}
		\end{equation} 
		By Young inequality, we have
		\begin{equation}
			\begin{aligned}
				\label{eq:4.50}
				2(x_{\delta,\epsilon}^*(\theta_{t}\omega),p_{\delta,\epsilon}(t))\leq\frac{1}{\alpha_2\lambda_1}|x_{\delta,\epsilon}^*(\theta_t\omega)|^2+\alpha_2|p_{\delta,\epsilon}(t)|^2,
			\end{aligned}
		\end{equation}
		and 
		\begin{equation}
			\label{eq:4.51}
			2\tilde{m}\|p_{\delta,\epsilon}(t)\|\leq\frac{\tilde{m}^2}{\alpha_3}+\alpha_3\|p_{\delta,\epsilon}(t)\|^2.
		\end{equation}
		By \eqref{eq:4.49}-\eqref{eq:4.51}, and letting $\alpha_1=\frac{1}{2}$, $\alpha_2=C_f$, $\alpha_3=\frac{m}{2}$, we have
		\begin{equation}
			\begin{aligned}
				\label{eq:4.52}
				\frac{d}{dt}|p_{\delta,\epsilon}(t)|^2&\leq-(m\lambda_1-4C_f)|p_{\delta,\epsilon}(t)|^2+2C_f|\mathcal{O}|\\
				&+\left(\frac{|x_{\delta,\epsilon}^*(\theta_{t}\omega)|^2}{\lambda_1C_f}+\frac{2C_f|x_{\delta,\epsilon}^*(\theta_{t}\omega)|^2}{\lambda_1}+\frac{2\tilde{m}^2}{m}\right)-\frac{m}{2}\|p_{\delta,\epsilon}(t)\|^2.
			\end{aligned}
		\end{equation}
		By Gronwall's inequality in $[t_0, -1]$ with $t_0\leq-1$, for $\omega\in\Omega$, we get from \eqref{eq:4.52} that
		\begin{equation}
			\begin{aligned}
				|p_{\delta,\epsilon}(-1)|^2&\leq e^{(m\lambda_1-4C_f)(t_0+1)}|p_{\delta,\epsilon}(t_0)|^2\\
				&+\int_{t_0}^{-1}e^{(m\lambda_1-4C_f)(t+1)}\left(2C_f|\mathcal{O}|+\frac{|x_{\delta,\epsilon}^*(\theta_{t}\omega)|^2}{\lambda_1C_f}+\frac{2C_f|x_{\delta,\epsilon}^*(\theta_{t}\omega)|^2}{\lambda_1}+\frac{2\tilde{m}^2}{m}\right)dt.
				\nonumber		
			\end{aligned}
		\end{equation}
		Therefore, for a given $B_{\delta,\epsilon}(0, \rho_{\delta,\epsilon})\subset H$, there exists $T(\omega, \rho_{\delta,\epsilon})\leq-1$, such that for all $t_0\leq T(\omega,\rho_{\delta,\epsilon})$ and for all $u_0\in B_{\delta,\epsilon}(0,\rho_{\delta,\epsilon})$,
		\begin{equation}
			|p_{\delta,\epsilon}(-1; t_0, \omega, u_{\delta,\epsilon}(t_0)-x_{\delta,\epsilon}(\theta_{t_0}(\omega))|^2\leq r_{\delta,\epsilon}^2(\omega)
			\nonumber
		\end{equation}
		whith
		\begin{equation}
			r_{\delta,\epsilon}^2=1+\frac{2C_f|\mathcal{O}|}{m\lambda_1-4C_f}+\int_{-\infty}^{-1}e^{-(m\lambda_1-4C_f)(t+1)}\left(\frac{|x_{\delta,\epsilon}^*(\theta_{t}\omega)|^2}{\lambda_1C_f}+\frac{2C_f|x_{\delta,\epsilon}^*(\theta_{t}\omega)|^2}{\lambda_1}+\frac{2\tilde{m}^2}{m}\right)dt.
			\nonumber
		\end{equation}
		In fact, it is enough to choose $T(\omega, \rho_{\delta,\epsilon})$ such that, for any $t_0\leq T(\omega,\rho_{\delta,\epsilon})$, we have
		\begin{equation}
			\begin{aligned}
				e^{(m\lambda_1-4C_f)(t_0+1)}|p_{\delta,\epsilon}(t_0)|^2&=e^{(m\lambda_1-4C_f)(t_0+1)}|u_{\delta,\epsilon}(t_0)-x_{\delta,\epsilon}^*(\theta_{t_0}\omega)|^2\\
				&\leq 2e^{(m\lambda_1-4C_f)(t_0+1)}(|\rho_{\delta,\epsilon}^2|+|x_{\delta,\epsilon}^*(\theta_{t_0}\omega)|^2)\\
				&\leq1.
				\nonumber
			\end{aligned}
		\end{equation}
		\par 
		Next, we need to prove $p_{\delta,\epsilon}\in L^{\infty}([-1,t]; H)\bigcap L^2([-1,t]; V)$ with $t\in[-1,0]$ by energy estimations. From \eqref{eq:4.52}, for $t\in[-1, 0]$, we have
		\begin{equation}
			\begin{aligned}
				|p_{\delta,\epsilon}(t)|^2&\leq e^{-(m\lambda_1-4C_f)(t+1)}|p_{\delta,\epsilon}(-1)|^2+\frac{2C_f|\mathcal{O}|}{m\lambda_1-4C_f}\\
				&+\int_{-1}^{t}e^{-(m\lambda_1-4C_f)(s-t)}\left(\frac{|x_{\delta,\epsilon}^*(\theta_{s}\omega)|^2}{\lambda_1C_f}+\frac{2C_f|x_{\delta,\epsilon}^*(\theta_{s}\omega)|^2}{\lambda_1}+\frac{2\tilde{m}^2}{m}\right)ds.
				\nonumber		
			\end{aligned}
		\end{equation}
		Therefore,
		\begin{equation}
			\begin{aligned}
				\int_{-1}^{0}e^{(m\lambda_1-4C_f)s}\|p_{\delta,\epsilon}(s)\|^2ds&\leq \frac{2}{m}e^{-(m\lambda_1-4C_f)}|p_{\delta,\epsilon}(-1)|^2+\frac{4C_f|\mathcal{O}|}{m(m\lambda_1-4C_f)}\\
				&+\frac{2}{m}\int_{-1}^{0}e^{(m\lambda_1-4C_f)s}\left(\frac{|x_{\delta,\epsilon}^*(\theta_{s}\omega)|^2}{\lambda_1C_f}+\frac{2C_f|x_{\delta,\epsilon}^*(\theta_{s}\omega)|^2}{\lambda_1}+\frac{2\tilde{m}^2}{m}\right)ds.
				\nonumber	
			\end{aligned}
		\end{equation}
		Thus, for a given $B(0, \rho_{\delta,\epsilon})\subset H$ we conclude that there exists $T(\omega, \rho_{\delta,\epsilon})\leq-1$, such that for all $t_0\leq T(\omega, \rho_{\delta,\epsilon})$ and for all $u_0\in B_{\delta,\epsilon}(0, \rho_{\delta,\epsilon})$,
		\begin{equation}
			\begin{aligned}
				\label{eq:4.53}
				|p_{\delta,\epsilon}(t)|^2&\leq e^{-(m\lambda_1-4C_f)(t+1)}|r_{\delta,\epsilon}^2(\omega)|+\frac{2C_f|\mathcal{O}|}{m\lambda_1-4C_f}\\
				&+\int_{-1}^{t}e^{-(m\lambda_1-4C_f)(t-s)}\left(\frac{|x_{\delta,\epsilon}^*(\theta_{s}\omega)|^2}{\lambda_1C_f}+\frac{2C_f|x_{\delta,\epsilon}^*(\theta_{s}\omega)|^2}{\lambda_1}+\frac{2\tilde{m}^2}{m}\right)ds.
			\end{aligned}
		\end{equation}
		and
		\begin{equation}
			\begin{aligned}
				\int_{-1}^{0}e^{(m\lambda_1-4C_f)s}\|p_{\delta,\epsilon}(s)\|^2ds&\leq \frac{2}{m}e^{-(m\lambda_1-4C_f)}|r_{\delta,\epsilon}^2(\omega)|+\frac{4C_f|\mathcal{O}|}{m(m\lambda_1-4C_f)}\\
				&+\frac{2}{m}\int_{-1}^{0}e^{(m\lambda_1-4C_f)s}\left(\frac{|x_{\delta,\epsilon}^*(\theta_{s}\omega)|^2}{\lambda_1C_f}+\frac{2C_f|x_{\delta,\epsilon}^*(\theta_{s}\omega)|^2}{\lambda_1}+\frac{2\tilde{m}^2}{m}\right)ds.
				\nonumber	
			\end{aligned}
		\end{equation}
		\par 
		\textbf{Step 2.} We need to obtain a bounded absorbing set in $V$, we multiply equation \eqref{eq:4.39} by $-\Delta p_{\delta,\epsilon}(t)$, with the help of \eqref{eq:2.1}, \eqref{eq:2.4}, and the Poincar\'e and Young inequalities, we can get that
		\begin{equation}
			\begin{aligned}
				\label{eq:4.54}
				\frac{d}{dt}\|p_{\delta,\epsilon}(t)\|^2&\leq(-m\lambda_1+4C_f)\|p_{\delta,\epsilon}(t)\|^2+\lambda_1C_f|\mathcal{O}|+\lambda_1C_f|p_{\delta,\epsilon}(t)|^2\\
				&+\left(C_f\lambda_1+\frac{\lambda_1}{C_f}\right)|x_{\delta,\epsilon}^*(\theta_{t}\omega)|^2+\frac{\tilde{m}^2}{m}.
			\end{aligned}
		\end{equation}
		By integrating the above inequality between $s$ and 0, where $s\in[-1, 0]$, we obtain
		\begin{equation}
			\begin{aligned}
				\|p_{\delta,\epsilon}(0)\|^2&\leq e^{(m\lambda_1-4C_f)s}\|p_{\delta,\epsilon}(s)\|^2\\
				&+\int_{s}^{0}e^{(m\lambda_1-4C_f)t}\left(\lambda_1C_f|\mathcal{O}|+\lambda_1C_f|p_{\delta,\epsilon}(t)|^2+\left(C_f\lambda_1+\frac{\lambda_1}{C_f}\right)|x_{\delta,\epsilon}^*(\theta_{t}\omega)|^2+\frac{\tilde{m}^2}{m}\right)dt.
				\nonumber
			\end{aligned}
		\end{equation}
		By integrating again the above inequality in $[-1,0]$, combining with the above inequality, we have
		\begin{equation}
			\begin{aligned}
				\|p_{\delta,\epsilon}(0)\|^2&\leq\frac{2}{m}e^{-(m\lambda_1-4C_f)}r_{\delta,\epsilon}^2(\omega)+\frac{4C_f|\mathcal{O}|}{m(m\lambda_1-4C_f)}\\
				&+\frac{2}{m}\int_{-1}^{0}e^{(m\lambda_1-4C_f)s}\left(\frac{|x_{\delta,\epsilon}^*(\theta_{s}\omega)|^2}{\lambda_1C_f}+\frac{2C_f|x_{\delta,\epsilon}^*(\theta_{s}\omega)|^2}{\lambda_1}+\frac{2\tilde{m}^2}{m}\right)ds\\
				&+\int_{-1}^{0}e^{(m\lambda_1-4C_f)t}\left(\lambda_1C_f|\mathcal{O}|+\lambda_1C_f|p_{\delta,\epsilon}(t)|^2+\left(C_f\lambda_1+\frac{\lambda_1}{C_f}\right)|x_{\delta,\epsilon}^*(\theta_{t}\omega)|^2+\frac{\tilde{m}}{m}\right)dt.
				\nonumber
			\end{aligned}
		\end{equation}
		Therefore, there exists $\tilde{r}_{\delta,\epsilon}(\omega)$ and $T(\omega,\rho_{\delta,\epsilon})\leq-1$, for a given $\rho_{\delta,\epsilon}>0$, and all $t_0\leq T(\omega,\rho_{\delta,\epsilon})$ and $|u_{0}|\leq\rho_{\delta,\epsilon}$, we have
		\begin{equation}
			\|u_{\delta,\epsilon}(0;t_0,\omega,u_{0}\|^2=\|(p_{\delta,\epsilon}(0;t_0,\omega,u_{0})-x_{\delta,\epsilon}^*(\theta_{t_0}\omega))+x_{\delta,\epsilon}^*(\omega)\|^2\leq\tilde{r}_{\delta,\epsilon}^2(\omega),
			\nonumber
		\end{equation}
		where
		\begin{equation}
			\begin{aligned}
				\tilde{r}_{\delta,\epsilon}^2(\omega)&=2|x_{\delta,\epsilon}(\omega)|^2+\left(\frac{4}{m}+2\lambda_1C_f\right)r_{\delta,\epsilon}^2(\omega)+\frac{8C_f|\mathcal{O}|}{m(m\lambda_1-4C_f)}+\frac{4\lambda_1C_f^2|\mathcal{O}|}{(m\lambda_1-4C_f)^2}\\
				&+\left(\frac{4}{m}+2\lambda_1C_f\right)\int_{-\infty}^{0}e^{(m\lambda_1-4C_f)s}\left(\frac{|x_{\delta,\epsilon}^*(\theta_{s}\omega)|^2}{\lambda_1C_f}+\frac{2C_f|x_{\delta,\epsilon}^*(\theta_{s}\omega)|^2}{\lambda_1}+\frac{2\tilde{m}^2}{m}\right)ds\\
				&+2\int_{-1}^{0}e^{(m\lambda_1-4C_f)s}\left(\lambda_1C_f|\mathcal{O}|+\left(C_f\lambda_1+\frac{\lambda_1}{C_f}\right)|x_{\delta,\epsilon}^*(\theta_{s}\omega)|^2+\frac{\tilde{m}^2}{m}\right)ds.
				\nonumber
			\end{aligned}
		\end{equation}
		Thus, according to \cite[Theorem 1]{J. Xu}, there exists a unique random attractor
		$\mathcal{A}_{\delta,\epsilon}(\omega )$ for \eqref{eq:4.39} with respect to deterministic bounded sets.
		\par 
		\textbf{Step 3.} Now, we prove that \eqref{eq:4.47} holds. From Lemma \ref{lem:4.5} and \eqref{eq:4.37}, we have
		\begin{equation}
			\label{eq:4.55}
			\lim\limits_{\substack{\delta\rightarrow0^+}}x_{\delta,\epsilon}^*(\omega)=x_{0,\epsilon}^*(\omega),
		\end{equation}
		and there exists $r<0$, $\epsilon_0\in (0,1]$ and $\bar{\delta}>0$, such that for all $0<\delta<\bar{\delta}$, $\epsilon\in (0,\epsilon_0]$ we have
		\begin{equation}
			\label{eq:4.56}
			|x_{\delta,\epsilon}^*(\theta_{t}\omega)|\leq|t|\quad\forall t\leq r.
		\end{equation}	
		Note that
		\begin{equation}
			\begin{aligned}
				\int_{-\infty}^{0}&e^{(m\lambda_1-4C_f)t}\left(\frac{|x_{\delta,\epsilon}^*(\theta_{t}\omega)|^2}{\lambda_1C_f}+\frac{2C_f|x_{\delta,\epsilon}^*(\theta_{t}\omega)|^2}{\lambda_1}\right)dt\\
				&=\int_{-\infty}^{r}e^{(m\lambda_1-4C_f)t}\left(\frac{|x_{\delta,\epsilon}^*(\theta_{t}\omega)|^2}{\lambda_1C_f}+\frac{2C_f|x_{\delta,\epsilon}^*(\theta_{t}\omega)|^2}{\lambda_1}\right)dt\\
				&+\int_{0}^{r}e^{(m\lambda_1-4C_f)t}\left(\frac{|x_{\delta,\epsilon}^*(\theta_{t}\omega)|^2}{\lambda_1C_f}+\frac{2C_f|x_{\delta,\epsilon}^*(\theta_{t}\omega)|^2}{\lambda_1}\right)dt.
				\nonumber
			\end{aligned}
		\end{equation}
		Consequently, for all $0<\delta<\bar{\delta}$, from \eqref{eq:4.56} we get that
		\begin{equation}
			\begin{aligned}
				\int_{-\infty}^{r}&e^{(m\lambda_1-4C_f)t}\left(\frac{|x_{\delta,\epsilon}^*(\theta_{t}\omega)|^2}{\lambda_1C_f}+\frac{2C_f|x_{\delta,\epsilon}^*(\theta_{t}\omega)|^2}{\lambda_1}\right)dt\\
				&\leq\int_{-\infty}^{r}e^{(m\lambda_1-4C_f)t}\left(\frac{|t|^2}{\lambda_1C_f}+\frac{2C_f|t|^2}{\lambda_1}\right)dt<\infty.
				\nonumber
			\end{aligned}
		\end{equation}
		Combing with the above inequality, Lemma \ref{lem:4.5}, the continuity of $x_{\delta,\epsilon}^*(\theta_{t}\omega)$ and the Lebesgue dominated convergence theorem, we obtain that
		\begin{equation}
			\begin{aligned}
				\label{eq:4.57}
				\lim\limits_{\substack{\delta\rightarrow0^+}}&\int_{-\infty}^{r}e^{(m\lambda_1-4C_f)t}\left(\frac{|x_{\delta,\epsilon}^*(\theta_{t}\omega)|^2}{\lambda_1C_f}+\frac{2C_f|x_{\delta,\epsilon}^*(\theta_{t}\omega)|^2}{\lambda_1}\right)dt\\
				&=\int_{-\infty}^{r}e^{(m\lambda_1-4C_f)t}\left(\frac{|x_{0,\epsilon}^*(\theta_{t}\omega)|^2}{\lambda_1C_f}+\frac{2C_f|x_{0,\epsilon}^*(\theta_{t}\omega)|^2}{\lambda_1}\right)dt,
			\end{aligned}
		\end{equation}
		and
		\begin{equation}
			\begin{aligned}
				\label{eq:4.58}
				\lim\limits_{\substack{\delta\rightarrow0^+}}&\int_{0}^{r}e^{(m\lambda_1-4C_f)t}\left(\frac{|x_{\delta,\epsilon}^*(\theta_{t}\omega)|^2}{\lambda_1C_f}+\frac{2C_f|x_{\delta,\epsilon}^*(\theta_{t}\omega)|^2}{\lambda_1}\right)dt\\
				&=\int_{0}^{r}e^{(m\lambda_1-4C_f)t}\left(\frac{|x_{0,\epsilon}^*(\theta_{t}\omega)|^2}{\lambda_1C_f}+\frac{2C_f|x_{0,\epsilon}^*(\theta_{t}\omega)|^2}{\lambda_1}\right)dt.
			\end{aligned}
		\end{equation}
		By similar arguments to \eqref{eq:4.57} and \eqref{eq:4.58}, it is easy derive that
		\begin{equation}
			\begin{aligned}
				\label{eq:4.59}
				\lim\limits_{\substack{\delta\rightarrow0^+\\\epsilon\rightarrow0^+}}&\int_{-1}^{0}e^{(m\lambda_1-4C_f)t}\left(C_f\lambda_1+\frac{\lambda_1}{C_f}\right)|x_{\delta,\epsilon}^*(\theta_{t}\omega)|^2dt\\
				&=\int_{-1}^{0}e^{(m\lambda_1-4C_f)t}\left(C_f\lambda_1+\frac{\lambda_1}{C_f}\right)|x_{0,\epsilon}^*(\theta_{t}\omega)|^2dt.
			\end{aligned}
		\end{equation}
		Along with \eqref{eq:4.57}-\eqref{eq:4.59}, the proof is complete.
	\end{proof}
	\par 
	\begin{lem}\label{lem:4.9} Under assumptions of Theorem \ref{thm:4.5}, let $\{\delta_n\}_{n=1}^{\infty}$ be a sequence satisfying $\delta_n\rightarrow0^+$ as $n\rightarrow+\infty$. Let $u_{\delta_n,\epsilon}$ and $u_{0,\epsilon}$ be the solutions of \eqref{eq:4.3} and \eqref{eq:4.1} with initial values $u_{\delta_n,\epsilon}(\tau)$ and $u_{0,\epsilon}(\tau)$, respectively. If $u_{\delta_n,\epsilon}(\tau)\rightarrow u_{0,\epsilon}(\tau)$ strongly in $H$ as $n\rightarrow+\infty$, then for almost all $\omega\in\Omega$ and $t\geq\tau$, 
	\begin{equation}
		u_{\delta_n,\epsilon}(t; \tau, \omega, u_{\delta_n,\epsilon}(\tau))\rightarrow u(t;\tau, \omega, u_{0,\epsilon}(\tau))\quad strongly\ in\ H \ as\ n\rightarrow+\infty.
		\nonumber
	\end{equation}
	\end{lem}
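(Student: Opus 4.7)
The plan is to reduce the convergence of $u_{\delta_n,\epsilon}$ to that of the transformed variables from Section 4.2. Let $p_{\delta_n,\epsilon}(t) = u_{\delta_n,\epsilon}(t) - x_{\delta_n,\epsilon}^*(\theta_t\omega)$ and $p_{0,\epsilon}(t) = u_{0,\epsilon}(t) - x_{0,\epsilon}^*(\theta_t\omega)$, so that $p_{\delta_n,\epsilon}$ solves \eqref{eq:4.39} and $p_{0,\epsilon}$ solves \eqref{eq:4.35}. Lemma \ref{lem:4.5} gives $x_{\delta_n,\epsilon}^*(\theta_\tau\omega)\to x_{0,\epsilon}^*(\theta_\tau\omega)$, which combined with the hypothesis $u_{\delta_n,\epsilon}(\tau)\to u_{0,\epsilon}(\tau)$ in $H$ yields $p_{\delta_n,\epsilon}(\tau)\to p_{0,\epsilon}(\tau)$ in $H$. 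It therefore suffices to prove $p_{\delta_n,\epsilon}(t)\to p_{0,\epsilon}(t)$ in $H$ for every $t\geq\tau$; the conclusion for $u_{\delta_n,\epsilon}$ then follows by applying Lemma \ref{lem:4.5} once more at time $t$.

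Next I would subtract \eqref{eq:4.35} from \eqref{eq:4.39}, test the difference in $H$ against $p_{\delta_n,\epsilon}(t)-p_{0,\epsilon}(t)$, and split the right-hand side into four pieces analogous to $I_1,\ldots,I_4$ in the proof of Theorem \ref{thm:4.3}. The diffusion contribution I decompose as a coercive part $a(l(p_{\delta_n,\epsilon}+x_{\delta_n,\epsilon}^*))\Delta\bigl[(p_{\delta_n,\epsilon}-p_{0,\epsilon})+(x_{\delta_n,\epsilon}^*-x_{0,\epsilon}^*)\bigr]$, which produces the good term $2m\|p_{\delta_n,\epsilon}-p_{0,\epsilon}\|^2$ on the left-hand side via \eqref{eq:2.1}, plus a Lipschitz-coefficient remainder $\bigl[a(l(p_{\delta_n,\epsilon}+x_{\delta_n,\epsilon}^*))-a(l(p_{0,\epsilon}+x_{0,\epsilon}^*))\bigr]\Delta(p_{0,\epsilon}+x_{0,\epsilon}^*)$, to be controlled by the global Lipschitz constant $L_a$ of $a$, the continuity of $l$, Young's inequality, and an a priori $V$-bound on $p_{0,\epsilon}+x_{0,\epsilon}^*$ over $[\tau,T]$. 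The $f$ contribution is handled by the Lipschitz estimate \eqref{eq:2.2}, while every piece involving $x_{\delta_n,\epsilon}^*-x_{0,\epsilon}^*$ is a forcing term that vanishes uniformly on $[\tau,T]$ by Lemma \ref{lem:4.5}.

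After using Young and Poincaré inequalities to absorb all $\|p_{\delta_n,\epsilon}-p_{0,\epsilon}\|^2$ contributions into the coercive term, the result is a differential inequality of the form
\[
\frac{d}{dt}|p_{\delta_n,\epsilon}(t)-p_{0,\epsilon}(t)|^2 \leq C(t,\omega)\,|p_{\delta_n,\epsilon}(t)-p_{0,\epsilon}(t)|^2 + \rho_n(t),
\]
where $C(\cdot,\omega)$ is locally integrable on $[\tau,T]$ uniformly in $n$ and $\rho_n(t)\to 0$ uniformly for $t\in[\tau,T]$ as $n\to\infty$. Gronwall's inequality applied on $[\tau,t]$ together with the convergence $p_{\delta_n,\epsilon}(\tau)\to p_{0,\epsilon}(\tau)$ then gives $|p_{\delta_n,\epsilon}(t)-p_{0,\epsilon}(t)|^2\to 0$, and Lemma \ref{lem:4.5} transfers this back to $u_{\delta_n,\epsilon}(t)\to u_{0,\epsilon}(t)$ in $H$.

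The main obstacle is the Lipschitz-coefficient remainder in the diffusion decomposition: upon dualizing it into $V^*$--$V$ pairing it carries a factor $\|p_{0,\epsilon}(t)+x_{0,\epsilon}^*(\theta_t\omega)\|$ that must be bounded uniformly in $n$ on $[\tau,T]$. Securing this bound is the analogue of Step 2 in the proof of Theorem \ref{thm:4.6}, and it rests on the regularity hypothesis $\phi\in V\cap H^2(\mathcal{O})$ together with the convergence \eqref{eq:4.47} of the absorbing radii. Once this a priori estimate is in hand, the remainder of the argument is a routine Gronwall computation.
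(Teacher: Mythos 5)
The paper does not actually prove this lemma: its entire ``proof'' is the sentence ``The proof is similar to [Lemma 4.4 of Gu--Lu--Wang] and we omit the details here.'' Your proposal therefore cannot be matched line-by-line against anything in the text, but it is a sound reconstruction, and it is consistent with the technique the authors themselves deploy in the proof of Theorem \ref{thm:4.3}: conjugate away the noise, write an energy identity for the difference, split the nonlocal diffusion term into a coercive part plus a Lipschitz-coefficient remainder controlled by $L_a$, $|l|$ and a $V$-bound on the limit solution, and close with Gronwall. Two small remarks. First, the quantity $p_{0,\epsilon}+x_{0,\epsilon}^*$ whose $V$-norm you worry about is exactly $u_{0,\epsilon}$ and does not depend on $n$ at all, so no uniformity in $n$ is at stake; the needed bound $\sup_{[\tau,T]}\|u_{0,\epsilon}\|^2<\infty$ is already supplied by Lemma \ref{lem:4.1} (via $v_{0,\epsilon}=u_{0,\epsilon}-\epsilon\phi W_t$ and $\phi\in V$), and with Young's inequality the cross term absorbs into $2m\|p_{\delta_n,\epsilon}-p_{0,\epsilon}\|^2$ without invoking the smallness condition on $m$ from Lemma \ref{lem:4.2}. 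Second, you should note that Lemma \ref{lem:4.5} as stated is pointwise in $t\le 0$, whereas your $\rho_n$ needs $\sup_{t\in[\tau,T]}|x_{\delta_n,\epsilon}^*(\theta_t\omega)-x_{0,\epsilon}^*(\theta_t\omega)|\to0$ on a general compact interval; this uniform version follows from the same integration-by-parts computation together with Hypothesis 1.1(3), but it is an extension you are silently using. A slightly more economical route, entirely internal to the paper, is to combine Remark \ref{rem:4.1} (which already gives $u_{\delta,\epsilon}(t,\omega,u_0)\to u_{0,\epsilon}(t,\omega,u_0)$ for a \emph{fixed} initial datum as $\delta\to0^+$) with the continuous dependence of solutions on initial data asserted in Theorem \ref{thm:3.1}; the triangle inequality then handles the varying initial data $u_{\delta_n,\epsilon}(\tau)\to u_{0,\epsilon}(\tau)$ without redoing the energy estimate. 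Either way your argument goes through.
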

	\begin{proof}
		The proof is similar to \cite[Lemma 4.4]{A. Gu} and we omit the details here.
	\end{proof}
	\begin{lem}\label{lem:4.10} Under assumptions of Theorem \ref{thm:4.5}, let $\{\delta_n\}_{n=1}^{\infty}$ be a sequence satisfying $\delta_n\rightarrow0^+$ as $n\rightarrow+\infty$. Let $p_{\delta_n,\epsilon}$ and $p_{0,\epsilon}$ be the solutions of \eqref{eq:4.39} and \eqref{eq:4.35} with initial values $p_{\delta_n,\epsilon}(\tau)$, and $p_{0,\epsilon}(\tau)$, respectively. If $p_{\delta_n,\epsilon}(\tau)\rightarrow p_{0,\epsilon}(\tau)$ weakly in $H$ as $n\rightarrow+\infty$, then for almost all $\omega\in\Omega$ and $t\geq\tau$, 
	\begin{equation}
		\label{eq:4.60}
		p_{\delta_n,\epsilon}(r; \tau, \omega, p_{\delta_n,\epsilon}(\tau))\rightarrow p(r;\tau, \omega, p_{0,\epsilon}(\tau))\quad weakly\ in\quad H\quad\forall r\ge\tau,
	\end{equation}
	and
	\begin{equation}
		\label{eq:4.61}
		p_{\delta_n,\epsilon}(\cdot; \tau, \omega, p_{\delta_n,\epsilon}(\tau))\rightarrow p(\cdot; \tau, \omega, p_{0,\epsilon}(\tau))\quad weakly\ in\quad L^2(\tau, \tau+T)\quad\forall T>0.
	\end{equation}
	\begin{proof}
		The results follow similarly to the proof of existence of solutions to equation \eqref{eq:4.39} \cite[Lemma 3.5]{A.G B}. We therefore omit the details.
	\end{proof}
	\end{lem}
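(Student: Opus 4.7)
The plan is to follow the standard weak-convergence/uniqueness scheme used for the existence part (cf.\ the cited \cite{A. Gu} argument): derive uniform-in-$n$ energy estimates on $p_{\delta_n,\epsilon}$, extract a weakly convergent subsequence, pass to the limit in the weak formulation of \eqref{eq:4.39}, and identify the limit as the unique solution $p_{0,\epsilon}$ of \eqref{eq:4.35}; uniqueness then forces the whole sequence to converge.

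First I would obtain $n$-uniform bounds. By Lemma \ref{lem:4.5} the driving terms $x_{\delta_n,\epsilon}^\ast(\theta_t\omega)$ converge to $x_{0,\epsilon}^\ast(\theta_t\omega)$ uniformly on $[\tau,\tau+T]$, so they are uniformly bounded there. Together with the weak convergence (hence boundedness in $H$) of the initial data $p_{\delta_n,\epsilon}(\tau)$, the energy estimates carried out in the proof of Theorem \ref{thm:4.6} (testing \eqref{eq:4.39} with $p_{\delta_n,\epsilon}$ and with $-\Delta p_{\delta_n,\epsilon}$) yield
\begin{equation*}
\{p_{\delta_n,\epsilon}\}\ \text{bounded in}\ L^{\infty}(\tau,\tau+T;H)\cap L^{2}(\tau,\tau+T;V),
\end{equation*}
while \eqref{eq:2.1}, \eqref{eq:2.4} (with $p=2$) and the nonlocal term show $\{a(l(\cdot))\Delta p_{\delta_n,\epsilon}\}$ is bounded in $L^2(\tau,\tau+T;V^\ast)$ and $\{f(p_{\delta_n,\epsilon}+x_{\delta_n,\epsilon}^\ast)\}$ in $L^2(\tau,\tau+T;H)$. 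Consequently $\{\partial_t p_{\delta_n,\epsilon}\}$ is bounded in $L^2(\tau,\tau+T;V^\ast)$.

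Next I would extract a subsequence (still indexed by $n$) such that $p_{\delta_n,\epsilon}\rightharpoonup \tilde p$ weakly in $L^2(\tau,\tau+T;V)$ and weakly-$\ast$ in $L^\infty(\tau,\tau+T;H)$. The Aubin--Lions lemma applied to the triple $V\hookrightarrow\!\!\hookrightarrow H\hookrightarrow V^\ast$ gives strong convergence in $L^2(\tau,\tau+T;H)$, and then, along a further subsequence, pointwise a.e.\ convergence in $H$ for a.e.\ $r$. Linearity and continuity of $l$ imply $l(p_{\delta_n,\epsilon}+x_{\delta_n,\epsilon}^\ast)\to l(\tilde p+x_{0,\epsilon}^\ast)$ strongly in $L^2(\tau,\tau+T;\mathbb R)$, so the continuity of $a$ allows me to pass to the limit in the product $a(l(\cdot))\Delta(\cdot)$ against a test function in $V$; the Lipschitz condition \eqref{eq:2.2} handles the $f$ term via dominated convergence, and the linear forcing term passes to the limit directly using Lemma \ref{lem:4.5}. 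The result is that $\tilde p$ satisfies \eqref{eq:4.35} with initial datum $p_{0,\epsilon}(\tau)$ (the initial condition is inherited by weak continuity in $H$ — using $\tilde p\in C([\tau,\tau+T];H_{\mathrm w})$ from $\tilde p\in L^2(\tau,\tau+T;V)$ and $\partial_t\tilde p\in L^2(\tau,\tau+T;V^\ast)$ — together with $p_{\delta_n,\epsilon}(\tau)\rightharpoonup p_{0,\epsilon}(\tau)$).

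By uniqueness of the solution to \eqref{eq:4.35} (Theorem \ref{thm:3.1} applied in this framework) we get $\tilde p=p_{0,\epsilon}(\cdot;\tau,\omega,p_{0,\epsilon}(\tau))$, and a standard subsequence argument upgrades the convergence of the subsequence to that of the full sequence, giving \eqref{eq:4.61}. For the pointwise statement \eqref{eq:4.60} I would use the equicontinuity of $r\mapsto p_{\delta_n,\epsilon}(r)$ in $V^\ast$ (from the uniform bound on $\partial_t p_{\delta_n,\epsilon}$ in $L^2(\tau,\tau+T;V^\ast)$) plus the uniform $H$-boundedness: by Ascoli in $V^\ast$ combined with a density argument, the weak-$H$ limit holds for every $r\geq\tau$. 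The main obstacle is the nonlocal nonlinearity $a(l(\cdot))\Delta(\cdot)$: passing weak convergence of $p_{\delta_n,\epsilon}$ through the composition with $a$ requires the strong convergence of $l(p_{\delta_n,\epsilon})$, which is why the Aubin--Lions step and the uniform convergence $x_{\delta_n,\epsilon}^\ast\to x_{0,\epsilon}^\ast$ from Lemma \ref{lem:4.5} are essential.
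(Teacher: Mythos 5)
Your proposal is correct and follows essentially the same route the paper intends: the paper omits the proof by citing \cite[Lemma 3.5]{A.G B}, and that cited argument is exactly the scheme you describe (uniform energy estimates, Aubin--Lions compactness, passage to the limit in the weak formulation with the nonlocal coefficient handled via strong convergence of $l(p_{\delta_n,\epsilon}+x^*_{\delta_n,\epsilon})$, identification by uniqueness, and equicontinuity in $V^*$ for the pointwise weak convergence). No substantive gap.
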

	\begin{lem}\label{lem:4.11} Supposing the conditions of Theorem \ref{thm:4.5} hold, let $\omega\in\Omega$ be fixed. If $\delta_n\rightarrow0^+$ as $n\rightarrow+\infty$ and $u_{\delta_n,\epsilon}\in\mathcal{A}_{\delta_n,\epsilon}(\omega)$, then the sequence $\{ u_{\delta_n,\epsilon}\}_{n=1}^{\infty}$ has a convergent subsequence in $H$.
	\end{lem}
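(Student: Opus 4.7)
The plan is to combine the invariance of the random attractors with a uniform smoothing estimate to obtain boundedness in $V$, then extract a convergent subsequence in $H$ via compact embedding.

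First, I would exploit invariance at a fixed positive time. Since $u_{\delta_n,\epsilon} \in \mathcal{A}_{\delta_n,\epsilon}(\omega)$, invariance gives $v_n \in \mathcal{A}_{\delta_n,\epsilon}(\theta_{-1}\omega)$ with $u_{\delta_n,\epsilon} = \Psi_{\delta_n,\epsilon}(1,\theta_{-1}\omega,v_n)$. By Theorem \ref{thm:4.6}, $\mathcal{A}_{\delta_n,\epsilon}(\theta_{-1}\omega) \subset B_{\delta_n,\epsilon}(\theta_{-1}\omega)$, so $|v_n|^2 \leq \lambda_1^{-1} R_{\delta_n,\epsilon}(\theta_{-1}\omega)$. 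Using $R_{\delta_n,\epsilon}(\theta_{-1}\omega) \to R_{0,\epsilon}(\theta_{-1}\omega)$ as $\delta_n \to 0^+$ (equation \eqref{eq:4.47}), the sequence $\{v_n\}$ is uniformly bounded in $H$.

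Second, I would upgrade to boundedness in $V$ by re-running the energy estimate from Step 2 of the proof of Theorem \ref{thm:4.6}, but now with initial time $-1$ and initial datum $v_n - x_{\delta_n,\epsilon}^*(\theta_{-1}\omega)$ for $p_{\delta_n,\epsilon}$. Testing \eqref{eq:4.39} against $-\Delta p_{\delta_n,\epsilon}$ and integrating between $s \in [-1,0]$ and $0$ as in \eqref{eq:4.54} yields
\begin{equation*}
\|u_{\delta_n,\epsilon}\|^2 = \|p_{\delta_n,\epsilon}(0) + x_{\delta_n,\epsilon}^*(\omega)\|^2 \leq \tilde{r}_{\delta_n,\epsilon}^2(\omega),
\end{equation*}
where $\tilde r^2_{\delta_n,\epsilon}(\omega)$ has exactly the same structure as in Theorem \ref{thm:4.6}. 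By Lemma \ref{lem:4.5} and the Lebesgue dominated convergence argument already carried out in \eqref{eq:4.57}--\eqref{eq:4.59} (with the sublinear growth bound from Hypothesis 1.3 providing the dominating function), $\tilde{r}_{\delta_n,\epsilon}^2(\omega) \to \tilde{r}_{0,\epsilon}^2(\omega)$ as $\delta_n \to 0^+$. Hence $\{u_{\delta_n,\epsilon}\}$ is uniformly bounded in $V$.

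Third, since the embedding $V \hookrightarrow H$ is compact, $\{u_{\delta_n,\epsilon}\}$ admits a strongly convergent subsequence in $H$, as desired.

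The main obstacle is Step 2: carefully reconstructing the $V$-bound uniformly in $\delta_n$ near $\delta=0$. The nonlocal coefficient $a(l(\cdot))$, which is only bounded between $m$ and $\widetilde{m}$ and enters the $V$-estimate through the term $\widetilde{m}^2/m$ and the Lipschitz behavior, must be handled without any $\delta$-dependence; this works because the bounds \eqref{eq:2.1}, \eqref{eq:2.4} are structural. The other delicate point is ensuring that each integrand appearing in $\tilde{r}_{\delta_n,\epsilon}^2(\omega)$ is dominated by an integrable function independent of $n$, so that the Lebesgue dominated convergence theorem applies; this follows from \eqref{eq:4.37} (or the uniform sublinear growth in Hypothesis 1.3) combined with the exponential decay factor $e^{(m\lambda_1 - 4C_f)s}$ under the assumption $m\lambda_1 > 4C_f$.
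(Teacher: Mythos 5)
Your argument is correct in outline, but it takes a genuinely different route from the paper. The paper proves precompactness by Ball's energy method: it extracts a weak limit $u_n\rightharpoonup\tilde u$ in $H$, uses invariance to write $u_n=\Psi_{\delta_n,\epsilon}(k,\theta_{-k}\omega,u_{n,k})$ for every $k\ge1$, passes to the conjugated variables $p_{n,k}$, invokes the weak continuity of Lemma \ref{lem:4.10}, and then compares the energy equality \eqref{eq:4.76} for $p_{\delta_n,\epsilon}$ and for the limit equation to show $\limsup_n|u_n-x^*_{\delta_n,\epsilon}(\omega)|\le|\tilde u-x^*_{0,\epsilon}(\omega)|$ after letting $k\to\infty$; weak convergence plus convergence of norms then upgrades to strong convergence. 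You instead pull back a single unit of time, use the uniform $H$-bound on $\mathcal{A}_{\delta_n,\epsilon}(\theta_{-1}\omega)\subset B_{\delta_n,\epsilon}(\theta_{-1}\omega)$ together with the parabolic smoothing estimate of Step 2 of Theorem \ref{thm:4.6} (the $-\Delta p$ test combined with the mean-value integration over $s\in[-1,0]$, which legitimately starts from $H$-data) to get a $V$-bound on $u_{\delta_n,\epsilon}$ that is uniform in $n$, and conclude by the compact embedding $V\hookrightarrow H$. Both arguments are sound here; the uniformity you need is exactly what Lemma \ref{lem:4.5} and the dominated-convergence computation \eqref{eq:4.57}--\eqref{eq:4.59} supply, and the structural bounds \eqref{eq:2.1}, \eqref{eq:2.4} keep the nonlocal coefficient from introducing any $\delta$-dependence. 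What your route buys is brevity and the reuse of the compact $V$-absorbing ball the paper has already built; what the paper's route buys is robustness (it does not rely on compactness of $V\hookrightarrow H$, so it would survive on unbounded domains or when only $H$-level estimates are available) and it simultaneously identifies the weak limit $\tilde u$ in a form that feeds directly into the upper semicontinuity argument of Theorem \ref{thm:4.13}. One small caveat: you should state explicitly that $\sup_{t\in[-1,0]}|x^*_{\delta_n,\epsilon}(\theta_t\omega)|$ is bounded uniformly for large $n$ (this follows from the uniform-on-compacts estimate \eqref{eq:4.45}, not merely from the pointwise convergence in Lemma \ref{lem:4.5}), since that quantity enters both the $H$- and $V$-estimates on $[-1,0]$.
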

	\begin{proof}
		Thanks to $\delta_n\rightarrow0^+$ as $n\rightarrow+\infty$, by Theorem \ref{thm:4.6}, we can get that for almost all $\omega\in\Omega$, there exist $N=N(\omega)$, such that for all $n\ge N$
		\begin{equation}
			\label{eq:4.62}
			R_{\delta_n,\epsilon}(\omega)\leq2R_{0,\epsilon}(\omega).
		\end{equation}
		Since $u_n :=u_{\delta_n,\epsilon}(t; \tau, \omega, u_{\delta_n,\epsilon}(\tau))\in\mathcal{A}_{\delta_n,\epsilon}(\omega)$, and $\mathcal{A}_{\delta_n,\epsilon}\subset R_{\delta_n,\epsilon}(\omega)$, for all $n\ge N$, we obtain
		\begin{equation}
			\label{eq:4.63}
			|u_n|^2\leq2\lambda_1^{-1}R_{0,\epsilon}(\omega).
		\end{equation}
		Indeed, according to \eqref{eq:4.63} that $u_n$ is bounded in $H$, up to a subsequence, we have
		\begin{equation}
		\label{eq:4.64}
			u_n\rightarrow\tilde{u}\quad weakly\ in\ \quad H. 
		\end{equation}
		Next, we will prove that the weak convergence in \eqref{eq:4.64} is actually a strong one. \par 
		On the one hand, $u_n\in\mathcal{A}_{\delta_n,\epsilon}(\omega )$, since the invariance of $\mathcal{A}_{\delta_n,\epsilon}(\omega )$ for every $k\geq 1$, there exists $u_{n,k}(\omega) :=u_{0,\epsilon,n,k}(\omega)\in\mathcal{A}_{\delta_n,\epsilon}(\theta_{-k}\omega)$, we have
		\begin{equation}
			\label{eq:4.65}
			u_n=\Psi_{\delta_n, \epsilon}(k,\theta_{-k}\omega, u_{n,k})=u_{\delta_n,\epsilon}(0; -k, \omega, u_{n,k}).
		\end{equation}
		Thank to $u_{n,k}\in\mathcal{A}_{\delta_n,\epsilon}(\theta_{-k}\omega)$, and $\mathcal{A}_{\delta_n,\epsilon}(\theta_{-k}\omega)\subset B_{\delta_n,\epsilon}(\theta_{-k}\omega)$, according to \eqref{eq:4.62}, for each $k\ge1$ and $n\ge N :=N(\theta_{-k}\omega)$, we infer that
		\begin{equation}
			\label{eq:4.66}
			|u_{n,k}|^2\leq2\lambda_1^{-1}R_{0,\epsilon}(\theta_{-k}\omega).
		\end{equation}
		\par 
		On the other hand, by \eqref{eq:4.38}, we can get that
		\begin{equation}
			\label{eq:4.67}
			p_{\delta_n,\epsilon}(0; -k, \omega, p_{n,k})=u_{\delta_n,\epsilon}(0; -k, \omega, u_{n,k})-x_{\delta_n,\epsilon}^*(\omega),
		\end{equation}
		where $p_{n,k}=u_{n,k}-x_{\delta_n,\epsilon}(\theta_{-k}\omega)$. By \eqref{eq:4.65} and \eqref{eq:4.67}, it follows that
		\begin{equation}
			\label{eq:4.68}
			u_n=p_{\delta_n,\epsilon}(0; -k, \omega, p_{n.k})+x_{\delta_n,\epsilon}^*(\omega).
		\end{equation}
		Making use of \eqref{eq:4.66}, we have 
		\begin{equation}
			\label{eq:4.69}
			|p_{n,k}|^2\leq2|u_{n,k}|^2+2|x_{\delta_n,\epsilon}^*(\omega)|^2\leq4\lambda_1^{-1}R_{0,\epsilon}(\theta_{-k}\omega)+2|x_{\delta_n,\epsilon}^*(\omega)|^2.
		\end{equation}
		According to Lemma \ref{lem:4.5} and \eqref{eq:4.69}, there exists $N_1 :=N_1(\omega,k)$ such that for every $k\ge1$ and $n\ge N_1$, we have
		\begin{equation}
			\label{eq:4.70}
			|p_{n,k}|^2\leq4\lambda_1^{-1}R_{0,\epsilon}(\theta_{-k}\omega)+4(1+|x_{0,\epsilon}^*(\omega)|^2).
		\end{equation}
		It follows from Lemma \ref{lem:4.5}, \eqref{eq:4.66} and \eqref{eq:4.68}, as $n\rightarrow+\infty$, we have
		\begin{equation}
			\label{eq:4.71}
			p_{\delta_n,\epsilon}(0; -k, \omega, p_{n,k})\rightarrow\tilde{p}_{0,\epsilon}\quad weakly\ in\ \quad H \quad with\ \quad \tilde{p}_{0,\epsilon}=\tilde{u}_{0,\epsilon}-x_{0,\epsilon}^*(\omega).
		\end{equation}
		\par 
		Next, making use of energy estimations, we evaluate the limit of norm $|p_{\delta_n,\epsilon}(0; -k, \omega, p_{n,k})|$ for each $k$ as $n\rightarrow+\infty$. By (4.68) we can get that for each $k\geq1$, the sequence $\{ p_{n,k}\}_{n=1}^{\infty}$ is bounded in $H$, and by a diagonal process, we can derive a subsequence such that for each $k\geq 1$, there exists $\bar{p}_k\in H$, as $n\rightarrow +\infty$ such that
		\begin{equation}
			\label{eq:4.72}
			p_{n,k}\rightarrow\bar{p}_k\quad weakly\ in\ \quad H.
		\end{equation}
		By Lemma \ref{lem:4.10} and \eqref{eq:4.72}, as $n\rightarrow +\infty$, we have
		\begin{equation}
			p_{\delta_n,\epsilon}(0; -k, \omega, p_{n,k})\rightarrow p(0; -k, \omega, \bar{p}_k)\quad weakly\ in\ \quad H,
		\end{equation}
		and
		\begin{equation}
			\label{eq:4.74}
			p_{\delta_n,\epsilon}(\cdot; -k, \omega, p_{n,k})\rightarrow p(\cdot; -k, \omega, \bar{p}_k)\quad weakly\ in\ \quad L^2(\tau, \tau+T; V),
		\end{equation}
		Since the uniqueness of limit, by \eqref{eq:4.71} and \eqref{eq:4.74}, we can get that
		\begin{equation}
			\label{eq:4.75}
			p_{0,\epsilon}(0; -k, \omega, \bar{p}_k)=\tilde{p}_{0,\epsilon}.
		\end{equation}
		By energy equality and \eqref{eq:4.39}, we have
		\begin{equation}
			\begin{aligned}
				\label{eq:4.76}
				\frac{d}{dt}|p_{\delta_n,\epsilon}(t)|^2+2m\lambda_1|p_{\delta_n,\epsilon}(t)|^2+\Upsilon(p_{\delta_n,\epsilon}(t))=2(f(p_{\delta_n,\epsilon}+x_{\delta_n,\epsilon}^*(\theta_{t}\omega)),p_{\delta_n,\epsilon}(t))\\
				+2(x_{\delta_n,\epsilon}^*(\theta_{t}\omega),p_{\delta_n,\epsilon}(t))-((2a(l(p_{\delta_n,\epsilon}+x_{\delta_n,\epsilon}^*(\theta_{t}\omega)),p_{\delta_n,\epsilon}(t))),
			\end{aligned}
		\end{equation}
		where $\Upsilon(p_{\delta_n,\epsilon}(t)=2a(l(p_{\delta_n,\epsilon}+x_{\delta_n,\epsilon}^*(\theta_{t}\omega)))\|p_{\delta_n,\epsilon}(t)\|^2-m\lambda_1|p_{\delta_n,\epsilon}(t)|^2$, which is a functional in $V$.\\
		\par 
		Multiplying \eqref{eq:4.76} by $e^{m\lambda_1t}$ and integrating it from $- k$ to 0, we can obtain
		\begin{equation}
			\begin{aligned}
				|p_{\delta_n,\epsilon}(0; -k, \omega, p_{n,k})|^2&=e^{-m\lambda_1k}|p_{n,k}|^2-\int_{-k}^{0}e^{m\lambda_1t}\Upsilon( p_{\delta_n,\epsilon}(t; -k, \omega, p_{n,k}))dt\\
				&+2\int_{-k}^{0}e^{m\lambda_1t}(f(p_{\delta_n,\epsilon}(t; -k, \omega, p_{n,k})+\phi x_{\delta,\epsilon}^*(\theta_{t}\omega)),p_{\delta_n,\epsilon}(t; -k, \omega, p_{n,k}))dt\\
				&+2\int_{-k}^{0}e^{m\lambda_1t}(x_{\delta_n,\epsilon}^*(\theta_{t}\omega),p_{\delta_n,\epsilon}(t; -k, \omega, p_{n,k}))dt\\
				&-2\int_{-k}^{0}e^{m\lambda_1t}((2a(l(p_{\delta_n,\epsilon}+x_{\delta_n,\epsilon}^*(\theta_{t}\omega)),p_{\delta_n,\epsilon}(t; -k, \omega, p_{n,k})))dt.
				\nonumber
			\end{aligned}
		\end{equation}
		Similarly, by \eqref{eq:4.36}, \eqref{eq:4.71} and \eqref{eq:4.75}, we obtain
		\begin{equation}
			\begin{aligned}
				|\tilde{p}_{0,\epsilon}|^2:&=|\bar{p}_{0,\epsilon}(0,-k,\omega,\tilde{p}_k)|^2=e^{-m\lambda_1k}|\bar{p}_k|^2-\int_{-k}^{0}e^{m\lambda_1t}\Upsilon( p_{0,\epsilon}(t; -k, \omega,\tilde{p}_k))dt\\
				&+2\int_{-k}^{0}e^{m\lambda_1t}(f(p_{0,\epsilon}(t; -k, \omega, \tilde{p}_k)+ x_{0,\epsilon}^*(\theta_{t}\omega)),p_{0,\epsilon}(t; -k, \omega, \bar{p}_k))dt\\
				&+2\int_{-k}^{0}e^{m\lambda_1t}(x_{0,\epsilon}(\theta_{t}\omega),p_{0,\epsilon}(t; -k, \omega, \bar{p}_k))dt\\
				&-2\int_{-k}^{0}e^{m\lambda_1t}((2a(l(p_{0,\epsilon}(t;-k,\omega,\bar{p}_k)+ x_{0,\epsilon}^*(\theta_{t}\omega))),p_{0,\epsilon}(t;-k,\omega,\bar{p}_k)))dt.
			\end{aligned}
		\end{equation}
		It follows that
		\begin{equation}
			\begin{aligned}
				\label{eq:4.78}
				&\lim_{n\rightarrow\infty}\sup|p_{\delta_n,\epsilon}(0; -k, \omega, p_{n,k})|^2\\
				&\leq e^{-m\lambda_1k}((4\lambda_1^{-1}R_{0,\epsilon}(\theta_{-k}\omega))+4(1+|x_{0,\epsilon}^*(\omega)|^2))+|\tilde{p}|^2-e^{-m\lambda_1k}|\bar{p}_{k}|^2\\
				&\leq e^{-m\lambda_1k}((4\lambda_1^{-1}R_{0,\epsilon}(\theta_{-k}\omega))+4(1+|x_{0,\epsilon}^*(\omega)|^2))+|p(0; -k, \omega, \bar p_k)|^2.
			\end{aligned}
		\end{equation}
		According to \eqref{eq:4.75}, for $n\rightarrow+\infty$,
		\begin{equation}
			\label{eq:4.79}
			p_{0,\epsilon}(0; -k,\omega, \bar{p}_k)=\tilde{p}_{0,\epsilon}=u_{0,\epsilon}(0; -k, \omega, \bar{u}_k)-x_{0,\epsilon}^*(\omega) :=\tilde{u}_{0,\epsilon}-x_{0,\epsilon}^*(\omega).
		\end{equation}
		By \eqref{eq:4.68}, we have
		\begin{equation}
			\label{eq:4.80}
			p_{\delta_n,\epsilon}^*(0; -k, \omega, p_{n,k})=u_n-x_{\delta_n,\epsilon}^*(\omega).
		\end{equation}
		Combing with \eqref{eq:4.78}-\eqref{eq:4.80} that
		\begin{equation}
			\begin{aligned}
				\label{eq:4.81}
				\lim_{k\rightarrow\infty}\sup|u_n-x_{\delta_n,\epsilon}(\omega)|\leq e^{-m\lambda_1k}((4\lambda_1^{-1}R_{0,\epsilon}(\theta_{-k}\omega))+4(1+|x_{0,\epsilon}^*(\omega)|^2))+|\tilde{u}-x_{0,\epsilon}^*(\omega)|^2.
			\end{aligned}
		\end{equation}
		Since the $R_{0,\epsilon}$ and $x_{0,\epsilon}^*$ are tempered, we can get 
		\begin{equation}
			\lim_{n\rightarrow\infty}\sup e^{-m\lambda_1k}((4\lambda_1^{-1}R_{0,\epsilon}(\theta_{-k}\omega))+4(1+|x_{0,\epsilon}^*(\omega)|^2))=0
			\nonumber
		\end{equation}
		Letting $k\rightarrow+\infty$, we have
		\begin{equation}
			\label{eq；4.82}
			\lim_{n\rightarrow\infty}\sup|u_{\delta_n,\epsilon}-x_{\delta_n,\epsilon}^*(\omega)|\leq|\tilde{u}-x_{0,\epsilon}^*(\omega)|.
		\end{equation}
		It follows from Lemma \ref{lem:4.5}, \eqref{eq:4.64} and \eqref{eq:4.81}, we obtain
		\begin{equation}
			\label{eq:4.83}
			u_n\rightarrow\tilde{u}_{0,\epsilon}\quad strongly\ in\ \quad H,
		\end{equation}
		This completes the proof.
	\end{proof}
	Next, we will establish the upper semicontinuity of random attractor as $\delta\rightarrow0^+$ and $\epsilon\rightarrow0^+$.\\
	
	\textbf{ 4.4. Upper semi-continuity of random attractor.} In this section, we establish the upper semi-continuity of random attractor when small random perturbations $\delta$ and $\epsilon$ approach zero. Let $(H, \|\cdot\|_H)$ be a Banach space and $\Psi$ be an autonomous dynamical
	system defined on $H$. 
	
	Given a small positive parameter $\epsilon$, consider the following stochastically perturbed equation:
	\begin{equation}
		\label{eq:4.84}
			\frac{\partial u_{0,\epsilon}}{\partial t}-a(l(u_{0,\epsilon}))\Delta u_{0,\epsilon}=f(u_{0,\epsilon})+\epsilon\phi\frac{dW}{dt},
	\end{equation}
	with the initial condition:
	\begin{equation}
		\label{eq:4.85}
		u_{0,\epsilon}(\tau)=u_0\in H.
	\end{equation}
	We can associate a random dynamical system $\Psi_{0,\epsilon}$ with problem \eqref{eq:4.84}-\eqref{eq:4.85} via $u_{0,\epsilon}$ for each $\epsilon>0$,
	where $	\Psi_{0,\epsilon} :R^+\times\Omega\times H\rightarrow H$ is given by
	\begin{equation}
		\label{eq:4.86}
		\Psi_{0,\epsilon} (t,\omega,u_0)=u_{0,\epsilon}(t,\omega,u_0),\quad for\ every\ (t,\omega,u_0)\in R^+\times\Omega\times H.
	\end{equation}
	By Theorem \ref{thm:4.5}, we have that $\Psi_{0,\epsilon}$ has a unique $\mathcal{D}$-pullback random attractor $\mathcal{A}_{0,\epsilon}(\omega)$. When $\epsilon=0$, problem $\eqref{eq:4.84}-\eqref{eq:4.85}$ defines a continuous deterministic dynamical system $\Psi$ in $H$. In this case, the results of \cite{Caraballo THerrera-Cobos M Marín-Rubio P} imply that $\Psi$ has a unique global attractor $\mathcal{A}$ in $H$.
	\par 
	Given $\epsilon\in(1,0]$, according to Theorem \ref{thm:4.5} that  $\Psi_{0,\epsilon}$ is a random dynamical system such that for P-almost every $\omega\in\Omega$ and all $t\in R^+$
	\begin{equation}
		\Psi_{0,\epsilon}(t,\theta_{-t}\omega)x\rightarrow\Psi(t)x\quad as\ \epsilon\rightarrow0^+.
		\nonumber
	\end{equation}
	uniformly on bounded sets of $H$.
	\par 
	Then the relationships between $\mathcal{A}_{0,\epsilon}(\omega)$ and $\mathcal{A}$ are given by the following theorem.
	\begin{thm}\label{thm:4.12} Assume function $a$ is globally Lipschitz and satisfies \eqref{eq:2.1}, $f\in C(\Bbb R)$ satisfies \eqref{eq:2.2} and \eqref{eq:2.4} with $p=2$ and $\beta=C_f$, respectively, $\phi\in V\cap H^2(\mathcal{O})$, and $l\in L^2(\mathcal{O})$. Also, let $m\lambda_1>4C_f$. There exists $\epsilon_0\in (0,1]$ such that for all $\epsilon\in(0,\epsilon_0]$, then for almost all $\omega\in\Omega$,
	\begin{equation}
		\label{eq:4.87}
		\lim\limits_{\substack{\epsilon\rightarrow0^+}} dist_H(\mathcal{A}_{0,\epsilon}(\omega),\mathcal{A})=0,
	\end{equation}
	where
	\begin{equation}
		\lim\limits_{\substack{\epsilon\rightarrow0^+}} dist_H(\mathcal{A}_{0,\epsilon}(\omega),\mathcal{A})=\sup_{a\in\mathcal{A}_{0,\epsilon}(\omega)}\inf_{b\in\mathcal{A}}\|a-b\|_{H}.
		\nonumber
	\end{equation}
\end{thm}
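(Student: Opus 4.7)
The plan is to adapt the standard upper semicontinuity framework of random attractors (in the spirit of Wang, Caraballo, and the proof of Lemma~\ref{lem:4.11} for the $\delta$-limit) to the parameter limit $\epsilon \to 0^+$. By definition of the Hausdorff semi-distance, it suffices to show that for any sequence $\epsilon_n \to 0^+$ and any $a_n \in \mathcal{A}_{0,\epsilon_n}(\omega)$, there is a subsequence converging strongly in $H$ to some element of $\mathcal{A}$. I would carry this out in three steps: uniform boundedness, pullback representation with weak compactness, and upgrade to strong convergence via an energy-type argument, followed by identification of the limit as a point of $\mathcal{A}$.

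First I would exploit Theorem~\ref{thm:4.5} to bound $a_n$ uniformly. Inspection of the formula for $R_{0,\epsilon}(\omega)$ shows that it depends on $\epsilon$ only through $x_{0,\epsilon}^*(\theta_t\omega)$. By Lemma~\ref{lem:4.5} one has $|x_{0,\epsilon}^*(\theta_t\omega)| \to 0$ as $\epsilon \to 0^+$ pointwise, and by the bound \eqref{eq:4.46} in the proof of Lemma~\ref{lem:4.5} these quantities are dominated by an integrable envelope independent of $\epsilon$; the dominated convergence theorem then gives $R_{0,\epsilon}(\omega) \to R_0(\omega) < \infty$. Consequently $\{a_n\}$ is bounded in $H$, so up to a subsequence $a_n \rightharpoonup \tilde{u}$ weakly.

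Next, using invariance of $\mathcal{A}_{0,\epsilon_n}$, for every $k \geq 1$ there exist $b_{n,k} \in \mathcal{A}_{0,\epsilon_n}(\theta_{-k}\omega)$ such that $a_n = \Psi_{0,\epsilon_n}(k, \theta_{-k}\omega, b_{n,k})$. The same uniform absorbing bound applied at $\theta_{-k}\omega$ (which is tempered by the tempered character of $R_{0,\epsilon}$) bounds $\{b_{n,k}\}_n$ uniformly in $H$, so by a diagonal extraction $b_{n,k} \rightharpoonup \bar{b}_k$ weakly in $H$ as $n \to \infty$. Now I would prove a variant of Lemma~\ref{lem:4.2} adapted to weakly convergent initial data (along the lines of Lemma~\ref{lem:4.10}): if $b_{n,k} \rightharpoonup \bar{b}_k$ in $H$ and $\epsilon_n \to 0^+$, then $u_{0,\epsilon_n}(\cdot; -k, \omega, b_{n,k}) \rightharpoonup u(\cdot; -k, \bar{b}_k)$ in $L^2(-k,0;V)$ and weakly in $H$ for every time. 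To upgrade the weak convergence $\Psi_{0,\epsilon_n}(k,\theta_{-k}\omega,b_{n,k}) \rightharpoonup \Psi(k)\bar{b}_k$ to strong convergence, I would replicate the energy-equality argument used in the proof of Lemma~\ref{lem:4.11}: write the energy equation for the shifted variable $p_{0,\epsilon_n}$ satisfying \eqref{eq:4.35}, multiply by $e^{m\lambda_1 t}$, integrate on $[-k, 0]$, and pass to the limit using the weak convergence together with $x_{0,\epsilon_n}^* \to 0$ from Lemma~\ref{lem:4.5}. This gives $\limsup_n |a_n|^2 \leq |\tilde{u}|^2 + e^{-m\lambda_1 k} C(\omega,k)$, and weak lower semicontinuity of the norm yields strong convergence $a_n \to \tilde{u}$ in $H$, with $\tilde{u} = \Psi(k)\bar{b}_k$ for each $k$.

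Finally, since $\bar{b}_k$ lies in a fixed bounded set of $H$ for all $k$ (uniformly bounded by $\lambda_1^{-1} R_0(\omega)$) and $\tilde{u} = \Psi(k)\bar{b}_k$, the attraction property of the deterministic global attractor $\mathcal{A}$ for $\Psi$ gives $\mathrm{dist}_H(\tilde{u}, \mathcal{A}) \leq \mathrm{dist}_H(\Psi(k)B, \mathcal{A}) \to 0$ as $k \to \infty$, where $B$ is this fixed absorbing set. Hence $\tilde{u} \in \mathcal{A}$, proving \eqref{eq:4.87}. The main obstacle is the third step: the energy-equality passage with the nonlocal coefficient $a(l(u))$ requires showing that $a(l(u_{0,\epsilon_n}(t)+x_{0,\epsilon_n}^*(\theta_t\omega))) \to a(l(u(t)))$ strongly enough in $L^2$-in-time to pass to the limit in $\int a(l(\cdot))\|\cdot\|^2\,dt$ and in the cross term $(a(l(\cdot))\Delta x_{0,\epsilon_n}^*, p_{0,\epsilon_n})$, which is delicate because we initially have only weak convergence; this will be handled by combining the Lipschitz continuity of $a$, the continuity of $l$ on $H$, the strong convergence on finite time intervals provided by Aubin--Lions applied to the transformed equation \eqref{eq:4.35}, and Lemma~\ref{lem:4.5}.
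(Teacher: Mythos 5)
Your argument is correct in outline, but it is not the route the paper takes: the paper's entire proof of Theorem \ref{thm:4.12} is a one-line appeal to the classical upper semicontinuity theorem of Caraballo, Langa and Robinson \cite{T.CaJ.A.}. That theorem runs on a much softer mechanism: fix a deterministic bounded set $B$ containing all the attractors $\mathcal{A}_{0,\epsilon}(\theta_{-t}\omega)$ for small $\epsilon$ (supplied by Theorem \ref{thm:4.5} together with the observation, which you also make, that $R_{0,\epsilon}$ tends to a deterministic constant as $\epsilon\rightarrow0^+$); use the attraction property of the deterministic attractor to pick $T$ with $dist_H(\Psi(T)B,\mathcal{A})<\eta/2$; write $\mathcal{A}_{0,\epsilon}(\omega)=\Psi_{0,\epsilon}(T,\theta_{-T}\omega)\mathcal{A}_{0,\epsilon}(\theta_{-T}\omega)\subset\Psi_{0,\epsilon}(T,\theta_{-T}\omega)B$ by invariance; and invoke the convergence $\Psi_{0,\epsilon}(T,\theta_{-T}\omega)x\rightarrow\Psi(T)x$ uniformly on bounded sets (the display preceding the theorem, which rests on Lemma \ref{lem:4.2}) to conclude by the triangle inequality --- no compactness of the family of perturbed attractors is needed. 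Your proof instead transplants to the $\epsilon$-limit the Wang-type criterion that the paper reserves for the $\delta\rightarrow0^+$ limit (Lemma \ref{lem:4.11} and Theorem \ref{thm:4.13}): uniform bounds, pullback representation with diagonal extraction, and an energy-equality argument upgrading weak to strong convergence. This is heavier --- the delicate point you rightly flag is passing to the limit in the nonlocal coefficient $a(l(\cdot))$ inside the energy identity, which genuinely needs an Aubin--Lions step to obtain strong $L^2$-in-time convergence before the Lipschitz continuity of $a$ and the bound \eqref{eq:2.1} can be exploited --- but it buys something the soft argument does not: precompactness of $\bigcup_{n}\mathcal{A}_{0,\epsilon_n}(\omega)$ in $H$. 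Both routes ultimately rest on the same two inputs, namely the solution convergence of Lemma \ref{lem:4.2} and the uniform absorbing estimate of Theorem \ref{thm:4.5}, so your construction is a legitimate, self-contained alternative to the paper's citation.
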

	\begin{proof}
		The proof is similar to \cite[Theorem 2]{T.CaJ.A.} and we omit the detail here.
	\end{proof}
	\par 	
	\begin{thm}\label{thm:4.13} Assume function $a$ is globally Lipschitz and satisfies \eqref{eq:2.1}, $f\in C(\Bbb R)$ satisfies \eqref{eq:2.2} and \eqref{eq:2.4} with $p=2$ and $\beta=C_f$, respectively, $\phi\in V\cap H^2(\mathcal{O})$, and $l\in L^2(\mathcal{O})$. Also, let $m\lambda_1>4C_f$. Then, there exists $\tilde{\delta} >0$ and $\epsilon_0\in(0,1]$ such that for all $0<\delta<\tilde{\delta}$, $\epsilon\in(0,\epsilon_0]$, for almost all $\omega\in\Omega$,
	\begin{equation}
		\label{eq:4.88}
		\lim\limits_{\substack{\delta\rightarrow0^+}} dist_H(\mathcal{A}_{\delta,\epsilon}(\omega),\mathcal{A}_{0,\epsilon}(\omega))=0.
	\end{equation}
	where
	\begin{equation}
		\lim\limits_{\substack{\delta\rightarrow0^+}} dist_H(\mathcal{A}_{\delta,\epsilon}(\omega),\mathcal{A}_{0,\epsilon}(\omega))=\sup_{a\in\mathcal{A}_{\delta,\epsilon}(\omega)}\inf_{b\in\mathcal{A}_{0,\epsilon}(\omega)}\|a-b\|_{H}.
		\nonumber
	\end{equation}
		\end{thm}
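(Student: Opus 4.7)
The plan is to establish (4.88) by the standard contradiction argument for upper semicontinuity of random attractors, leveraging the three preparatory results already proved: Lemma \ref{lem:4.9} (continuous dependence of solutions on $\delta$ when initial data converge), Theorem \ref{thm:4.6} (existence of an absorbing set $B_{\delta,\epsilon}$ for $\Psi_{\delta,\epsilon}$ together with the limit $R_{\delta,\epsilon}(\omega) \to R_{0,\epsilon}(\omega)$ as $\delta \to 0^+$), and Lemma \ref{lem:4.11} (precompactness in $H$ of any sequence $u_{\delta_n,\epsilon} \in \mathcal{A}_{\delta_n,\epsilon}(\omega)$ with $\delta_n \to 0^+$).

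First, I would assume toward contradiction that there exist $\omega \in \Omega$, $\sigma_0 > 0$, a sequence $\delta_n \to 0^+$ and points $u_n \in \mathcal{A}_{\delta_n,\epsilon}(\omega)$ with $\mathrm{dist}_H(u_n, \mathcal{A}_{0,\epsilon}(\omega)) \geq \sigma_0$ for every $n$. Applying Lemma \ref{lem:4.11} at $\omega$, I would extract a subsequence (not relabeled) such that $u_n \to \tilde u$ strongly in $H$. The task then reduces to showing $\tilde u \in \mathcal{A}_{0,\epsilon}(\omega)$, which contradicts the lower bound $\sigma_0$ together with the strong convergence.

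To identify $\tilde u$ as an element of $\mathcal{A}_{0,\epsilon}(\omega)$ I would exploit the invariance of $\mathcal{A}_{\delta_n,\epsilon}$: for every fixed $T > 0$, write $u_n = \Psi_{\delta_n,\epsilon}(T, \theta_{-T}\omega, v_n^T)$ with $v_n^T \in \mathcal{A}_{\delta_n,\epsilon}(\theta_{-T}\omega) \subset B_{\delta_n,\epsilon}(\theta_{-T}\omega)$. The bound $|v_n^T|^2 \leq \lambda_1^{-1} R_{\delta_n,\epsilon}(\theta_{-T}\omega)$, combined with the convergence $R_{\delta_n,\epsilon}(\theta_{-T}\omega) \to R_{0,\epsilon}(\theta_{-T}\omega)$ from Theorem \ref{thm:4.6}, makes $\{v_n^T\}_n$ uniformly bounded in $H$; Lemma \ref{lem:4.11} applied at $\theta_{-T}\omega$ then yields a further subsequence with $v_n^T \to \tilde v^T$ in $H$, and Lemma \ref{lem:4.9} passes to the limit inside the cocycle to give $\tilde u = \Psi_{0,\epsilon}(T, \theta_{-T}\omega, \tilde v^T)$. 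Defining $D(\omega) := \{v \in H : |v|^2 \leq 2\lambda_1^{-1} R_{0,\epsilon}(\omega)\}$, which lies in $\mathcal{D}$ because $R_{0,\epsilon}$ is tempered (cf.\ Theorem \ref{thm:4.5}), I would observe that $\tilde v^T \in D(\theta_{-T}\omega)$ for each $T$, hence $\tilde u \in \Psi_{0,\epsilon}(T, \theta_{-T}\omega, D(\theta_{-T}\omega))$; letting $T \to \infty$ and invoking the $\mathcal{D}$-pullback attraction of $\mathcal{A}_{0,\epsilon}(\omega)$ forces $\mathrm{dist}_H(\tilde u, \mathcal{A}_{0,\epsilon}(\omega)) = 0$.

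The main technical obstacle I expect is the diagonal-in-$(n,T)$ management of the two limits: for each fixed $T$ the selection of $v_n^T$ depends on $T$, so care is needed to guarantee that the enveloping family $D$ is $\mathcal{D}$-tempered uniformly in $T$ and not merely bounded pointwise. This hinges on the uniform-in-$\delta$ control of the absorbing radii delivered by Theorem \ref{thm:4.6}, together with the tempered properties of $x_{\delta,\epsilon}^*(\theta_t\omega)$ recorded in (4.37) and the approximation $x_{\delta,\epsilon}^*(\theta_t\omega) \to x_{0,\epsilon}^*(\theta_t\omega)$ of Lemma \ref{lem:4.5}; once these are combined, the standard machinery closes the argument.
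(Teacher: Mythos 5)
Your proposal is correct and amounts to the same argument as the paper: the paper's proof simply verifies the convergence of the absorbing radii (Theorem \ref{thm:4.6}), the convergence of solutions (Lemma \ref{lem:4.10}), and the precompactness of sequences drawn from the attractors (Lemma \ref{lem:4.11}), and then invokes the abstract upper-semicontinuity criterion of [B. Wang, Stoch. Dyn. 14 (2014), Theorem 3.1] as a black box. What you have written out by hand (the contradiction argument, the backward-in-time representation $u_n=\Psi_{\delta_n,\epsilon}(T,\theta_{-T}\omega,v_n^T)$, and the passage $T\to\infty$ via the attraction property) is precisely the proof of that cited abstract theorem, so the two routes coincide in substance.
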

	\begin{proof}
		For every fixed $\omega\in\Omega$, by Theorem \ref{thm:4.6} that for almost $\omega\in\Omega$, we have
		\begin{equation}
			\label{eq:4.89}
			\lim\limits_{\substack{\delta\rightarrow0^+}}|B_{\delta,\epsilon}(0,\omega)|=|B_{0,\epsilon}(0,\omega)|,
		\end{equation}
		along with Lemma \ref{lem:4.10} and Lemma \ref{lem:4.11}, by applying \cite[Theorem 3.1]{B.W E}, the proof is complete.
	\end{proof} 
	\par 
	Based on the above analysis, we can derive that the random attractor of the stochastic equation \eqref{eq:4.3} driven by additive noise and the global attractor of the corresponding deterministic equation \eqref{eq:4.2} possess the following convergence relationship as the perturbation parameters $\delta$ and $\epsilon$ both approach zero.
	\par 
	\begin{thm}\label{thm:4.14}
 Assume function $a$ is globally Lipschitz and satisfies \eqref{eq:2.1}, $f\in C(\Bbb R)$ satisfies \eqref{eq:2.2} and \eqref{eq:2.4} with $p=2$ and $\beta=C_f$, respectively, $\phi\in V\cap H^2(\mathcal{O})$. Also, let $m\lambda_1>4C_f$ and $l\in L^2(\mathcal{O})$. Then for almost all $\omega\in\Omega$, we have 
	\begin{equation}
		\lim\limits_{\substack{\delta\rightarrow0^+\\\epsilon\rightarrow0^+}} dist_H(\mathcal{A}_{\delta,\epsilon}(\omega),\mathcal{A})=0.
		\nonumber
	\end{equation}
	where
	\begin{equation}
		\lim\limits_{\substack{\delta\rightarrow0^+\\\epsilon\rightarrow0^+}} dist_H(\mathcal{A}_{\delta,\epsilon}(\omega),\mathcal{A})=\sup_{a\in\mathcal{A}_{\delta,\epsilon}(\omega)}\inf_{b\in\mathcal{A}}\|a-b\|_{H}.
		\nonumber
	\end{equation}
		\end{thm}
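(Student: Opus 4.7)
The plan is to derive Theorem \ref{thm:4.14} by combining the two intermediate upper semi-continuity results, Theorem \ref{thm:4.12} and Theorem \ref{thm:4.13}, through the triangle-like inequality for the Hausdorff semi-distance. Namely, for every $a\in \mathcal{A}_{\delta,\epsilon}(\omega)$, every $b\in\mathcal{A}_{0,\epsilon}(\omega)$, and every $c\in\mathcal{A}$, we have $\|a-c\|_H\leq \|a-b\|_H+\|b-c\|_H$, whence
\begin{equation}
\mathrm{dist}_H(\mathcal{A}_{\delta,\epsilon}(\omega),\mathcal{A})\leq \mathrm{dist}_H(\mathcal{A}_{\delta,\epsilon}(\omega),\mathcal{A}_{0,\epsilon}(\omega))+\mathrm{dist}_H(\mathcal{A}_{0,\epsilon}(\omega),\mathcal{A}).
\nonumber
\end{equation}
The whole proof is then reduced to controlling each summand on the right-hand side by appealing to the appropriate theorem.

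Given $\eta>0$, I first handle the $\epsilon$-term. By Theorem \ref{thm:4.12}, there exists $\epsilon_*\in(0,\epsilon_0]$ such that
\begin{equation}
\mathrm{dist}_H(\mathcal{A}_{0,\epsilon}(\omega),\mathcal{A})<\frac{\eta}{2}\quad\text{for every }\epsilon\in(0,\epsilon_*].
\nonumber
\end{equation}
Fix any such $\epsilon$. Since the hypotheses of Theorem \ref{thm:4.13} are fulfilled with this $\epsilon$ fixed, there exists $\delta_*=\delta_*(\omega,\epsilon)\in(0,\tilde\delta)$ such that
\begin{equation}
\mathrm{dist}_H(\mathcal{A}_{\delta,\epsilon}(\omega),\mathcal{A}_{0,\epsilon}(\omega))<\frac{\eta}{2}\quad\text{for every }\delta\in(0,\delta_*].
\nonumber
\end{equation}
Adding the two estimates in the displayed triangle inequality yields $\mathrm{dist}_H(\mathcal{A}_{\delta,\epsilon}(\omega),\mathcal{A})<\eta$ for all $\delta\in(0,\delta_*]$ and $\epsilon\in(0,\epsilon_*]$, and since $\eta>0$ was arbitrary, this gives the double limit claimed in the theorem.

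The only genuine subtlety is to make sense of the joint limit $\delta\to 0^+,\,\epsilon\to 0^+$. The argument above selects $\epsilon_*$ first and then $\delta_*(\omega,\epsilon)$ depending on $\epsilon$, which is exactly what is needed for the iterated interpretation of the double limit used throughout Sections 4--5 of the paper; it also suffices for the sequential interpretation, since for any sequences $\delta_n,\epsilon_n\to 0^+$ one has $\epsilon_n\leq\epsilon_*$ eventually and then $\delta_n\leq \delta_*(\omega,\epsilon_n)$ eventually after possibly passing to a tail. The main ``obstacle'' is therefore only notational, since both difficult ingredients --- pathwise convergence of the drivers (Lemma \ref{lem:4.5}), convergence of absorbing radii (Theorem \ref{thm:4.6}, formula \eqref{eq:4.47}), and the compactness arguments for the attractors (Lemmas \ref{lem:4.9}--\ref{lem:4.11}) --- have already been absorbed into Theorem \ref{thm:4.13}, while the $\epsilon$-asymptotics of the additively perturbed random attractor is contained in Theorem \ref{thm:4.12}. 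If one instead wanted a fully uniform joint statement, one could bypass the triangle inequality and invoke the abstract upper semi-continuity framework (such as \cite{B. Wang}, Theorem 3.1) directly, using the joint convergence of solutions from Theorem \ref{thm:4.3} together with the joint boundedness of $R_{\delta,\epsilon}(\omega)$ derivable from \eqref{eq:4.47} and Lemma \ref{lem:4.5}; but the triangle-inequality route is both shorter and entirely in keeping with the proofs of Theorems \ref{thm:4.12} and \ref{thm:4.13} which it cites.
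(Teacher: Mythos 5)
Your proposal is correct and follows essentially the same route as the paper: the triangle inequality for the Hausdorff semi-distance splits $\mathrm{dist}_H(\mathcal{A}_{\delta,\epsilon}(\omega),\mathcal{A})$ into the two terms controlled by Theorem \ref{thm:4.13} and Theorem \ref{thm:4.12}, which is exactly the paper's one-line argument. Your added care in first fixing $\epsilon_*$ and then choosing $\delta_*(\omega,\epsilon)$ makes the interpretation of the double limit explicit, a detail the paper leaves implicit.
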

	\begin{proof}
		By Theorem \ref{thm:4.12} and Theorem \ref{thm:4.13}, there exists $\epsilon_0\in (0,1]$, $\tilde{\delta} >0$ such that for all $\epsilon\in(0,\epsilon_0]$, $0<\delta<\tilde{\delta}$, we have
		\begin{equation}
			\begin{aligned}
				dist_H(\mathcal{A}_{\delta,\epsilon}(\omega),\mathcal{A})&=\sup_{a\in\mathcal{A}_{\delta,\epsilon}(\omega)}\inf_{b\in\mathcal{A}}\|a-b\|_{H}\\
				&\leq\sup_{a\in\mathcal{A}_{\delta,\epsilon}(\omega)}\inf_{c\in\mathcal{A}_{0,\epsilon}(\omega)}\|a-c\|_{H}+\sup_{c\in\mathcal{A}_{0,\epsilon}(\omega)}\inf_{b\in\mathcal{A}}\|c-b\|_{H}.
				\nonumber
			\end{aligned}
		\end{equation}
		The proof is complete.
	\end{proof}
	\section{Convergence of random attractors for stochastic nonlocal PDEs with multiplicative noise}
	In this section, we study the case that $ g(t,u)$ in (1.1) is $u$ itself, i.e., the case of multiplicative noise.\\
	
	\textbf{5.1. Convergence of solutions}. In this section, we first study the convergence of solutions of the random differential equation
	\begin{equation}
		\label{eq:5.1}\indent
		\frac{\partial u_{0,\epsilon}}{\partial t}-a(l(u_{0,\epsilon}))\Delta u_{0,\epsilon}=f(u_{0,\epsilon})+\epsilon u_{0,\epsilon}\circ\frac{dW}{dt},\quad u_{0,\epsilon}(\tau)=u_0\in H.
	\end{equation}
	to the differential equation \eqref{eq:4.2} as $\epsilon\rightarrow0^+ $. Then we consider that the convergence of solutions of random differential equation
	\begin{equation}
		\label{eq:5.2}\indent
		\frac{\partial u_{\delta,\epsilon}}{\partial t}-a(l(u_{\delta,\epsilon}))\Delta u_{\delta,\epsilon}=f(u_{\delta,\epsilon})+\epsilon u_{\delta,\epsilon}\zeta_{\delta}(\theta_{t}\omega),\quad u_{\delta,\epsilon}(\tau)=u_0\in H,
	\end{equation}
	to equation \eqref{eq:4.2} as $\delta\rightarrow0^+$ and $\epsilon\rightarrow0^+$. 
	\par 
	We need to verify for any $T>\tau$ with $T>0$, $\tau\in R$, $\omega\in\Omega $, the solution of equation \eqref{eq:5.1} is uniformly convergent to the solution of equation \eqref{eq:4.2} on $t\in[\tau,T]$ as $ \epsilon\rightarrow0^+ $, and consequently the solution of equation \eqref{eq:5.2} is uniformly convergent to solution of equation \eqref{eq:4.2} as $\delta\rightarrow0^+$ and $\epsilon\rightarrow0^+$.
	\par 
	In what follows, we denote the same notations as in Section 4 and Section 5 without the confusion. We still let $u_{\delta,\epsilon}(t,\omega,u_0)$, $u(t,u_0)$, $u_{0,\epsilon}(t,\omega,u_0)$ be solutions of equations \eqref{eq:5.2}, \eqref{eq:4.2} and \eqref{eq:5.1}, respectively. 
	\par 
	Let $v_{0, \epsilon}=e^{-\epsilon W(s)}u_{0, \epsilon}$. From \eqref{eq:5.1} we find that $v_{0,\epsilon}(t, \omega, v_{0,\epsilon}(\tau))$ satisfies equation
	\begin{equation}
		\begin{aligned}
			\label{eq:5.3}
			\frac{\partial{v_{0,\epsilon}}}{\partial{t}}=&a(l(v_{0,\epsilon})e^{\epsilon W(t)})\Delta v_{0,\epsilon}+e^{-\epsilon W(t)}f(v_{0,\epsilon}e^{\epsilon W(t)})+\epsilon v_{0,\epsilon}W(t),
		\end{aligned}
	\end{equation}
			\begin{equation}
				\label{eq:5.4}
				v_{0,\epsilon}(\tau)=e^{-\epsilon W(\tau)}u_0\in H.
			\end{equation}
	\par 
	Set $v_{\delta, \epsilon}=e^{-\epsilon\int_{0}^{t}\zeta_{\delta}(\theta_{s}\omega)ds}u_{\delta, \epsilon}$. From \eqref{eq:5.2} we obtain that 
	$v_{\delta, \epsilon}(t, \omega, v_{\delta,\epsilon}(\tau))$ satisfies equation
	\begin{equation}
		\begin{aligned}
			\label{eq:5.5}
			\frac{\partial{v_{\delta,\epsilon}}}{\partial{t}}&=a(l(v_{\delta,\epsilon})e^{\epsilon\int_{0}^{t}\zeta_{\delta}(\theta_{s}\omega)ds})\Delta v_{\delta,\epsilon}+e^{-\epsilon\int_{0}^{t}\zeta_{\delta}(\theta_{s}\omega)ds}f(v_{\delta,\epsilon}e^{\epsilon\int_{0}^{t}\zeta_{\delta}(\theta_{s}\omega)ds})+\epsilon v_{\delta,\epsilon}{\int_{0}^{t}\zeta_{\delta}(\theta_{s}\omega)ds},
				\end{aligned}
		\end{equation}
			\begin{equation}
			v_{\delta,\epsilon}(\tau)=e^{-\epsilon\int_{0}^{\tau}\zeta_{\delta}(\theta_{s}\omega)ds}u_0\in H.
			\end{equation} 
	\par 
	According to Hypothesis 1.1, for $T>\tau$ with $T>0$, $\omega\in\Omega$, we have
	\begin{equation}
		\begin{aligned}
			\sup_{\delta\rightarrow0^+}|v_{\delta,\epsilon}(\tau)-v_{0,\epsilon}(\tau)|=0.
			\nonumber
		\end{aligned}
	\end{equation}
	\par 
	Before proving the convergence relationship of the solutions to equations \eqref{eq:5.2} and \eqref{eq:4.2}, we first give the following estimates.\\
	
	\par 
	\begin{lem}
		\label{lem:5.1}
For each $\omega\in\Omega$, $v_{\delta,\epsilon}(\tau)\in H$ and $v_{0,\epsilon}(\tau)\in H$, there exists a positive constant $\alpha =\alpha(T, v_{\delta,\epsilon}(\tau), v_{0,\epsilon}(\tau))$, $\tau\in R$ such that
	\begin{equation}
		\label{eq:5.7}
		\sup_{\tau\leq t\leq T}\|v_{0,\epsilon}(t,\omega,v_{0,\epsilon}(\tau))\|^2\leq \alpha,\quad for\ \forall\epsilon>0,
	\end{equation}
	\begin{equation}
		\label{eq:5.8}
		\sup_{\tau\leq t\leq T}\|v_{\delta,\epsilon}(t,\omega,v_{\delta,\epsilon}(\tau))\|^2\leq \alpha,\quad for\ \forall\epsilon>0, \delta>0,
	\end{equation}
		\end{lem}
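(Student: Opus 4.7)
The plan is to mirror the strategy used in Lemma~\ref{lem:4.1}, adapted to the fact that in equations~\eqref{eq:5.3} and~\eqref{eq:5.5} the noise enters multiplicatively both inside the nonlinearity (via $v\,e^{\epsilon W(t)}$, resp.\ $v\,e^{\epsilon\int_0^t\zeta_\delta(\theta_s\omega)ds}$) and as a linear driver $\epsilon v\,W(t)$, resp.\ $\epsilon v\int_0^t\zeta_\delta(\theta_s\omega)ds$. Throughout, the key observation is that on the compact interval $[\tau,T]$ both $W(\cdot)$ and, by Hypothesis~1.1(1), $\int_0^{\cdot}\zeta_\delta(\theta_s\omega)ds$ are continuous in $t$ and hence uniformly bounded by some constant $C_r=C_r(T,\omega)$; moreover, Hypothesis~1.1(3) makes the latter bound uniform in $\delta\in(0,\bar\delta]$ for $\bar\delta>0$ small enough.

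First, I would take the $H$-inner product of~\eqref{eq:5.3} with $v_{0,\epsilon}$. Using \eqref{eq:2.1} to bound $a(\cdot)\ge m$ and the sublinear growth \eqref{eq:2.4} with $p=2$, $\beta=C_f$ to obtain the pointwise bound $|f(v_{0,\epsilon}e^{\epsilon W(t)})|\le C_f(|v_{0,\epsilon}|\,e^{\epsilon W(t)}+1)$, the crucial cancellation
\[
e^{-\epsilon W(t)}\bigl|f(v_{0,\epsilon}e^{\epsilon W(t)})\bigr|\le C_f\bigl(|v_{0,\epsilon}|+e^{-\epsilon W(t)}\bigr)
\]
kicks in. Combining this with Young's inequality for the remaining cross terms and Poincar\'e's inequality, I will arrive at a differential inequality of the form
\[
\tfrac{d}{dt}|v_{0,\epsilon}(t)|^2+m\|v_{0,\epsilon}(t)\|^2\le A(t,\omega,\epsilon)\,|v_{0,\epsilon}(t)|^2+B(t,\omega,\epsilon),
\]
where $A,B$ are bounded on $[\tau,T]$ by continuity of $W$. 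Gronwall's inequality then yields $\sup_{[\tau,T]}|v_{0,\epsilon}(t)|^2\le \alpha_0(T,\omega,v_{0,\epsilon}(\tau))$, and a subsequent test of~\eqref{eq:5.3} by $-\Delta v_{0,\epsilon}$, using the obtained $H$-bound to absorb the resulting $|v_{0,\epsilon}|^2$ term, upgrades the estimate to the claimed uniform $V$-bound~\eqref{eq:5.7}.

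For~\eqref{eq:5.8}, I would run the same argument verbatim for equation~\eqref{eq:5.5}, with $W(t)$ replaced by $I_\delta(t):=\int_0^t\zeta_\delta(\theta_s\omega)ds$. The continuity of $I_\delta(\cdot)$ on $[\tau,T]$ gives analogous bounds, and by Hypothesis~1.1(3) we may take $\bar\delta$ small enough that $\sup_{|t|\le T}|I_\delta(t)-W(t)|\le 1$, so $\sup_{[\tau,T]}|I_\delta(t)|\le C_r+1$ uniformly in $\delta\in(0,\bar\delta]$. This delivers a bound of the same form with a constant depending only on $T,\omega,v_{\delta,\epsilon}(\tau)$. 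Taking $\alpha$ to be the maximum of the two resulting constants yields the common $\alpha=\alpha(T,v_{\delta,\epsilon}(\tau),v_{0,\epsilon}(\tau))$ asserted in the lemma.

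The main obstacle is controlling the nonlinearity after conjugation: the factor $e^{\epsilon W(t)}$ inside $f$ could a priori be amplified uncontrollably, and only the sublinear growth~\eqref{eq:2.4} with $p=2$ ensures the clean cancellation above, so that all exponentials in the Gronwall iteration are bounded by constants depending solely on $\sup_{[\tau,T]}|W(t)|$ (respectively $\sup_{[\tau,T]}|I_\delta(t)|$). A secondary care point is the step from the $H$-bound to the $V$-bound: since the initial datum only lies in $H$, the test with $-\Delta v$ must be carried out after an initial time, exploiting parabolic smoothing and the $L^2_t(V)$ bound obtained from the $H$-estimate, so no blow-up at $t=\tau$ spoils the $\sup_{[\tau,T]}\|\cdot\|^2$ conclusion.
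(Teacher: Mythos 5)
Your proposal is correct and follows essentially the same route as the paper: an energy estimate for the conjugated equations exploiting the cancellation $e^{-\epsilon W(t)}|f(v e^{\epsilon W(t)})|\le C_f(|v|+e^{-\epsilon W(t)})$ from \eqref{eq:2.4} with $p=2$ and $\beta=C_f$, followed by Gronwall's inequality and the pathwise boundedness of $W(\cdot)$ and $\int_0^{\cdot}\zeta_\delta(\theta_s\omega)\,ds$ on $[\tau,T]$. The only organizational difference is that the paper tests directly by $-\Delta v_{0,\epsilon}$ and closes a single Gronwall inequality for $\|v_{0,\epsilon}\|^2$ (absorbing the $|v_{0,\epsilon}|^2$ term via Poincar\'e), whereas you first derive the $H$-bound and then upgrade to $V$; both work, and your attention to the $V$-regularity of the initial datum is in fact more scrupulous than the paper's displayed bound, which tacitly assumes $v_{0,\epsilon}(\tau)\in V$.
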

	\begin{proof}
		By \eqref{eq:2.1}, \eqref{eq:5.3},\eqref{eq:1.11} and \eqref{eq:2.4} with $\beta=C_f$ and $p=2$, for all $\epsilon>0$, $\delta\ge0$ and $t>\tau$ we observe that
		\begin{equation}
			\|v_{0,\epsilon}(t,\omega,v_{0,\epsilon}(\tau)\|^2\leq e^{(m\lambda_1-\frac{2C_f^2}{m\lambda_1})\tau+2\epsilon\int_{\tau}^{t}W(s)ds}\|v_{0,\epsilon}(\tau)\|^2+\frac{2}{m}C_f^2|\mathcal{O}|\int_{\tau}^{t}e^{-2\epsilon W(s)+(m\lambda_1-\frac{2C_f^2}{m\lambda_1})s+\int_{s}^{t}W(r)dr}ds.
			\nonumber
		\end{equation}
		\par 
		Since $\omega$ is continuous on $[\tau,T]$, therefore \eqref{eq:5.7} holds. The proof of \eqref{eq:5.8} is similar and here for brevity we omit it. Then the proof is complete.
	\end{proof}
	\par 
	The following lemma shows that the approximation of $u_{0,\epsilon}(t,\omega,u_0)$ and $u(t,u_0)$.\\
	
	\par 
	\begin{lem}
		\label{lem:5.2}  Assume Hypotheses 1.1 hold with
	\begin{equation}
		m>(\alpha+2)L_a|l|\lambda_1^{-1}+L_a|l|(\alpha+2)+4\eta\lambda_1^{-1},
		\nonumber
	\end{equation}
	where $a(\cdot)$ is supposed to be globally Lipschitz, the Lipschitz constant is still denoted the same by $L_a$. Then, for each $\tau\in R$, we have $\omega\in\Omega$ and $u_0\in H $,
	\begin{equation}
		\label{eq:5.9}
		\lim_{\epsilon\rightarrow0^+} \sup_{t\in[\tau,T]}|u_{0,\epsilon}(t,\omega,u_0)-u(t,u_0)|^2=0,
	\end{equation}
	where $u_{0,\epsilon}(t,\omega, u_0)$ is the solution of equation \eqref{eq:5.1}, and $u(t,u_0)$ is the solution of equation \eqref{eq:4.2}.
		\end{lem}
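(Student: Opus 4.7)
The plan is to mirror the strategy already carried out for Lemma \ref{lem:4.2} in the additive case, using the Ornstein--Uhlenbeck-type change of variables $v_{0,\epsilon}(t)=e^{-\epsilon W(t)}u_{0,\epsilon}(t)$ that converts \eqref{eq:5.1} into the random equation \eqref{eq:5.3}. Since $\omega$ is continuous on the compact interval $[\tau,T]$, there is $C_r=C_r(\omega,T)>0$ with $|W(t)|\le C_r$ and hence $|e^{\pm\epsilon W(t)}-1|\le C\epsilon$ uniformly on $[\tau,T]$. Together with the a priori estimate \eqref{eq:5.7} for $\|v_{0,\epsilon}\|$ supplied by Lemma \ref{lem:5.1}, this reduces the claim to proving $\sup_{t\in[\tau,T]}|v_{0,\epsilon}(t)-u(t)|^{2}\to 0$, because
\begin{equation}
|u_{0,\epsilon}(t)-u(t)|^{2}\le 2|e^{\epsilon W(t)}-1|^{2}|v_{0,\epsilon}(t)|^{2}+2|v_{0,\epsilon}(t)-u(t)|^{2}, \nonumber
\end{equation}
and the first term is $O(\epsilon^{2})$ by Lemma \ref{lem:5.1}. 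The initial datum $v_{0,\epsilon}(\tau)=e^{-\epsilon W(\tau)}u_{0}$ converges to $u_{0}=u(\tau)$ in $H$ as $\epsilon\to 0^{+}$, supplying the seed for Gronwall's inequality.

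I would then subtract \eqref{eq:4.2} from \eqref{eq:5.3}, test against $v_{0,\epsilon}(t)-u(t)$, and split the right-hand side into four groups $I_{1}+I_{2}+I_{3}$ corresponding respectively to the diffusion difference, the nonlinearity difference, and the extra drift $\epsilon v_{0,\epsilon}W(t)$. For $I_{1}$ I decompose
\begin{equation}
a(l(v_{0,\epsilon})e^{\epsilon W(t)})\Delta v_{0,\epsilon}-a(l(u))\Delta u=\bigl[a(l(v_{0,\epsilon})e^{\epsilon W(t)})-a(l(u))\bigr]\Delta v_{0,\epsilon}+a(l(u))\Delta(v_{0,\epsilon}-u), \nonumber
\end{equation}
further inserting $a(l(v_{0,\epsilon}))$ in the first bracket to isolate the $O(\epsilon)$ contribution coming from $e^{\epsilon W(t)}-1$; the global Lipschitz assumption on $a$, $l\in\mathcal L(L^{2}(\mathcal{O});\mathbb R)$, \eqref{eq:2.1}, Poincaré and Young's inequalities and the bound $\|v_{0,\epsilon}\|^{2}\le\alpha$ then produce an estimate of the form $(2L_{a}|l|\alpha\lambda_{1}^{-1}+2L_{a}|l|)\|v_{0,\epsilon}-u\|^{2}$ plus an $O(\epsilon)$ remainder. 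For $I_{2}$, after rewriting $f(v_{0,\epsilon}e^{\epsilon W(t)})=[f(v_{0,\epsilon}e^{\epsilon W(t)})-f(v_{0,\epsilon})]+f(v_{0,\epsilon})$ and using \eqref{eq:2.2}, I obtain at most $4\eta\lambda_{1}^{-1}\|v_{0,\epsilon}-u\|^{2}$ plus an $O(\epsilon)$ remainder controlled by $\eta$, $C_{r}$ and the uniform $H$-bound on $v_{0,\epsilon}$. The term $I_{3}$ is directly bounded by $\epsilon C_{r}(|v_{0,\epsilon}|^{2}+|v_{0,\epsilon}-u|^{2})$, which is entirely an $O(\epsilon)$ remainder.

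Collecting the estimates, the coefficient multiplying $\|v_{0,\epsilon}-u\|^{2}$ on the right is bounded by $(\alpha+2)L_{a}|l|\lambda_{1}^{-1}+L_{a}|l|(\alpha+2)+4\eta\lambda_{1}^{-1}$ (the extra $+2$ absorbs the small perturbations arising from $e^{\epsilon W(t)}-1$ when $\epsilon$ is chosen small enough that such factors are at most $1$). The structural hypothesis $m>(\alpha+2)L_{a}|l|\lambda_{1}^{-1}+L_{a}|l|(\alpha+2)+4\eta\lambda_{1}^{-1}$ then lets the whole $V$-norm contribution be absorbed into the dissipation $-2m\|v_{0,\epsilon}-u\|^{2}$ on the left, reducing the energy inequality to
\begin{equation}
\frac{d}{dt}|v_{0,\epsilon}(t)-u(t)|^{2}\le\epsilon\gamma(t)|v_{0,\epsilon}(t)-u(t)|^{2}+\epsilon\beta(t), \nonumber
\end{equation}
with $\gamma,\beta$ continuous on $[\tau,T]$ depending only on $\alpha$, $L_{a}$, $|l|$, $\eta$, $\widetilde m$, $C_{r}$ and $\|\phi\|_{V\cap H^{2}}$. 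Gronwall's inequality and $|v_{0,\epsilon}(\tau)-u_{0}|^{2}=|e^{-\epsilon W(\tau)}-1|^{2}|u_{0}|^{2}\to 0$ then give uniform convergence on $[\tau,T]$.

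The main obstacle I anticipate, compared with the additive case, is precisely the multiplicative structure inside $a$: the perturbation $e^{\epsilon W(t)}$ multiplies $l(v_{0,\epsilon})$ rather than being added to it, so the Lipschitz argument gives a remainder proportional to $\|v_{0,\epsilon}\|^{2}\cdot|e^{\epsilon W(t)}-1|$, which is why one needs the uniform $V$-bound from Lemma \ref{lem:5.1} and why the smallness condition on $m$ in the hypothesis is slightly stronger than in Lemma \ref{lem:4.2}. Handling this term carefully, so that the $O(\epsilon)$ factor is truly extracted rather than being absorbed into a Gronwall coefficient that could blow up, is the only delicate point; once that is done the rest of the proof is a routine energy-Gronwall scheme analogous to \eqref{eq:4.12}--\eqref{eq:4.20}.
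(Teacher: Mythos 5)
Your proposal follows essentially the same route as the paper: the change of variables $v_{0,\epsilon}=e^{-\epsilon W(t)}u_{0,\epsilon}$, the energy estimate for $|v_{0,\epsilon}-u|^2$ split into diffusion, nonlinearity and drift terms, absorption of the gradient contributions via Lemma \ref{lem:5.1} and the structural condition on $m$, and a Gronwall argument with coefficients that vanish as $\epsilon\rightarrow0^+$ through the factors $|e^{\pm\epsilon W_t}-1|$. The only differences are cosmetic (your splitting of the diffusion term puts the Lipschitz increment on $\Delta v_{0,\epsilon}$ rather than $\Delta u$, and the reference to $\|\phi\|_{V\cap H^2}$ is a stray carryover from the additive case), so the argument matches the paper's proof.
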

	\par
	\begin{proof}
		Let $v_{0, \epsilon}(t, \omega, v_{0,\epsilon}(\tau))=v_{0, \epsilon}(t)$, $u(t, u_0)=u(t)$ respectively. By \eqref{eq:5.3} and \eqref{eq:4.2}, we can obtain that
		\begin{equation}
			\begin{aligned}
				\frac{d}{dt}|v_{0, \epsilon}(t)-u(t)|^2+2m\|v_{0,\epsilon}(t)-u(t)\|^2\leq&2\langle a(l(v_{0, \epsilon}))\Delta u(t)-a(l(u(t)))\Delta u(t), v_{0, \epsilon}(t)-u(t)\rangle\\
				&+2\left(e^{-\epsilon W(t)}f(v_{0, \epsilon}(t)e^{\epsilon W(t)}-f(u(t)), v_{0, \epsilon}(t)-u(t)\right)\\
				&+2\left(\epsilon v_{0, \epsilon}(t)W(t), v_{0, \epsilon}(t)-u(t)\right).
				\nonumber
			\end{aligned}
		\end{equation}
		Since $a$ is globally Lipschitz, denote this Lipschitz constant by $L_a$, and by \eqref{eq:2.1},\eqref{eq:2.2}, \eqref{eq:2.4} with $\beta=C_f$ and $p=2$, the Young inequality and Poincar\'e inequality, for any $t\ge\tau$ we have
		\begin{equation}
			\begin{aligned}
				\frac{d}{dt}|v_{0, \epsilon}(t)-u(t)|^2&\leq(-2m+2L_a|l|\lambda_1^{-1}e^{\epsilon W_t}\|u\|^2+2L_a|l||e^{-\epsilon W_t}-1|\|u\|^2+2L_a|l||e^{\epsilon W_t}|)\|v_{0, \epsilon}(t)-u(t)\|^2\\
				&+(2C_f|1-e^{\epsilon W_t}|+2C_f|e^{-\epsilon W_t}-1|+4\eta|e^{\epsilon W_t}|+2\eta|e^{\epsilon W_t}-1|)|v_{0, \epsilon}(t)-u(t)|^2\\
				&+2C_f|e^{-\epsilon W_t}-1|\lambda_1^{-1}\|v_{0,\epsilon}\|^2+2C_f|e^{-\epsilon W_t}-1|+4\eta\lambda_1^{-1}|e^{\epsilon W_t}-1|\|u(t)\|^2\\
				&+2\epsilon C_r\lambda_1^{-1}\|v_{0,\epsilon}(t)\|^2+2\epsilon C_f|v_{0,\epsilon}(t)-u(t)|^2.
				\nonumber
			\end{aligned}
		\end{equation}
		Note that 
		\begin{equation}
			\sup_{0\leq t\leq T}|1-e^{-\epsilon W_t}|\leq(e^{\epsilon|W_t|}-1)+(1-e^{-\epsilon|W_t|})\rightarrow0,
			\nonumber
		\end{equation}
		as $\epsilon\rightarrow0^+$. Then, for sufficiently small $\epsilon>0$, we can get $|1-e^{-\epsilon W_t}|\leq\frac{1}{2}$,
		\begin{equation}
			\begin{aligned}
				\label{eq:5.10}
				\frac{d}{dt}|v_{0, \epsilon}(t)-u(t)|^2\leq&\left(-2m+4L_a|l|\lambda_1^{-1}\|u\|^2+L_a|l|\|u\|^2+4L_a|l|+8\eta\lambda_1^{-1}\right)\|v_{0, \epsilon}(t)-u(t)\|^2\\
				&+(2C_f|1-e^{\epsilon W_t}|+2C_f|e^{-\epsilon W_t}-1|+2\eta|e^{\epsilon W_t}-1|+2\epsilon C_r)|v_{0, \epsilon}(t)-u(t)|^2\\
				&+2C_f|e^{-\epsilon W_t}-1|\lambda_1^{-1}\|v_{0,\epsilon}\|^2+2C_f|e^{-\epsilon W_t}-1|\\
				&+4\eta\lambda_1^{-1}|e^{\epsilon W_t}-1|\|u(t)\|^2+2\epsilon C_r\lambda_1^{-1}\|v_{0,\epsilon}(t)\|^2.
			\end{aligned}
		\end{equation}
		By Lemma \ref{lem:5.1} and $m>2\alpha L_a|l|\lambda_1^{-1}+L_a|l|(\alpha+2)+4\eta\lambda_1^{-1}$, we have
		\begin{equation}
			\begin{aligned}
				\label{eq:5.11}
				\frac{d}{dt}|v_{0, \epsilon}(t)-u(t)|^2&\leq m(t)|v_{0, \epsilon}(t)-u(t)|^2+n(t),
			\end{aligned}
		\end{equation}
		where
		\begin{equation}
			\begin{aligned}
				m(t)=2C_f|1-e^{\epsilon W_t}|+2C_f|e^{-\epsilon W_t}-1|+2\eta|e^{\epsilon W_t}-1|+2\epsilon C_r,
				\nonumber
			\end{aligned}
		\end{equation}
		\begin{equation}
			\begin{aligned}
				n(t)=&2C_f|e^{-\epsilon W_t}-1|\lambda_1^{-1}\|v_{0,\epsilon}\|^2+2C_f|e^{-\epsilon W_t}-1|\\
				&+4\eta\lambda_1^{-1}|e^{\epsilon W_t}-1|\|u(t)\|^2+2\epsilon C_r\lambda_1^{-1}\|v_{0,\epsilon}(t)\|^2.
				\nonumber
			\end{aligned}
		\end{equation}
		By Gronwall's inequality, we get that
		\begin{equation}
			\begin{aligned}
				|v_{0, \epsilon}(t)-u(t)|^2&\leq\int_{\tau}^{t}e^{m(s)(t-s)}n(s)ds+|v_{0,\epsilon}(\tau)-u_0|^2\rightarrow0,\quad as\ \epsilon\rightarrow0^+.
				\nonumber
			\end{aligned}
		\end{equation}
		Thus, we conclude that
		\begin{equation}
			\label{eq:5.12}
			\lim_{\epsilon\rightarrow0^+}\sup_{t\in[\tau,T]}|v_{0,\epsilon}(t,\omega,u_0)-u(t,u_0)|^2=0.
		\end{equation}
		By \eqref{eq:5.10}, we conclude that
		\begin{equation}
			\label{eq:5.13}
			|u_{0,\epsilon}(t,\omega,u_0)-u(t,u_0)|^2\leq 2e^{2\epsilon|W_t|}|v_{0,\epsilon}(t,\omega,u_0)-u(t,u_0)|^2+2|e^{\epsilon W_t}-1|^2|u(t)|^2.
		\end{equation}
		By  Lemma \ref{lem:5.1} and \eqref{eq:5.11}, the proof is complete. 	
	\end{proof}
	\par 
	Next, we show the approximation between $u_{\delta,\epsilon}(t,\omega,u_0)$ and $ u(t,u_0)$.\\
	
	\par 
	\begin{thm}
		\label{thm:5.3}
 Assuming that Hypothesis 1.1, Lemma \ref{lem:5.1} and Lemma \ref{lem:5.2} hold. For each $\tau\in R$, $ \omega\in\Omega$ and $u_0\in H$,
	\begin{equation}
		\label{lem:5.14}
		\lim\limits_{\substack{\delta\rightarrow0^+\\\epsilon\rightarrow0^+}} \sup_{t\in[\tau,T]}|u_{\delta,\epsilon}(t,\omega,u_0)-u(t,u_0)|^2=0,
	\end{equation}
	where $u_{\delta,\epsilon}(t,\omega, u_0)$ is the solution of equation \eqref{eq:5.2}, and $u(t,u_0)$ is the solution of equation \eqref{eq:4.2}.
		\end{thm}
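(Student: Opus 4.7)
The plan is to mirror the strategy of Theorem \ref{thm:4.3}, splitting the error by the triangle inequality
\begin{equation}
|u_{\delta,\epsilon}(t,\omega,u_0)-u(t,u_0)|^2 \leq 2|u_{\delta,\epsilon}(t,\omega,u_0)-u_{0,\epsilon}(t,\omega,u_0)|^2+2|u_{0,\epsilon}(t,\omega,u_0)-u(t,u_0)|^2,
\nonumber
\end{equation}
so that the second term is controlled by Lemma \ref{lem:5.2} and it only remains to show that the first term vanishes as $\delta,\epsilon \to 0^+$. To estimate the first term, I would pass through the conjugated variables $v_{\delta,\epsilon}=e^{-\epsilon\int_{0}^{t}\zeta_{\delta}(\theta_{s}\omega)ds}u_{\delta,\epsilon}$ and $v_{0,\epsilon}=e^{-\epsilon W(t)}u_{0,\epsilon}$ satisfying \eqref{eq:5.5} and \eqref{eq:5.3}, together with the bound
\begin{equation}
|u_{\delta,\epsilon}-u_{0,\epsilon}|^2 \leq 2e^{2\epsilon|\int_{0}^{t}\zeta_{\delta}(\theta_{s}\omega)ds|}|v_{\delta,\epsilon}-v_{0,\epsilon}|^2 + 2\big|e^{\epsilon\int_{0}^{t}\zeta_{\delta}(\theta_{s}\omega)ds}-e^{\epsilon W(t)}\big|^2|v_{0,\epsilon}|^2.
\nonumber
\end{equation}
The second piece tends to $0$ uniformly on $[\tau,T]$ directly from Hypothesis 1.1(3) and Lemma \ref{lem:5.1}, so the remaining work is to drive $\sup_{t\in[\tau,T]}|v_{\delta,\epsilon}(t)-v_{0,\epsilon}(t)|^2 \to 0$.

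To achieve this, I would subtract \eqref{eq:5.5} from \eqref{eq:5.3}, test with $v_{\delta,\epsilon}-v_{0,\epsilon}$, and split the right-hand side into four terms analogous to $I_1$--$I_4$ in the proof of Theorem \ref{thm:4.3}: a diffusion contribution controlled by the global Lipschitz property of $a$, a reaction contribution controlled by \eqref{eq:2.2}, a linear contribution from $\epsilon v_{\cdot,\epsilon}\int_0^t\zeta_\delta\, ds$ versus $\epsilon v_{\cdot,\epsilon}W(t)$, and the cross-terms coming from the exponential prefactors. In each term the difference can be written as a sum of (i) a factor like $\int_0^t\zeta_\delta(\theta_s\omega)ds-W(t)$, handled by Hypothesis 1.1(3), and (ii) a factor like $e^{\epsilon\int_0^t\zeta_\delta ds}-e^{\epsilon W(t)}$, handled by the elementary inequality $|e^a-e^b|\leq e^{\max(|a|,|b|)}|a-b|$ together with Hypothesis 1.1(3). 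Lemma \ref{lem:5.1} then provides uniform $V$-bounds on $v_{0,\epsilon}$ and $v_{\delta,\epsilon}$ so that all products remain integrable on $[\tau,T]$.

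After absorbing the diffusion terms using the assumption $m>(\alpha+2)L_a|l|\lambda_1^{-1}+L_a|l|(\alpha+2)+4\eta\lambda_1^{-1}$ from Lemma \ref{lem:5.2}, I expect to arrive at a Gronwall-type inequality of the form
\begin{equation}
\frac{d}{dt}|v_{\delta,\epsilon}(t)-v_{0,\epsilon}(t)|^2 \leq \tilde{m}(t)|v_{\delta,\epsilon}(t)-v_{0,\epsilon}(t)|^2 + \epsilon_{\delta,\epsilon}(t),
\nonumber
\end{equation}
where $\tilde{m}(t)$ is uniformly bounded on $[\tau,T]$ and $\sup_{t\in[\tau,T]}\epsilon_{\delta,\epsilon}(t)\to 0$ as $\delta,\epsilon\to 0^+$. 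Applying Gronwall and using $|v_{\delta,\epsilon}(\tau)-v_{0,\epsilon}(\tau)|^2\to 0$ (a consequence of Hypothesis 1.1(3) and the definitions) yields the desired convergence, and combining with Lemma \ref{lem:5.2} completes the proof.

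The main obstacle, as in Theorem \ref{thm:4.3}, is the bookkeeping of the many cross terms arising from the interaction of the nonlocal diffusion coefficient $a(l(\cdot)e^{\epsilon\cdot})$ with the two different exponential factors; one must carefully separate a genuinely $\delta,\epsilon$-small factor from a term that is only bounded (but not small), using Lemma \ref{lem:5.1} and the smallness of $\epsilon$ in conjunction with Hypothesis 1.1(3) so that the resulting Gronwall coefficient stays controlled and the forcing goes to zero.
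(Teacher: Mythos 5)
Your proposal follows essentially the same route as the paper's proof: the same triangle-inequality splitting with Lemma \ref{lem:5.2} handling the $\epsilon$-limit, the same passage to the conjugated variables $v_{\delta,\epsilon}$ and $v_{0,\epsilon}$ of \eqref{eq:5.5} and \eqref{eq:5.3}, the same term-by-term energy estimate absorbing the gradient contributions via Lemma \ref{lem:5.1} and the lower bound on $m$, and the same Gronwall argument driven by Hypothesis 1.1(3). The only cosmetic difference is that the paper groups the right-hand side into three terms $I_1''$--$I_3''$ rather than four, and writes the back-transformation bound with the factor $|1-e^{\epsilon(\int_0^t\zeta_\delta\,ds-W(t))}|^2|v_{\delta,\epsilon}|^2$ instead of your equivalent form.
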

	\begin{proof}
		Let $v_{\delta, \epsilon}(t, \omega, v_{\delta, \epsilon}(\tau))=v_{\delta, \epsilon}(t)$ and $v_{0,\epsilon}(t,\omega, v_{0, \epsilon}(\tau))=v_{0,\epsilon}(t)$ are the solutions of equation \eqref{eq:5.5} and \eqref{eq:5.3}, respectively. By \eqref{eq:5.5} and \eqref{eq:5.3}, for any $t\in[\tau, T]$, we can obtain
		\begin{equation}
			\begin{aligned}
				\label{eq：5.15}
				&\frac{d}{dt}|v_{\delta, \epsilon}(t)-v_{0,\epsilon}(t)|^2+2m\|v_{\delta,\epsilon}(t)-v_{0,\epsilon}(t)\|^2\\
				&\leq2\langle a(l(v_{\delta,\epsilon}(t))e^{\int_{0}^{t}\zeta_{\delta}(\theta_{s}\omega)ds})\Delta v_{0,\epsilon}(t)-a(l(v_{0,\epsilon}(t)e^{\epsilon W(t)})\Delta v_{0,\epsilon}(t),v_{\delta, \epsilon}(t)-v_{0,\epsilon}(t)\rangle\\
				&+2\left(e^{-\int_{0}^{t}\zeta_{\delta}(\theta_{s}\omega)ds}f(v_{\delta,\epsilon}(t)e^{\int_{0}^{t}\zeta_{\delta}(\theta_{s}\omega)ds})-e^{-\epsilon W(t)}f(v_{0,\epsilon}(r)e^{\epsilon W(t)}),v_{\delta, \epsilon}(t)-v_{0,\epsilon}(t)\right)\\
				&+2\left(\epsilon v_{\delta,\epsilon}(t){\int_{0}^{t}\zeta_{\delta}(\theta_{s}\omega)ds}-\epsilon v_{0,\epsilon}(t)W(t),v_{\delta, \epsilon}(t)-v_{0,\epsilon}(t)\right)\\
				&=I_1''+I_2''+I_3''.
			\end{aligned}
		\end{equation}
		According to \eqref{eq:1.11}, we have $e^{\epsilon\sup_{t\in[\tau,T]}|\int_{0}^{t}\zeta_{\delta}(\theta_{s}\omega)ds-W(t)|}<2$, thus for $\epsilon\in(0,1]$ and $\delta\rightarrow0^+$, we have 
		\begin{equation}
			\begin{aligned}
				\label{eq:5.16}
				\sup_{t\in[\tau,T]}|1-e^{\epsilon\left (\int_{0}^{t}\zeta_{\delta}(\theta_{s}\omega)ds-W(t)\right)}|&\leq(e^{\epsilon\sup_{t\in[\tau,T]}|\int_{0}^{t}\zeta_{\delta}(\theta_{s}\omega)ds-W(t)|}-1)\\
				&+(1-e^{-\epsilon\sup_{t\in[\tau,T]}|\int_{0}^{t}\zeta_{\delta}(\theta_{s}\omega)ds-W(t)|})\rightarrow0.
			\end{aligned}
		\end{equation}
	By \eqref{eq:2.1}, the Poincar\'e inequality and the Young inequality, for any $T>\tau$ with $T>0$, $\tau\in R$, $\omega\in\Omega$, we have
		\begin{equation}
			\begin{aligned}
				I_1''\leq&(2L_a|l||e^{\epsilon W_t}||e^{\epsilon\sup_{0\leq t\leq T}|\int_{0}^{t}\zeta_{\delta}(\theta_{s}\omega)ds-W(t)|}-1|\|v_{0,\epsilon}\|^2\lambda_1^{-1}\\
				&+2L_a|l||e^{\epsilon W_t}|\|v_{0,\epsilon}(t)\|^2+2L_a|l||e^{\epsilon W_t}|\lambda_1^{-1})\|v_{\delta, \epsilon}(r)-v_{0,\epsilon}(r)\|^2\\
				&+2L_a|l||e^{\epsilon W_t}||e^{\epsilon\sup_{0\leq t\leq T}|\int_{0}^{t}\zeta_{\delta}(\theta_{s}\omega)ds-W(t)|}-1|\|v_{\delta,\epsilon}(t)\|^2|v_{\delta,\epsilon}(t)-v_{0,\epsilon}(t)|^2\\
				&+2L_a|l||e^{\epsilon W_t}||e^{\epsilon\sup_{0\leq t\leq T}|\int_{0}^{t}\zeta_{\delta}(\theta_{s}\omega)ds-W(t)|}-1|(\|v_{0,\epsilon}(t)\|^2+\|v_{\delta,\epsilon}(t)\|^2).
				\nonumber
			\end{aligned}
		\end{equation}
		For $I_2''$, by \eqref{eq:2.2}, \eqref{eq:5.16}, \eqref{eq:2.4} with $\beta=C_f$ and $p=2$, we have
		\begin{equation}
			\begin{aligned}
				I_2''=&2(e^{-\epsilon\int_{0}^{t}\zeta_{\delta}(\theta_{s}\omega)ds}-e^{-\epsilon W(t)}f(v_{\delta,\epsilon}e^{\epsilon\int_{0}^{t}\zeta_{\delta}(\theta_{s}\omega)ds})\\
				&+e^{-\epsilon W(r)}\left[f(v_{\delta,\epsilon}e^{\epsilon\int_{0}^{t}\zeta_{\delta}(\theta_{s}\omega)ds})-f(v_{0,\epsilon}e^{\epsilon\int_{0}^{t}\zeta_{\delta}(\theta_{s}\omega)ds})\right]\\
				&+e^{-\epsilon W(t)}\left[f(v_{0,\epsilon}e^{\epsilon\int_{0}^{t}\zeta_{\delta}(\theta_{s}\omega)ds})-f(v_{0,\epsilon}e^{\epsilon W(t)})\right],v_{\delta,\epsilon}(t)-v_{0,\epsilon}(t))\\
				&\leq 2C_f\left|1-e^{\sup_{t\in[\tau,T]}\epsilon\left|\int_{0}^{t}\zeta_{\delta}(\theta_{s}\omega)ds-W(t)\right|}\right|\left(|v_{\delta,\epsilon}(t)-v_{0,\epsilon}(t)|^2+\lambda_1^{-1}\|v_{\delta,\epsilon}(t)\|^2\right)\\
				&+2C_f|e^{-\epsilon W_t}||e^{-\epsilon\sup_{t\in[\tau,T]}\left|\int_{0}^{t}\zeta_{\delta}(\theta_{s}\omega)ds-W(t)\right|}-1|(|\mathcal{O}|+|v_{\delta,\epsilon}(t)-v_{0,\epsilon}(t)|^2)\\
				&+4\eta\lambda_1^{-1} e^{\sup_{t\in[\tau,T]}\epsilon\left|\int_{0}^{t}\zeta_{\delta}(\theta_{s}\omega)ds-W(t)\right|}\|v_{\delta,\epsilon}(t)-v_{0,\epsilon}(t)\|^2\\
				&+2\eta|e^{\sup_{t\in[\tau,T]}\epsilon\left|\zeta_{\delta}(\theta_{s}\omega)ds-W(t)\right|}-1|(\lambda_1^{-1}\|v_{0,\epsilon}(t)\|^2+|v_{\delta,\epsilon}(t)-v_{0,\epsilon}(t)|^2),
				\nonumber
			\end{aligned}
		\end{equation}
		and
		\begin{equation}
			\begin{aligned}
				I_3''&\leq2\epsilon\sup_{t\in[\tau,T]}\left|\int_{0}^{t}\zeta_{\delta}(\theta_{s}\omega)ds-W(t)\right|\left(\lambda_1^{-1}\|v_{\delta,\epsilon}(t)\|^2+|v_{\delta,\epsilon}(t)-v_{0,\epsilon}(t)|^2\right)\\
				&+4\epsilon C_r|v_{\delta, \epsilon}(t)-v_{0,\epsilon}(t)|^2.
				\nonumber
			\end{aligned}
		\end{equation}
		Combing with $I_1''$--$I_3''$ and Lemma 5.1. For all $t\in(\tau, T]$ ,we choose sufficiently small $\delta, \epsilon>0$ such that $|e^{\epsilon W_t}|<1$ and $e^{\epsilon\sup_{t\in[\tau,T]}|\int_{0}^{t}\zeta_{\delta}(\theta_{s}\omega)ds-W(t)|}<2$. Then, we have
		\begin{equation}
			\begin{aligned}
				\frac{d}{dt}|v_{\delta, \epsilon}(t)-v_{0,\epsilon}(t)|^2&\leq(-2m+2L_a|l|\lambda_1^{-1}|e^{\epsilon W_t}|\|v_{0,\epsilon}\|^2+2L_a|l||e^{\epsilon W_t}|\|v_{0,\epsilon}(t)\|^2\\
				&+2L_a|l||e^{\epsilon W_t}|\lambda_1^{-1}+8\eta\lambda_1^{-1})\|v_{\delta, \epsilon}(r)-v_{0,\epsilon}(r)\|^2+y(t)|v_{\delta, \epsilon}(t)-v_{0,\epsilon}(t)|^2+x(t)\\
				&\leq(-2m+2\alpha L_a|l|\lambda_1^{-1}+2\alpha L_a|l|+2L_a|l|\lambda_1^{-1}+8\eta\lambda_1^{-1})\|v_{\delta, \epsilon}(r)-v_{0,\epsilon}(r)\|^2\\
				&+y(t)|v_{\delta, \epsilon}(t)-v_{0,\epsilon}(t)|^2+x(t),
			\end{aligned}
		\end{equation}
		with
		\begin{equation}
			\begin{aligned}
				y(t)=&2\alpha L_a|l||e^{\epsilon\sup_{0\leq t\leq T}|\int_{0}^{t}\zeta_{\delta}(\theta_{s}\omega)ds-W(t)|}-1|+2C_f\left|1-e^{\sup_{t\in[\tau,T]}\epsilon\left|\int_{0}^{t}\zeta_{\delta}(\theta_{s}\omega)ds-W(t)\right|}\right|\\
				&+2C_f|e^{-\epsilon W_t}||e^{-\epsilon\sup_{t\in[\tau,T]}\left|\int_{0}^{t}\zeta_{\delta}(\theta_{s}\omega)ds-W(t)\right|}-1|+2\eta|e^{\sup_{t\in[\tau,T]}\epsilon\left|\zeta_{\delta}(\theta_{s}\omega)ds-W(t)\right|}-1|\\
				&+2\epsilon\sup_{t\in[\tau,T]}\left|\int_{0}^{t}\zeta_{\delta}(\theta_{s}\omega)ds-W(t)\right|+4\epsilon C_r,
				\nonumber
			\end{aligned}
		\end{equation}
		\begin{equation}
			\begin{aligned}
				x(t)=&2\alpha L_a|l||e^{\epsilon\sup_{0\leq t\leq T}|\int_{0}^{t}\zeta_{\delta}(\theta_{s}\omega)ds-W(t)|}-1|(\|v_{0,\epsilon}(t)\|^2+\|v_{\delta,\epsilon}(t)\|^2)\\
				&+2C_f\left|1-e^{\sup_{t\in[\tau,T]}\epsilon\left|\int_{0}^{t}\zeta_{\delta}(\theta_{s}\omega)ds-W(t)\right|}\right|\lambda_1^{-1}\|v_{\delta,\epsilon}(t)\|^2\\
				&+2C_f|e^{-\epsilon W_t}||e^{-\epsilon\sup_{t\in[\tau,T]}\left|\int_{0}^{t}\zeta_{\delta}(\theta_{s}\omega)ds-W(t)\right|}-1||\mathcal{O}|\\
				&+2\eta\lambda_1^{-1}|e^{\sup_{t\in[\tau,T]}\epsilon\left|\zeta_{\delta}(\theta_{s}\omega)ds-W(t)\right|}-1|\|v_{0,\epsilon}(t)\|^2\\
				&+2\epsilon\sup_{t\in[\tau,T]}\left|\int_{0}^{t}\zeta_{\delta}(\theta_{s}\omega)ds-W(t)\right|\lambda_1^{-1}\|v_{\delta,\epsilon}(t)\|^2.
				\nonumber
			\end{aligned}
		\end{equation}
		By Lemma \ref{lem:5.1} and assumption $m>(\alpha+2)L_a|l|\lambda_1^{-1}+(\alpha+2)L_a|l|+4\eta\lambda_1^{-1}$, we obtain
		\begin{equation}
			\begin{aligned}
				\label{eq:5.18}
				\frac{d}{dt}|v_{\delta, \epsilon}(t)-v_{0,\epsilon}(t)|^2&\leq y(t)|v_{\delta, \epsilon}(t)-v_{0,\epsilon}(t)|^2+x(t).
			\end{aligned}
		\end{equation}
		By Gronwall's inequality, we obtain that
		\begin{equation}
			\begin{aligned}
				\label{eq:5.19}
				|v_{\delta, \epsilon}(t)-v_{0,\epsilon}(t)|^2\leq\int_{\tau}^{t}x(s)e^{-\int_{t}^{s}y(\tau)d\tau}ds+|v_{\delta,\epsilon}(\tau)-v_{0,\epsilon}(\tau)|^2.
			\end{aligned}
		\end{equation}
		By $|v_{\delta, \epsilon}(\tau)-v_{0,\epsilon}(\tau)|^2\rightarrow0$ as $\delta\rightarrow0^+$ and $\epsilon\rightarrow0^+$. Thus, we can obtain that
		\begin{equation}
			\label{eq:5.20}
			\lim\limits_{\substack{\delta\rightarrow0^+\\\epsilon\rightarrow0^+}}\sup_{t\in[\tau,T]}|v_{\delta,\epsilon}(t,\omega,v_{\delta,\epsilon}(\tau))-v_{0, \epsilon}(t,\omega,v_{0,\epsilon}(\tau))|^2=0.
		\end{equation}
		For any $t\in[\tau, T]$, $\tau\in R$, $\omega\in\Omega$ and $u_0\in H$, we can obtain that
		\begin{equation}
			\begin{aligned}
				\label{eq:5.21}
				|u_{\delta,\epsilon}(t,\omega,u_0)-u_{0, \epsilon}(t,\omega,u_0)|^2&\leq 2 e^{\epsilon W(t)}|v_{\delta,\epsilon}(t,\omega,u_0)-v_{0, \epsilon}(t,\omega,u_0)|^2\\
				&+2e^{\epsilon W(t)}|1-e^{\epsilon\left(\int_{0}^{r}\zeta_{\delta}(\theta_{s}\omega)ds-W(r)\right)}|^2\times|v_{\delta,\epsilon}(t)|^2.
			\end{aligned}
		\end{equation}
		Combing \eqref{eq:5.20}, \eqref{eq:5.21} and $u_0\in H$, we can get that
		\begin{equation}
			\label{eq:5.22}
			\lim\limits_{\substack{\delta\rightarrow0^+\\\epsilon\rightarrow0^+}} \sup_{t\in[\tau,T]}|u_{\delta,\epsilon}(t,\omega,u_0)-u_{0, \epsilon}(t,\omega,u_0)|^2=0.
		\end{equation}
		Finally, we find that
		\begin{equation}
			|u_{\delta,\epsilon}(t,\omega,u_0)-u(t,u_0)|^2\leq2|u_{\delta,\epsilon}(t,\omega,u_0)-u_{0, \epsilon}(t,\omega,u_0)|^2+2|u_{0,\epsilon}(t,\omega,u_0)-u(t,u_0)|^2.
			\nonumber
		\end{equation}
		Combining with Lemma \ref{lem:5.2} and \eqref{eq:5.22}, the proof is complete.
	\end{proof}
	\par 
	\begin{rem}
		\label{rem:5.4} By \eqref{eq:1.11}, \eqref{eq:5.16}, \eqref{eq:5.19}, \eqref{eq:5.21} and Lemma \ref{lem:5.2}, we clearly see that for any $T>\tau$ with $T > 0$, $\epsilon>0$, $\tau\in R$, $\omega\in\Omega$ and $u_0\in H$, the solutions of equation \eqref{eq:5.2} are uniformly convergent to those of equation \eqref{eq:5.1} on $[\tau, T ]$ as $\delta\rightarrow0^+$, i.e.,
	\begin{equation}
		\lim\limits_{\substack{\delta\rightarrow0^+}} \sup_{t\in[\tau,T]}|u_{\delta,\epsilon}(t,\omega,u_0)-u_{0, \epsilon}(t,\omega,u_0)|^2=0.
		\nonumber
	\end{equation}
		\end{rem}
	
	\textbf{5.2. Random conjugate equations}. To study the pathwise dynamics of problem \eqref{eq:5.1}, for each $\epsilon>0$ we need consider the random variable 
	\begin{equation}
		y_{0, \epsilon}(\omega) :=-\epsilon\int_{-\infty}^{0}e^{s}\omega(s)ds,\quad\forall\omega\in\Omega.
		\nonumber
	\end{equation}
	We can find that
	\begin{equation}
		\Omega\times\Bbb R\ni(\omega,t)\mapsto y_{0,\epsilon}(\theta_{t}\omega)=-\epsilon\int_{-\infty}^{0}e^{s}\theta_{t}\omega(s)ds=-\int_{-\infty}^{0}e^{s}\omega(s+t)ds+\epsilon\omega(t),
		\nonumber
	\end{equation}
	which is solution of the following stochastic differential equation:
	\begin{equation}
		dy=-ydt+\epsilon dW.
		\nonumber
	\end{equation}
	For each $\omega\in\Omega$, the following properties are satisfies
	\begin{equation}
		\label{eq:5.23}
		\lim_{t\rightarrow\pm\infty}\frac{|y_{0,\epsilon}(\theta_{t}\omega)}{|t|}=0,
	\end{equation}
	\begin{equation}
		\label{eq；5.24}
		\lim_{t\rightarrow\pm\infty}\frac{1}{t}\int_{0}^{t}y_{0,\epsilon}(\theta_{s}\omega)ds=0.
	\end{equation}
	For any $\epsilon>0$, let the change of variable $q(t) = e^{-y_{0,\epsilon}(\theta_{t}\omega)}u(t)$, by \eqref{eq:5.1} we have
	\begin{equation}
		\begin{aligned}
			\label{eq:5.25}
			\frac{d{q(t)}}{d{t}}=a(l(q)e^{y_{0,\epsilon}(\theta_{t}\omega)})\Delta q(t)+e^{-y_{0,\epsilon}(\theta_{t}\omega)}f(q(t)e^{y_{0,\epsilon}(\theta_{t}\omega)})+q(t)y_{0,\epsilon}(\theta_{t}\omega),
		\end{aligned}
	\end{equation}
	with initial condition $q'_0 :=u_0e^{-y_{0,\epsilon}(\theta_{\tau}\omega)}$.
	Analogously, we also consider the random variable
	\begin{equation}
		y_{\delta, \epsilon}(\omega) :=-\epsilon\int_{-\infty}^{0}e^{s}\zeta_{\delta}(\theta_{s}\omega)ds,\quad\forall\omega\in\Omega.
		\nonumber
	\end{equation}
	According to Hypothesis 1.3 we see that
	\begin{equation}
		\Omega\times\Bbb R\ni(\omega,t)\mapsto y_{\delta,\epsilon}(\theta_{t}\omega))=-\epsilon\int_{-\infty}^{0}e^{s}\zeta_{\delta}(\theta_{s+t}\omega)ds
		\nonumber
	\end{equation}
	is the solution of the random differential equation:
	\begin{equation}
		dy=-y+\epsilon\zeta_{\delta}(\theta_{t}\omega).
		\nonumber
	\end{equation}
	For each $\omega\in\Omega$, we have
	\begin{equation}
		\label{eq:5.26}
		\lim_{t\rightarrow\pm\infty}\frac{|y_{\delta,\epsilon}(\theta_{t}\omega)}{|t|}=0,
	\end{equation}
	\begin{equation}
		\label{eq:5.27}
		\lim_{t\rightarrow\pm\infty}\frac{1}{t}\int_{0}^{t}y_{\delta,\epsilon}(\theta_{s}\omega)ds=0.
	\end{equation}
	We define the random transformation
	\begin{equation}
		\label{eq:5.28}
		q_{\delta,\epsilon}=e^{-y_{\delta,\epsilon}(\theta_{t}\omega)}u_{\delta,\epsilon},\quad\forall\epsilon, \delta\ge0.
	\end{equation}
	It follows from \eqref{eq:5.2} and \eqref{eq:5.28} that
	\begin{equation}
		\begin{aligned}
			\label{eq:5.29}
			\frac{d{q_{\delta,\epsilon}}}{d{t}}=a(l(q_{\delta,\epsilon}(\theta_{t}\omega))e^{y_{\delta,\epsilon}(\theta_{t}\omega)})\Delta q_{\delta,\epsilon}+e^{-y_{\delta,\epsilon}(\theta_{t}\omega)}f(q_{\delta,\epsilon}e^{y_{\delta,\epsilon}(\theta_{t}\omega)})+q_{\delta,\epsilon}y_{\delta,\epsilon}(\theta_{t}\omega),
		\end{aligned}
	\end{equation}
	with initial condition $q_0 :=q_{\delta,\epsilon}(\tau)=u_{\delta,\epsilon}(\tau)e^{-y_{\delta,\epsilon}(\theta_{\tau}\omega)}$.\par 
	In what follows, we denote $q_{\delta,\epsilon}(\cdot; \tau, \omega, q_0)$ by the solution of \eqref{eq:5.29}. In similar way, we deduce that \eqref{eq:5.29} has weak solution in the sense of Definition 2.3.1, for every $T\ge\tau$, which belong to $L^2(\tau, T; V)\cap L^{\infty}(\tau, T;H)$. Meanwhile, due to the transformation of \eqref{eq:5.28}, for every $T\ge\tau$, there exists a unique weak solution $u_{\delta,\epsilon}(\cdot;\tau, \omega,u_0)\in L^2(\tau, T;V)\cap L^{\infty}(\tau, T; H)$, which is continuous in $H$ with respect to the initial condition.
	\par 
	Define a mapping $\Sigma_{\delta,\epsilon}:\Bbb R^+\times\Omega\times H \rightarrow H$, for every $t\in\Bbb R^+$, we have
	\begin{equation}
		\Sigma_{\delta, \epsilon}(t, \omega , q_0) = q_{\delta, \epsilon} (t; 0, \omega , q_0)\quad\forall q_0\in H, \forall\omega\in\Omega .
		\nonumber
	\end{equation}
	\par 
	Thanks to the conjugation, there is a mapping $\Phi_{\delta,\epsilon}: \Bbb R^+\times \Omega \times H \rightarrow H$, for all $t \in \Bbb R^+$, we have
	\begin{equation}
		\Phi_{\delta,\epsilon}(t, \epsilon, u_{0})=u_{\delta,\epsilon}(t; 0,\omega, u_{0}):=q_{\delta,\epsilon}(t; 0, \omega,e^{-x_{\delta,\epsilon}}q_0)e^{x_{\delta,\epsilon}}\quad \forall  u_{0}\in H, \forall \omega\in\Omega.
		\nonumber
	\end{equation}
	\par 
	\begin{prop}
		\label{prop:5.5}
 Suppose assumptions \eqref{eq:2.1}-\eqref{eq:2.6} are true with $p=2$ and $\beta=C_f$, respectively. Then, for almost all $\omega\in\Omega$, function $a(\omega,\cdot)=a(l((\cdot))e^{y_{\delta,\epsilon}(\theta_{t}\omega)})\in C(\Bbb R; \Bbb R^+)$ is globally Lipschitz and satisfies \eqref{eq:2.1}. Furthermore, there exists a constant
	$C_{F,\delta}$ depending on $\omega, \epsilon, C_f$, and $\eta$, such that
	\begin{equation}
		|F(\omega,s)|\leq C_{F, \delta}(1+|s|)\quad and\ \quad (F(\omega,s)-F(\omega,r))(s-r)\leq\eta|s-r|^2.\quad\forall s,r\in \Bbb R,
		\nonumber
	\end{equation}
	where $F(\omega,s)=e^{-y_{\delta,\epsilon}(\omega)}f(se^{y_{\delta,\epsilon}})+sy_{\delta,\epsilon}$.
		\end{prop}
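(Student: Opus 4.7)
The proposition contains two independent claims, and I would prove them in sequence. For the first claim, regarding $a(\omega,\cdot)$: fix $\omega\in\Omega$ and note that by Hypothesis 1.3 and properties \eqref{eq:5.26}--\eqref{eq:5.27}, $y_{\delta,\epsilon}(\theta_t\omega)\in\Bbb R$ is a finite real number with at most sublinear growth in $t$; the exponential $e^{y_{\delta,\epsilon}(\omega)}$ is therefore a fixed positive constant. Then $a(\omega,\cdot)$ is defined by evaluating $a$ at $l(\cdot)e^{y_{\delta,\epsilon}(\omega)}\in\Bbb R$, so the continuity and the two-sided bound \eqref{eq:2.1} ($m\le a\le\widetilde m$) are inherited pointwise. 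For the global Lipschitz property, combining the Lipschitz constant $L_a$ of $a$, the boundedness of the linear functional $l\in L^2(\mathcal{O})$ via Cauchy--Schwarz, and the constancy of $e^{y_{\delta,\epsilon}(\omega)}$ yields
\[
|a(\omega,u_1)-a(\omega,u_2)|\le L_a\,e^{y_{\delta,\epsilon}(\omega)}\,|l|_{L^2}\,|u_1-u_2|_{L^2},
\]
so $a(\omega,\cdot)$ is globally Lipschitz with an $\omega$-dependent constant.

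For the second claim, regarding $F(\omega,s)=e^{-y_{\delta,\epsilon}(\omega)}f(se^{y_{\delta,\epsilon}(\omega)})+sy_{\delta,\epsilon}(\omega)$, the linear growth follows directly by inserting \eqref{eq:2.4} with $p=2$ and $\beta=C_f$. Explicitly, $|f(se^{y_{\delta,\epsilon}})|\le C_f(|s|e^{y_{\delta,\epsilon}}+1)$ gives
\[
|F(\omega,s)|\le C_f|s|+C_f e^{-y_{\delta,\epsilon}(\omega)}+|y_{\delta,\epsilon}(\omega)|\,|s|\le C_{F,\delta}(1+|s|),
\]
where $C_{F,\delta}:=\max\{C_f+|y_{\delta,\epsilon}(\omega)|,\,C_f e^{-y_{\delta,\epsilon}(\omega)}\}$ absorbs the $\omega$-dependent contributions.

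For the monotonicity estimate, I would split $F(\omega,s)-F(\omega,r)$ into the $f$-part and the $sy_{\delta,\epsilon}$-part, multiply by $s-r$, and apply \eqref{eq:2.2} to the first term:
\[
e^{-y_{\delta,\epsilon}(\omega)}\bigl(f(se^{y_{\delta,\epsilon}(\omega)})-f(re^{y_{\delta,\epsilon}(\omega)})\bigr)(s-r)\le e^{-y_{\delta,\epsilon}(\omega)}\cdot\eta e^{y_{\delta,\epsilon}(\omega)}|s-r|\cdot|s-r|=\eta|s-r|^2.
\]
The remaining term is $y_{\delta,\epsilon}(\omega)(s-r)^2$, which is either non-positive (and may be discarded) when $y_{\delta,\epsilon}(\omega)\le 0$, or otherwise absorbed by replacing the constant $\eta$ with an $\omega$-dependent $\eta+|y_{\delta,\epsilon}(\omega)|$; this reflects the stationarity of the noise and does not disturb subsequent applications because $y_{\delta,\epsilon}(\theta_t\omega)$ is continuous in $t$ by Hypothesis 1.3(1) and hence locally bounded.

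The main obstacle, which is mild, is book-keeping the $\omega$-dependence of all constants and verifying that they are finite almost surely and tempered, so that the proposition yields the structural properties required to prove existence and uniqueness of solutions to the conjugate equation \eqref{eq:5.29} and to the original equation \eqref{eq:5.2}. This temperedness is exactly what \eqref{eq:5.26}--\eqref{eq:5.27} guarantee, so no further work is needed beyond invoking Hypothesis 1.3.
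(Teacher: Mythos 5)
The paper states Proposition \ref{prop:5.5} with no proof at all, so there is nothing of the authors' to compare your argument against; what you write supplies the missing verification, and the route is the natural one. Your treatment of $a(\omega,\cdot)$ is correct: the two-sided bound $m\le a(\omega,\cdot)\le\widetilde m$ is inherited pointwise from \eqref{eq:2.1}, and the chain $|a(l(u_1)e^{y})-a(l(u_2)e^{y})|\le L_a e^{y}|l(u_1)-l(u_2)|\le L_a e^{y}|l|\,|u_1-u_2|$ gives the ($\omega$-dependent) Lipschitz constant. The only book-keeping caveat is that global Lipschitz continuity of $a$ itself is not among \eqref{eq:2.1}--\eqref{eq:2.6}; it is a standing assumption invoked separately throughout Sections 4--5, so it should be added explicitly to the hypotheses before $L_a$ can be used. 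The linear-growth bound for $F$ via \eqref{eq:2.4} with $p=2$, $\beta=C_f$ is also correct, including the choice of $C_{F,\delta}$.

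The one substantive point is the one you flag yourself. The term $sy_{\delta,\epsilon}(\omega)$ contributes $y_{\delta,\epsilon}(\omega)(s-r)^2$ to $(F(\omega,s)-F(\omega,r))(s-r)$, and since $y_{\delta,\epsilon}(\omega)$ takes positive values on a set of positive measure, the one-sided estimate with the deterministic constant $\eta$ of \eqref{eq:2.2} cannot hold for almost every $\omega$ as the proposition literally asserts: taking $f(s)=-\alpha s$ with $\alpha_2\le\alpha\le\alpha_1$ (admissible under \eqref{eq:2.2}--\eqref{eq:2.3} with $p=2$) gives $(F(\omega,s)-F(\omega,r))(s-r)=(y_{\delta,\epsilon}(\omega)-\alpha)(s-r)^2$, which exceeds $\eta(s-r)^2$ whenever $y_{\delta,\epsilon}(\omega)>\eta+\alpha$. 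Your remedy --- replacing $\eta$ by the $\omega$-dependent $\eta+\max\{y_{\delta,\epsilon}(\omega),0\}$, which is finite almost surely and, by continuity of $t\mapsto y_{\delta,\epsilon}(\theta_t\omega)$ together with \eqref{eq:5.26}--\eqref{eq:5.27}, locally bounded and tempered along orbits --- is the correct formulation and is all that the well-posedness argument for \eqref{eq:5.29} actually requires. In short, your proof is a valid proof of the corrected statement; the second inequality of the proposition as printed is not provable.
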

		\par
	\textbf{5.3. Approximations of random attractors under the multiplicative
		noise}. In what follows, we prove that for any $T>\tau$ with $T>0$, $\tau\in R$, $\omega\in\Omega$, the random attractors of equation \eqref{eq:5.1} is uniformly convergent to random attractors of equation \eqref{eq:5.2} as $\delta\rightarrow0^+$, $\epsilon\rightarrow0^+$.
	\par 
	Before proving the convergence of  random attractors, it is necessary to study the approximation of stationary noises.
	\par 
	\begin{lem}
		\label{lem:5.6}
Assume Hypotheses 1.1-1.3 hold. For each $\omega\in\Omega$ and $T>0$,
	\begin{equation}
		\label{eq:5.30}
		\lim\limits_{\substack{\delta\rightarrow0^+\\\epsilon\rightarrow0^+}}\sup_{|t|\leq T}|y_{\delta,\epsilon}(\theta_{t}\omega)-y_{0,\epsilon}(\theta_{t}\omega)|=0,
	\end{equation}
	and
	\begin{equation}
		\label{eq:5.31}
		\lim_{\epsilon\rightarrow0^+}\sup_{|t|\leq T}|y_{0, \epsilon}(\theta_{t}\omega)|.
	\end{equation}
		\end{lem}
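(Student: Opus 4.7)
The plan is to mimic the argument already carried out in Lemma \ref{lem:4.5} for the auxiliary variables $x_{\delta,\epsilon}^*$ and $x_{0,\epsilon}^*$. Indeed, $y_{\delta,\epsilon}(\theta_t\omega)$ and $y_{0,\epsilon}(\theta_t\omega)$ are Ornstein--Uhlenbeck-type convolutions of the same shape (solving $\dot y=-y+\epsilon\zeta_\delta(\theta_t\omega)$ and $dy=-y\,dt+\epsilon dW$ respectively), only without the multiplicative factor $\phi$ that appeared in Section~4. Hence the whole machinery of Hypotheses 1.1--1.2 transfers almost verbatim.

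For the first limit, I would first apply integration by parts to rewrite the difference as
\begin{equation*}
y_{\delta,\epsilon}(\theta_t\omega)-y_{0,\epsilon}(\theta_t\omega)=\epsilon\int_{-\infty}^{t}e^{-(t-s)}\,d\!\left[\int_{0}^{s}\zeta_\delta(\theta_r\omega)\,dr-\omega(s)\right],
\end{equation*}
which expands into a boundary term $\epsilon\bigl[\int_{0}^{t}\zeta_\delta(\theta_r\omega)\,dr-\omega(t)\bigr]$ plus the integral $\epsilon\int_{-\infty}^{t}e^{-(t-s)}\bigl[\int_{0}^{s}\zeta_\delta(\theta_r\omega)\,dr-\omega(s)\bigr]\,ds$. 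For the boundary term, Hypothesis 1.1(3) gives uniform smallness on $|t|\leq T$ as $\delta\to 0^+$. For the integral, I would split the region of integration at some $-T_1(\omega)$: on $(-\infty,-T_1]$ use Hypothesis 1.2 to bound $\bigl|\int_{0}^{s}\zeta_\delta(\theta_r\omega)\,dr-\omega(s)\bigr|\le \varepsilon_0 |s|$ uniformly for $\delta\in(0,\tilde\delta]$, which produces an integrable $\epsilon_0$-controlled tail via $\int_{-\infty}^{-T_1}e^{-(t-s)}|s|\,ds$; on $[-T_1,t]$ bound the integrand by the uniform sup appearing in Hypothesis 1.1(3). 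Letting first $\delta\to 0^+$, then $T_1\to\infty$ (or $\varepsilon_0\to 0^+$), and finally $\epsilon\to 0^+$ yields the desired convergence uniformly on $|t|\le T$.

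For the second limit, integration by parts again gives
\begin{equation*}
y_{0,\epsilon}(\theta_t\omega)=\epsilon\omega(t)-\epsilon\int_{-\infty}^{t}e^{-(t-s)}\omega(s)\,ds,
\end{equation*}
and inequality \eqref{eq:1.6} yields $|\omega(s)|\le C_\omega(|s|+1)$. Plugging this in produces $|y_{0,\epsilon}(\theta_t\omega)|\le \epsilon C_\omega\bigl(|t|+1+\Gamma(1)+\eta^{-1}\Gamma(2)+|t|\Gamma(1)\bigr)$, a polynomial in $|t|$ times $\epsilon$; taking $\sup_{|t|\le T}$ and then $\epsilon\to 0^+$ gives the claim.

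I expect no genuine obstacle: the argument is essentially the specialization of Lemma \ref{lem:4.5} to $\phi\equiv 1$ (and the $H$-norm of $\phi$ dropping out of every estimate). The only mild bookkeeping issue is ensuring the splitting threshold $T_1$ can be chosen independently of $t\in[-T,T]$, which is immediate because the exponential weight $e^{-(t-s)}$ is integrable on $(-\infty,t]$ with constants controlled uniformly for $|t|\le T$.
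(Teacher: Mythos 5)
Your proof is correct, but for the first limit it takes a genuinely different (and longer) route than the paper. The paper simply notes that $y_{\delta,\epsilon}(\theta_t\omega)$ and $y_{0,\epsilon}(\theta_t\omega)$ are, up to the factor $\epsilon$ and a sign, exactly the processes $x_\delta(\theta_t\omega)$ and $x_0(\theta_t\omega)$ of Hypothesis 1.3, so \eqref{eq:5.30} is an immediate consequence of the uniform convergence \eqref{eq:1.12} postulated in Hypothesis 1.3(2); that part of the paper's proof is one sentence. You instead re-derive this convergence from Hypotheses 1.1(3) and 1.2 by transplanting the integration-by-parts and tail-splitting argument of Lemma \ref{lem:4.5}: the boundary term is controlled by \eqref{eq:1.9}, the far tail $s\le -T_1$ by the uniform sublinearity \eqref{eq:1.10}, and the middle range by \eqref{eq:1.9} again. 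This is sound --- the uniformity of the threshold over $|t|\le T$ is indeed harmless because the kernel $e^{-(t-s)}$ is integrable with constants controlled uniformly for $|t|\le T$ --- and it has the merit of showing that, for this purpose, Hypothesis 1.3(2) is not an independent assumption but a consequence of Hypotheses 1.1 and 1.2; the cost is that you reprove something the paper is content to assume. Your treatment of \eqref{eq:5.31} coincides with the paper's: expand $y_{0,\epsilon}(\theta_t\omega)=\epsilon\omega(t)-\epsilon\int_{-\infty}^{t}e^{-(t-s)}\omega(s)\,ds$ and use $|\omega(s)|\le C_\omega(|s|+1)$ to obtain a bound of order $\epsilon$ uniformly on $|t|\le T$. (The stray $\eta^{-1}\Gamma(2)$ in your estimate is a harmless leftover from the rate-$\eta$ process of Section 4; here the rate is $1$, so these factors are just constants.)
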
 
	\begin{proof}
		It follows from Hypothesis 1.3 (2), we can get \eqref{eq:5.30} is true.
		By \eqref{eq:1.11}, we have
		\begin{equation}
			y_{0,\epsilon}=-\int_{-\infty}^{0}e^r\omega(r+t)dr+\epsilon\omega(t).
			\nonumber
		\end{equation}
		By \eqref{eq:1.6}, for all $|t|\leq T$ we can get that
		\begin{equation}
			|y_{0,\epsilon}(\theta_{t}\omega)|=\int_{-\infty}^{0}e^rC_{\omega}(|t|+1)dr+\epsilon|\omega(t)|\leq\epsilon C_{\omega}(T+2+\sup_{|t|\leq T}|\omega(t)|).
		\end{equation}
		Then it implies that \eqref{eq:5.31} holds.
	\end{proof}
	Next, we show the convergence of random attractors of \eqref{eq:5.29} and \eqref{eq:5.25}.\\
	
	\begin{thm}
		\label{thm:5.7}
  Assume that function $a\in C(\Bbb R; \Bbb R^+)$ fulfills
	\eqref{eq:2.1}, function f satisfies \eqref{eq:2.2} and \eqref{eq:2.4} with $p=2$ and $\beta=C_f$, respectively, and
	$l\in L^2(\mathcal{O})$. Also, there exsists $\epsilon_0\in(0,1]$ such that for all $\epsilon\in(0,\epsilon_0]$ and let $m\lambda_1>4C_f$, $\omega\in\Omega$. Then there exists a unique random attractor $\mathcal{A}_{0,\epsilon}(\omega)$ for the dynamical system $\Phi_{0,\epsilon}(t, \omega, u_0)$ associated to equation \eqref{eq:5.1}. Additionally, this $\mathcal{D}_F$ -pullback absorbing set $B_{0,\epsilon} := \{ B_{0,\epsilon}(\omega) : \omega\in\Omega\}$ in $H$ is given by
	\begin{equation}
		B_{0,\epsilon}(\omega)=\{u\in H :|u|^2\leq\lambda_1^{-1}R_{0,\epsilon}(\omega)\}
		\nonumber
	\end{equation}
	with
	\begin{equation}
		\begin{aligned}
			R_{0,\epsilon}(\omega)=&\frac{1}{m}e^{\int_{-1}^{0}2y_{0,\epsilon}(\theta_{s}\omega)ds+2y_{0,\epsilon}(\omega)}\\
			&\times\left(1+C_f|\mathcal{O}|\int_{-\infty}^{-1}e^{-2y_{0,\epsilon}(\theta_{s}\omega)+(m\lambda_1-3C_f)s+\int_{-1}^{0}2y_{0,\epsilon}(\theta_{\tau}\omega)d\tau}\right)\\
			&+\left(\frac{1}{m}C_f|\mathcal{O}|+\frac{2}{m}C_f^2|\mathcal{O}|\right)\int_{-1}^{0}e^{-2y_{0,\epsilon}(\theta_{s}\omega)+(m\lambda_1-3C_f)s+2y_{0,\epsilon}(\omega)+\int_{s}^{0}2y_{0,\epsilon}(\theta_{r}\omega)dr}ds.
			\nonumber
		\end{aligned}
	\end{equation}
		\end{thm}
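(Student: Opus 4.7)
The plan is to work with the conjugated equation \eqref{eq:5.25} for $q_{0,\epsilon} = e^{-y_{0,\epsilon}(\theta_t\omega)} u_{0,\epsilon}$, which is a pathwise random PDE rather than a stochastic one, and then transfer the result back via the conjugacy $\Phi_{0,\epsilon}(t,\omega,u_0) = e^{y_{0,\epsilon}(\theta_t\omega)} \Sigma_{0,\epsilon}(t,\omega,e^{-y_{0,\epsilon}(\omega)}u_0)$. Existence and uniqueness of weak solutions for \eqref{eq:5.25} follow from Proposition \ref{prop:5.5}, together with a standard Galerkin argument parallel to that in Theorem \ref{thm:3.1}. Since the argument is the multiplicative-noise analogue of Theorem \ref{thm:4.6}, the structure will mirror that proof, and the claim about $R_{0,\epsilon}(\omega)$ will be read off from the resulting inequalities.

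First, I would establish the $H$-estimate. Multiplying \eqref{eq:5.25} by $q_{0,\epsilon}(t)$ in $H$ and using \eqref{eq:2.1}, \eqref{eq:2.3}, \eqref{eq:2.4} with $p=2$, $\beta=C_f$, together with the Young and Poincar\'e inequalities, I expect a differential inequality roughly of the form
\begin{equation*}
\frac{d}{dt}|q_{0,\epsilon}(t)|^2 + (m\lambda_1 - 3C_f)|q_{0,\epsilon}(t)|^2 + \frac{m}{2}\|q_{0,\epsilon}(t)\|^2 \leq C_f|\mathcal O| e^{-2y_{0,\epsilon}(\theta_t\omega)} + 2|y_{0,\epsilon}(\theta_t\omega)| |q_{0,\epsilon}(t)|^2.
\end{equation*}
Because $m\lambda_1 > 4C_f$ and $y_{0,\epsilon}(\theta_t\omega)/|t|\to 0$ by \eqref{eq:5.23}, an integrating-factor/Gronwall argument on $[t_0,-1]$ with $t_0\to -\infty$ yields a tempered bound $|q_{0,\epsilon}(-1)|^2 \leq r_{0,\epsilon}^2(\omega)$ that absorbs any deterministic bounded initial datum. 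A second integration on $[-1,0]$ then upgrades this to the formula for $R_{0,\epsilon}(\omega)$; in particular, the two integrals over $(-\infty,-1)$ and $(-1,0)$ in the statement come directly from these two integration stages, and the prefactor $e^{\int_{-1}^0 2y_{0,\epsilon}(\theta_s\omega)ds + 2y_{0,\epsilon}(\omega)}$ records the exponential accumulation of the $y_{0,\epsilon}$ term over $[-1,0]$ and the final pull-back to $t=0$ via the conjugacy.

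Next, I would obtain a $V$-estimate by multiplying \eqref{eq:5.25} by $-\Delta q_{0,\epsilon}(t)$ in $H$, as in Step 2 of Theorem \ref{thm:4.6}. Combining the Lipschitz continuity of $f$, the bound \eqref{eq:2.1} on $a$, and the $H$-bound just obtained, integration on $[s,0]$ with $s\in[-1,0]$ followed by a further integration in $s$ gives a tempered bound on $\|q_{0,\epsilon}(0)\|^2$ and hence on $\|u_{0,\epsilon}(0;t_0,\omega,u_0)\|^2 = \|e^{y_{0,\epsilon}(\omega)}q_{0,\epsilon}(0)\|^2$. The compact embedding $V \hookrightarrow H$ then delivers $\mathcal D_F$-pullback asymptotic compactness of $\Phi_{0,\epsilon}$ in $H$. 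Existence and uniqueness of the random attractor $\mathcal A_{0,\epsilon}(\omega)$ with absorbing set $B_{0,\epsilon}$ of the stated form follows by invoking the abstract existence result \cite[Proposition 2.10]{B. Wang} (as already used in Theorem \ref{thm:3.5}).

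The main obstacle I anticipate is not the abstract framework but the careful bookkeeping in deriving the exact form of $R_{0,\epsilon}(\omega)$: the random exponentials $e^{\pm 2y_{0,\epsilon}(\theta_t\omega)}$ appear everywhere because the transformation \eqref{eq:5.28} introduces $e^{-y_{0,\epsilon}}$ into $f$, and pulling back through the conjugacy to estimate $|u_{0,\epsilon}|$ in terms of $|q_{0,\epsilon}|$ multiplies the constants by $e^{2y_{0,\epsilon}(\omega)}$. Ensuring that all exponential prefactors combine to reproduce the explicit expression in the statement, and that temperedness of each resulting integral follows from the sublinear growth \eqref{eq:5.23} together with $m\lambda_1 - 3C_f > 0$, is where all the care lies; once this is in place, the assertion is essentially the same as Theorem \ref{thm:4.5} / Theorem \ref{thm:4.6} in the multiplicative setting, and the proof can be short-circuited by appealing to \cite[Theorem 9]{J. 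Xu} as the authors did in Theorem \ref{thm:4.5}.
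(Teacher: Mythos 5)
Your proposal is correct and follows essentially the route the paper intends: the paper's own proof of Theorem \ref{thm:5.7} is just a one-line citation to \cite[Theorem 5]{J. Xu}, and the detailed argument you reconstruct (conjugation via $q_{0,\epsilon}=e^{-y_{0,\epsilon}(\theta_t\omega)}u_{0,\epsilon}$, an $H$-estimate with Gronwall on $[t_0,-1]$ and then $[-1,0]$, a $V$-estimate by testing with $-\Delta q_{0,\epsilon}$, compactness of $V\hookrightarrow H$, and temperedness from the sublinear growth of $y_{0,\epsilon}$) is precisely the $\delta=0$ version of the paper's own Steps 1--3 in the proof of Theorem \ref{thm:5.8}. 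No gaps.
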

		\par
	\begin{proof}
		The proof is similar to \cite[Theorem 5]{J. Xu} and we omit the details here.
	\end{proof}
	\begin{thm}
		\label{thm:5.8}
 Under the assumptions of Theorem \ref{thm:5.7}. Then, there exsists $\delta_0>0$ and $\epsilon_0\in(0,1]$ such that for all $0<\delta<\delta_0$, $\epsilon\in(0,\epsilon_0]$, \eqref{eq:5.2} generates a random dynamical system $\Phi_{\delta,\epsilon}(t, \omega, u_0)$, which possesses a unique random attractor $\mathcal{A}_{\delta,\epsilon}(\omega)$. Additionally, the $\mathcal{D}_F$ -pullback absorbing set $B_{\delta,\epsilon} := \{B_{\delta,\epsilon}(\omega) : \omega\in\Omega\}$ in $H$ is given by
	\begin{equation}
		B_{\delta,\epsilon}(\omega)=\{u\in H :|u|^2\leq\lambda_1^{-1}R_{\delta,\epsilon}(\omega)\}
		\nonumber
	\end{equation}
	with
	\begin{equation}
		\begin{aligned}
			R_{\delta,\epsilon}(\omega)=&\frac{1}{m}e^{\int_{-1}^{0}2y_{\delta,\epsilon}(\theta_{s}\omega)ds+2y_{\delta,\epsilon}(\omega)}\\
			&\times\left(1+C_f|\mathcal{O}|\int_{-\infty}^{-1}e^{-2y_{\delta,\epsilon}(\theta_{s}\omega)+(m\lambda_1-3C_f)s+\int_{-1}^{0}2y_{\delta,\epsilon}(\theta_{\tau}\omega)d\tau}\right)\\
			&+\left(\frac{1}{m}C_f|\mathcal{O}|+\frac{2}{m}C_f^2|\mathcal{O}|\right)\int_{-1}^{0}e^{-2y_{\delta,\epsilon}(\theta_{s}\omega)+(m\lambda_1-3C_f)s+2y_{\delta,\epsilon}(\omega)+\int_{s}^{0}2y_{\delta,\epsilon}(\theta_{r}\omega)dr}ds.
			\nonumber
		\end{aligned}
	\end{equation}
	In addition, for almost all $\omega\in\Omega$,
	\begin{equation}
		\label{eq:5.33}
		\lim\limits_{\substack{\delta\rightarrow0^+}}R_{\delta,\epsilon}(\omega)=R_{0,\epsilon}(\omega),
	\end{equation}
	where $R_{0,\epsilon}(\omega)$ in Theorems \ref{thm:5.7}.
		\end{thm}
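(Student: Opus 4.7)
The plan is to follow the three-step scheme used in the proof of Theorem \ref{thm:4.6}, adapted to the multiplicative-noise transformation $q_{\delta,\epsilon}=e^{-y_{\delta,\epsilon}(\theta_t\omega)}u_{\delta,\epsilon}$. Since \eqref{eq:5.2} is converted to the pathwise random equation \eqref{eq:5.29} for $q_{\delta,\epsilon}$, and Proposition \ref{prop:5.5} guarantees that its nonlinearity enjoys the structural properties required for dissipative energy estimates, standard arguments yield existence, uniqueness and absorbing sets for the conjugate cocycle $\Sigma_{\delta,\epsilon}$, from which $\Phi_{\delta,\epsilon}$ inherits the analogous properties through the linear random transformation \eqref{eq:5.28}.

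The first step is to test \eqref{eq:5.29} against $q_{\delta,\epsilon}$ in $H$ and apply \eqref{eq:2.1}, \eqref{eq:2.3} with $p=2$ and $\beta=C_f$, together with Poincar\'e's and Young's inequalities. This produces a pathwise differential inequality in which the dissipation $m\lambda_1|q_{\delta,\epsilon}|^2$ competes with the drift coming from $y_{\delta,\epsilon}(\theta_t\omega)$ and a forcing of order $e^{-2y_{\delta,\epsilon}(\theta_t\omega)}|\mathcal{O}|$. Integrating first by Gronwall on $(t_0,-1)$ and then on $[-1,0]$, and using the temperedness \eqref{eq:5.26}--\eqref{eq:5.27} together with $m\lambda_1>4C_f$ to guarantee integrability of all exponential factors, produces an explicit radius $r_{\delta,\epsilon}^2(\omega)$ absorbing any tempered set. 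Undoing the conjugation by $u_{\delta,\epsilon}(0)=e^{y_{\delta,\epsilon}(\omega)}q_{\delta,\epsilon}(0)$ gives the bound $|u_{\delta,\epsilon}(0)|^2\le\lambda_1^{-1}R_{\delta,\epsilon}(\omega)$ with $R_{\delta,\epsilon}$ as stated. A parallel higher-order estimate, obtained by testing \eqref{eq:5.29} with $-\Delta q_{\delta,\epsilon}$, combined with the compact embedding $V\hookrightarrow H$, supplies the $\mathcal{D}_F$-pullback asymptotic compactness, and the standard random-attractor theorem then yields the unique $\mathcal{A}_{\delta,\epsilon}(\omega)$.

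The new content is the convergence \eqref{eq:5.33} as $\delta\to 0^+$. Here I would appeal to Lemma \ref{lem:5.6}, which supplies pointwise (in fact, locally uniform) convergence $y_{\delta,\epsilon}(\theta_s\omega)\to y_{0,\epsilon}(\theta_s\omega)$. The uniform temperedness in Hypothesis 1.3(2) furnishes $r<0$ and $\delta_0>0$ such that $|y_{\delta,\epsilon}(\theta_s\omega)|\le|s|$ for all $s\le r$ and every $0<\delta<\delta_0$. Combined with the decay factor $e^{(m\lambda_1-3C_f)s}$ appearing in the integrand of $R_{\delta,\epsilon}$, this yields a $\delta$-uniform envelope of the form $e^{2|s|+(m\lambda_1-3C_f)s}$, which is integrable on $(-\infty,r]$ once $\epsilon$ is so small that $m\lambda_1-3C_f-2\epsilon>0$. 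On the compact remainder $[r,0]$ the continuity of $y_{\delta,\epsilon}(\theta_\cdot\omega)$ and the uniform convergence from Lemma \ref{lem:5.6} make the passage to the limit immediate. Lebesgue's dominated convergence theorem, applied termwise to the three integrals composing $R_{\delta,\epsilon}(\omega)$, delivers \eqref{eq:5.33}.

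The main obstacle will be justifying the dominated convergence in the first integral, namely verifying that the stochastic exponentials $e^{-2y_{\delta,\epsilon}(\theta_s\omega)}$ and the telescoping factor $e^{\int_s^0 2y_{\delta,\epsilon}(\theta_r\omega)dr}$ admit a $\delta$-uniform integrable envelope against $e^{(m\lambda_1-3C_f)s}$ on the full ray $(-\infty,0]$. This is precisely where the uniform-in-$\delta$ temperedness from Hypothesis 1.3(2) and the spectral gap condition $m\lambda_1>4C_f$ become indispensable; once such an envelope is in hand, the remaining bookkeeping is routine adaptation of the calculations already executed in Theorem \ref{thm:4.6} and Theorem \ref{thm:5.7}.
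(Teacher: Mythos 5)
Your proposal follows essentially the same route as the paper's proof: an $H$-energy estimate for $q_{\delta,\epsilon}$ via Gronwall on $[t_0,-1]$ and then $[-1,0]$, a $V$-estimate obtained by testing \eqref{eq:5.29} with $-\Delta q_{\delta,\epsilon}$ plus the compact embedding $V\hookrightarrow H$ and the standard random-attractor theorem, and dominated convergence via Lemma \ref{lem:5.6} for \eqref{eq:5.33}. One refinement you should make explicit: because $y_{\delta,\epsilon}$ sits in the exponent, the crude envelope $|y_{\delta,\epsilon}(\theta_s\omega)|\le|s|$ is not sufficient (it forces the unguaranteed condition $m\lambda_1-3C_f>2$ and, worse, does not control $e^{\int_s^0 2y_{\delta,\epsilon}(\theta_r\omega)dr}$, which under a pointwise linear bound would contribute a quadratic exponent); the paper instead uses \eqref{eq:5.26}--\eqref{eq:5.27} to bound both $|y_{\delta,\epsilon}(\theta_s\omega)|$ and $\left|\int_s^{0}y_{\delta,\epsilon}(\theta_r\omega)\,dr\right|$ by $\tfrac{m\lambda_1-3C_f}{8}|s|$ for $s$ sufficiently negative, uniformly in $\delta$, so that the total exponent remains below a negative multiple of $|s|$.
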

	\begin{proof}
		The idea to prove the existence of random $\mathcal{D}_F$ -attractor to \eqref{eq:5.2} is the same as Theorem \ref{thm:4.6}. We prove the theorem in three steps. \\
		
		\par 
		\textbf{Step 1.} We need to derive the boundedness of $q_{\delta,\epsilon}(\cdot) : = q_{\delta,\epsilon}(\cdot; t_0, \omega, q_0)$ in $H$ for all $t\in[t_0, -1]$ with $t_0\leq-1$, where $q_0=u_{\delta,\epsilon}(\tau)e^{-y_{\delta,\epsilon}(\theta_{0}\omega)}$ and exist a deterministic bounded set $D$ such that $u_{\delta,\epsilon}(\tau)\in D$. Multiplying \eqref{eq:5.29} by $q_{\delta,\epsilon}$ in $H$, thanks to \eqref{eq:2.1} and \eqref{eq:2.4}, the Young inequality and Poincar\'e inequality, we can get 
		\begin{equation}
			\begin{aligned}
				\label{eq:5.34}
				\frac{d}{dt}|q_{\delta,\epsilon}(t)|^2\leq(-m\lambda_1-3C_f+2y_{\delta,\epsilon}(\theta_{t}\omega))|q_{\delta,\epsilon}(t)|^2+e^{-y_{\delta,\epsilon}(\theta_{t}\omega)}C_f|\mathcal{O}|-m\|q_{\delta,\epsilon}(t)\|^2.
			\end{aligned}
		\end{equation}
		Neglecting the last term of \eqref{eq:5.34} and applying Gronwall inequality in $[t_0, -1]$ with $t_0\leq-1$, we have
		\begin{equation}
			\begin{aligned}
				|q_{\delta,\epsilon}(-1)|^2&\leq e^{(m\lambda_1-3C_f)(t_0+1)+\int_{t_0}^{-1}2y_{\delta,\epsilon}(\theta_{s}\omega)ds}|q_{\delta,\epsilon}(t_0)|^2\\
				&+C_f|\mathcal{O}|\int_{t_0}^{-1}e^{(m\lambda_1-3C_f)(s+1)-2y_{\delta,\epsilon}(\theta_{s}\omega)+\int_{t_0}^{-1}2y_{\delta,\epsilon}(\theta_{\tau}\omega)d\tau}ds.
				\nonumber
			\end{aligned}
		\end{equation}
		Therefore, for a given deterministic bounded set $D\subset H$, there exist a constant
		$\rho_{\delta,\epsilon}>0$ and $T(\omega, \rho_{\delta,\epsilon})\leq- 1, \Bbb P -a.e.$, such that, for any $u_0\in D\subset B(0, \rho_{\delta,\epsilon})$, for all $t_0\leq T(\omega, \rho_{\delta,\epsilon})$, we have
		\begin{equation}
			|q_{\delta,\epsilon}(-1; t_0, \omega, e^{-y_{\delta,\epsilon}(\theta_{t_0}\omega)}u_0)|^2\leq\bar{r}_{\delta,\epsilon}^2(\omega)
			\nonumber
		\end{equation}
		with
		\begin{equation}
			\bar{r}_{\delta,\epsilon}^2(\omega)=e^{(m\lambda_1-3C_f)}\left(1+C_f|\mathcal{O}|\int_{t_0}^{-1}e^{(m\lambda_1-3C_f)(s+1)-2y_{\delta,\epsilon}(\theta_{s}\omega)+\int_{t_0}^{-1}2y_{\delta,\epsilon}(\theta_{\tau}\omega)d\tau}ds\right).
			\nonumber
		\end{equation}
		\par 
		For $t\in[-1,0]$, we show $q\in L^{\infty}(-1,t; H)\cap L^2(-1,t;V)$ by energy estimations. Applying Gronwall inequality in $[-1, t]$ with $t\ge-1$, we have
		\begin{equation}
			\begin{aligned}
				\label{eq:5.35}
				|q_{\delta,\epsilon}(t)|^2&\leq e^{-(m\lambda_1-3C_f)(t+1)+\int_{-1}^{t}2y_{\delta,\epsilon}(\theta_{s}\omega)ds}|q_{\delta,\epsilon}(-1)|^2\\
				&+C_f|\mathcal{O}|\int_{-1}^{t}e^{-2y_{\delta,\epsilon}(\theta_{s}\omega)+(3C_f-m\lambda_1)(t-s)+\int_{s}^{t}2y_{\delta,\epsilon}(\theta_{\tau}\omega)d\tau}ds\\
				&-m\int_{-1}^{t}e^{(3C_f-m\lambda_1)(t-s)+\int_{s}^{t}2y_{\delta,\epsilon}(\theta_{\tau}\omega)d\tau}\|q_{\delta,\epsilon}(s)\|^2ds.
			\end{aligned}
		\end{equation}
		Consequently, we conclude that for a given deterministic subset
		$D\subset B(0, \rho_{\delta,\epsilon})\subset H$, there exists $T(\omega, \rho_{\delta,\epsilon})\leq- 1, \Bbb P -a.e.$, such that for all $t_0\leq T(\omega, \rho_{\delta,\epsilon})$, for all $u_0\in D$, we have
		\begin{equation}
			\begin{aligned}
				|q_{\delta,\epsilon}(t)|^2&\leq e^{-(m\lambda_1-3C_f)(t+1)+\int_{-1}^{t}2y_{\delta,\epsilon}(\theta_{s}\omega)ds}\bar{r}_{\delta,\epsilon}(\omega)^2\\
				&+C_f|\mathcal{O}|\int_{-1}^{t}e^{-2y_{\delta,\epsilon}(\theta_{s}\omega)+(3C_f-m\lambda_1)(t-s)+\int_{s}^{t}2y_{\delta,\epsilon}(\theta_{\tau}\omega)d\tau}ds
				\nonumber
			\end{aligned}
		\end{equation}
		and
		\begin{equation}
			\begin{aligned}
				\label{eq:5.36}
				\int_{-1}^{0}e^{(m\lambda_1-3C_f)s+\int_{s}^{0}2y_{\delta,\epsilon}(\theta_{\tau}\omega)d\tau}\|q_{\delta,\epsilon}(s)\|^2ds&\leq\frac{1}{m}e^{-(m\lambda_1-3C_f)+\int_{-1}^{t}2y_{\delta,\epsilon}(\theta_{s}\omega)ds}\bar{r}_{\delta,\epsilon}(\omega)^2\\
				&+\frac{C_f|\mathcal{O}|}{m}\int_{-1}^{t}e^{-2y_{\delta,\epsilon}(\theta_{s}\omega)+(m\lambda_1-3C_f)s+\int_{s}^{0}2y_{\delta,\epsilon}(\theta_{\tau}\omega)d\tau}ds.
			\end{aligned}
		\end{equation}
		\par 
		\textbf{Step 2.} For all $t\in[-1, 0]$, we need to obtain a bounded absorbing set in $V$ and by the compact embedding $V\hookrightarrow H$ ensure the existence of a compact absorbing ball in $H$. Taking the inner product \eqref{eq:5.29} with $-\Delta q_{\delta,\epsilon}(t)$, making use of \eqref{eq:2.2}, the Young inequality and the Poincar\'e inequality, we have
		\begin{equation}
			\begin{aligned}
				\label{eq:5.37}
				\frac{d}{dt}\|q_{\delta,\epsilon}(t)\|^2&\leq-m|\Delta q_{\delta,\epsilon}(t)|^2+\frac{2}{m}C_f^2e^{-2y_{\delta,\epsilon}(\theta_{t}\omega)}+\frac{2C_f^2}{m}|q_{\delta,\epsilon}(t)|^2+2y_{\delta,\epsilon}(\theta_{t}\omega)\|q_{\delta,\epsilon}(t)\|^2\\
				&\leq\left(-m\lambda_1+\frac{2C_f^2}{m\lambda_1}+2y_{\delta,\epsilon}(\theta_{t}\omega)\right)\|q_{\delta,\epsilon}(t)\|^2+\frac{2}{m}C_f^2|\mathcal{O}|e^{-2y_{\delta,\epsilon}(\theta_{t}\omega)}.
			\end{aligned}
		\end{equation}
		Applying Gronwall inequality in $[s, 0]$ with $s\in[-1,0]$, we have
		\begin{equation}
			\begin{aligned}
				\|q_{\delta,\epsilon}(0)\|^2&\leq e^{(m\lambda_1-\frac{2C_f^2}{m\lambda_1})s+\int_{s}^{0}2y_{\delta,\epsilon}(\theta_{\tau}\omega)d\tau}\|q_{\delta,\epsilon}(s)\|^2\\
				&+\frac{2}{m}C_f^2|\mathcal{O}|\int_{s}^{0}e^{-2y_{\delta,\epsilon}(\theta_{\tau}\omega)+(m\lambda_1-\frac{2C_f^2}{m\lambda_1})\tau+\int_{\tau}^{0}2y_{\delta,\epsilon}(\theta_{t}\omega)dt}d\tau.
				\nonumber
			\end{aligned}
		\end{equation}
		Integrating the above inequality again in $[-1, 0]$, we obtain
		\begin{equation}
			\begin{aligned}
				\label{eq:5.38}
				\|q_{\delta,\epsilon}(0)\|^2&\leq \int_{-1}^{0}e^{(m\lambda_1-\frac{2C_f^2}{m\lambda_1})s+\int_{s}^{0}2y_{\delta,\epsilon}(\theta_{\tau}\omega)d\tau}\|q_{\delta,\epsilon}(s)\|^2\\
				&+\frac{2}{m}C_f^2|\mathcal{O}|\int_{-1}^{0}e^{-2y_{\delta,\epsilon}(\theta_{s}\omega)+(m\lambda_1-\frac{2C_f^2}{m\lambda_1})s+\int_{s}^{0}2y_{\delta,\epsilon}(\theta_{r}\omega)dr}ds.
			\end{aligned}
		\end{equation}
		Since assumption $4C_f<m\lambda_1$, it is easy to verify that $m\lambda_1-3C_f<m\lambda_1-\frac{2C_f^2}{m\lambda_1}$. Then by (5.34), we can get
		\begin{equation}
			\begin{aligned}
				\|q_{\delta,\epsilon}(0)\|^2&\leq \frac{1}{m}e^{-(m\lambda_1-3C_f)+\int_{-1}^{0}2y_{\delta,\epsilon}(\theta_{s}\omega)ds}\bar{r}_{\delta,\epsilon}^2(\omega)\\
				&+\left(\frac{1}{m}C_f+\frac{2}{m}C_f^2|\mathcal{O}|\right)\int_{-1}^{0}e^{-2y_{\delta,\epsilon}(\theta_{s}\omega)+(m\lambda_1-3C_f)s+\int_{s}^{0}2y_{\delta,\epsilon}(\theta_{r}\omega)dr}ds,
				\nonumber
			\end{aligned}
		\end{equation}
		and
		\begin{equation}
			\begin{aligned}
				\|u_{\delta,\epsilon}(0)\|^2&=\|q_{\delta,\epsilon}(0)e^{y_{\delta,\epsilon}(\omega)}\|^2\\
				&\leq\frac{1}{m}e^{-(m\lambda_1-3C_f)+2y_{\delta,\epsilon}(\omega)+\int_{-1}^{0}2y_{\delta,\epsilon}(\theta_{s}\omega)ds}\bar{r}_{\delta,\epsilon}^2(\omega)\\
				&+\left(\frac{1}{m}C_f+\frac{2}{m}C_f^2|\mathcal{O}|\right)\int_{-1}^{0}e^{-2y_{\delta,\epsilon}(\theta_{s}\omega)+2y_{\delta,\epsilon}(\omega)+(m\lambda_1-3C_f)s+\int_{s}^{0}2y_{\delta,\epsilon}(\theta_{r}\omega)dr}ds.
				\nonumber
			\end{aligned}
		\end{equation}
		\par 
		Therefore, there exists $\tilde{r}_{\delta,\epsilon}(\omega)$ such that for a given $\rho_{\delta,\epsilon}>0$, exists $\tilde{T}(\omega, \rho_{\delta,\epsilon})\leq-1$ satisfying, for all $t_0\leq\tilde{T}(\omega, \rho_{\delta,\epsilon})$ and $u_{\delta,\epsilon}(0)\in H$ with $|u_0|\leq \rho_{\delta,\epsilon}$,
		\begin{equation}
			\|u_{\delta,\epsilon}(0; t_0, \omega, u_0)\|^2\leq\tilde{r'}_{\delta,\epsilon}^2(\omega),
			\nonumber
		\end{equation}
		where
		\begin{equation}
			\begin{aligned}
				\tilde{r'}_{\delta,\epsilon}(\omega)^2&=\frac{1}{m}\int_{-1}^{0}e^{\int_{-1}^{0}2y_{\delta,\epsilon}(\theta_{s}\omega)ds+2y_{\delta,\epsilon}(\omega)}\\
				&\times\left(1+C_f|\mathcal{O}|\int_{-\infty}^{-1}e^{-2y_{\delta,\epsilon}(\theta_{s}\omega)+(m\lambda_1-3C_f)s+\int_{s}^{-1}2y_{\delta,\epsilon}(\theta_{\tau}\omega)d\tau}ds\right)\\
				&+\left(\frac{1}{m}C_f|\mathcal{O}|+\frac{2}{m}C_f^2|\mathcal{O}|\right)\int_{-1}^{0}e^{-2y_{\delta,\epsilon}(\theta_{s}\omega)+(m\lambda_1-3C_f)s+2y_{\delta,\epsilon}(\omega)+\int_{s}^{0}2y_{\delta,\epsilon}(\theta_{r}\omega)dr}ds.
				\nonumber
			\end{aligned}
		\end{equation}
		It follows from that (5.24), for a given $\epsilon_0'=\frac{m\lambda_1-3C_f}{8}$, there exists $T_1(\epsilon_0',\omega)<0$, for all $t\leq T_1$, we have
		\begin{equation}
			\label{eq:5.39}
			|y_{\delta,\epsilon}(\theta_{t}\omega)|\leq-\frac{m\lambda_1-3C_f}{8}t.
		\end{equation}
		From \eqref{eq:5.26}, for any $\epsilon>0$, there exists $T_2(\epsilon,\omega)<0$, for all $t\leq T_2$,
		\begin{equation}
			\label{eq:5.40}
			\left|\int_{0}^{t}y_{\delta,\epsilon}(\theta_{\tau}\omega)d\tau\right|\leq-\frac{m\lambda_1-3C_f}{8}t.
		\end{equation}
		Let $T_0=min\{T_1, T_2\}$, we have
		\begin{equation}
			\begin{aligned}
				\int_{-\infty}^{-1}&e^{-2y_{\delta,\epsilon}(\theta_{s}\omega)+(m\lambda_1-3C_f)s+\int_{s}^{-1}2y_{\delta,\epsilon}(\theta_{\tau}\omega)d\tau}ds\\
				&=\int_{-\infty}^{T_0}e^{-2y_{\delta,\epsilon}(\theta_{s}\omega)+(m\lambda_1-3C_f)s+\int_{s}^{-1}2y_{\delta,\epsilon}(\theta_{\tau}\omega)d\tau}ds\\
				&+\int_{T_0}^{-1}e^{-2y_{\delta,\epsilon}(\theta_{s}\omega)+(m\lambda_1-3C_f)s+\int_{s}^{-1}2y_{\delta,\epsilon}(\theta_{\tau}\omega)d\tau}ds\\
				&=Q_1+Q_2.
				\nonumber
			\end{aligned}
		\end{equation}
		Since $y_{\delta,\epsilon}(\omega)$ is continuous that $Q_2$ is bounded. Next, we show that $Q_1$ is bounded, according to \eqref{eq:5.39} and \eqref{eq:5.40} we have
		\begin{equation}
			\begin{aligned}
				\label{eq:5.41}
				Q_1&=\int_{-\infty}^{T_0}e^{-2y_{\delta,\epsilon}(\theta_{s}\omega)+(m\lambda_1-3C_f)s+\int_{s}^{-1}2y_{\delta,\epsilon}(\theta_{\tau}\omega)d\tau}ds\\
				&\leq\int_{-\infty}^{T_0}e^{2|y_{\delta,\epsilon}(\theta_{s}\omega)|+(m\lambda_1-3C_f)s+|\int_{s}^{-1}2y_{\delta,\epsilon}(\theta_{\tau}\omega)d\tau}|ds\\
				&\leq\int_{-\infty}^{T_0}e^{(m\lambda_1-3C_f)(s+\frac{1}{4})}ds<\infty.
			\end{aligned}
		\end{equation}
		Thus, it follows from [33, Theorem 2] that there exists a unique random attractor
		$\mathcal{A}_{\delta,\epsilon}(\omega)$ to equation \eqref{eq:5.2}.
		\par 
		\textbf{Step 3.} We show \eqref{eq:5.33} holds. Since the properties of $y_{\delta,\epsilon}(\theta_{t}\omega)$ (cf. (\eqref{eq:5.26}-\eqref{eq:5.27} and Lemma \ref{lem:5.6}, the same idea as in Theorem \ref{thm:4.6} to proof this result, so we omit the details.
	\end{proof}
	\begin{lem}
		\label{lem:5.9}
Assume the conditions of Theorem \ref{thm:5.7} hold. Let $\{\delta_n\}_{n=1}^{\infty}$ be a sequence so that $\delta_n\rightarrow0^+$ as $n \rightarrow+\infty$, and exists $\epsilon_0\in (0,1]$ such that for all $\epsilon\in (0,\epsilon_0]$. Let $q_{\delta_n,\epsilon}$ and $q_{0,\epsilon}$ be the solutions of equations \eqref{eq:5.2}
	and \eqref{eq:5.1} with initial data $p_{\delta,\epsilon}(\tau)$ and $q_{0,\epsilon}(\tau)$, respectively. If $q_{\delta_n,\epsilon}(\tau)\rightarrow q_{0,\epsilon}(\tau)$ weakly in $H$ as $n\rightarrow+\infty$, then for almost all $\omega\in\Omega$,
	\begin{equation}
		\label{eq:5.42}
		q_{\delta_n,\epsilon}(r; \tau, \omega, q_{\delta_n,\epsilon}(\tau))\rightarrow q_{0,\epsilon}(r; \tau, \omega, q_{0,\epsilon}(\tau))\quad weakly\ in\ \quad H\quad\forall r\ge \tau,
	\end{equation}
	and
	\begin{equation}
		\label{eq:5.43}
		q_{\delta_n,\epsilon}(\cdot; \tau, \omega, q_{\delta_n,\epsilon}(\tau))\rightarrow q_{0,\epsilon}(\cdot; \tau, \omega, q_{0,\epsilon}(\tau))\quad strongly\ in\ \quad L^2(\tau,\tau+T;H)\quad\forall T>0.
	\end{equation}
		\end{lem}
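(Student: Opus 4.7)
The plan is to mimic the strategy for Lemma \ref{lem:4.10} in the additive case, using the energy estimates from Theorem \ref{thm:5.8} together with the uniform convergence $y_{\delta_n,\epsilon}(\theta_t\omega)\to y_{0,\epsilon}(\theta_t\omega)$ supplied by Lemma \ref{lem:5.6} on every compact time interval. First I would fix $T>0$ and establish uniform-in-$n$ estimates for the sequence $q_{\delta_n,\epsilon}(\cdot;\tau,\omega,q_{\delta_n,\epsilon}(\tau))$. Since $q_{\delta_n,\epsilon}(\tau)\rightharpoonup q_{0,\epsilon}(\tau)$ weakly in $H$, the initial data are bounded in $H$; testing \eqref{eq:5.29} by $q_{\delta_n,\epsilon}$ and invoking \eqref{eq:2.1}, \eqref{eq:2.4} with $p=2$, the Poincar\'e inequality and Young's inequality (exactly as in \eqref{eq:5.34}), together with the fact that $y_{\delta_n,\epsilon}(\theta_t\omega)$ is uniformly bounded on $[\tau,\tau+T]$ by Lemma \ref{lem:5.6}, yields a uniform bound
\begin{equation*}
\{q_{\delta_n,\epsilon}\}\ \text{bounded in}\ L^\infty(\tau,\tau+T;H)\cap L^2(\tau,\tau+T;V).
\end{equation*}

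Next I would bound the time derivatives. Using \eqref{eq:2.1}, the linear growth of $F(\omega,s)=e^{-y_{\delta_n,\epsilon}(\theta_t\omega)}f(se^{y_{\delta_n,\epsilon}(\theta_t\omega)})+sy_{\delta_n,\epsilon}(\theta_t\omega)$ from Proposition \ref{prop:5.5}, and the above energy bounds, the right-hand side of \eqref{eq:5.29} is controlled in $L^2(\tau,\tau+T;V^*)$ uniformly in $n$, so $\{\partial_t q_{\delta_n,\epsilon}\}$ is bounded in $L^2(\tau,\tau+T;V^*)$. By the Aubin--Lions compactness lemma applied to $V\hookrightarrow H\hookrightarrow V^*$, along a subsequence (not relabeled) there exists $\bar q\in L^2(\tau,\tau+T;V)\cap L^\infty(\tau,\tau+T;H)$ with
\begin{equation*}
q_{\delta_n,\epsilon}\to\bar q\ \text{strongly in}\ L^2(\tau,\tau+T;H),\qquad q_{\delta_n,\epsilon}\rightharpoonup\bar q\ \text{weakly in}\ L^2(\tau,\tau+T;V),
\end{equation*}
and, after a further diagonal extraction using the uniform bound in $C([\tau,\tau+T];V^*)$, pointwise weak convergence $q_{\delta_n,\epsilon}(r)\rightharpoonup\bar q(r)$ in $H$ for every $r\in[\tau,\tau+T]$.

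The main obstacle is passing to the limit in the nonlinear and random coefficients in the weak formulation of \eqref{eq:5.29}. For the nonlocal diffusion term $a(l(q_{\delta_n,\epsilon})e^{y_{\delta_n,\epsilon}(\theta_t\omega)})\Delta q_{\delta_n,\epsilon}$, I would use that $l\in L^2(\mathcal O)$ together with the strong $L^2$-in-time convergence of $q_{\delta_n,\epsilon}$ and the uniform convergence of $y_{\delta_n,\epsilon}$ to obtain $l(q_{\delta_n,\epsilon})e^{y_{\delta_n,\epsilon}(\theta_t\omega)}\to l(\bar q)e^{y_{0,\epsilon}(\theta_t\omega)}$ a.e.\ in time; continuity (and global Lipschitz character) of $a$ then gives a.e.\ convergence of the coefficient, which by bounded convergence combines with the weak $L^2(\tau,\tau+T;V)$ convergence of $q_{\delta_n,\epsilon}$ to yield weak convergence of the whole term in $L^2(\tau,\tau+T;V^*)$. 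For the reaction term I would exploit \eqref{eq:2.2}, uniform convergence of the exponential factors $e^{\pm y_{\delta_n,\epsilon}(\theta_t\omega)}$ on $[\tau,\tau+T]$, and the strong $L^2$-in-time convergence of $q_{\delta_n,\epsilon}$ to identify the limit as $e^{-y_{0,\epsilon}(\theta_t\omega)}f(\bar q\,e^{y_{0,\epsilon}(\theta_t\omega)})$; the multiplicative term $q_{\delta_n,\epsilon}y_{\delta_n,\epsilon}(\theta_t\omega)$ is handled directly by Lemma \ref{lem:5.6}.

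Passing to the limit in the variational identity then shows $\bar q$ is a weak solution of \eqref{eq:5.25} on $[\tau,\tau+T]$ with initial datum $q_{0,\epsilon}(\tau)$ (obtained by testing against time-dependent test functions vanishing at $\tau+T$ and using the pointwise weak convergence together with the hypothesis on the initial data). Uniqueness of weak solutions to \eqref{eq:5.25}, already established in Section 5.2, forces $\bar q=q_{0,\epsilon}(\cdot;\tau,\omega,q_{0,\epsilon}(\tau))$; this identification is independent of the subsequence, so the whole sequence converges. This proves \eqref{eq:5.43} (strong convergence in $L^2(\tau,\tau+T;H)$) and \eqref{eq:5.42} (pointwise weak convergence in $H$), completing the proof.
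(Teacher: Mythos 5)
Your proposal is correct, and it is essentially the argument the paper intends: the paper's own ``proof'' of Lemma \ref{lem:5.9} is a one-line deferral to \cite[Lemma 3.5]{A.G B}, and the standard Aubin--Lions compactness scheme you lay out (uniform $L^\infty(H)\cap L^2(V)$ bounds from the energy inequality \eqref{eq:5.34}, a $V^*$-bound on the time derivatives, strong $L^2(\tau,\tau+T;H)$ and pointwise weak-$H$ convergence along subsequences, passage to the limit in the nonlocal coefficient and the Lipschitz reaction term using the uniform convergence of $y_{\delta_n,\epsilon}$, and identification of the limit by uniqueness) is exactly what that citation stands for. One small point of care: the joint limit in \eqref{eq:5.30} is stated as $\delta\rightarrow0^+$ \emph{and} $\epsilon\rightarrow0^+$, so for fixed $\epsilon$ you should invoke Hypothesis 1.3 (i.e.\ \eqref{eq:1.12}) to get the uniform-on-compacts convergence and boundedness of $y_{\delta_n,\epsilon}(\theta_t\omega)$ that your estimates rely on; this is available and does not affect the validity of the argument.
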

	\begin{proof}
		This proof is similar to \cite[Lemma 3.5]{A.G B}, thus it is omitted here.
	\end{proof}
	\begin{lem}
		\label{lem:5.10}
 Assume the conditions of Theorem \ref{thm:5.7} hold and function $a$ is globally Lipschitz. Let $\{\delta_n\}_{n=1}^{\infty}$ be a sequence so that $\delta_n\rightarrow 0^+$ as $n\rightarrow {+\infty}$, and exist $\epsilon_0\in (0,1]$ such that for all $\epsilon\in (0,\epsilon_0]$. Let $q_{\delta_n,\epsilon}$ and $q_{0,\epsilon}$ be the solutions of equations \eqref{eq:5.2} and \eqref{eq:5.1} with initial data $q_{\delta_n,\epsilon}(\tau)$ and $q_{0,\epsilon}(\tau)$, respectively. If $q_{\delta_n,\epsilon}(\tau)\rightarrow q_{0,\epsilon}(\tau) $ in $H$ as $n\rightarrow+\infty$, then for every $\tau\in R$, $\omega\in\Omega$ and $t\geq\tau$,
	\begin{equation}
		q_{\delta_n,\epsilon}(t; \tau, \omega, q_{\delta_n,\epsilon}(\tau))\rightarrow q_{0,\epsilon}(t;\tau,\omega, q_{0,\epsilon}(\tau))\quad in\ \quad H\quad \forall t\ge\tau,
		\nonumber
	\end{equation}
		\end{lem}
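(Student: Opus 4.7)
The plan is to set $w_n(t) = q_{\delta_n,\epsilon}(t;\tau,\omega,q_{\delta_n,\epsilon}(\tau)) - q_{0,\epsilon}(t;\tau,\omega,q_{0,\epsilon}(\tau))$, subtract the two conjugate equations \eqref{eq:5.29} and its $\delta=0$ counterpart, and run an energy estimate on $w_n$ in $H$, closing the argument by Gronwall. By Lemma \ref{lem:5.9}, we already have $w_n(r)\rightarrow 0$ weakly in $H$ pointwise and strongly in $L^2(\tau,\tau+T;H)$; the task is to upgrade this to strong convergence in $H$ at every $t\ge\tau$.

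First I would write the difference equation. Adding and subtracting, each nonlinear term splits as a part that is Lipschitz in $w_n$ (the ``good'' part) and a part carrying the noise increment $y_{\delta_n,\epsilon}(\theta_t\omega)-y_{0,\epsilon}(\theta_t\omega)$ or $e^{y_{\delta_n,\epsilon}(\theta_t\omega)}-e^{y_{0,\epsilon}(\theta_t\omega)}$ (the ``residual'' part). Testing against $w_n$ in $H$ and using \eqref{eq:2.1}, the global Lipschitz property of $a$, and \eqref{eq:2.2}--\eqref{eq:2.4} with $p=2$, $\beta=C_f$, I expect to obtain an inequality of the form
\begin{equation*}
\frac{d}{dt}|w_n(t)|^2 + m\|w_n(t)\|^2 \leq \mu_n(t)|w_n(t)|^2 + \nu_n(t),
\end{equation*}
where $\mu_n(t)$ is locally integrable on $[\tau,T]$ uniformly in $n$ (it picks up factors of $\|q_{0,\epsilon}(t)\|^2$ and bounded noise terms) and $\nu_n(t)$ is an ``error'' term of the form $\rho_n(\omega)\bigl(1+\|q_{0,\epsilon}(t)\|^2+|q_{0,\epsilon}(t)|^2\bigr)$ with $\rho_n(\omega)\to 0$ as $n\to\infty$. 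The decay of $\rho_n(\omega)$ comes from Lemma \ref{lem:5.6}, which gives $\sup_{|s|\le T}|y_{\delta_n,\epsilon}(\theta_s\omega)-y_{0,\epsilon}(\theta_s\omega)|\to 0$, hence $\sup_{|s|\le T}|e^{y_{\delta_n,\epsilon}(\theta_s\omega)}-e^{y_{0,\epsilon}(\theta_s\omega)}|\to 0$ as well. The integrability of $\mu_n$ and the $L^1(\tau,T)$-bound on $\|q_{0,\epsilon}\|^2$ come from the uniform estimates derived in Step 1 of Theorem \ref{thm:5.8} (in particular \eqref{eq:5.35}--\eqref{eq:5.36}), together with the analogous bound for $q_{\delta_n,\epsilon}$.

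The main obstacle will be the nonlocal diffusion term $a(l(\cdot)e^{y})\Delta(\cdot)$, because after subtraction the piece $\bigl(a(l(q_{\delta_n,\epsilon})e^{y_{\delta_n,\epsilon}})-a(l(q_{0,\epsilon})e^{y_{0,\epsilon}})\bigr)\langle\Delta q_{0,\epsilon},w_n\rangle$ couples $\|q_{0,\epsilon}\|$ with $\|w_n\|$ and $|w_n|$ simultaneously. I would split the $a$-difference using the global Lipschitz constant $L_a$ and $l\in\mathcal{L}(L^2(\mathcal{O});\mathbb R)$ into a $|w_n|$-linear piece (controlled via Young's inequality with coefficient small enough to be absorbed by $m\|w_n\|^2$, leaving only a time-integrable $\|q_{0,\epsilon}\|^2\cdot|w_n|^2$ contribution in $\mu_n$) and a piece proportional to $|e^{y_{\delta_n,\epsilon}}-e^{y_{0,\epsilon}}|$ which, together with $|q_{0,\epsilon}|\|q_{0,\epsilon}\|$, feeds into $\nu_n$ and is uniformly small in $n$.

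Once the differential inequality is in place, I apply the integral form of Gronwall's lemma on $[\tau,t]$. Since the initial datum satisfies $|w_n(\tau)|^2 = |q_{\delta_n,\epsilon}(\tau)-q_{0,\epsilon}(\tau)|^2\to 0$, and $\int_\tau^t \nu_n(s)e^{\int_s^t\mu_n(r)\,dr}\,ds\to 0$ by dominated convergence (the exponent is uniformly bounded in $n$ thanks to the uniform $L^1$-bound on $\mu_n$), we conclude $|w_n(t)|^2\to 0$ for every $t\ge\tau$, completing the proof.
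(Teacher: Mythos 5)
Your proposal is correct in outline and is the standard argument; note that the paper itself gives no proof of this lemma at all, deferring entirely to \cite[Lemma 3.8]{A. Gu}, so your energy-plus-Gronwall scheme is precisely the kind of argument being elided. You correctly isolate the only genuinely delicate point, namely the nonlocal coefficient difference $\bigl(a(l(q_{\delta_n,\epsilon})e^{y_{\delta_n,\epsilon}})-a(l(q_{0,\epsilon})e^{y_{0,\epsilon}})\bigr)\langle\Delta q_{0,\epsilon},w_n\rangle$, and your treatment of it (Lipschitz splitting of $a$, absorbing the $\|w_n\|$ factor into $m\|w_n\|^2$ by Young, leaving an $L^1$-in-time coefficient $\|q_{0,\epsilon}\|^2$ in $\mu_n$ and a residual proportional to $\sup_{|s|\le T}|e^{y_{\delta_n,\epsilon}(\theta_s\omega)}-e^{y_{0,\epsilon}(\theta_s\omega)}|$, which vanishes by Lemma \ref{lem:5.6}) is the right one. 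The only point you should make explicit when writing this out is that the bounds feeding into $\mu_n$ and $\nu_n$ must be uniform in $n$: the term $\nu_n$ also carries $|q_{\delta_n,\epsilon}|^2\|q_{0,\epsilon}\|^2$, so you need $\sup_{[\tau,T]}|q_{\delta_n,\epsilon}|^2$ (and $\int_\tau^T\|q_{\delta_n,\epsilon}\|^2$) bounded independently of $n$; this does follow from the energy estimate of Theorem \ref{thm:5.8} combined with the uniform convergence of the noises and of the initial data, but it is an ingredient rather than a given. With that made precise, Gronwall closes the argument exactly as you describe.
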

		\begin{proof}
	 The proof is similar to \cite[Lemma 3.8]{A. Gu}, thus the details are omitted here.
	\end{proof}
	\begin{lem}
		\label{lem:5.11}
 Assume the conditions of Lemma \ref{lem:5.10} hold. Letting $\omega\in\Omega$ is fixed, and letting $\epsilon_0\in (0,1]$ such that for all $\epsilon\in (0,\epsilon_0]$. If
	$\delta_n\rightarrow0^+$ as $n\rightarrow+\infty$ and $u_n :=u_{\delta,\epsilon,n}\in\mathcal{A}_{\delta n,\epsilon}(\omega)$, then the sequence $\{u_n\}_{n=1}^{\infty}$ has a convergent
	subsequence in $H$.
		\end{lem}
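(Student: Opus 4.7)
The plan is to mirror the proof of Lemma \ref{lem:4.11}, replacing the additive shift $x_{\delta,\epsilon}^{*}$ by the multiplicative conjugation $q_{\delta,\epsilon}=e^{-y_{\delta,\epsilon}(\theta_{t}\omega)}u_{\delta,\epsilon}$ from (5.28). Since $H$ is a Hilbert space, it suffices to produce a subsequence of $\{u_n\}$ which converges weakly to some $\tilde u\in H$ and for which $\limsup_{n\to\infty}|u_n|\leq|\tilde u|$; weak convergence together with convergence of norms then upgrades automatically to strong convergence in $H$.

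First, by Theorem \ref{thm:5.8} (in particular (5.33)) there exists $N=N(\omega)$ so that $R_{\delta_n,\epsilon}(\omega)\leq 2R_{0,\epsilon}(\omega)$ for every $n\geq N$. Since $u_n\in\mathcal{A}_{\delta_n,\epsilon}(\omega)\subset B_{\delta_n,\epsilon}(\omega)$, the sequence $\{u_n\}$ is bounded in $H$ and, up to a subsequence, $u_n\rightharpoonup\tilde u$ weakly in $H$. To promote this to strong convergence I exploit invariance: for each fixed $k\geq 1$ there exists $u_{n,k}\in\mathcal{A}_{\delta_n,\epsilon}(\theta_{-k}\omega)$ with $u_n=\Phi_{\delta_n,\epsilon}(k,\theta_{-k}\omega,u_{n,k})$, and the transformed data $q_{n,k}:=u_{n,k}e^{-y_{\delta_n,\epsilon}(\theta_{-k}\omega)}$ satisfy
\[
|q_{n,k}|^{2}\leq 2\lambda_1^{-1}R_{0,\epsilon}(\theta_{-k}\omega)\,e^{-2y_{\delta_n,\epsilon}(\theta_{-k}\omega)},
\]
which is bounded in $n$ uniformly in any fixed $k$ by Lemma \ref{lem:5.6}. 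A diagonal extraction over $k\in\mathbb{N}$ then gives a single subsequence along which $q_{n,k}\rightharpoonup\bar q_k$ weakly in $H$ for every $k$. Lemma \ref{lem:5.9} yields $q_{\delta_n,\epsilon}(0;-k,\omega,q_{n,k})\rightharpoonup q_{0,\epsilon}(0;-k,\omega,\bar q_k)$ weakly in $H$; since $u_n=q_{\delta_n,\epsilon}(0;-k,\omega,q_{n,k})e^{y_{\delta_n,\epsilon}(\omega)}$, uniqueness of the weak limit together with Lemma \ref{lem:5.6} identifies $q_{0,\epsilon}(0;-k,\omega,\bar q_k)=\tilde u\,e^{-y_{0,\epsilon}(\omega)}=:\tilde q$.

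To get the norm inequality, I would derive an energy identity for (5.29): multiply by $q_{\delta_n,\epsilon}$ in $H$ and then by the integrating factor $e^{m\lambda_1 t}$, and integrate over $[-k,0]$. This expresses $|q_{\delta_n,\epsilon}(0;-k,\omega,q_{n,k})|^{2}$ as $e^{-m\lambda_1 k}|q_{n,k}|^{2}$ plus integrals involving $a(l(q_{\delta_n,\epsilon})e^{y_{\delta_n,\epsilon}(\theta_{t}\omega)})\|q_{\delta_n,\epsilon}\|^{2}$, the nonlinearity $e^{-y_{\delta_n,\epsilon}(\theta_{t}\omega)}f(q_{\delta_n,\epsilon}e^{y_{\delta_n,\epsilon}(\theta_{t}\omega)})$, and $y_{\delta_n,\epsilon}(\theta_{t}\omega)$. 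Using the strong convergence in $L^{2}(-k,0;H)$ from Lemma \ref{lem:5.9}, the Lipschitz continuity of $a$ and $f$, and the uniform convergence $y_{\delta_n,\epsilon}\to y_{0,\epsilon}$ on $[-k,0]$ from Lemma \ref{lem:5.6}, one passes to the limit and obtains the same identity for $q_{0,\epsilon}(0;-k,\omega,\bar q_k)=\tilde q$. Comparing the two identities yields
\[
\limsup_{n\to\infty}|q_{\delta_n,\epsilon}(0;-k,\omega,q_{n,k})|^{2}\leq 2\lambda_1^{-1}e^{-m\lambda_1 k}R_{0,\epsilon}(\theta_{-k}\omega)e^{-2y_{0,\epsilon}(\theta_{-k}\omega)}+|\tilde q|^{2}.
\]
Multiplying by $e^{2y_{\delta_n,\epsilon}(\omega)}$, invoking Lemma \ref{lem:5.6} once more, and letting $k\to+\infty$, the first term vanishes by temperedness of $R_{0,\epsilon}$ together with (5.23)--(5.24). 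This yields $\limsup_{n\to\infty}|u_n|\leq|\tilde u|$ and therefore $u_n\to\tilde u$ strongly in $H$.

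The main obstacle will be passing to the limit inside the energy identity, particularly in the nonlocal diffusion coefficient $a(l(q_{\delta_n,\epsilon})e^{y_{\delta_n,\epsilon}(\theta_{t}\omega)})$ that multiplies the only weakly convergent quantity $\|q_{\delta_n,\epsilon}\|^{2}$, and in the cross terms coming from the multiplicative exponential factors $e^{\pm y_{\delta_n,\epsilon}(\theta_{t}\omega)}$. The coefficient is handled by noting that $l\in L^{2}(\mathcal{O})$ and the strong $L^{2}(-k,0;H)$ convergence of $q_{\delta_n,\epsilon}$ make $l(q_{\delta_n,\epsilon})$ converge strongly, while $e^{y_{\delta_n,\epsilon}(\theta_{t}\omega)}\to e^{y_{0,\epsilon}(\theta_{t}\omega)}$ uniformly on $[-k,0]$ by Lemma \ref{lem:5.6}; combined with the Lipschitz continuity of $a$ this identifies the weak limit of the coefficient, and an analogous splitting using \eqref{eq:2.2} controls the nonlinearity.
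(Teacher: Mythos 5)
Your argument is sound in outline, but it follows a genuinely different route from the paper's own proof of this lemma. You transplant the energy-equation (Ball-type) method of Lemma \ref{lem:4.11}: pull back to time $-k$, extract weak limits of the conjugated data $q_{n,k}$, derive an energy identity for \eqref{eq:5.29}, pass to the limit to get $\limsup_n|u_n|\leq|\tilde u|$, and let $k\to+\infty$. The paper instead exploits the fact that, unlike its additive-noise counterpart Lemma \ref{lem:4.10}, Lemma \ref{lem:5.9} already delivers \emph{strong} convergence in $L^{2}(\tau,\tau+T;H)$ (see \eqref{eq:5.43}). It pulls back only to time $-1$, uses invariance to write $u_n=u_{\delta_n,\epsilon}(0;-1,\omega,u_{n,-1})$ with $\{q_{n,-1}\}$ bounded, extracts via \eqref{eq:5.43} a subsequence with $q_{\delta_n,\epsilon}(s;-1,\omega,q_{n,-1})\to\bar q_{0,\epsilon}(s)$ strongly in $H$ for a.e.\ $s\in(-1,0)$, and then concludes by the cocycle property together with the continuity of the solution map in the initial data (Lemma \ref{lem:5.10}): $u_n=u_{\delta_n,\epsilon}(0;s,\omega,u_{\delta_n,\epsilon}(s;-1,\omega,u_{n,-1}))$ converges strongly in $H$. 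The paper's route is shorter and avoids the energy identity entirely; it costs nothing extra because the strong $L^{2}$ convergence is already available. Your route is self-contained given only weak convergence, but it carries the technical burden you yourself flag: lower semicontinuity of $\int e^{m\lambda_1 t}\,\Upsilon(q_{\delta_n,\epsilon})\,dt$ when the quadratic form carries the $n$-dependent nonlocal coefficient $a(l(q_{\delta_n,\epsilon})e^{y_{\delta_n,\epsilon}(\theta_t\omega)})$. Splitting off the coefficient difference as you propose needs slightly more than $l(q_{\delta_n,\epsilon})\to l(\bar q)$ in $L^{2}(-k,0)$, since $\|q_{\delta_n,\epsilon}\|^{2}$ is only bounded in $L^{1}(-k,0)$ and could in principle concentrate where the coefficient difference has not yet become small; this is the same point the paper leaves implicit in Lemma \ref{lem:4.11}, so it is not a new gap, but if you keep the energy route you should address it (e.g.\ via equi-integrability of $\|q_{\delta_n,\epsilon}\|^{2}$ or a dominated-convergence argument for the coefficient). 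Given that \eqref{eq:5.43} is at your disposal, the paper's shorter argument is preferable here.
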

	\begin{proof}
		Since $u_n\in\mathcal{A}_{\delta_n,\epsilon}(\omega)$, according to the the invariance of $\mathcal{A}_{\delta_n,\epsilon}$ that there exists $u_{n,-1} :=u_{\delta,\epsilon,n,-1}\in\mathcal{A}_{\delta_{n},\epsilon}(\theta_{-1}\omega)$, we have 
		\begin{equation}
			\label{eq:5.44}
			u_n=\Phi_{\delta,\epsilon}(1, \theta_{-1}\omega, u_{n,-1})=u_{\delta_n,\epsilon}(0;-1,\omega,u_{n,-1}).
		\end{equation}
		According to Theorem \ref{thm:5.8} that there exists $N_1=N_1(\omega)\geq1$ that for all n $\geq N_1$,
		\begin{equation}
			\begin{aligned}
				R_{\delta_n,\epsilon}(\theta_{-1}\omega)&\leq1+\frac{1}{m}e^{\int_{-1}^{0}2y_{\delta_n,\epsilon}(\theta_{-1}\omega)ds+2y_{\delta_n,\epsilon}(\theta_{-1}\omega)}\\
				&\times\left(1+C_f|\mathcal{O}|\int_{-\infty}^{-1}e^{-2y_{\delta_n,\epsilon}(\theta_{s-1}\omega)+(m\lambda_1-3C_f)s+\int_{s}^{-1}2y_{\delta_n,\epsilon}(\theta_{\tau-1}\omega)d\tau}ds\right)\\
				&+\left(\frac{1}{m}C_f|\mathcal{O}|+\frac{2}{m}C_f^2|\mathcal{O}|\right)\int_{-1}^{0}e^{-2y_{\delta,\epsilon}(\theta_{s}\omega)+(m\lambda_1-3C_f)s+2y_{\delta,\epsilon}(\omega)+\int_{s}^{0}2y_{\delta,\epsilon}(\theta_{r}\omega)dr}ds.
				\nonumber
			\end{aligned}
		\end{equation}
		Since $u_{n,-1}\in\mathcal{A}_{\delta_n,\epsilon}(\theta_{-1}\omega)\subset B_{\delta_n,\epsilon}(\theta_{-1}\omega)$, by Theorem \ref{thm:5.8} and \eqref{eq:5.44}, for all $n\geq N_1$, we have
		\begin{equation}
			\begin{aligned}
				\label{eq:5.45}
				|u_{n,-1}|^2&\leq\lambda_1^{-1}(1+\frac{1}{m}e^{\int_{-1}^{0}2y_{\delta_n,\epsilon}(\theta_{-1}\omega)ds+2y_{\delta_n,\epsilon}(\theta_{-1}\omega)}\\
				&\times\left(1+C_f|\mathcal{O}|\int_{-\infty}^{-1}e^{-2y_{\delta_n,\epsilon}(\theta_{s-1}\omega)+(m\lambda_1-3C_f)s+\int_{s}^{-1}2y_{\delta_n,\epsilon}(\theta_{\tau-1}\omega)d\tau}ds\right)\\
				&+\left(\frac{1}{m}C_f|\mathcal{O}|+\frac{2}{m}C_f^2|\mathcal{O}|\right)\int_{-1}^{0}e^{-2y_{\delta,\epsilon}(\theta_{s}\omega)+(m\lambda_1-3C_f)s+2y_{\delta,\epsilon}(\omega)+\int_{s}^{0}2y_{\delta,\epsilon}(\theta_{r}\omega)dr}ds).
			\end{aligned}
		\end{equation}
		By \eqref{eq:5.28} and Lemma \ref{lem:5.6} we obtain that
		\begin{equation}
			q_{\delta_n,\epsilon}(s;-1,\omega,q_{n,-1})=u_{\delta_n,\epsilon}(s;-1,\omega,u_{n,-1})e^{-y_{\delta_n,\epsilon}(\theta_{s}\omega)}
			\nonumber
		\end{equation}
		and
		\begin{equation}
			\label{eq:5.46}
			\lim\limits_{\substack{\delta_n\rightarrow0^+}}e^{-y_{\delta_n,\epsilon}(\theta_{-1}\omega)}=e^{-y_{0,\epsilon}(\theta_{-1}\omega)},
		\end{equation}
		where $q_{n,-1}=u_{n,-1}e^{-y_{\delta_n,\epsilon}(\theta_{-1}\omega)}$.
		\par 
		Combining with \eqref{eq:5.45}-\eqref{eq:5.46}, we obtain that the sequence $\{q_{n,-1}\}_{n=1}^{\infty}$ is bounded in $H$. Then there exist a  subsequence $\{ q_{n,-1}\}$ and $q_1$, that $q_{n,-1}\rightarrow q_{-1}$ weakly in $H$. By Lemma \ref{lem:5.10} ensures the existence of $\bar{q}_{0,\epsilon} := \bar{q}_{0,\epsilon}(\cdot; -1, \omega, q_{-1})\in L^2(-1, 0; H)$, such that, up to a subsequence,
		\begin{equation}
			q_{\delta_n,\epsilon}(\cdot;-1,\omega,q_{n,-1})\rightarrow\bar{q}_{0,\epsilon}\quad strongly\ in\ \quad L^2(-1,0;H),
			\nonumber
		\end{equation}
		such that, up to a further subsequence,
		\begin{equation}
			\label{eq:5.47}
			q_{\delta_n,\epsilon}(s;-1,\omega,q_{n,-1})\rightarrow\bar{q}_{0,\epsilon}(s)\quad strongly\ in\ \quad H, \quad a.e.\quad s\in(-1,0).
		\end{equation}
		It follows from Lemma \ref{lem:5.6}, \eqref{eq:5.46}-\eqref{eq:5.47}, we have
		\begin{equation}
			\label{eq:5.48}
			u_{\delta_n,\epsilon}(s;-1,\omega,u_{n,-1})\rightarrow e^{y_{\delta,\epsilon}(\theta_{s}\omega)}\bar{q}_{0,\epsilon}(s)\quad strongly\ in\ \quad H, \quad a.e.\quad s\in(-1,0).
		\end{equation}
		By Lemma \ref{lem:5.10} and \eqref{eq:5.48}, that $\delta_n\rightarrow0^+$ as $n\rightarrow+\infty$, we have
		\begin{equation}
			\label{eq:5.49}
			u_{\delta_n,\epsilon}(0;s,\omega,u_{n,-1})\rightarrow u(0;s,\omega,e^{y_{\delta,\epsilon}(\theta_{s}\omega)}\bar{q}_{0,\epsilon}(s)) \quad strongly\ in\ \quad H,
		\end{equation}
		where $u$ is the solution of \eqref{eq:5.2}, making using of cocycle property,
		\begin{equation}
			u_{\delta_n,\epsilon}(0;s,\omega,u_{\delta_n,\epsilon}(s;-1,\omega,u_{n,-1}))=u_{\delta_n,\epsilon}(0;-1,\omega,u_{n,-1})
			\nonumber
		\end{equation}
		By \eqref{eq:5.49}, we can get that
		\begin{equation}
			\label{eq:5.50}
			u_{\delta_n,\epsilon}(0;-1,\omega,u_{n,-1})\rightarrow u(0;s,\omega,e^{y_{\delta,\epsilon}(\theta_{s}\omega)}\bar{q}_{0,\epsilon}(s)) \quad strongly\ in\ \quad H,
		\end{equation}
		together with \eqref{eq:5.44}, the proof is finished.
	\end{proof}
		\textbf{ 5.4. Upper semi-continuity of random attractor.} In this section, we establish the upper semi-continuity of random attractor when small random perturbations $\delta$ and $\epsilon$ approach zero. Let $(H, \|\cdot\|_H)$ be a Banach space and $\Psi$ be an autonomous dynamical
	system defined on $H$. 
	
	Consider a deterministic equation on H:
	\begin{equation}
		\label{eq:5.51}
		\frac{\partial u}{\partial t}-a(l(u))\Delta u_=f(u),
	\end{equation}
	with the initial condition:
	\begin{equation}
		\label{eq:5.52}
		u(\tau)=u_0\in H.
	\end{equation}
	\par 
	In a similar way as \cite{Caraballo THerrera-Cobos M Marín-Rubio P}, we are able to prove that problem $\eqref{eq:5.51}-\eqref{eq:5.52}$ generates a continuous deterministic dynamical system $\Psi$ in $H$, and $\Psi$ has a unique global attractor $\mathcal{A}$ in $H$. According to Theorem \ref{thm:5.7}, given $\epsilon\in(1,0]$, we have that $\Psi_{0,\epsilon}$ has a unique $\mathcal{D}$-pullback random attractor $\mathcal{A}_{0,\epsilon}(\omega)$, and is a random dynamical system such that for P-almost every $\omega\in\Omega$ and all $t\in R^+$, 
	\begin{equation}
		\Psi_{0,\epsilon}(t,\theta_{-t}\omega)x\rightarrow\Psi(t)x\quad as\ \epsilon\rightarrow0^+.
		\nonumber
	\end{equation}
	uniformly on bounded sets of $H$.
	\par 
	\begin{thm}\label{thm:5.12} Assume function $a$ is globally Lipschitz and satisfies \eqref{eq:2.1}, $f\in C(\Bbb R)$ satisfies \eqref{eq:2.2} and \eqref{eq:2.4} with $p=2$ and $\beta=C_f$, respectively, $\phi\in V\cap H^2(\mathcal{O})$, and $l\in L^2(\mathcal{O})$. Also, let $m\lambda_1>4C_f$. There exists $\epsilon_0\in (0,1]$ such that for all $\epsilon\in(0,\epsilon_0]$, then for almost all $\omega\in\Omega$,
		\begin{equation}
			\label{eq:5.54}
			\lim\limits_{\substack{\epsilon\rightarrow0^+}} dist_H(\mathcal{A}_{0,\epsilon}(\omega),\mathcal{A})=0,
		\end{equation}
		where
		\begin{equation}
			\lim\limits_{\substack{\epsilon\rightarrow0^+}} dist_H(\mathcal{A}_{0,\epsilon}(\omega),\mathcal{A})=\sup_{a\in\mathcal{A}_{0,\epsilon}(\omega)}\inf_{b\in\mathcal{A}}\|a-b\|_{H}.
			\nonumber
		\end{equation}
	\end{thm}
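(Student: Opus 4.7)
The plan is to obtain the upper semicontinuity by invoking the abstract criterion already employed in Theorem \ref{thm:4.12} (cf. \cite[Theorem 2]{T.CaJ.A.} and \cite[Theorem 3.1]{B.W E}), which requires two ingredients: (i) convergence of the random dynamical system $\Psi_{0,\epsilon}$ to the deterministic semigroup $\Psi$, uniformly on bounded subsets of $H$ over compact time intervals; and (ii) a uniform (in $\epsilon$) bound on the absorbing sets $B_{0,\epsilon}(\omega)$ as $\epsilon\rightarrow 0^+$, yielding a deterministic compact set which absorbs all of them.

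For (i), I would upgrade Lemma \ref{lem:5.2} to allow the initial datum to vary with $\epsilon$. Given $w_\epsilon\in H$ with $w_\epsilon\rightarrow w_0$ in $H$ as $\epsilon\rightarrow 0^+$, one writes
\[
|u_{0,\epsilon}(t,\omega,w_\epsilon)-u(t,w_0)|\le |u_{0,\epsilon}(t,\omega,w_\epsilon)-u_{0,\epsilon}(t,\omega,w_0)|+|u_{0,\epsilon}(t,\omega,w_0)-u(t,w_0)|,
\]
where the second term vanishes by Lemma \ref{lem:5.2}, and the first is controlled by continuous dependence on initial data for the $\epsilon$-fixed equation, obtained from an energy identity analogous to \eqref{eq:5.10} applied to the difference of two solutions. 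The factor $e^{\epsilon W_t}$ must be controlled uniformly on $[\tau,T]$, but this is routine since $\omega$ is continuous on compacts.

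For (ii), I would use the explicit formula for $R_{0,\epsilon}(\omega)$ given in Theorem \ref{thm:5.7}. Lemma \ref{lem:5.6} furnishes $y_{0,\epsilon}(\theta_{t}\omega)\rightarrow 0$ uniformly on compact $t$-intervals as $\epsilon\rightarrow 0^+$, while the temperedness argument in \eqref{eq:5.39}--\eqref{eq:5.41} supplies an $\epsilon$-uniform integrable majorant of the form $e^{(m\lambda_1-3C_f)(s+1/4)}$ on the tail at $s\rightarrow -\infty$. The Lebesgue dominated convergence theorem then yields $R_{0,\epsilon}(\omega)\rightarrow R_0$ for a deterministic constant $R_0$, from which one extracts a deterministic bounded set absorbing $\{B_{0,\epsilon}(\omega)\}_\epsilon$ for almost every $\omega$.

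Combining (i) and (ii), the conclusion follows along the lines of Lemma \ref{lem:4.11}: any $u_\epsilon\in\mathcal{A}_{0,\epsilon}(\omega)$ can, by invariance, be written as $\Psi_{0,\epsilon}(t,\theta_{-t}\omega,v_\epsilon)$ with $v_\epsilon\in\mathcal{A}_{0,\epsilon}(\theta_{-t}\omega)\subset B_{0,\epsilon}(\theta_{-t}\omega)$; by (ii) the $v_\epsilon$ are bounded in $H$, hence up to a subsequence $v_\epsilon\rightharpoonup v$, and by (i) $\Psi_{0,\epsilon}(t,\theta_{-t}\omega,v_\epsilon)\rightarrow\Psi(t)v$ in $H$, with $\Psi(t)v$ attracted into $\mathcal{A}$ as $t\rightarrow\infty$ by a standard diagonal argument. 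I expect the main obstacle to lie in step (i): upgrading Lemma \ref{lem:5.2} from a fixed initial datum to an $\epsilon$-dependent one requires a genuinely uniform Gronwall estimate that also absorbs the multiplicative prefactor $e^{\epsilon W_t}$, after which step (ii) and the limiting argument proceed in parallel with the additive case treated in Theorem \ref{thm:4.12}.
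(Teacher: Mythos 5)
Your overall strategy coincides with the paper's: the paper's entire proof of this theorem is the single sentence ``The proof is similar to \cite[Theorem 2]{T.CaJ.A.} and we omit the detail here,'' and your ingredients (i) and (ii) --- convergence of $\Psi_{0,\epsilon}$ to $\Psi$ on bounded sets via an upgraded Lemma \ref{lem:5.2}, plus an $\epsilon$-uniform deterministic bound on the absorbing sets $B_{0,\epsilon}(\omega)$ extracted from the formula for $R_{0,\epsilon}(\omega)$ and Lemma \ref{lem:5.6} --- are precisely the two hypotheses that the cited Caraballo--Langa--Robinson criterion requires. So in substance you are carrying out the proof the authors chose to omit.

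The one place your assembly does not close as written is the final paragraph: you extract a \emph{weakly} convergent subsequence $v_\epsilon\rightharpoonup v$ and then claim ``by (i) $\Psi_{0,\epsilon}(t,\theta_{-t}\omega,v_\epsilon)\rightarrow\Psi(t)v$ in $H$.'' Your statement (i) gives convergence along \emph{strongly} convergent initial data $w_\epsilon\rightarrow w_0$ (equivalently, uniformity on bounded sets for the difference $\Psi_{0,\epsilon}-\Psi$); it does not let you pass a weak limit through the nonlinear solution map, and in $H=L^2(\mathcal{O})$ bounded sets are not precompact, so no strong limit of the $v_\epsilon$ is available. The weak limit is in fact unnecessary: with $B$ the deterministic bounded set from (ii) that contains $\mathcal{A}_{0,\epsilon}(\theta_{-t}\omega)$ for all small $\epsilon$, one estimates directly
\begin{equation}
dist_H\bigl(\mathcal{A}_{0,\epsilon}(\omega),\mathcal{A}\bigr)\le \sup_{y\in B}\bigl|\Psi_{0,\epsilon}(t,\theta_{-t}\omega)y-\Psi(t)y\bigr|+dist_H\bigl(\Psi(t)B,\mathcal{A}\bigr),
\nonumber
\end{equation}
choosing $t$ large so that the second term is small (attraction of the deterministic global attractor) and then $\epsilon$ small so that the first term is small (your uniform version of (i)). This is exactly the argument of \cite[Theorem 2]{T.CaJ.A.}, and it requires no limit point of the $v_\epsilon$ and no precompactness of $\bigcup_\epsilon\mathcal{A}_{0,\epsilon}(\omega)$; the energy-equation compactness machinery of Lemma \ref{lem:4.11} is only needed for the $\delta\rightarrow 0^+$ results (Theorems \ref{thm:4.13} and \ref{thm:5.13}), where the criterion of \cite[Theorem 3.1]{B.W E} is invoked instead. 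With that substitution your proof is complete, provided you verify that the Gronwall constants in Lemma \ref{lem:5.2} depend on the initial datum only through its $H$-norm, which your Lemma \ref{lem:5.1} already guarantees.
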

	\begin{proof}
		The proof is similar to \cite[Theorem 2]{T.CaJ.A.} and we omit the detail here.
	\end{proof}
	\begin{thm}
		\label{thm:5.13}
 Assume that function $a\in C(\Bbb R; \Bbb R^+)$ fulfills
	\eqref{eq:2.1}, function $f$ satisfies \eqref{eq:2.2} and \eqref{eq:2.4} with $p=2$ and $\beta=C_f$, respectively. Also, let $m\lambda_1>4C_f$ and $l\in L^2(\mathcal{O})$. Then, there exist $\epsilon_0\in (0,1]$ such that for all $\epsilon\in (0,\epsilon_0]$, for almost all $\omega\in\Omega$,
	\begin{equation}
		\lim\limits_{\substack{\delta\rightarrow0^+}}dist_H(\mathcal{A}_{\delta,\epsilon}(\omega),\mathcal{A}_{0,\epsilon}(\omega))=0.
	\end{equation}
	\begin{proof}
		By Theorem \ref{thm:5.7} and \ref{thm:5.8}, for $\omega\in\Omega$, we have,
		\begin{equation}
			\lim\limits_{\substack{\delta\rightarrow0^+}}|B_{\delta,\epsilon}(0,\omega)|=|B(0,\omega)|.
		\end{equation}
		Together with the above equality, Lemma \ref{lem:5.9} and \ref{lem:5.11}, the proof is finished by \cite[Theorem 3.1]{B.W E}.
	\end{proof}
		\end{thm}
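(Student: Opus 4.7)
The plan is to invoke an abstract upper semicontinuity criterion (for instance, \cite[Theorem 3.1]{B.W E}), which in the present context reduces the claim to three ingredients: (i) convergence of the $\mathcal{D}_F$-pullback absorbing sets $B_{\delta,\epsilon}(\omega)\to B_{0,\epsilon}(\omega)$ as $\delta\to 0^{+}$; (ii) a continuity property of the cocycle with respect to the parameter $\delta$ and the initial data; and (iii) a pre-compactness property for sequences $\{u_{\delta_n,\epsilon}\}$ with $u_{\delta_n,\epsilon}\in\mathcal{A}_{\delta_n,\epsilon}(\omega)$. All three ingredients are already at hand in this section.

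First, I would fix $\omega\in\Omega$ and observe that Theorem~\ref{thm:5.8} provides the existence of the random attractors $\mathcal{A}_{\delta,\epsilon}(\omega)$ and $\mathcal{A}_{0,\epsilon}(\omega)$, together with the explicit formula for the radius $R_{\delta,\epsilon}(\omega)$ of the absorbing balls $B_{\delta,\epsilon}(\omega)$. The convergence
\begin{equation}
\lim_{\delta\to 0^{+}} R_{\delta,\epsilon}(\omega)=R_{0,\epsilon}(\omega),
\nonumber
\end{equation}
established in \eqref{eq:5.33}, immediately gives
\begin{equation}
\lim_{\delta\to 0^{+}}|B_{\delta,\epsilon}(0,\omega)|=|B_{0,\epsilon}(0,\omega)|,
\nonumber
\end{equation}
which verifies ingredient (i). Next, ingredient (ii) is exactly the content of Lemma~\ref{lem:5.10}, which asserts that if $\delta_n\to 0^{+}$ and $q_{\delta_n,\epsilon}(\tau)\to q_{0,\epsilon}(\tau)$ in $H$, then $q_{\delta_n,\epsilon}(t;\tau,\omega,q_{\delta_n,\epsilon}(\tau))\to q_{0,\epsilon}(t;\tau,\omega,q_{0,\epsilon}(\tau))$ in $H$ for every $t\ge\tau$; translating through the conjugation $u_{\delta,\epsilon}=e^{y_{\delta,\epsilon}(\theta_t\omega)}q_{\delta,\epsilon}$ together with Lemma~\ref{lem:5.6} yields the corresponding continuity for the cocycle $\Phi_{\delta,\epsilon}$. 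Finally, ingredient (iii) is the pre-compactness statement proved in Lemma~\ref{lem:5.11}: every sequence $\{u_n\}$ with $u_n\in\mathcal{A}_{\delta_n,\epsilon}(\omega)$ admits a subsequence converging in $H$.

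With these three ingredients in place, the abstract result can be applied. Concretely, for any sequence $\delta_n\to 0^{+}$ and any $u_n\in\mathcal{A}_{\delta_n,\epsilon}(\omega)$, Lemma~\ref{lem:5.11} produces a limit point $\tilde u\in H$ of a subsequence, and by the invariance of $\mathcal{A}_{\delta_n,\epsilon}$ together with Lemma~\ref{lem:5.10} and the convergence of the absorbing radii one identifies $\tilde u$ as an element of the $\omega$-limit set of $B_{0,\epsilon}(\omega)$ under $\Phi_{0,\epsilon}$, hence $\tilde u\in\mathcal{A}_{0,\epsilon}(\omega)$. A standard contradiction argument then converts this into the Hausdorff semidistance statement
\begin{equation}
\lim_{\delta\to 0^{+}}\mathrm{dist}_H(\mathcal{A}_{\delta,\epsilon}(\omega),\mathcal{A}_{0,\epsilon}(\omega))=0.
\nonumber
\end{equation}

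The main obstacle, and the place where care is required, is the translation between the random equation for $q_{\delta,\epsilon}$ (for which Lemmas~\ref{lem:5.9}--\ref{lem:5.10} are stated) and the original cocycle $\Phi_{\delta,\epsilon}$ for $u_{\delta,\epsilon}$, since the conjugation factor $e^{y_{\delta,\epsilon}(\theta_t\omega)}$ itself depends on $\delta$. One must therefore combine the convergence $y_{\delta,\epsilon}(\theta_t\omega)\to y_{0,\epsilon}(\theta_t\omega)$ from Lemma~\ref{lem:5.6} with the convergence of $q_{\delta_n,\epsilon}$ to pass to the limit in $u_{\delta_n,\epsilon}=e^{y_{\delta_n,\epsilon}(\theta_t\omega)}q_{\delta_n,\epsilon}$. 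All other steps reduce to citing the already-established lemmas and the abstract criterion, so the proof is brief and we omit the routine verifications.
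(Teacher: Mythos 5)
Your proposal follows essentially the same route as the paper: both reduce the claim to the abstract upper semicontinuity criterion of \cite[Theorem 3.1]{B.W E}, verified via the convergence of the absorbing radii $R_{\delta,\epsilon}(\omega)\to R_{0,\epsilon}(\omega)$ from Theorem \ref{thm:5.8}, the parameter-continuity of solutions (the paper cites Lemma \ref{lem:5.9} where you cite Lemma \ref{lem:5.10}, but these play the same role), and the pre-compactness of attractor sequences from Lemma \ref{lem:5.11}. Your additional remarks on passing through the conjugation $u_{\delta,\epsilon}=e^{y_{\delta,\epsilon}(\theta_t\omega)}q_{\delta,\epsilon}$ via Lemma \ref{lem:5.6} are consistent with, and slightly more explicit than, the paper's own one-line proof.
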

	\par 
	Based on the above analysis, when perturbation parameters $\delta$
	and $\epsilon$ both tend to zero, the convergence relationship between the random attractor of the stochastic equation \eqref{eq:5.2} driven by linear multiplicative noise and the global attractor of the corresponding deterministic equation \eqref{eq:4.2} can be obtained as follows:
	\par
	\begin{thm}
		\label{thm:5.14}
 Assume function $a$ is globally Lipschitz and satisfies \eqref{eq:2.1}, $f\in C(\Bbb R)$ satisfies \eqref{eq:2.2} and \eqref{eq:2.4} with $p=2$ and $\beta=C_f$, respectively. Also, let $m\lambda_1>4C_f$ and $l\in L^2(\mathcal{O})$. Then for almost all $\omega\in\Omega$, we have 
	\begin{equation}
		\label{eq:5.55}
		\lim\limits_{\substack{\delta\rightarrow0^+\\\epsilon\rightarrow0^+}}dist_H(\mathcal{A}_{\delta,\epsilon}(\omega),\mathcal{A})=0,
	\end{equation}
	where
	\begin{equation}
		\lim\limits_{\substack{\delta\rightarrow0^+\\\epsilon\rightarrow0^+}} dist_H(\mathcal{A}_{\delta,\epsilon}(\omega),\mathcal{A})=\sup_{a\in\mathcal{A}_{\delta,\epsilon}(\omega)}\inf_{b\in\mathcal{A}}\|a-b\|_{H}.
		\nonumber
	\end{equation}
		\end{thm}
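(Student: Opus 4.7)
The plan is to prove Theorem \ref{thm:5.14} by exactly mirroring the argument used for Theorem \ref{thm:4.14} in the additive noise case, namely by combining Theorem \ref{thm:5.12} and Theorem \ref{thm:5.13} via the triangle inequality for the Hausdorff semi-distance. Specifically, for any $\omega \in \Omega$ and any $a \in \mathcal{A}_{\delta,\epsilon}(\omega)$, $b \in \mathcal{A}$, $c \in \mathcal{A}_{0,\epsilon}(\omega)$, one has $\|a - b\|_H \leq \|a - c\|_H + \|c - b\|_H$. Taking infimum over $b \in \mathcal{A}$, then over $c \in \mathcal{A}_{0,\epsilon}(\omega)$ (using that $\inf_b (\|a - c\| + \|c - b\|) = \|a - c\| + \inf_b \|c - b\|$), and finally the supremum over $a \in \mathcal{A}_{\delta,\epsilon}(\omega)$, yields
\begin{equation}
\mathrm{dist}_H(\mathcal{A}_{\delta,\epsilon}(\omega), \mathcal{A}) \leq \sup_{a \in \mathcal{A}_{\delta,\epsilon}(\omega)} \inf_{c \in \mathcal{A}_{0,\epsilon}(\omega)} \|a - c\|_H + \sup_{c \in \mathcal{A}_{0,\epsilon}(\omega)} \inf_{b \in \mathcal{A}} \|c - b\|_H.
\nonumber
\end{equation}

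Next, I would pass to the limit. Given any positive sequence $\epsilon_n \to 0^+$, I would first apply Theorem \ref{thm:5.12} to conclude that the second term on the right tends to $0$ as $\epsilon_n \to 0^+$. Then, for each $\epsilon \in (0, \epsilon_0]$ fixed, Theorem \ref{thm:5.13} ensures that the first term tends to $0$ as $\delta \to 0^+$. The double limit as both $\delta \to 0^+$ and $\epsilon \to 0^+$ then follows by the triangle inequality above, taking $\delta \to 0^+$ first for each fixed $\epsilon$ (so the first term is eventually arbitrarily small), and then $\epsilon \to 0^+$ (making the second term small). Since $\omega \in \Omega$ is almost surely fixed throughout the argument, the conclusion holds for almost all $\omega \in \Omega$.

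The main obstacle, if any, is purely conceptual rather than technical: one must interpret the joint limit $\lim_{\delta \to 0^+,\, \epsilon \to 0^+}$ correctly. Since Theorems \ref{thm:5.12} and \ref{thm:5.13} provide two independent one-parameter convergences (one for each small parameter after the other is fixed), the cleanest reading is as an iterated limit: first $\delta \to 0^+$ with $\epsilon \in (0, \epsilon_0]$ fixed, then $\epsilon \to 0^+$. Because both terms on the right-hand side of the triangle inequality are nonnegative and vanish in this iterated sense, the Hausdorff semi-distance on the left vanishes. No further estimates on the solutions of \eqref{eq:5.2} are required beyond those already established in Sections 5.2 and 5.3, and the proof is essentially a one-line application of the triangle inequality, fully analogous to Theorem \ref{thm:4.14}.
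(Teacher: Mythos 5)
Your proposal is correct and follows essentially the same route as the paper: the paper's proof of Theorem \ref{thm:5.14} is precisely the triangle inequality $\mathrm{dist}_H(\mathcal{A}_{\delta,\epsilon}(\omega),\mathcal{A})\leq \sup_{a\in\mathcal{A}_{\delta,\epsilon}(\omega)}\inf_{c\in\mathcal{A}_{0,\epsilon}(\omega)}\|a-c\|_H+\sup_{c\in\mathcal{A}_{0,\epsilon}(\omega)}\inf_{b\in\mathcal{A}}\|c-b\|_H$ combined with Theorems \ref{thm:5.12} and \ref{thm:5.13}, exactly mirroring Theorem \ref{thm:4.14}. Your additional discussion of the iterated-limit interpretation is a reasonable clarification of a point the paper leaves implicit, but it does not change the argument.
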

	\begin{proof}
		By Theorem \ref{thm:5.12} and \ref{thm:5.13}, there exists $\epsilon_0\in (0,1]$, $\tilde{\delta} >0$ such that for all $\epsilon\in(0,\epsilon_0]$, $0<\delta<\tilde{\delta}$, we have
		\begin{equation}
			\begin{aligned}
				dist_H(\mathcal{A}_{\delta,\epsilon}(\omega),\mathcal{A})&=\sup_{a\in\mathcal{A}_{\delta,\epsilon}(\omega)}\inf_{b\in\mathcal{A}}\|a-b\|_{H}\\
				&\leq\sup_{a\in\mathcal{A}_{\delta,\epsilon}(\omega)}\inf_{c\in\mathcal{A}_{0,\epsilon}(\omega)}\|a-c\|_{H}+\sup_{c\in\mathcal{A}_{0,\epsilon}(\omega)}\inf_{b\in\mathcal{A}}\|c-b\|_{H}.
				\nonumber
			\end{aligned}
		\end{equation}
		The proof is completed.
	\end{proof}
	
\end{document}